\def\triplenorm#1{{\vert\kern-0.25ex\vert\kern-0.25ex\vert #1
    \vert\kern-0.25ex\vert\kern-0.25ex\vert}}
\def\longformule#1#2{
\displaylines{ \qquad{#1} \hfill\cr \hfill {#2} \qquad\cr } }
\def\inte#1{
\displaystyle\mathop{#1\kern0pt}^\circ }
\let\pa=\partial
\let\al=\alpha
\let\b=\bar
\let\d=\delta
\let\e=\varepsilon
\let\f=\frac
\let\vf=\varphi
\let\p=\psi
\let\D=\Delta
\let\wt=\widetilde
\let\wh=\widehat
\def\cA{{\mathcal A}}
\def\dH{\dot{H}}
\def\cB{{\mathcal B}}
\def\cC{{\mathcal C}}
\def\cD{{\mathcal D}}
\def\cE{{\mathcal E}}
\def\cF{{\mathcal F}}
\def\cL{{\mathcal L}}
\def\cY{{\mathcal Y}}
\def\pa{\partial}
\def\dB{\dot{B}}
\def\v{{\rm v}}
\def\h{{\rm h}}
\def\Ga{\Gamma}
\def\bY{\bar{Y}}
\def\Djl{\Delta_j\Delta_\ell^{\rm v}}
\def\dD{\dot{\Delta}}
\def\virgp{\raise 2pt\hbox{,}}
\def\cdotpv{\raise 2pt\hbox{;}}
\def\eqdefa{\buildrel\hbox{\footnotesize def}\over =}
\def\C{\mathop{\mathbb C\kern 0pt}\nolimits}
\def\DD{\mathop{\mathbb D\kern 0pt}\nolimits}
\def\EE{\mathop{{\mathbb E \kern 0pt}}\nolimits}
\def\K{\mathop{\mathbb K\kern 0pt}\nolimits}
\def\N{\mathop{\mathbb N\kern 0pt}\nolimits}
\def\Q{\mathop{\mathbb Q\kern 0pt}\nolimits}
\def\R{\mathop{\mathbb R\kern 0pt}\nolimits}
\def\SS{\mathop{\mathbb S\kern 0pt}\nolimits}
\def\ZZ{\mathop{\mathbb Z\kern 0pt}\nolimits}
\def\TT{\mathop{\mathbb T\kern 0pt}\nolimits}
\def\P{\mathop{\mathbb P\kern 0pt}\nolimits}
\newcommand{\la}{\lambda}
\newcommand{\Z}{{\ZZ}}
\newcommand{\vv}[1]{\boldsymbol{#1}}
\def\dv{\mbox{div}}
\def\dive{\mathop{\rm div}\nolimits}
\def\no{\noindent}
\def\na{\nabla}
\def\p{\partial}
\def\th{\theta}
\newcommand{\w}[1]{\langle {#1} \rangle}
\newcommand{\beq}{\begin{equation}}
\newcommand{\eeq}{\end{equation}}
\newcommand{\ben}{\begin{eqnarray}}
\newcommand{\een}{\end{eqnarray}}
\newcommand{\beno}{\begin{eqnarray*}}
\newcommand{\eeno}{\end{eqnarray*}}
\newcommand{\andf}{\quad\hbox{and}\quad}
\newcommand{\with}{\quad\hbox{with}\quad}
\newtheorem{defi}{Definition}[section]
\newtheorem{thm}{Theorem}[section]
\newtheorem{lem}{Lemma}[section]
\newtheorem{rmk}{Remark}[section]
\newtheorem{col}{Corollary}[section]
\newtheorem{prop}{Proposition}[section]
 \numberwithin{equation}{section}
\begin{document}

\title[Decay of solutions to 3-D MHD system]
{Large time behavior  of solutions to 3-D  MHD system with initial data near
equilibrium }
\author[W.  Deng]{Wen Deng}
\address[W. Deng]{Academy of Mathematics $\&$ Systems Science
and  Hua Loo-Keng Key Laboratory of Mathematics, Chinese Academy of
Sciences, Beijing 100190, CHINA} \email{dengwen@amss.ac.cn}
\author[P. Zhang]{Ping Zhang} \address[P. Zhang]{Academy of Mathematics $\&$ Systems Science
and  Hua Loo-Keng Key Laboratory of Mathematics, Chinese Academy of
Sciences, Beijing 100190, CHINA, and School of Mathematical Sciences, University of Chinese Academy of Sciences, Beijing 100049, China.} \email{zp@amss.ac.cn}

\date{\today}

\maketitle \begin{abstract} In \cite{ChCa},  Califano and Chiuderi conjectured that the  energy of incompressible Magnetic hydrodynamical system is
dissipated at a rate that is independent of the ohmic resistivity.   The goal of this paper is to mathematically justify this conjecture in three space dimension provided that the  initial magnetic field and velocity
is a small perturbation of the equilibrium state $(e_3,0).$ In particular, we prove that for such data, 3-D incompressible MHD system without
magnetic diffusion  has a unique global solution. Furthermore, the velocity field and the difference between the magnetic field and $e_3$ decay to zero
in both $L^\infty$ and $L^2$ norms with explicit  rates. We point out that the decay rate in the $L^2$ norm is optimal in sense that this rate coincides with
that of  the linear system. The main idea of the proof is to exploit  H$\ddot{o}$rmander's version of Nash-Moser iteration scheme, which is very much motivated
by the seminar papers \cite{Kl80, Kl82, Kl84} by  Klainerman on the long time behavior to the evolution equations.
\end{abstract}

\noindent {\sl Keywords:}  MHD system,   Nash-Moser iteration scheme,   Littlewood-Paley theory,
 Besov spaces.

\vskip 0.2cm

\noindent {\sl AMS Subject Classification (2000):} 35Q30, 76D03  \




\setcounter{equation}{0}
\section{Introduction}
In this paper, we investigate the large time behavior  of the global smooth
solutions to the following three-dimensional incompressible magnetic
hydrodynamical (or MHD in short)
 system with initial data being sufficiently close to the equilibrium state
 $(e_3,0):$
\beq\label{1.1} \left\{\begin{array}{l}
\displaystyle \pa_t b +u\cdot\nabla b=b\cdot\nabla u,\qquad (t,x)\in\R^+\times\R^3, \\
\displaystyle \pa_t  u+u\cdot\nabla u -\Delta u+\na p=b\cdot\nabla b, \\
\displaystyle \dv\, u =\dv\, b= 0, \\
\displaystyle (b,u)|_{t=0}=(b_0,u_0)\with b_0=e_3+\e \phi,
\end{array}\right.
\eeq where $b=(b^1,b^2,b^3)$ denotes the magnetic field,   $
u=(u^1,u^2,u^3)$ and $ p$ stand for the velocity and scalar pressure of the fluid
respectively.  This MHD system \eqref{1.1} with zero diffusivity in
 the magnetic field equation  can be applied to model plasmas when
the plasmas are strongly collisional, or the resistivity due to
these collisions are extremely small. One may check the references
\cite{Ca,CP, Dav01, LL} for more explanations to this system.

\medbreak

Whether there is dissipation or not for the magnetic field of
\eqref{1.1} is a very important problem  from physics of
plasmas. The heating of high temperature plasmas by MHD waves is one
of the most interesting and challenging problems of plasma physics
especially when the energy is injected into the system at the length
scales which are much larger than the dissipative ones. It has been
conjectured that in the two-dimensional MHD system, energy is
dissipated at a rate that is independent of the ohmic resistivity
\cite{ChCa}. In other words, the diffusivity for the
magnetic field equation can be zero yet the whole system may still
be dissipative.  The goal of this paper is to rigorously justify this conjecture in three space dimension provided that the  initial data of \eqref{1.1}
is a small perturbation of the equilibrium state $(e_3,0).$

\smallskip

Concerning the well-posedness issue of the system \eqref{1.1}, Chemin et al \cite{CMRR} proved
the local well-posedness of \eqref{1.1} with initial data in the
critical Besov spaces.
 Lin and
the second author \cite{LZMHD1} proved the global well-posedness to a
modified three-dimensional MHD system  with initial data
sufficiently close to the equilibrium state (see \cite{lin-zhang}
for a simplified proof).
Lin, Xu and the second author  \cite{LXZ}  established the global well-posedness of \eqref{1.1} in 2-D provided  that the initial data
is near the equilibrium state $(e_d,0)$ and the initial magnetic
field, $b_0,$ satisfies sort of admissible condition, namely
\beq \label{S0eq1} \int_{\R}(b_0-e_3)(Z(t,\al))\,dt=0\quad\mbox{for
all }\quad  \al\in \R^d\times\{0\} \eeq with $Z(t,\al)$ being
determined by \beno \left\{\begin{array}{l}
\displaystyle \f{d Z(t,\al)}{dt}= b_0(Z(t,\al)), \\
\displaystyle Z(t,\al)|_{t=0}=\al.
\end{array}\right. \eeno
Similar result in three space dimension was proved by Xu and the
second author in \cite{XZ1}.

In the 2-D case, the restriction \eqref{S0eq1} was removed by Ren,
Wu, Xiang and Zhang in \cite{RWXZ} by carefully exploiting the
divergence structure of the velocity field. Moreover, the authors
proved that \beq\label{S0eq2}
\|\p_{x_2}^kb(t)\|_{L^2}+\|\p_{x_2}^ku(t)\|_{L^2}\leq
C\w{t}^{-\f{1+2k}4+\epsilon} \quad\mbox{for any } \ \epsilon\in ]0, 1/2[\andf
k=0,1,2, \eeq  where $\w{t}\eqdefa\left(1+t^2\right)^{\f12}. $ A more elementary existence proof was also given by
Zhang in \cite{ZhangTing}. Very recently, Abidi and the second author removed the restriction \eqref{S0eq1} in \cite{AZ5} for the 3-D MHD system.
Moreover, if the initial magnetic field equals to $e_3$ and with other technical assumptions, this solution decays to zero according
to
\beq\label{S0eq3}
\|u(t)\|_{H^2}+\|b(t)-e_3\|_{H^2}\leq C\w{t}^{-\f14}.
\eeq
Note that \eqref{S0eq3} corresponding to the critical case of \eqref{S0eq2}, that is, $\epsilon=0$ in \eqref{S0eq2}.

\medbreak This idea of considering the global well-posedness of MHD system with initial data close
to the equilibrium sate $(e_d,0)$ goes back to the work by Bardos, Sulem and Sulem \cite{BSS88} for
the global well-posedness of ideal incompressible MHD system.
 In general,  it is not known whether or not classical
solutions of \eqref{1.1} can develop finite time singularities even
in  two dimension.  In the case when there is full magnetic diffusion
in \eqref{1.1}, one may check   \cite{DL} for its local
well-posedness  in the classical Sobolev spaces, and
\cite{ST} for the global well-posedness of such a  system  in
two space dimension.
With mixed partial dissipation and additional magnetic diffusion in
the two-dimensional MHD system, Cao and Wu \cite{CW} (see also
\cite{CRW}) proved that such a system is globally well-posed for any
data in $H^2(\R^2).$
Lately He, Xu and Yu \cite{HXY16} (see also  \cite{CL16} and  \cite{WZ16}) justified the vanishing viscosity limit of the full
diffusive MHD system to the solution constructed by Bardos et al in \cite{BSS88} for the ideal MHD system.

 \medbreak

The main result of this paper states as follows:

\begin{thm}\label{thmmain}
{\sl Let $e_3=(0,0,1),$ $b_0=e_3+\e\phi$ with
$\phi=(\phi_1,\phi_2,\phi_3)\in C_c^\infty$ and $\dive \phi=0,$ let
$u_0\in W^{N_0,1}\cap H^{N_0}$ for some integer $N_0$ sufficiently large. Then there exist sufficiently
small positive constants $\e_0, c_0$ such that if \beq\label{S0eq4}
\|u_0\|_{W^{N_0,1}}+\|u_0\|_{H^{N_0}}\leq c_0 \andf \e\leq \e_0, \eeq  \eqref{1.1}
has a unique global solution $(b, u)$ so that for any $T>0,$
$b\in C^2([0,T]\times\R^3),$ $ u\in C^2([0,T]\times\R^3)$. Moreover, for some $\kappa>0,$  there hold
\beq\label{S0eq5}
\begin{split}
&\|u(t)\|_{W^{2,\infty}}\leq C_\kappa\w{t}^{-\frac54+\kappa},\quad\qquad \|b(t)-e_3\|_{W^{2,\infty}}\leq C_\kappa\w{t}^{-\f34+\kappa} \andf\\
&\|u(t)\|_{H^2}+\|b(t)-e_3\|_{H^2}\leq C\langle t\rangle^{-\frac12},\quad \|\nabla u(t)\|_{L^2}\leq C\langle t\rangle^{-1}.
\end{split}
\eeq }
\end{thm}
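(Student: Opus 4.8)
The plan is to treat the nonlinear MHD system as a perturbation of a linearized model whose energy decays at the rates advertised in \eqref{S0eq5}, and to close the argument by a Nash–Moser scheme following Klainerman's philosophy. The first step is to identify the right linear model. Writing $b=e_3+\e\, a$ and $v=u/\e$ (or a comparable rescaling), the linearization of \eqref{1.1} around $(e_3,0)$ is
\begin{equation*}
\pa_t a = \pa_{x_3} v,\qquad \pa_t v - \Delta v + \na q = \pa_{x_3} a,\qquad \dive a=\dive v=0,
\end{equation*}
which after eliminating $a$ gives a damped wave-type equation $\pa_t^2 v - \Delta \pa_t v - \pa_{x_3}^2 v = 0$ for the divergence-free part of $v$. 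The frequency-space symbol is $\tau^2 + |\xi|^2\tau + \xi_3^2 = 0$, so modes with $\xi_3\ne 0$ are damped; the degeneracy at $\xi_3=0$ is exactly what produces the polynomial (rather than exponential) decay and forces the anisotropic, $\w t$-type rates. I would first prove sharp $L^1\!-\!L^\infty$ and $L^2$ decay estimates for this linear semigroup, decomposing dyadically in $|\xi|$ and in the angular variable $\xi_3/|\xi|$ (Littlewood–Paley in the spirit of the paper's notation $\dot\Delta_j$, $\Delta_\ell^{\rm v}$), and obtain the weighted bounds $\w t^{-5/4}$ in $W^{2,\infty}$ for $v$, $\w t^{-3/4}$ for $a$, $\w t^{-1/2}$ in $H^2$, and $\w t^{-1}$ for $\na v$ in $L^2$. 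The hypothesis $u_0\in W^{N_0,1}\cap H^{N_0}$ is precisely what feeds these two endpoint estimates.

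The second step is to set up the iteration. Rather than a contraction in a fixed space (which fails here because the nonlinearity $u\cdot\na b$, $b\cdot\na b$, $u\cdot\na u$ loses a derivative relative to the decay one can hope to propagate, and because the problem is quasilinear in character once one tracks optimal rates), I would run a Newton-type scheme: given an approximate solution $(b_n,u_n)$, solve the \emph{linearized} MHD system at $(b_n,u_n)$ for the increment, and smooth the increment with a mollifier $S_{\theta_n}$ at frequency $\theta_n\to\infty$ (Hörmander's version of Nash–Moser). The smoothing is calibrated against the "loss of derivatives" in the linearized solve so that the quadratic error from the previous step, amplified by the regularization, still contracts. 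Concretely one introduces a scale of norms $\|\cdot\|_{s}$ combining spatial Sobolev/Besov regularity with time-decay weights $\w t^{\mu(s)}$, and one proves a \emph{tame} estimate: the linearized solution operator gains the sharp decay but loses a fixed finite number of derivatives, with the high-norm growth controlled linearly by the high norm of the coefficients times the low norm of the data. Interpolation (the anisotropic Littlewood–Paley version) converts the tame estimate plus the quadratic smallness of the Newton error into geometric convergence of $\sum \theta_n^{\delta}\|\text{increment}_n\|$ in the low norm, together with at most slow growth in the high norm $N_0$; this is where the requirement "$N_0$ sufficiently large" is consumed.

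The third step is to propagate the nonlinear decay estimates \eqref{S0eq5} along the iteration. Here the key structural inputs are: (i) the divergence-free conditions, which (as in \cite{RWXZ}) let one integrate by parts and avoid the worst-looking terms — e.g. $\dive(u\otimes b)$ in place of $u\cdot\na b$; (ii) the observation that the "bad" directions (low $\xi_3$) in the source terms are themselves quadratic in quantities that do decay, so a Duhamel estimate $\|e^{(t-s)\cL}\,\text{(nonlinearity)}\|\lesssim \int_0^t (\text{linear kernel})(t-s)\,\|NL(s)\|\,ds$ closes provided the product of two decay rates beats the borderline of the convolution. The bookkeeping of these convolutions — matching, for each norm, the decay rate of the linear kernel against the rate of the nonlinear source so that the time integral converges and reproduces the claimed exponent — is the technical heart, and $\kappa>0$ appears as the unavoidable $\epsilon$-loss at the borderline cases (compare \eqref{S0eq2}). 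Finally, uniqueness and the $C^2([0,T]\times\R^3)$ regularity follow from the high-norm bound with $N_0$ large via Sobolev embedding, and a standard energy estimate for the difference of two solutions.

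\textbf{Main obstacle.} I expect the decisive difficulty to be the derivative loss in the linearized solve combined with the need for \emph{sharp} decay: an ordinary energy estimate for the linearized system gives either the right regularity or the right decay, not both simultaneously, because the damping term $\Delta\pa_t v$ degenerates exactly where one wants decay. Reconciling these — i.e. proving the tame estimate with the \emph{optimal} weights $\w t^{-5/4+\kappa}$, $\w t^{-3/4+\kappa}$, $\w t^{-1/2}$, $\w t^{-1}$ rather than lossy ones — is what genuinely requires the Nash–Moser machinery and the anisotropic Littlewood–Paley decomposition, and it is where Klainerman's method of using the structure of the equation (here the partial parabolicity in the $x_3$-directions plus the transport structure in $x_h$) has to be exploited most carefully.
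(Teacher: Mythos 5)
Your overall strategy matches the paper's: the same linearized operator $\pa_t^2-\D\pa_t-\pa_3^2$, anisotropic $L^1\!-\!L^\infty$ and weighted $L^2$ decay estimates for it obtained by splitting frequency space according to $\xi_3/|\xi|^2$, and a H\"ormander-style Nash--Moser iteration with smoothing operators, tame estimates and interpolation to close the scheme. But there is one genuine gap: you propose to run the whole argument in Eulerian coordinates, whereas the paper's first and structurally essential move is to pass to the Lagrangian formulation \eqref{1.1f}--\eqref{1.2} and then to a Frobenius-type change of coordinates \eqref{S1eq13}--\eqref{CH} that straightens $\pa_{b_0}$ into $\pa_{z_3}$. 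The reason this cannot be skipped is the one the paper states explicitly: the magnetic field equation $\pa_t b+u\cdot\na b=b\cdot\na u$ has no diffusion, and it is in general impossible to propagate \emph{anisotropic} regularity (extra decay/regularity in the $x_3$ direction only) through a transport operator $u\cdot\na$. Your decay estimates for the linear semigroup are anisotropic with respect to the \emph{fixed} direction $e_3$, but in the Eulerian nonlinear problem the distinguished direction is $b(t,x)$ itself, which drifts with the flow; commuting $\pa_3$ with $u\cdot\na$ produces terms like $\pa_3u\cdot\na b$ in which $\na_{\rm h}b$ has no decay, and the convolution bookkeeping you describe in your third step does not close. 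In Lagrangian variables $b=\pa_{b_0}X$, so the magnetic field becomes a derivative of the flow map, the whole system collapses to a single second-order equation \eqref{1.2b} for the displacement $\bY$, and the anisotropic direction is frozen for all time.

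Two smaller points. First, your proposal does not address the perturbation $b_0=e_3+\e\phi$ of the background field: the paper absorbs the resulting time-independent source $\pa_{z_3}b_0$ by the explicit corrector $\wt Y$ in \eqref{S1eq17}, which is exactly how the admissibility condition \eqref{S0eq1} of the earlier literature is removed; without some such device the Duhamel integral of a non-decaying source diverges. Second, the paper's stated reason for invoking Nash--Moser is not only a loss of derivatives in the linearized solve: it is that the source $f$ in \eqref{S1eq2} contains double Riesz transforms from the pressure (so the pointwise growth hypothesis \eqref{S1eq21a} of Klainerman's Theorem \ref{thm2} fails), and that even for a quadratic nonlinearity the linear decay rate does not satisfy \eqref{S1eq21b}; the scheme is saved only by exploiting that $\pa_3\bY$, $\bY_t$ and $\bY$ decay at \emph{different} rates and the nonlinearity couples them favorably. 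Your "main obstacle" paragraph gestures at this, but the precise mechanism is the anisotropy of the rates in \eqref{S2eq3as}, not merely the degeneracy of the damping at $\xi_3=0$.
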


Let us remark that the above theorem recovers the global well-posedness result of the system \eqref{1.1} in \cite{AZ5}. Moreover, the bigger the integer $N_0,$ the smaller
 the positive constant $\kappa.$ The main idea of
the proof here works in both two space dimension and in three space dimension.
The $L^\infty$ decay rates of the solution in \eqref{S0eq5} are completely new. The $L^2$ decay rates of the solution  are optimal in the sense that
these decay rates coincide with those of the linearized system (see Propositions \ref{S0prop1} and \ref{S5thm2} below), which greatly improves
the rate given by
 \eqref{S0eq3}. We can also work on the decay rates for the higher order derivatives of the solutions. But we choose not to pursue on this direction here.

\setcounter{equation}{0}
\section {Structure and strategies  of the proof}
\label{Sect2}

\subsection{Lagrangian formulation of \eqref{1.1}}\label{subsect2.1}
As observed in the previous references (\cite{LXZ, XZ1}), the linearized system of \eqref{1.1} around the equilibrium state $(e_3,0)$
reads
\beq\label{S1eq1}\left\{\begin{aligned}
&Y_{tt}-\Delta Y_t-\p_3^2 Y=\vv f\quad\text{in}\ \R^+\times\R^3,\\
&Y|_{t=0}=Y^{(0)},\quad Y_t|_{t=0}=Y^{(1)}.
\end{aligned}\right.\eeq
 It is easy to calculate that this system has two different
eigenvalues \beq\label{S2eq1qe}
\la_1(\xi)=-\f{|\xi|^2}2+\sqrt{\f{|\xi|^4}4-\xi_3^2} \andf
\la_2(\xi)=-\f{|\xi|^2}2-\sqrt{\f{|\xi|^4}4-\xi_3^2}. \eeq
 The Fourier modes
corresponding to $\la_2(\xi)$ decays like $e^{-t|\xi|^2}$.  Whereas the
decay property of the Fourier modes corresponding to $\la_-(\xi)$  varies
with directions of $\xi$ as \beno
\la_1(\xi)=-\f{2\xi_3^2}{|\xi|^2\bigl(1+\sqrt{1-\f{4\xi_3^2}{|\xi|^4}}\bigr)}
\to -1\quad \mbox{as}\quad |\xi|\to \infty \eeno only in the $\xi_3$
direction. This simple  analysis shows that the dissipative
properties of the system  \eqref{S1eq1} may be more complicated
than that for the linearized system of isentropic compressible
Navier-Stokes system (see \cite{Da} for instance). Moreover, it is well-known that it is in general impossible to
propagate the anisotropic regularities for the transport equation. This motivates us to use the Lagrangian formulation of
the system \eqref{1.1}.\smallskip

Let us now recall the Lagrangian formulation of \eqref{1.1} from \cite{AZ5}.
Let $(b,u)$ be a smooth enough solution of \eqref{1.1}, we
define  \beq\label{1.1f}
\begin{split}
X(t,y)=&y+\int_0^t u(t',X(t',y))dt'\eqdefa y+Y(t,y), \quad
\vv u(t,y)\eqdefa u(t,X(t,y)),\\
  {\vv b}(t,y)\eqdefa & b(t,X(t,y)),\quad
\vv p(t,y)\eqdefa  p(t,X(t,y)),\quad  \cA\eqdefa\left(Id+\na_y Y\right)^{-1} \andf \na_Y\eqdefa \cA^{t}\na_y.
\end{split} \eeq
Then $(Y, \vv b, \vv u, \vv p)$ solves  \beq\label{1.2} \left\{\begin{array}{l}
\displaystyle {\vv b}(t,y) =\p_{b_0}X(t,y),\quad \na_Y\cdot{\vv b}=0,\\
\displaystyle   Y_{tt} -\Delta_y Y_t-\partial_{b_0}^2Y=\p_{b_0}b_0+g,\\
\displaystyle Y_{|t=0}=Y^{(0)}=0,\qquad {Y_t}_{|t=0}=Y^{(1)}=u_0(y),
\end{array}\right.
\eeq where \beq\label{P}
\begin{split}
&g=\dv_y\bigl[(\mathcal{A}\mathcal{A}^{t}-Id)\na_yY_t\bigr]
-\mathcal{A}^{t}\na_y\vv p,\quad \p_{b_0}\eqdefa b_0\cdot\na_y, \andf\\
& (\Delta_x p)(t, X(t,y))
=\sum_{i,j=1}^3\na_{Y^i}\na_{Y^j}\bigl(\pa_{b_0}X^i\pa_{b_0}X^j-Y^i_tY^j_t\bigr)(t,y).
\end{split}
\eeq

In what follows, we assume that \beq \label{S1eq5}
\mbox{supp}(b_0(x_\h,\cdot)-e_3)\subset [0, K]\andf b_0^3\neq 0.
\eeq

Due to the difficulty of the variable coefficients for the
linearized system of \eqref{1.2}, we shall use Frobenius Theorem
type argument to find a new coordinate system $\{z\}$ so that
$\p_{b_0}=\p_{z_3}.$
Toward this, let us define \beq\label{S1eq13}
\left\{\begin{array}{l}
\displaystyle \frac{d y_1}{d y_3}=\f{b_0^1}{b_0^3}(y_1,y_2,y_3),\quad y_1|_{y_3=0}=w_1, \\
\displaystyle \frac{d y_2}{d
y_3}=\f{b_0^2}{b_0^3}(y_1,y_2,y_3),\quad y_2|_{y_3=0}=w_2,\\
\displaystyle y_3=w_3,
\end{array}\right.  \eeq
and
 \beq \label{CH}
\begin{split}
& z_1=w_1,\quad z_2=w_2,\quad
z_3=w_3+\int_{0}^{w_3}\Bigl(\f1{b_0^3(y(w))}-1\Bigr)\,dw_3'.
\end{split}
\eeq Then we have \beq\label{CHa}
\begin{split}
&\p_{b_0}f(y)=\f{\p
f(y(w(z)))}{\p
z_3},\andf \na_y=\na_Z= {\cB}^{t}(z)\na_z \with {\cB}(z)=\Bigl(\f{\p
y(w(z))}{\p z}\Bigr)^{-1}.
\end{split}
\eeq It is easy to observe that \beno \begin{split}
{\cB}(z)=\Bigl(\f{\p y(w(z))}{\p z}\Bigr)^{-1}=&\Bigl(\f{\p
y(w(z))}{\p w}\times\f{\p w(z)}{\p z}
\Bigr)^{-1}\\
=&\Bigl(\f{\p w(z)}{\p z} \Bigr)^{-1}\Bigl(\f{\p y(w(z))}{\p
w}\Bigr)^{-1}=\Bigl(\f{\p z}{\p w}\Bigr)\Bigl(\f{\p y(w(z))}{\p
w}\Bigr)^{-1}.
\end{split} \eeno
Yet it follows from \eqref{S1eq13} that \beq\label{CHab}
\begin{split}
\Bigl(\f{\p y(w)}{\p w}\Bigr) = & \begin{pmatrix}
1&0&\f{b_0^1}{b_0^3}\\ 0&1&\f{b_0^2}{b_0^3}\\0&0&1\end{pmatrix}+
\begin{pmatrix} \int_0^{w_3}\f{\p}{\p
y_1}\bigl(\f{b_0^1}{b_0^3}\bigr)dy_3'&\int_0^{w_3}\f{\p}{\p
y_2}\bigl(\f{b_0^1}{b_0^3}\bigr)dy_3'&0 \\
\int_0^{w_3}\f{\p}{\p y_1}\bigl(\f{b_0^2}{b_0^3}\bigr)dy_3' & \int_0^{w_3}\f{\p}{\p y_2}\bigl(\f{b_0^2}{b_0^3}\bigr)dy_3' & 0\\
0 & 0 &  0 \end{pmatrix}  \begin{pmatrix} \f{\p y_1}{\p w_1} & \f{\p
y_1}{\p w_2} & \f{\p y_1}{\p w_3}\\  \f{\p y_2}{\p w_1} & \f{\p
y_2}{\p w_2} & \f{\p y_2}{\p w_3}\\  \f{\p y_3}{\p w_1} &
\f{\p y_3}{\p w_2} & \f{\p y_3}{\p w_3}\end{pmatrix}\\
\eqdefa & A_1(y(w))+ A_2(y(w))\Bigl(\f{\p y(w)}{\p w}\Bigr),
\end{split} \eeq which gives \beq \label{1.2fg} \Bigl(\f{\p y(w)}{\p
w}\Bigr) =\bigl(Id-A_2(y(w))\bigr)^{-1}A_1(y(w)). \eeq While it is
easy to observe that \beq \label{1.2fh} \Bigl(\f{\p z(w)}{\p
w}\Bigr) =
\begin{pmatrix}
1&0& 0\\
0&1&0\\
\int_0^{w_3}\f{\p}{\p w_1}\bigl(\f{1}{b_0^3(y(w))}\bigr)dw_3' &
\int_0^{w_3}\f{\p}{\p w_2}\bigl(\f{1}{b_0^3(y(w))}\bigr)dy_3' &
\f{1}{b_0^3}
\end{pmatrix}\eqdefa A_3(w).
\eeq As a consequence, we obtain \beq \label{1.2fk}
\begin{split}
& y(w)=(y_\h(w_\h,w_3),w_3),\quad w(z)=(z_\h,w_3(z)), \andf
y(w(z))=\bigl(y_\h(z_\h,w_3(z)),w_3(z)\bigr),
\\
&{\cB}(z)=A_3(w(z))A_1^{-1}(y(w(z))\bigl(Id-A_2(w(z))\bigr),
\end{split} \eeq with the matrices $A_1,A_2, A_3$ being determined by
\eqref{CHab} and \eqref{1.2fh} respectively.

For simplicity, let us abuse the notation that
$Y(t,z)=Y(t,y(w(z))).$ Then the system \eqref{1.2} becomes
\beq\label{S1eq19} \left\{\begin{array}{l}
\displaystyle  Y_{tt} -\Delta_z Y_t-\partial_{z_3}^2Y=\bigl(\na_Z\cdot \na_Z-\D_z)Y_t+\p_{z_3}b_0(y(w(z)))+g(y(w(z))),\\
\displaystyle Y_{|t=0}=Y_0=0,\qquad
{Y_t}_{|t=0}=Y_1(z)=u_0(y(w(z))),
\end{array}\right.
\eeq for $g$ given by \eqref{1.2}. Since $\p_{z_3}b_0(y(w(z)))$ in
the source term is a time independent function, we now introduce  a smooth cut-off
function $\eta(z_3)$ with $\eta(z_3)= \left\{\begin{array}{l}
\displaystyle 0,\,\,z_3\geq 2+K, \\
\displaystyle 1,\,\,-1\leq z_3\leq 1+K, \\
\displaystyle 0,\,\,z_3\leq -2,
\end{array}\right.$ and  a
correction term $\tilde{Y}$ so that $Y=\tilde{Y}+\bY$ and
 \beq \label{S1eq17}\begin{split}
\wt{Y}(z)
=&\eta(z_3)\Bigl(\int_{-1}^{z_3}\bigl(e_3-b_0(y(w(z_\h,z_3')))\bigr)\,dz_3'
-\int_{-1}^{K+1}\bigl(e_3-b_0(y(w(z_\h,z_3')))\bigr)\,dz_3'\Bigr),
\end{split} \eeq
which satisfies
\beq\label{S1eq18}  \p_{z_3}\tilde{Y}(z)=e_3-b_0(y(w(z))), \andf  \p_{z_3}\bigl(\p_{z_3}\tilde{Y}+b_0(y(w(z)))\bigr)=0. \eeq
Then  in view of (2.23), (2.24) and (2.30) of \cite{AZ5}, $\b Y$ solves
 \beq\label{1.2b}
\left\{\begin{array}{l}
\displaystyle  \bY_{tt} -\Delta_z \bY_t-\partial_{z_3}^2\bY=f,\\
\displaystyle {\bY}_{|t=0}={\bY}^{(0)}=-\tilde{Y},\qquad
{\b{Y_t}}_{|t=0}=Y^{(1)},
\end{array}\right.
\eeq with \beq\label{S2-27}
\begin{split}
 \mathcal{A}=&\left(Id+{\cB}^{t}\na_z\wt
Y+{\cB}^{t}\na_z \bY\right)^{-1},\andf\\
f=&{\cB}^{t}\na_z\cdot\bigl[(\mathcal{A}\mathcal{A}^{t}-Id){\cB}^{t}\na_z\b
Y_t\bigr] +{\cB}^{t}\na_z\cdot({\cB}^{t}\na_z\b Y_t\bigr)
-\Delta_z\b Y_t-({\cB}\mathcal{A})^{t}\na_z\vv p,\\
\na_z \vv
p=&-\na_z\D_z^{-1}\dv_z\bigl(\det(\cB^{-1})(\cB\mathcal{A}\mathcal{A}^{t}\cB^{t}-Id)\na_z
\vv p\bigr)\\
& -\na_z\D_z^{-1}\dv_z\bigl((\det(\cB^{-1})Id-Id)\na_z \vv p\bigr)
\\&
+\na_z\D_z^{-1}\dv_z\Bigl(\cB\mathcal{A}\dv_z
\left(\det(\cB^{-1})\cB\mathcal{A}\bigl(\pa_{3}\b Y\otimes\pa_{3}\b
Y -\b Y_t\otimes\b Y_t\bigr)\right)\Bigr).
\end{split}
\eeq

\subsection{The proof of Theorem \ref{thmmain}}
Before presenting the main result for the system (\ref{1.2b}-\ref{S2-27}), let us first introduce notations of the norms:
For $f:\R_y^3\to \R,$  $u:\R^+\times\R_y^3\to \R$, and $p\in[1,+\infty]$, $N\in\N$, we denote
\begin{align*}
\|f\|_{W^{N,p}}\eqdefa\sum_{|\alpha|\leq N}\|D_y^\alpha f\|_{L^p} \andf \|u\|_{L^p;k,N}\eqdefa\sup_{t>0}(1+t)^k\|u(t)\|_{W^{N,p}}.
\end{align*}
In particular, when $p=1,$ $p=2$ and $p=\infty,$ we simplify the notations as
\beq\label{S1eq3}
\begin{split}
&\triplenorm{f}_N\eqdefa \|f\|_{W^{N,1}},\qquad  \|f\|_{N}\eqdefa \|f\|_{H^N}, \qquad \quad\  \ |f|_N\eqdefa \|f\|_{W^{N,\infty}} \\
&\text{and}\qquad\qquad
\|u\|_{k,N}\eqdefa \|u\|_{L^2;k,N},\qquad |u|_{k,N}\eqdefa \|u\|_{L^\infty;k,N}.
\end{split}
\eeq
\begin{thm}\label{Th1}
{\sl There exist an integer $L_0$  and small constants $\eta, \e_0>0$ such that if
\beq\label{S1eq4}
\triplenorm{(\bar{Y}^{(0)},Y^{(1)})}_{L_0}+\|(\bar{Y}^{(0)},Y^{(1)})\|_{L_0}\leq \eta \andf \e\leq \e_0.\eeq
Then the system \eqref{1.2b} has a unique global solution $\bar{Y}\in C^{2}([0,\infty);C^{N_1-4}(\R^3))$, where $N_1=[(L_0-12)/2]$. Furthermore, for any $\kappa>0,$ there hold
\beq \label{S1eq5}
|\p_3\bar Y|_{\frac34-\kappa,2}+|\bar Y_{t}|_{\frac54-\kappa,2}+|\bar Y|_{\frac14-\kappa,2}\leq C_\kappa\eta,
\end{equation}
 and
\begin{equation}\label{S1eq6}
\begin{split}
&\||D|^{-1}(\p_3\bar Y,\bar Y_{t})\|_{0,N_1+2}+\|\nabla \bar Y\|_{0,N_1+1} +\|(\bar Y_{t},\p_3\bar Y)\|_{\frac12,N_1+1}+\|\nabla \bar Y_{t}\|_{1,N_1-1}
\\
&\qquad\qquad\qquad +\|\bar Y_{t}\|_{L_t^2(H^{N_1+2})}+\bigl\|(\p_3\bar Y,\langle t\rangle^\frac12\nabla \bar Y_{t})\bigr\|_{L_t^2(H^{N_1+1})}+\|\bar Y_{tt}\|_{\frac12,N_1-2}\leq C.
\end{split}
\end{equation}}
\end{thm}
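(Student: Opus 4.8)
The plan is to prove Theorem~\ref{Th1} by running H\"ormander's form of the Nash--Moser iteration, treating \eqref{1.2b}--\eqref{S2-27} as a quasilinear perturbation of the constant--coefficient damped wave-type operator
\[
\mathcal{L}\bar Y\eqdefa \bar Y_{tt}-\D_z\bar Y_t-\p_{z_3}^2\bar Y .
\]
By \eqref{1.2fk}, \eqref{CH}, \eqref{S1eq17} and \eqref{S2-27}, the coefficients $\mathcal A$, $\mathcal B$ and the source $f$ are smooth, \emph{nonlocal} (through the operator $\na_z\D_z^{-1}\dive_z$ in the pressure) functions of $(\bar Y,\na_z\bar Y,\bar Y_t)$ and of the $O(\e)$ data $\bar Y^{(0)}=-\wt Y$; modulo an $O(\e)$-small linear perturbation of $\mathcal L$, every summand of $f$ is at least bilinear in the unknowns, so the right-hand side of \eqref{1.2b} is small of higher order, which is exactly what is needed to beat the derivative loss. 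The scheme carries two families of norms: a high norm allowed to grow slowly along a geometric sequence $\theta_n=\theta_0e^{n\sigma}$, and the low norm, which is precisely the time-weighted norm of \eqref{S1eq5}--\eqref{S1eq6}, kept of size $\lesssim\eta$.

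\textbf{Linear decay estimates.} The structural ingredient is a sharp analysis of $\mathcal L$ with prescribed data and source. The Fourier multiplier diagonalizes with eigenvalues $\la_1,\la_2$ as in \eqref{S2eq1qe}: one mode is genuinely parabolic ($\sim e^{-t|\xi|^2}$), while the other behaves like $e^{t\la_1(\xi)}$ with $\la_1$ degenerating away from the $\xi_3$-axis, which forces anisotropic decay. A stationary-phase/Littlewood--Paley computation --- the content of Propositions~\ref{S0prop1} and~\ref{S5thm2} --- gives, for data of size $\eta$ in $W^{L_0,1}\cap H^{L_0}$, the $L^\infty$ rates $\w{t}^{-5/4}$ for $\bar Y_t$, $\w{t}^{-3/4}$ for $\p_3\bar Y$, $\w{t}^{-1/4}$ for $\bar Y$, the $L^2$ rates $\w{t}^{-1/2}$ for $(\bar Y_t,\p_3\bar Y)$ and $\w{t}^{-1}$ for $\na\bar Y_t$, and the space--time bounds $\bar Y_t\in L^2_t(H^{N_1+2})$, $(\p_3\bar Y,\w{t}^{1/2}\na\bar Y_t)\in L^2_t(H^{N_1+1})$. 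I then record a Duhamel estimate quantifying the contribution of a source $f$ with its own prescribed decay to each of these norms; bookkeeping the time-convolution of the source's decay against the linear kernel is the delicate point.

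\textbf{Tame nonlinear estimates.} Next I expand $f$ and the coefficients via \eqref{1.2fk}, \eqref{S1eq13}, \eqref{CH} and, using tame product and composition inequalities in the (anisotropic) Sobolev/Besov scale, prove two things: (i) in the high norm $f$ loses only finitely many derivatives, the loss controlled by the gap between $L_0$ and $N_1=[(L_0-12)/2]$; (ii) in the low, time-weighted norm each summand of $f$ is at least bilinear, hence picks up the \emph{product} of two decay weights and is therefore quadratically small and better-decaying than any single factor. The pressure terms $\na_z\D_z^{-1}\dive_z(\cdots)$ require extra care: this is a zeroth-order Calder\'on--Zygmund operator, bounded on $H^s$ and --- to avoid a logarithmic loss in the $L^\infty$ rates --- on the relevant Besov spaces, which is the reason $\kappa>0$ enters \eqref{S1eq5}.

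\textbf{Iteration and conclusion.} With Littlewood--Paley smoothing operators $S_\theta$ in hand, set $\bar Y_{n+1}=\bar Y_n+\dot{\bar Y}_n$, where $\dot{\bar Y}_n$ solves the equation linearized about $S_{\theta_n}\bar Y_n$ with right-hand side the accumulated error $e_n$, split in the standard H\"ormander manner into a quadratic part and a smoothing part. At each step the variable-coefficient linearized operator equals $\mathcal L$ plus coefficient perturbations already controlled in the low norm, so these may be moved to the source and the Step~1 decay estimates applied. The interpolation inequalities between the high and low norms then close the induction, giving convergence of $(\bar Y_n)$ to a solution $\bar Y$ of \eqref{1.2b} obeying \eqref{S1eq5}--\eqref{S1eq6}; the uniform high-norm bound together with recovery of $\bar Y_{tt}$ from the equation and Sobolev embedding in $\R^3$ yields $\bar Y\in C^2([0,\infty);C^{N_1-4}(\R^3))$, while uniqueness follows from a direct energy estimate for the difference of two small solutions. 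The main obstacle is the coupling of Steps~2 and~3: one must obtain tame estimates for the nonlocal, second-order source $f$ that are \emph{simultaneously} of the right bilinear type with the precise budget $L_0\rightsquigarrow N_1$ and compatible with the anisotropic time weights --- since $\p_3\bar Y$, $\bar Y_t$ and $\bar Y$ decay at three different rates and $f$ mixes them through $\na_z\D_z^{-1}\dive_z$ --- and then feed them into the Nash--Moser induction so that the genuine time-weighted low norm of \eqref{S1eq5}--\eqref{S1eq6}, not merely an $L^2_tH^s$ quantity, is propagated; keeping the sharp rates $-5/4$ and $-3/4$ through the pressure's nonlocality is the technical crux.
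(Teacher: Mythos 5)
Your proposal follows essentially the same route as the paper: anisotropic Fourier-side decay estimates for the linearized operator $\cY_{tt}-\D\cY_t-\p_3^2\cY$, tame bilinear estimates for the nonlinearity with the nonlocal pressure handled in $\dot B^{s}_{1,1}$ rather than $W^{s,1}$ (the source of the loss $\kappa>0$), and H\"ormander's Nash--Moser scheme with smoothing operators, high/low time-weighted norms and interpolation. The only notable elisions are that the paper's smoothing $S_p$ is a product of a cutoff in time at scale $\theta_p$ and a spatial mollifier at the mismatched scale $\theta_p^{\bar\e}$ (the time cutoff is what makes the Duhamel bound $R_{N,\theta}$ usable), and that the rate $\|\nabla\bar Y_t\|_{L^2}\lesssim\langle t\rangle^{-1}$ and the higher-order energy bounds require, respectively, a separate anisotropic Littlewood--Paley argument and a rewriting of the linearized equation in divergence form rather than treating $\nabla\cdot((\cA\cA^t-Id)\nabla X_t)$ as a source --- but these are refinements within the same strategy.
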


Admitting Theorem \ref{Th1} for the time being, let us now turn to the proof of Theorem \ref{thmmain}.

\begin{proof}[Proof of Theorem \ref{thmmain}]
Indeed,
in view of \eqref{1.1f}, one has \beq\label{S10eq1}
\begin{split}
&Y_1(z)=u_0(y_\h(z_\h,w_3(z)),w_3(z))\ \andf
\vv u(t,y)=Y_t(t,y+Y(t,y)), \\
& \vv b(t,y)=b_0(y)+b_0(y)\cdot\na_yY(t,y) \with
Y(t,(y_\h(z_\h,w_3(z)),w_3(z)))=\wt{Y}(z)+\bar{Y}(t,z),
\end{split} \eeq with $\wt{Y}(z)$ and $\bar{Y}(t,z)$ being
determined by \eqref{S1eq17} and \eqref{1.2b} respectively.

Whereas in view of \eqref{CHab},  \eqref{1.2fh} and \eqref{1.2fk}, we get, by a similar proof of Lemma 4.3 of \cite{AZ5} that for any $N\in\N,$
\beq\label{S10eq2}
|(\cB-Id)|_N\leq C_N\e.
\eeq
So that under the assumptions of \eqref{S0eq4}, there holds \eqref{S1eq4}. Then Theorem \ref{Th1} ensures  that the system (\ref{1.2b}-\ref{S2-27}) has
a unique global classical solution
$\bar{Y}\in C^2([0,\infty);C^{N_1-4}(\R^3)),$  which verifies \eqref{S1eq5} and \eqref{S1eq6}. In particular, it follows from \eqref{S1eq17} and
\eqref{S1eq5} that \beno
|\na_zY|_{0,1}\leq |\p_z\wt{Y}|_1+|\na \bar{Y}|_{0,1}\leq C(\e+\eta),
\eeno
which together with \eqref{S10eq1} ensures that $\vv u\in C^2([0,\infty)\times\R^3)$ and $\vv b\in C^2([0,\infty)\times\R^3).$
Furthermore due to
\beno
\bigl|\f{\p X}{\p y}-Id\bigr|_{0,1}=\bigl|^t\cB\na_zY|_{0,1}\leq C(\e+\eta),
\eeno
we deduce from \eqref{1.1f} that $u\in C^2([0,\infty)\times\R^3)$ and $b\in C^2([0,\infty)\times\R^3)$ which verifies the system \eqref{1.1}
thanks to the derivation at the beginning of Subsection \ref{subsect2.1}.

On the other hand,  by virtue of \eqref{S1eq18}, we have
\beno
\vv b(t,y(w(z)))=b_0(y(w(z)))+\p_3\wt{Y}(z)+\p_3\bar{Y}(t,z)=e_3+\p_3\bar{Y}(t,z),
\eeno
which together with \eqref{S1eq5}, \eqref{S1eq6} and \eqref{S10eq1} implies that
there holds \eqref{S0eq5}. This completes the proof of Theorem \ref{thmmain}.
\end{proof}

\subsection{Strategies of the proof to Theorem \ref{Th1}}

Observing  from the calculations in \cite{AZ5} that under the assumptions of Theorem \ref{thmmain},  the matrix $\cB$ given by \eqref{1.2fk} is sufficiently close to
the identity matrix in the norms of $W^{N_0,1}$ and $H^{N_0}$ as long as $\e$ is sufficiently small. To avoid cumbersome calculation, here we just
prove Theorem \ref{Th1} for
the system \eqref{S1eq1}  with \beq\label{S1eq2}
\begin{split}
& \mathcal{A}=(Id+\na_y Y)^{-1},\quad
f=\na_y\cdot\bigl((\mathcal{A}\mathcal{A}^{t}-Id)\na_y Y_t\bigr)
-\mathcal{A}^{t}\na_y\vv p,\andf\\
& \vv p=-\D_y^{-1}\dv_y\bigl((\mathcal{A}\mathcal{A}^{t}-Id)\na_y
\vv p\bigr) +\D_y^{-1}\dv_y\Bigl(\mathcal{A}\dv_y
\bigl(\mathcal{A}\bigl(\pa_{y_3} Y\otimes\pa_{y_3} Y -Y_t\otimes
Y_t\bigr)\bigr)\Bigr), \end{split} \eeq
which corresponds to $\cB=Id$ in \eqref{1.2b}. The general case follows along the same line.

 Let us remark that
the system \eqref{S1eq1} is not scaling, rotation and Lorentz invariant, so that Klainerman's vector field
method (\cite{Kl85}) can not be applied here. Yet the ideas developed by Klainerman in the seminar papers
\cite{Kl80, Kl82, Kl84} can be well adapted for this system. We now recall the classical result on the global well-posedness to some evolutionary
system from \cite{Kl82}.  Let us consider  the following system
\beq\label{S1eq20} \left\{\begin{array}{l}
\displaystyle u_t-\cL u=F(u,Du) \with Du=(u_t, u_{x_1},\cdots, u_{x_d}),\\
\displaystyle u_{|t=0}=u_,\qquad Pu_0=0,
\end{array}\right.
\eeq
where $\cL\eqdefa \sum_{|\al|\leq \gamma}a_\al D_x^\al$ with
$a_\al$ being $r\times r$ matrices with constant entries.  Under the assumptions that

\begin{itemize}
\item[(1)]  $\cL$ satisfies a dissipative condition of the following type: there exists a positive definite $r\times r$ matrix $A$
such that
\beno
\text{either} \ \int_{\R^d}\Re (A \cL f, f)\,dx\leq 0 \quad\text{or} \quad \int_{\R^d}\Re (A \cL f, f)\,dx\leq-\|\na f\|_{L^2}^2
\eeno for any $f\in C_c^\infty;$

\item[(2)] Let $\Gamma(t)u_0$ be the solution of
\beno
\p_tu-\cL u=0\andf u(0,x)=u_0(x).
\eeno
There is a differential matrix $P$ such that
\beno
|\Gamma(t)u_0|_{0}\leq C\w{t}^{-k_0}\triplenorm{u_0}_d
\eeno
for any $u_0\in W^{d,1}\cap L^\infty$ that satisfies $P u_0=0;$

\item[(3)] $AF_{u_t}, A F_{u_{x_i}},$ $i=1,\cdots, d,$ are symmetric matrices and $F_{u_t}$ is independent of $u_t.$ Moreover
\beq \label{S1eq21a}
|F(u,Du)|\leq C(|u|+|Du|)^{p+1}\quad \text{for}\ |u|+|Du|\ \text{sufficiently small;}
\eeq

\item[(4)] $p$ is an integer and $F$ is a smooth function so that there holds
\beq\label{S1eq21b}
\f1p\left(1+\f1p\right)<k_0; \eeq
\end{itemize}
Klainerman proved in \cite{Kl82} the following celebrated theorem:

\begin{thm}[Theorem 1 of \cite{Kl82}]\label{thm2}
{\sl There exist an integer $N_0>0$ and a small constant $\eta>0$ such that if
\beno
\triplenorm{u_0}_{N_0}+\|u_0\|_{N_0}\leq \eta,
\eeno
\eqref{S1eq20} has a unique solution $u\in C^1([0,T]; C^{\gamma})$ for any $T>0.$ Moreover, the solution behaves, for $t$ large, like
\beq \label{S1eq21}
|u(t,x)|= O\left(t^{-\f{1+\e}p}\right)\quad\text{as}\quad t\to\infty,
\eeq
for some small $\e>0.$ Also
\beq
\label{S1eq22}
\|u(t)\|_{L^2}=O(1)\quad\text{as}\quad t\to\infty. \eeq }\end{thm}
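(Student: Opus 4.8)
The plan is to prove Theorem~\ref{thm2} by a continuity (bootstrap) argument built on two families of \emph{a priori} estimates for a smooth solution on an interval $[0,T]$: high‑order energy estimates and $L^\infty$‑decay estimates obtained from Duhamel's formula. First I would record the standard preliminaries: hypothesis~(1) together with the symmetry of $AF_{u_t},AF_{u_{x_i}}$ in~(3), the fact that $F_{u_t}$ is independent of $u_t$, and the embedding $H^{N_0}\hookrightarrow C^\gamma$ (valid once $N_0$ is large), yield by the usual symmetric quasilinear energy method (or its quasi‑parabolic variant under the stronger alternative in~(1)) local existence and uniqueness of $u\in C^1([0,T_*);C^\gamma)$, together with the continuation criterion that the solution extends as long as $\sup_{[0,T]}\|u(t)\|_{H^{N_0}}$ and $\int_0^T\bigl(|u(t)|_0+|Du(t)|_0\bigr)^p\,dt$ remain finite. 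It then suffices to prove uniform bounds on these two quantities for all $T$.

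Fix the decay exponent $q\eqdefa\f{1+\e}{p}$ with $\e>0$ so small that $q\bigl(1+\f1p\bigr)<k_0$, which is possible by~(4) since $q\bigl(1+\f1p\bigr)\to\f1p\bigl(1+\f1p\bigr)$ as $\e\to0$; note in particular $pq=1+\e>1$. Set
\[
\cE(T)\eqdefa\sup_{0\le t\le T}\|u(t)\|_{H^{N_0}},\qquad
\cD(T)\eqdefa\sup_{0\le t\le T}\w{t}^{q}\bigl(|u(t)|_0+|Du(t)|_0\bigr),
\]
and work under the bootstrap hypothesis $\cE(T)+\cD(T)\le 2C_*\eta$. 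Differentiating the equation up to order $N_0$, pairing $D^\alpha u$ against $AD^\alpha u$, using~(1) to dominate the linear contribution and~(3) with Moser and commutator estimates for $D^\alpha F(u,Du)$ — the vanishing of $F$ to order $p+1$ being what produces the power $p$ — gives
\[
\f{d}{dt}\|u(t)\|_{H^{N_0}}^2\le C\bigl(|u(t)|_0+|Du(t)|_0\bigr)^{p}\,\|u(t)\|_{H^{N_0}}^2.
\]
Since $\int_0^\infty\bigl(|u|_0+|Du|_0\bigr)^p\,dt\le\cD(T)^p\int_0^\infty\w{t}^{-pq}\,dt<\infty$ — here $pq>1$ is used — Gronwall's inequality yields $\cE(T)\le\|u_0\|_{H^{N_0}}\exp\bigl(C(2C_*\eta)^p\bigr)\le\f34C_*\eta$ for $C_*$ large and $\eta$ small; the same computation at order $0$ gives $\|u(t)\|_{L^2}=O(1)$, which is~\eqref{S1eq22}.

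For the decay, the structural point is that $F$ is compatible with the constraint, $PF(u,Du)=0$ whenever $Pu=0$, so $Pu\equiv0$ propagates and Duhamel's formula together with~(2) gives
\[
|u(t)|_0+|Du(t)|_0\le C\w{t}^{-k_0}\,\triplenorm{u_0}_d+C\int_0^t\w{t-s}^{-k_0}\,\triplenorm{F(u(s),Du(s))}_d\,ds.
\]
Using again that $F$ vanishes to order $p+1$, one bounds $\triplenorm{F(u,Du)}_d\le C\bigl(|u|_0+|Du|_0\bigr)^{p}\,M(s)$ with $M(s)$ a fixed low‑order $W^{d+1,1}$‑type norm of $u$, kept of size $O(\eta)$ on $[0,T]$ via an auxiliary bootstrap quantity (propagated through the linear flow, and closed because the forcing decays like $\w{s}^{-pq}$ with $pq>1$). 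It remains to establish the convolution bound $\int_0^t\w{t-s}^{-k_0}\w{s}^{-pq}\,ds\le C\w{t}^{-q}$: splitting at $s=t/2$, the small‑$s$ part converges (because $pq>1$) and contributes $\w{t}^{-k_0}$, the large‑$s$ part converges (because $k_0>1$) and contributes $\w{t}^{-pq}$, and both are $\le\w{t}^{-q}$ exactly when $k_0>q\bigl(1+\f1p\bigr)$ — the way $\e$ was chosen. Feeding this back gives $\cD(T)\le C\,\triplenorm{u_0}_d+C(2C_*\eta)^{p}\,O(\eta)\le\f34C_*\eta$ for $\eta$ small. Thus $\cE(T)+\cD(T)$ improves from $\le2C_*\eta$ to $\le\f32C_*\eta$, which closes the bootstrap; the bounds then hold on every $[0,T]$, the continuation criterion promotes the local solution to a global one, and the uniform bound $\cD(T)=O(1)$ is precisely~\eqref{S1eq21}.

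The crux — and the main obstacle — is the simultaneous closure of the two estimates: the energy estimate forces $q>\f1p$ (so that $\bigl(|u|_0+|Du|_0\bigr)^p$ is time‑integrable and the high‑order norm stays bounded), while the Duhamel estimate can only reproduce a rate $q$ with $q\bigl(1+\f1p\bigr)<k_0$; hypothesis~(4) is exactly the statement that these two windows for $q$ overlap. The residual technical difficulties — the loss of one derivative when $D^\alpha$ falls on $F(u,Du)$, absorbed by taking $N_0$ large, and controlling the $W^{d,1}$‑type norm of the nonlinearity in the absence of a genuine $L^1$ energy identity — are routine once the time‑weighted bookkeeping is in place.
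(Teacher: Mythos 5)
First, a contextual point: the paper does not prove this statement at all --- it is Theorem 1 of Klainerman's paper \cite{Kl82}, quoted verbatim precisely in order to explain why it \emph{cannot} be applied to the MHD system (the double Riesz transforms in $f$ violate \eqref{S1eq21a}, and the available decay rates violate \eqref{S1eq21b}). So there is no in-paper proof to compare against; your sketch has to be measured against the argument of \cite{Kl82} itself, whose overall architecture (local existence plus a continuation criterion, high-order energy estimates closed by the time-integrability of $(|u|_0+|Du|_0)^p$, Duhamel combined with the linear decay hypothesis (2), and a bootstrap) you do reproduce correctly, including the derivation of \eqref{S1eq22} from the order-zero energy identity.

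The genuine gap is in the quantitative bookkeeping, and it sits exactly where hypothesis (4) is supposed to do its work. In the Duhamel step you bound $\triplenorm{F(u,Du)}_d\le C\bigl(|u|_0+|Du|_0\bigr)^{p}M(s)$ and assert that the low-order $W^{d,1}$-type quantity $M(s)$ stays $O(\eta)$ ``via an auxiliary bootstrap \dots closed because the forcing decays like $\w{s}^{-pq}$''. This is not available: the solution operator $\Gamma(t)$ is not assumed bounded on $W^{d,1}$, and even granting that, the $W^{d,1}$ norm of the nonlinearity is controlled by $L^2$-based quantities (e.g.\ $\|v^{p+1}\|_{L^1}\le|v|_{0}^{p-1}\|v\|_{L^2}^{2}$), which decay only like $\w{s}^{-(p-1)q}$ --- not time-integrable for $p=1,2$ --- so $M(s)$ genuinely grows in $t$. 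It is precisely the competition between this growth and the decay $\w{t-s}^{-k_0}$ of the semigroup that produces the sharp threshold $\f1p\bigl(1+\f1p\bigr)<k_0$. In your own accounting, (4) is invoked at the convolution estimate, where in fact only $pq>1$ and $k_0\ge q$ are used; your sketch would therefore ``prove'' the theorem under a strictly weaker hypothesis than (4), which is the telltale sign that a real obstruction has been skipped. A smaller slip: the claim that the large-$s$ half of the convolution converges ``because $k_0>1$'' is not implied by (4) when $p\ge2$ (e.g.\ $p=2$, $k_0=4/5$ is admissible), though that piece of the computation can be repaired.
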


Let us remark that due to the appearance of the double Riesz transform in the expression of $f$ in \eqref{S1eq2}, the source term $f$ in \eqref{S1eq1} can not
satisfy the growth condition \eqref{S1eq21a}; secondly, even if we can assume the source term $f$ is in quadratic growth of $(Y_t, \p_3Y),$ that corresponds to $p=1$ in \eqref{S1eq21a},
the growth rate obtained in \eqref{S2eq3} below does not meet the requirement of \eqref{S1eq21b}. This makes it impossible to apply Theorem  \ref{thm2}
for the system \eqref{S1eq1}. Yet by considering the specific anisotropic structure of the system \eqref{S1eq1}, we can still succeed in applying Nash-Moser
scheme to establish the global existence as well as the large time behavior of solutions to (\ref{S1eq1}-\ref{S1eq2}).

Now we outline the proof of Theorem \ref{Th1}. According to the strategy in \cite{Kl80, Kl82, Kl84}, the first step is to study the decay properties
of the linear system:
\beq\label{S2eq1}
\left\{\begin{array}{l}
\displaystyle  \cY_{tt} -\Delta\cY_t-\partial_{3}^2\cY=0,\\
\displaystyle {\cY}_{|t=0}=\cY_0,\qquad {{\cY_t}}_{|t=0}=\cY_1.
\end{array}\right.
\eeq
\begin{prop}\label{S0prop1}
{\sl Let  $\cY(t)$ be a smooth enough solution of  \eqref{S2eq1}. Given $\delta\in[0,1]$, $N\in\N$, there exist $C_{\d,N}, C_N>0$ such that
\begin{equation}\label{S2eq3as}
\begin{split}
&|\p_3\cY|_{1,N}+|\p_t\cY|_{\frac32-\delta, N}+|\cY|_{\frac12,N}\leq  C_{\d,N}\bigl(\||D|^{2\delta}(\cY_0,\cY_1)\|_{L^1}+\||D|^{N+4}(\Delta\cY_0,\cY_1)\|_{L^1}\bigr);
\end{split}
\end{equation}
\begin{equation}\label{S2eq13as}
\begin{split}
\|(\p_t\cY,\p_3\cY)\|_{L^\infty_t(H^{N+1})}+&\|\Delta\cY\|_{L^\infty_t(H^N)}+\|\na\p_t\cY\|_{L^2_t(H^{N+1})}\\
 +&\|\na\p_3\cY\|_{L^2_t(H^N)}\leq C_N\bigl(\|(\p_3\cY_0,\cY_1)\|_{{N+1}}+\|\D\cY_0\|_N\bigr);
\end{split}
\end{equation}
\begin{equation}\label{S2eq16}
\begin{split}
&\|\langle t\rangle^\frac12(\p_t\cY,\p_3\cY)\|_{L_t^\infty(H^N)}+\|\langle t\rangle^\frac12\nabla \p_t\cY\|_{L_t^2(H^{N})}\leq C_N\bigl(\||D|^{-1}(\p_3\cY_0,\cY_1)\|_{{N+1}}+\|\na\cY_0\|_N\bigr);
\end{split}
\end{equation}
\begin{equation}
\begin{split}
&\|\langle t\rangle\Delta\p_t\cY\|_{L_t^\infty(H^N)}\leq C_N\|(\Delta\cY_0,\cY_1)\|_{N+2}.\label{S2eq15'as}
\end{split}
\end{equation}
}
\end{prop}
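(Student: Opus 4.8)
The plan is to pass to the Fourier side in $x$ and read everything off the explicit symbol. Applying the Fourier transform to \eqref{S2eq1} gives, for each $\xi$, the scalar ODE $\partial_t^2\widehat\cY+|\xi|^2\partial_t\widehat\cY+\xi_3^2\widehat\cY=0$ with data $(\widehat{\cY_0},\widehat{\cY_1})$; its roots are $\la_\pm(\xi)=-\f{|\xi|^2}2\pm\mu(\xi)$ with $\mu(\xi)^2\eqdefa\f{|\xi|^4}4-\xi_3^2$, so that
\[
\widehat\cY(t,\xi)=h_0(t,\xi)\widehat{\cY_0}(\xi)+h_1(t,\xi)\widehat{\cY_1}(\xi),\qquad
h_1=e^{-|\xi|^2t/2}\,\f{\sinh(t\mu)}{\mu},\quad h_0=\partial_t h_1+|\xi|^2h_1 .
\]
Since $\cosh z$ and $z^{-1}\sinh z$ are entire and $\mu^2$ is polynomial in $\xi$, the multipliers $h_0,h_1$ and all their $\xi$-derivatives are smooth across the degenerate set $\{|\xi|^4=4\xi_3^2\}$; moreover $\partial_t h_0=-\xi_3^2 h_1$ and $\partial_t h_1=h_0-|\xi|^2h_1$, so every quantity appearing in \eqref{S2eq3as}--\eqref{S2eq15'as} is a Fourier multiplier built from $h_0,h_1,\xi_3h_1,\dots$ acting on $(\cY_0,\cY_1)$. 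I would then reduce the $L^\infty$ statement \eqref{S2eq3as} to $L^1_\xi$ bounds through $\|D^\alpha f\|_{L^\infty_x}\lesssim\||\xi|^{|\alpha|}\widehat f\|_{L^1_\xi}$, and the remaining statements to $L^2_\xi$ (and $L^2_{t,\xi}$) bounds through Plancherel.

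The core of the argument is a family of pointwise symbol bounds obtained by separating the two roots. Put $\omega(\xi)\eqdefa\min\bigl(|\xi|^2,\ \xi_3^2/|\xi|^2\bigr)$. On the parabolic region $P\eqdefa\{8\xi_3^2\le|\xi|^4\}$ one has $\mu\ge c|\xi|^2$, $\la_-\le-c|\xi|^2$ and $-\la_+=\xi_3^2/(\tfrac{|\xi|^2}2+\mu)\in[\,\xi_3^2/|\xi|^2,\ 2\xi_3^2/|\xi|^2\,]$, so, using $h_1=(e^{\la_+t}-e^{\la_-t})/(\la_+-\la_-)$ and $\partial_t h_1=(\la_+e^{\la_+t}-\la_-e^{\la_-t})/(\la_+-\la_-)$,
\[
|h_1(t,\xi)|\lesssim|\xi|^{-2}e^{-c\omega(\xi)t},\qquad |\partial_t h_1(t,\xi)|\lesssim\tfrac{\xi_3^2}{|\xi|^4}e^{-c\omega(\xi)t}+e^{-c|\xi|^2t},\qquad |h_0(t,\xi)|\lesssim e^{-c\omega(\xi)t}.
\]
On the complement $P^c$, which is contained in $\{|\xi|<8\}$, either the roots remain real with $-\la_+\gtrsim|\xi|^2$, or $\mu=i\nu$ with $\Re\la_\pm=-|\xi|^2/2$; in both cases $|h_1|\lesssim\min(\w t,\nu^{-1})e^{-c|\xi|^2t}$ and $|h_0|\lesssim e^{-c|\xi|^2t}$, the factor $\w t$ being harmless because $P^c$ is bounded and the Gaussian $e^{-c|\xi|^2t}$ absorbs it after integration (e.g. $|\xi|^2|h_1|\lesssim|\xi|^2te^{-c|\xi|^2t}\lesssim 1$). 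The $\xi$-derivatives of $h_0,h_1$ satisfy the same bounds with extra powers of $\w\xi$ and $\w t$, which are always paid for by the exponential at frequency $\gtrsim1$.

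Granting these bounds, \eqref{S2eq3as} follows by inserting them into $\||\xi|^N\widehat{D^\alpha\cY}\|_{L^1_\xi}$ and splitting $\{|\xi|\le\rho_0\}\cup\{|\xi|>\rho_0\}$. On $\{|\xi|>\rho_0\}$ one uses $|\widehat{\cY_1}(\xi)|\le|\xi|^{-N-4}\||D|^{N+4}\cY_1\|_{L^1}$ and $|\widehat{\cY_0}(\xi)|\le|\xi|^{-N-6}\||D|^{N+4}\Delta\cY_0\|_{L^1}$; passing to coordinates $\xi=r\theta$ (so $\xi_3^2/|\xi|^2=\theta_3^2$), every surviving integral has the shape $\int_{\rho_0}^\infty r^{-m}\,dr\int_{S^2}\theta_3^{2j}e^{-c\theta_3^2t}\,d\theta\lesssim t^{-1/2-j}$, which is exactly why the rate $\tfrac12$ for $\cY$ improves to $1$ for $\partial_3\cY$ (one extra $\xi_3$) and to $\tfrac32-\delta$ for $\partial_t\cY$ (the extra $\xi_3^2/|\xi|^4$ in $\partial_t h_1$). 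On $\{|\xi|\le\rho_0\}$ the $N$ derivatives cost nothing and one uses $|\widehat{\cY_j}(\xi)|\le|\xi|^{-2\delta}\||D|^{2\delta}(\cY_0,\cY_1)\|_{L^1}$; the only delicate factor is the $|\xi|^{-2}$ in $h_1$, and the weight $|\xi|^{-2\delta}$ is precisely what makes $\int_{|\xi|\le\rho_0}|\xi|^{-2-2\delta}|\xi|^{2}\bigl(\cdots\bigr)\,d\xi$ behave: small $\delta$ recovers the weak rate $\tfrac12$ for $\cY$ and $\partial_3\cY$, while for $\partial_t\cY$ the extra $\xi_3^2/|\xi|^4$ turns the angular integral into $\int_{S^2}\theta_3^2e^{-c\theta_3^2t}\,d\theta\sim t^{-3/2}$ and the loss $\delta$ records the radial weight $\int_0^{\rho_0}r^{-2\delta}\,dr$.

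The $L^2$-based estimates \eqref{S2eq13as}--\eqref{S2eq15'as} can be obtained the same way via Plancherel from the symbol bounds above (now squared and integrated in $\xi$, resp. in $(t,\xi)$ for the $L^2_t$ norms, using for instance $|\xi|^2|h_0|,|\xi|^2|h_1|\lesssim1$ to handle the $\Delta\cY$ terms and the $|D|^{-1}$ on the data to produce the $\w t^{1/2}$ gain in \eqref{S2eq16}); alternatively they follow from weighted energy estimates on $D^\alpha$ of \eqref{S2eq1} — pairing with $\partial_t\cY$ gives $\f12\f d{dt}\bigl(\|\partial_t\cY\|_{L^2}^2+\|\partial_3\cY\|_{L^2}^2\bigr)+\|\nabla\partial_t\cY\|_{L^2}^2=0$ and hence the unweighted parts of \eqref{S2eq13as}; pairing $\nabla D^\alpha$ of the equation with $\nabla D^\alpha\cY$ controls $\|\nabla\partial_3\cY\|_{L^2_tL^2_x}$ and $\|\Delta\cY\|_{L^\infty_tL^2_x}$; and multiplying by $\w t^{a}\partial_t\cY$ and absorbing the commutator with $\w t^a$ against the already-controlled $\|\nabla\partial_t\cY\|_{L^2_tL^2_x}$ gives \eqref{S2eq16} and \eqref{S2eq15'as}. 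The main obstacle is entirely the symbol analysis of the second paragraph: because $\la_+$ barely decays near the $\xi_3$-axis and $h_1$ carries the singular factor $1/\mu$ near the degenerate set, reaching the optimal rates $t^{-1/2}$ and $t^{-3/2+\delta}$ forces one to keep the oscillation $\sin(t\nu)/\nu$ (equivalently, the divided-difference form of $h_1$) rather than bound $|\sinh(t\mu)/\mu|$ crudely, and to organize the frequency decomposition so that each factor of $\w t$ or $1/\mu$ is compensated, either by the Gaussian $e^{-c|\xi|^2t}$ on the bounded set $P^c$, or by the gain from integrating $e^{-ct\theta_3^2}$ over $S^2$, or by the weight $|D|^{2\delta}$ near the origin.
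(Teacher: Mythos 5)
Your proposal is correct and follows essentially the same route as the paper: the paper likewise splits frequency space according to the sign of the discriminant $\tfrac{|\xi|^4}4-\xi_3^2$, bounds the two branches of the multiplier pointwise, extracts the anisotropic rates from the angular integrals $\int_{S^2}\theta_3^{2j}e^{-ct\theta_3^2}\,d\theta\sim t^{-1/2-j}$ in polar coordinates, and obtains the $L^2$-based estimates \eqref{S2eq13as}--\eqref{S2eq16} by exactly the weighted energy arguments you sketch, handling general data via $\cY=\p_t\Gamma(t)\cY_0+\Gamma(t)(\cY_1-\Delta\cY_0)$, which is your identity $h_0=\p_th_1+|\xi|^2h_1$ in operator form. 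The only slip is the parenthetical claim $|\xi|^2|h_0|\lesssim1$, which fails at $t=0$ for large $|\xi|$; what you actually need (and what your own bounds already give) is $|h_0|\lesssim1$ together with $|\xi|^2|h_1|\lesssim1$.
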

We emphasize here the estimates of \eqref{S2eq3as} and \eqref{S2eq13as} are of anisotropic type, which means that the decay rates of the partial derivatives of the solution to \eqref{S2eq1} are different, which is consistent with the heuristic discussions at the beginning of Section \ref{Sect2}. Moreover, the estimate of \eqref{S2eq3as} is valid for $\d=0.$ Similar estimates as
  \eqref{S2eq16} and \eqref{S2eq15'as} were not proved in  \cite{Kl80, Kl82, Kl84}. They are purely due to
the special structure of the linearized system \eqref{S2eq1}.

With the above proposition,  we next turn to the  decay estimates for the solutions of the following inhomogeneous equation of \eqref{S2eq1}
\begin{equation}\label{S2eq18}\begin{cases}
Y_{tt}-\Delta Y_t-\p_3^2Y=g,\\
Y|_{t=0}=Y_t|_{t=0}=0.
\end{cases}\end{equation}

\begin{prop}\label{S2.3prop1}
{\sl Let $\delta\in[0,1/4[$ and $
\theta\in [1,\infty[.$ We assume that $g(t)=0$ if $t\geq\theta$. Then the solution $Y$ to \eqref{S2eq18} verifies for any $N\geq0$,
\begin{equation}\label{S2eq19}
\begin{split}
&|\p_3Y|_{1,N}+|\p_t Y|_{\frac32-\d,N}+|Y|_{\frac12,N}\leq C_{\d,N}R_{N,\theta}(g),
\end{split}
\end{equation}
where
\begin{equation}\label{S2eq19'}
\begin{split}
R_{N,\theta}(g)\eqdefa\triplenorm{g}_{L_t^1(\d,N)}+\theta^{\frac12}\big\|\langle t\rangle^{\frac12} |D|^{-1}g\big\|_{L_t^2(H^{N+3})}+\log\langle\theta\rangle\big\||D|^{-1}g\big\|_{\frac32-\delta,N+3},
\end{split}
\end{equation}
where
\begin{equation}\label{S2eq19''}
\triplenorm{g}_{\delta,N}\eqdefa\||D|^{2\delta}g\|_{L^1}+\||D|^{N+4}g\|_{L^1}\andf \triplenorm{g}_{L_t^p(\d,N)}=\Bigl(\int_0^t\triplenorm{g(t')}_{\d,N}^p\,dt'\Bigr)^{\f1p}.
\end{equation}}
\end{prop}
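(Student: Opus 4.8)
The plan is to reduce the inhomogeneous problem \eqref{S2eq18} to the homogeneous estimates of Proposition \ref{S0prop1} via Duhamel's formula, and then to split the time-integral at an appropriate scale so as to extract the decay. First I would write the solution as $Y(t) = \int_0^t G(t-s)g(s)\,ds$, where $G(\tau)$ denotes the solution operator $\cY_1 \mapsto \cY(\tau)$ for \eqref{S2eq1} with data $(\cY_0,\cY_1) = (0, \cY_1)$; that is, for each fixed $s$, the function $\tau \mapsto G(\tau)g(s)$ solves \eqref{S2eq1} with initial velocity $g(s)$ and zero initial position. Then $\p_3 Y$, $Y_t$, $Y$ are obtained by applying, under the integral sign, the corresponding homogeneous bounds from Proposition \ref{S0prop1} with $\cY_0 = 0$: for the piece where $t-s$ is bounded away from $0$ we use the pointwise-in-time decay \eqref{S2eq3as}, and near the diagonal $s \approx t$ we use the energy estimates \eqref{S2eq13as} and \eqref{S2eq16}, which do not degenerate. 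The hypothesis that $g(t) = 0$ for $t \ge \theta$ is what makes the $s$-integration run only over $[0,\min(t,\theta)]$, and this is precisely where the factors $\theta^{1/2}$ and $\log\langle\theta\rangle$ in \eqref{S2eq19'} will appear.

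Concretely, for the $L^\infty$-type norms I would estimate $|\p_3 Y(t)|_N$ by $\int_0^{\min(t,\theta)} |\p_3 G(t-s)g(s)|_N\,ds$. Using \eqref{S2eq3as} with $\cY_0 = 0$, $\cY_1 = g(s)$ (and choosing $\delta$ in that proposition to match the $\delta$ here), the integrand is controlled by $\langle t-s\rangle^{-1}\bigl(\||D|^{2\delta}g(s)\|_{L^1} + \||D|^{N+4}g(s)\|_{L^1}\bigr) = \langle t-s\rangle^{-1}\triplenorm{g(s)}_{\delta,N}$; this handles the region $t - s \ge 1$, producing a bound by $\triplenorm{g}_{L^1_t(\delta,N)}$ times a harmless constant once one also tracks the prefactor $\langle t\rangle$ built into the norm $|\cdot|_{1,N}$ — here one splits $\langle t\rangle \lesssim \langle t-s\rangle \langle s\rangle$ and absorbs $\langle s\rangle$ by paying the decay, or more efficiently one interpolates between \eqref{S2eq3as} and \eqref{S2eq16}. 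For the region $t - s \le 1$ one instead invokes the energy bound \eqref{S2eq13as}, whose right-hand side for data $(0,g(s))$ is $\|g(s)\|_{N+1} + 0$, but this must be routed through the $L^2$-in-time, $|D|^{-1}$-weighted quantities appearing in $R_{N,\theta}(g)$: writing $\|g(s)\|_{N+1} \lesssim \| |D|^{-1} g(s)\|_{N+1} + \||D|^{N+4} g(s)\|_{L^1}$ (low frequencies via $|D|^{-1}$, high frequencies via the $L^1$ Bernstein bound) and then integrating $ds$ over the short interval with Cauchy--Schwarz gives the $\theta^{1/2}\|\langle t\rangle^{1/2}|D|^{-1}g\|_{L^2_t(H^{N+3})}$ and $\log\langle\theta\rangle\||D|^{-1}g\|_{\frac32-\delta,N+3}$ contributions. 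The $\p_t Y$ and $Y$ estimates follow the same scheme, using the $\frac32-\delta$ and $\frac12$ decay exponents from \eqref{S2eq3as} respectively, with the logarithmic loss in the $\frac32 - \delta$ case coming from the borderline integrability of $\langle t-s\rangle^{-(\frac32-\delta)}\langle s\rangle^{-(\frac12)}$-type kernels near $\delta = 0$, cut off at scale $\theta$.

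The main obstacle I anticipate is the careful bookkeeping of the weights $\langle t\rangle^k$ that are folded into the norms $|\cdot|_{k,N}$: when one moves a weight $\langle t\rangle$ inside the Duhamel integral it does not directly distribute over $\langle t - s\rangle$ and $\langle s\rangle$ in a way that keeps every exponent summable, so one is forced either to interpolate among the several homogeneous estimates \eqref{S2eq3as}, \eqref{S2eq16}, \eqref{S2eq15'as} (trading $|D|^{2\delta}$ against $|D|^{-1}$ to gain extra decay at the cost of a negative-order Sobolev norm) or to perform the diagonal/off-diagonal split at a cleverly chosen $s$-dependent scale. Tracking the restriction $\delta \in [0,1/4[$ — which is exactly the range in which the relevant kernels $\int_0^\theta \langle t-s\rangle^{-(\frac32-\delta)}\,ds$ and the mixed $L^2$-$L^\infty$ estimates close with at most a $\log\theta$ loss — is the delicate point, and I expect the proof to devote most of its length to making these interpolations and scale choices explicit rather than to any conceptually new ingredient beyond Proposition \ref{S0prop1}.
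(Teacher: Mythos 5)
Your skeleton — Duhamel's formula $Y(t)=\int_0^t\Gamma(t-s)g(s)\,ds$, far-from-diagonal contributions controlled by the pointwise decay of Proposition \ref{S0prop1}, near-diagonal contributions controlled by the non-decaying energy-type bounds, with the support hypothesis on $g$ producing the $\theta^{1/2}$ and $\log\langle\theta\rangle$ factors — is exactly the paper's strategy. But the specific splitting you commit to, namely $t-s\ge1$ versus $t-s\le1$, does not close, and the fix you offer for the time weight is the step that fails. On the region $t/2\le s\le t-1$ you would apply the dispersive bound and then write $\langle t\rangle\lesssim\langle t-s\rangle\langle s\rangle$, "absorbing $\langle s\rangle$ by paying the decay": after cancelling $\langle t-s\rangle^{-1}$ against $\langle t-s\rangle$ you are left with $\int\langle s\rangle\,\triplenorm{g(s)}_{\delta,N}\,ds$, a \emph{weighted} $L^1_t$ norm of $g$ that is not one of the three quantities in $R_{N,\theta}(g)$. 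The paper avoids this entirely by splitting at $s=t/2$ rather than at $t-s=1$: for $s\le t/2$ one has $t-s\ge t/2$, so $\langle t\rangle^k\langle t-s\rangle^{-k}\le C$ and the weight disappears for free, yielding $\triplenorm{g}_{L^1_t(\delta,N)}$; the entire interval $[t/2,t]$ (not just $t-s\le1$) is then handled by the operator bounds \eqref{S2eq15}, where $\langle s\rangle\sim\langle t\rangle$ so the weight transfers onto $g$ harmlessly. You flag the weight bookkeeping as "the main obstacle I anticipate" rather than resolving it, and the resolution is precisely this choice of splitting point.

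Two further details are off. First, the factor $\theta^{1/2}$ does not come from Cauchy--Schwarz "over the short interval" $t-s\le1$ (a unit interval yields a factor $1$); it comes from Cauchy--Schwarz over $[t/2,t]$, whose length is $\le\theta$ on the support of $g$ (the integral vanishes for $t>2\theta$), combined with the square-function bound $\|\p_3\Gamma(\cdot)Y_1\|_{L^2_t(W^{N,\infty})}\le C\|Y_1\|_{N+2}$ coming from the parabolic smoothing in \eqref{S2eq13as}. Second, the $\log\langle\theta\rangle$ in the $\p_tY$ estimate does not arise from borderline integrability of $\langle t-s\rangle^{-(3/2-\delta)}$ kernels near $\delta=0$; it arises from $\int_{t/2}^{t}\langle t-s\rangle^{-1}ds\le\log\langle\theta\rangle$, using the $\langle t\rangle^{-1}$-decaying, $|D|^{-1}$-based bound $\|\langle t\rangle\p_t\Gamma(t)Y_1\|_{L^\infty_t(W^{N,\infty})}\le C\||D|^{-1}Y_1\|_{N+3}$ on the near-diagonal region (this is also where the index $N+3$ in $R_{N,\theta}$ comes from). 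With the split relocated to $s=t/2$ and these two mechanisms identified, your argument becomes the paper's proof.
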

The proof of the above propositions will be presented in Section \ref{Sect3}.

The goal of Section \ref{Sect4} is to calculate the linearized system of \eqref{S1eq1}, which reads
\begin{align}\label{LE1}
\begin{cases}
X_{tt}-\Delta X_t-\p_{3}^2X=f'(Y;X)+g,\\ X|_{t=0}=X_t|_{t=0}=0,
\end{cases}
\end{align}
where $f'(Y;X)=f_0'(Y;X)+f_1'(Y;X)+f_2'(Y;X)$, and $f_0'(Y;X),$ $f_1'(Y;X)$ and $f_2'(Y;X)$ are determined respectively
by \eqref{f0'} and \eqref{fm'}. Furthermore, the second derivative of $f''(Y;X,W)$ will be presented in Subsection \ref{Sect4.2}.

In Section \ref{Sect5}, we shall  derive    the $\dot W^{2\d,1}\cap \dot W^{N+4,1}$ and $\dot H^{N+1}$ estimates for the source term  $f'(Y;X)$  in the linearized
 system \eqref{LE1},  which will be used to derive the decay estimates for  the solutions of  \eqref{LE1}. The main result reads

\begin{prop}\label{S4.1prop1}
{\sl Let the functionals, $f_0'(Y;X), f_1'(Y;X), f_2'(Y;X),$ be given by \eqref{f0'} and \eqref{fm'} respectively, and the norm $ \triplenorm{\cdot}_{\d,N}$ be given by \eqref{S2eq19''}. Then under the assumptions that $\d>0,$ and 
 \beq \label{S4.1prop1assum}
 \|\nabla Y\|_{\dot B^\frac32_{2,1}}\leq\delta_1 \andf \|\nabla Y\|_{\dot B^\frac52_{2,1}}\leq1,\eeq
 for some $\delta_1>0$ sufficiently small, we have
\begin{equation}\label{S4.1eqf0}
\begin{split}
\triplenorm{f_0'(Y;X)}_{\d,N}
\leq& \|\nabla Y\|_{0}\|\nabla X_t\|_{N+6}+\|\nabla Y\|_{N+6}\|\nabla X_t\|_{0}\\
&+\|\nabla Y_t\|_{0}\|\nabla X\|_{N+6}+\big(\|\nabla Y_t\|_{N+6}+\|\nabla Y\|_{N+6}|\nabla Y_t|_{0}\big)\|\nabla X\|_{0},
\end{split}
\end{equation}
and
\ben
\triplenorm{f_1'(Y;X)}_{\d,N}&\lesssim& \frak{f}_1(\p_3Y,\p_3X) \andf \label{S4.1eqf1}\\
\triplenorm{f_2'(Y;X)}_{\d,N}&\lesssim& \frak{f}_1(Y_t,X_t), \label{S4.1eqf2}
\een
where the function $\frak{f}_1(\frak{x},\frak{y})$ is given by 
\begin{equation*}
\begin{split}
\frak{f}_1(\frak{x},\frak{y})\eqdefa &   \|\frak{x}\|_{0}\bigl(\|\frak{y}\|_{N+6}+|\frak{x}|_{0}\|\nabla X\|_{N+6}\bigr)\\
&+\|\frak{y}\|_{1}\big(\|\frak{x}\|_{N+6}+\|\nabla Y\|_{N+6}|\frak{x}|_{1}\big)+\big(\|\frak{x}\|_{N+6}+\|\nabla Y\|_{N+6}\|\frak{x}\|_{3}\big)|\frak{x}|_{1}\|\nabla X\|_{1}.
\end{split}
\end{equation*}
}
\end{prop}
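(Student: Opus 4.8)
The plan is to reduce the whole estimate to bounds on products of functions in the homogeneous Besov spaces $\dB^s_{1,1}(\R^3)$, which --- unlike the $\dot W^{s,1}$ hidden in $\triplenorm{\cdot}_{\delta,N}$ --- are stable under the Calder\'on--Zygmund operators produced by the pressure. First I would make \eqref{f0'} and \eqref{fm'} explicit: differentiating the nonlinearity $f$ of \eqref{S1eq2} in the direction $X$ and using $\cA'(Y;X)=-\cA\,(\na X)\,\cA$, each of $f_0'(Y;X),f_1'(Y;X),f_2'(Y;X)$ is a finite sum of terms of the schematic form $\mathcal R(a_1\cdots a_m)$ or $a_0\,\mathcal R(a_1\cdots a_m)$, where $\mathcal R$ is the identity, or one of the zero-order homogeneous Fourier multipliers $\na\otimes\na\,\D^{-1}$ and $\na\,\D^{-1}\dive$ coming from $\vv p$ (and from the Neumann inversion of the near-identity operator defining $\vv p$), and each factor $a_i$ lies in the fixed list $\{\cA,\ \cA-\mathrm{Id},\ \na Y,\ \na Y_t,\ \pa_3 Y,\ Y_t,\ \na X,\ \na X_t,\ \pa_3 X,\ X_t\}$, exactly one of which carries the linearized variable $X$. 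The hypothesis \eqref{S4.1prop1assum} enters through the three-dimensional embeddings $\dB^{3/2}_{2,1}\hookrightarrow L^\infty$ and $\dB^{5/2}_{2,1}\hookrightarrow\dB^1_{\infty,1}$: it forces $\|\na Y\|_{L^\infty}\le C\delta_1$ small and $\|\na^2 Y\|_{L^\infty}\le C$, so $\cA=(\mathrm{Id}+\na Y)^{-1}=\sum_{k\ge0}(-\na Y)^k$ and the near-identity operator defining $\vv p$ are invertible by convergent Neumann series, $\cA-\mathrm{Id}$ carries a factor $\na Y$ and is $O(\delta_1)$ in all relevant norms, and $\cA'$ carries a factor $\na X$.

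Second, the reduction itself. The operators $\na\otimes\na\,\D^{-1}$ and $\na\,\D^{-1}\dive$ are homogeneous Fourier multipliers of order zero, hence bounded on every $\dB^s_{1,1}(\R^3)$; together with the elementary embedding $\|h\|_{\dot W^{s,1}}=\||D|^sh\|_{L^1}\le\sum_j2^{js}\|\dot\Delta_jh\|_{L^1}=\|h\|_{\dB^s_{1,1}}$ this shows it suffices to bound every product $a_1\cdots a_m$ in $\dB^{2\delta}_{1,1}\cap\dB^{N+4}_{1,1}$, together with the prefactor $a_0$ in $L^\infty\cap\dB^{N+4}_{\infty,1}$ for the second type of term (for $f_0'$, where no Riesz transform occurs, one may instead bound $\||D|^{N+4}f_0'\|_{L^1}\le\||D|^{N+5}(\cdots)\|_{L^1}$ directly). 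The assumption $\delta>0$ is exactly what is needed here: the low-frequency sum $\sum_{j\le0}2^{2\delta j}\|\dot\Delta_j(\cdot)\|_{L^1}$ must converge, and $\dB^0_{1,1}$ is neither an algebra nor able to accommodate factors without low-frequency decay.

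Third, the product estimate and the bookkeeping. For a product $a_1\cdots a_m$ I would combine Bony's decomposition with $L^2\cdot L^2\hookrightarrow L^1$ and the three-dimensional embeddings $H^{3/2+\epsilon}\hookrightarrow L^\infty$ ($\epsilon<1/2$) and $\dB^{5/2}_{2,1}\hookrightarrow\dB^1_{\infty,1}$: one factor carries essentially all of the $N+4$ (resp.\ $2\delta$) derivatives and is measured in $H^{N+6}$, a second factor sits in $L^2$, and the remaining factors sit in $L^\infty$ --- bounded by $C\delta_1$ or $C$ via \eqref{S4.1prop1assum} and hence absorbed into the constant. The two-derivative gap between the $N+4$ in the norm and the $N+6$ on the right accommodates the $\ell^1$ summation over dyadic blocks in the homogeneous Besov norms (each such passage costs one derivative, by Cauchy--Schwarz), the Sobolev embedding $H^{3/2+\epsilon}\hookrightarrow L^\infty$ used for the auxiliary factors, and the surplus derivatives carried by the pressure structure $\cA\,\dive(\cA\,(\cdots))$. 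For $f_0'$, the linearization of the viscous term $\na\cdot\bigl[(\cA\cA^{t}-\mathrm{Id})\na Y_t\bigr]$, the high factor is one of $\na X_t,\na Y_t,\na X$; using the factorization $\cA\cA^{t}-\mathrm{Id}=-\cA\cA^{t}\bigl(\na Y+(\na Y)^{t}+(\na Y)^{t}\na Y\bigr)$ to isolate a clean $\na Y$, and collecting the placement where $\na X$ (inside $\cA'$) is in $L^2$ --- whose companion $\na Y$-factor produces the tail $\|\na Y\|_{N+6}\,|\na Y_t|_0\,\|\na X\|_0$ --- one obtains exactly \eqref{S4.1eqf0}. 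For $f_1'$ and $f_2'$, both arising from the linearization of $-\cA^{t}\na\vv p$ with $\vv p$ quadratic in $(\pa_3 Y,Y_t)$ up to $\cA$-corrections, the same distribution with $(\frak{x},\frak{y})=(\pa_3 Y,\pa_3 X)$ respectively $(\frak{x},\frak{y})=(Y_t,X_t)$ reproduces the three groups of $\frak{f}_1(\frak{x},\frak{y})$: $\|\frak{x}\|_0\bigl(\|\frak{y}\|_{N+6}+|\frak{x}|_0\|\na X\|_{N+6}\bigr)$ is the budget placed on $\frak{y}$, respectively on the $\na X$ coming from $\cA'$; $\|\frak{y}\|_1\bigl(\|\frak{x}\|_{N+6}+\|\na Y\|_{N+6}|\frak{x}|_1\bigr)$ is the budget placed on $\frak{x}$, respectively on a $\na Y$ inside a nested $\cA$, with one surplus pressure-derivative landing on $\frak{y}$; and $\bigl(\|\frak{x}\|_{N+6}+\|\na Y\|_{N+6}\|\frak{x}\|_3\bigr)|\frak{x}|_1\|\na X\|_1$ is the term in which the $\cA'$ of the prefactor multiplies the unlinearized $\vv p$, the norms $\|\cdot\|_1$ and $\|\cdot\|_3$ reflecting the surplus derivatives in the structure of $\vv p$, which are harmless because each factor producing them carries the smallness of \eqref{S4.1prop1assum}.

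The main obstacle is the double Riesz transform hidden in $\vv p$: it is unbounded on $L^1$ and on $\dot W^{s,1}$, so there is no estimate directly in the norm $\triplenorm{\cdot}_{\delta,N}$, and the scheme works only because one passes through $\dB^s_{1,1}$ before re-embedding into $\dot W^{s,1}$. The price is that the product estimates become delicate at low frequency and for the small index $2\delta$ --- which is why $\delta>0$ is assumed, and why \eqref{S4.1prop1assum} is imposed with precisely the higher-regularity and smallness content that makes the Neumann series for $\cA$, $\cA'$ and the pressure operator converge and lets every non-budget factor sit in $L^\infty$. The remaining work is the careful ``one factor carries all the derivatives'' induction run through those Neumann series while preserving the exact algebraic shape of the right-hand sides of \eqref{S4.1eqf0}--\eqref{S4.1eqf2} --- in particular the tails of the form $\|\na Y\|_{N+6}\,|\cdot|_0\,\|\na X\|_0$ --- which is routine but is where essentially all the bookkeeping goes.
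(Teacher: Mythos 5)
Your proposal is correct and follows essentially the same route as the paper: replace the $\dot W^{s,1}$ norms hidden in $\triplenorm{\cdot}_{\delta,N}$ by $\dot B^{s}_{1,1}$ (where the zero-order multipliers from the pressure are bounded), run Bony-type product laws with one factor carrying all the derivatives and the others in $L^\infty$ or $L^2$ via \eqref{S4.1prop1assum}, absorb the near-identity pressure operator using the smallness $\delta_1$, and return to Sobolev norms by the interpolation $\dot B^{s}_{2,1}\hookrightarrow H^{[s]+1}$, which is exactly where the $N+6$ and the requirement $\delta>0$ come from. The only caveat is that your ``Neumann series'' for the pressure must really be the paper's two-step absorption (first the low norm $\|\nabla\vv p_m\|_0$, then the high Besov norms, since the contraction holds only in the top-order term), but that is bookkeeping rather than a gap.
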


\begin{prop}\label{S4.2prop1}
{\sl
Under the assumption of  Proposition \ref{S4.1prop1}, we have
\begin{equation}\label{S4.2eqf0}
\begin{split}
\||D|^{-1}f_0'(Y;X)\|_{N+1}\lesssim & |\nabla Y|_{0}\|\nabla X_t\|_{N+1}+|\nabla Y|_{N+1}\|\nabla X_t\|_{0}\\
&+ | Y_t|_{1}\|\nabla X\|_{N+1}+\big(| Y_t|_{N+2}+| Y_t|_{1}|\nabla Y|_{N+1}\big)\|\nabla X\|_{0},
\end{split}
\end{equation}
and
\ben
\||D|^{-1}f_1'(Y;X)\|_{N+1}
&\lesssim & \frak{f}_2(\p_3Y,\p_3X)\andf \label{S4.2eqf1}\\
\||D|^{-1}f_2'(Y;X)\|_{N+1}
&\lesssim& \frak{f}_2(Y_t,X_t),\label{S4.2eqf2}
\een
where the functional $\frak{f}_2(\frak{x},\frak{y})$ is given by
\begin{equation*}
\begin{split}
\frak{f}_2(\frak{x},\frak{y})
\eqdefa & \big(|\frak{x}|_{1}^\frac43\|\frak{x}\|_{0}^\frac23+|\frak{x}|_{1}^2\big)\bigl(\|\na X\|_{N+1}+|\nabla Y|_{N+1}\|\nabla X\|_{1}\bigr)+|\frak{x}|_{0}\|\frak{y}\|_{N+1}\\
&+\big(|\frak{x}|_{N+1}+|\nabla Y|_{N+1}|\frak{x}|_{1}\big)\|\frak{y}\|_{1}+\big(|\frak{x}|_{0}^\frac13\|\frak{x}\|_{0}^\frac23+|\frak{x}|_{0}\big)|\frak{x}|_{N+1}\|\nabla X\|_{1}.
\end{split}
\end{equation*}
}
\end{prop}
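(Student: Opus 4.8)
The plan is to follow the scheme of the proof of Proposition~\ref{S4.1prop1}, replacing the $\dot W^{2\d,1}\cap\dot W^{N+4,1}$ estimates by $L^2$-based ones and exploiting that $|D|^{-1}$ acts favourably on the divergence operators built into $f_0',f_1',f_2'$. Recall from \eqref{S1eq2} and the formulas \eqref{f0'}, \eqref{fm'} that, after using the Piola-type identities satisfied by $\cA=(Id+\na_y Y)^{-1}$, each $f_j'(Y;X)$ is of divergence form: $f_0'$ is $\na_y\cdot$ of an expression bilinear in $\na Y$ (through $\cA$) and $\na X_t$, together with the pressure contribution $\cA^{t}\na_y\vv p$; while $f_1'$ and $f_2'$ collect the pressure terms, which appear under $\na_y\D_y^{-1}\dv_y$ (with an extra interior $\dv_y$ for the quadratic source) and are bilinear in $(\p_3 Y,\p_3 X)$, respectively in $(Y_t,X_t)$. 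Since, modulo lower-order commutators with the smooth coefficients $\cA$, the operators $|D|^{-1}\na_y\cdot$ and $|D|^{-1}\na_y\D_y^{-1}\dv_y$ are bounded zero-order Fourier multipliers (composed of Riesz transforms) on every $H^s$, applying $|D|^{-1}$ to $f_j'$ loses no derivative, and \eqref{S4.2eqf0}--\eqref{S4.2eqf2} reduce to $H^{N+1}$ product estimates for the underlying bilinear expressions. This is the source of the smaller derivative indices in $\frak{f}_2$ compared with $\frak{f}_1$.

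These product estimates are carried out with Littlewood--Paley theory just as for Proposition~\ref{S4.1prop1}: Bony's decomposition and the Moser-type inequality $\|uv\|_{H^{s}}\lesssim\|u\|_{H^{s}}|v|_{0}+|u|_{0}\|v\|_{H^{s}}$, together with the embedding $H^{N+1}(\R^3)\hookrightarrow W^{k,\infty}$ (valid once $N$ is large), dispose of the terms in which all the high derivatives fall on a single factor. The coefficients $\cA$, their products, and the linearized coefficient $\cA'(Y;X)$ are handled by Neumann-series and composition estimates of the type $\|\cA-Id\|_{H^{N}}\lesssim\|\na Y\|_{H^{N}}$ and $|\cA-Id|_{N}\lesssim|\na Y|_{N}$ with bilinear analogues; the smallness hypothesis \eqref{S4.1prop1assum}, which yields $|\na Y|_{0}\lesssim\|\na Y\|_{\dot B^{\frac32}_{2,1}}\le\delta_1$, guarantees that these series converge. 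The fractional powers $\tfrac43,\tfrac23,\tfrac13$ in $\frak{f}_2$ come out of a Gagliardo--Nirenberg interpolation that trades the $L^2$ norm $\|\frak{x}\|_{0}$ against the Lipschitz norm $|\frak{x}|_{1}$ of the low-frequency factor, the exponents being forced by the scaling that $|D|^{-1}$ has improved.

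The genuine obstacle is the nonlocal pressure hidden in $f_1'$ and $f_2'$. One first records an a priori bound for $\vv p$ by reading the last identity of \eqref{S1eq2} as $\vv p=\cT\vv p+\cS\bigl(\pa_{y_3}Y\otimes\pa_{y_3}Y-Y_t\otimes Y_t\bigr)$ with $\cT$ of small operator norm (by \eqref{S4.1prop1assum}) and inverting by a Neumann series; differentiating this identity in the direction $X$ then produces an equation of the same type for $\vv p'(Y;X)$ whose source is bilinear in $(\p_3 Y,\p_3 X)$ and in $(Y_t,X_t)$, plus a term coupling back the already-estimated $\vv p$ through $\cA'(Y;X)$ and $\cA$. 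The delicate point, besides this bookkeeping, is to keep the $\na_y\D_y^{-1}\dv_y$ and the interior $\dv_y$ structure intact throughout, so that $|D|^{-1}$ really acts as a zero-order operator rather than costing a derivative; once that is done, \eqref{S4.2eqf1} and \eqref{S4.2eqf2} follow from the same product and interpolation inequalities as above, and \eqref{S4.2eqf0} is the easier pure-divergence case. In the end the argument runs in parallel with the proof of Proposition~\ref{S4.1prop1}.
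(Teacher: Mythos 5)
Your proposal is correct and follows essentially the same route as the paper: the divergence structure of $f_0'$ makes $|D|^{-1}\nabla\cdot$ harmless, the pressure and its linearization are bounded by absorbing the small $({\mathcal A}{\mathcal A}^t-Id)$ term (Neumann series) under \eqref{S4.1prop1assum}, the negative-order norm of the remaining products is controlled via $L^{6/5}\hookrightarrow\dot H^{-1}$ type duality together with $L^3$ bounds on $\nabla\vv p_m$, whose interpolation $\|f\|_{L^3}\lesssim|f|_0^{1/3}\|f\|_0^{2/3}$ produces exactly the fractional exponents in $\frak f_2$, and the higher derivatives are handled by tame Moser product estimates. The only caution is that the tame inequality must be used in the form $\|uv\|_{H^N}\lesssim|u|_N\|v\|_0+|u|_0\|v\|_{H^N}$ (keeping the $L^\infty$-based high norm on the coefficient factor) rather than via Sobolev embedding, or the norms appearing in $\frak f_2$ would come out wrong.
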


 Let us remark that Riesz transform does not map continuously from $L^1$ to $L^1.$ Nevertheless due to \eqref{p1'}
 and \eqref{p2'}, we can not avoid estimates of this type. To overcome this difficulty, a natural replacement of $\dot{W}^{s,1}$ will be
 the Besov space $\dot B^s_{1,1},$ which satisfies
$$\|{\na(-\D)^{-\f12}|D|^{s}(f)}\|_{L^1}\lesssim \|f\|_{\dot B^{s}_{1,1}}\quad \forall \ s\in\R.$$
  We  now recall
the precise definition of the Besov norms from \cite{BCD} for
instance.
\begin{defi}
\label {S0def1} {\sl  Let us consider a smooth function~$\vf $
on~$\R,$ the support of which is included in~$[3/4,8/3]$ such that
$$
\forall
 \tau>0\,,\ \sum_{j\in\Z}\varphi(2^{-j}\tau)=1\andf \chi(\tau)\eqdefa 1 - \sum_{j\geq
0}\varphi(2^{-j}\tau) \in \cD([0,4/3]).
$$
Let us define
$$
\Delta_ja=\cF^{-1}(\varphi(2^{-j}|\xi|)\widehat{a}),
 \andf S_ja=\cF^{-1}(\chi(2^{-j}|\xi|)\widehat{a}).
$$
Let $(p,r)$ be in~$[1,+\infty]^2$ and~$s$ in~$\R$. We define the Besov norm by
$$
\|a\|_{\dB^s_{p,r}}\eqdefa\big\|\big(2^{js}\|\Delta_j
a\|_{L^{p}}\big)_j\bigr\|_{\ell ^{r}(\ZZ)}.
$$
}
\end{defi}

We remark that in the special case when $p=r=2,$  the Besov
spaces~$\dB^s_{p,r}$ coincides with the classical homogeneous
Sobolev spaces $\dH^s$.
Moreover, we have the following product laws (see Corollary 2.54 of \cite{BCD}):
\begin{equation}\label{product1}
\|ab\|_{\dot B^s_{p,r}}\leq C\big(|a|_{L^\infty}\|b\|_{\dot B^s_{p,r}}+\|a\|_{\dot B^s_{p,r}}|b|_{L^\infty}\big),
\end{equation}
for $s>0$, $(p,r)\in[1,+\infty]^2$.
Due to the product law \eqref{product1}, we need the index $\delta$ to be positive in Proposition \ref{S4.1prop1}.

In Section \ref{Sect6}, we investigate energy estimates for the solutions of the linearized equation \eqref{LE1}.

\begin{thm}\label{S5thm1}
{\sl Let $Y$ be a smooth enough vector field and $X$ be a smooth solution to  the linearized equation \eqref{LE1}.
 We assume  that $Y$ satisfies \eqref{S4.1prop1assum} and
\beq \label{S5thmassum} \|Y_t\|_{0,0}\leq1,\andf |Y_t|_{0,1}\leq 1.\eeq
Then  for any $\varepsilon>0$, we have
\begin{equation}\label{S5eqE1}
\begin{split}
&{\mathcal E}_0(t)\leq C_\varepsilon\|\langle t\rangle^{\frac{1+\varepsilon}{2}}|D|^{-1}g\|_{L_t^2(H^1)} E_\e(Y)\andf \quad\mbox{for}\quad N\geq 1\\
&\cE_N(t)\leq C_{\varepsilon,N}\Big(\|\langle t\rangle^{\frac{1+\varepsilon}2}g\|_{L_t^2( H^{N})}
+\gamma_{\varepsilon,N+1}(Y)\|\langle t\rangle^{\frac{1+\varepsilon}{2}}|D|^{-1}g\|_{L_t^2(H^1)}\Big)E_\e(Y),
\end{split}
\end{equation}
where
\begin{equation}\label{S5eqE2}
\begin{split}
{\mathcal E}_{N}(t)\eqdefa &\||D|^{-1}(X_t,\p_3X)\|_{0,N+2}+\|\nabla X\|_{0,N+1}+\|X_t\|_{L_t^2( H^{N+2})}+\|\p_3X\|_{L_t^2( H^{N+1})};\\
E_\e(Y)\eqdefa &\exp \Bigl(C\big(|\p_3Y|_{\frac12+\varepsilon,1}^{\frac43}\|\p_3Y\|_{L_t^2(L^2)}^\frac23+|\p_3Y|_{\frac12+\varepsilon,1}^2+|Y_t|_{1+\varepsilon,2}\big)\Bigr),
\end{split}
\end{equation}
and
\begin{equation}\label{S5eqE3}
\begin{split}
&\gamma_{\varepsilon,N+1}(Y)\eqdefa   1+|\p_3Y|_{\frac12+\varepsilon,N+1}\bigl(1+|\p_3Y|_{\frac12+\varepsilon,1}\bigr) +| Y_t|_{1+\varepsilon,N+2}+|\p_3Y|_{\frac12+\varepsilon,1}^\frac13\|\p_3Y\|_{L_t^2(L^2)}^\frac23\times\\
  &\quad\times\bigl(|\p_3Y|_{\frac12+\varepsilon,N+1}+|\nabla Y|_{0,N+1}\bigr)
+|\nabla Y|_{0,N+1}\big(1+|\p_3Y|_{\frac12+\varepsilon,0}^2+|\p_3Y|_{\frac12+\varepsilon,0}+|Y_t|_{1+\varepsilon,1}\big).
\end{split}
\end{equation}}
\end{thm}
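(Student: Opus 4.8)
The plan is to prove Theorem~\ref{S5thm1} directly, by a weighted energy argument for the linearized equation~\eqref{LE1}, viewing $\partial_t^2-\Delta\partial_t-\partial_3^2$ as a viscously damped wave operator in the $x_3$-variable, and then closing the resulting integral inequality by Gronwall's lemma, feeding in the source bounds of Propositions~\ref{S4.1prop1} and~\ref{S4.2prop1}. (Propositions~\ref{S0prop1} and~\ref{S2.3prop1}, which are tailored to the $L^\infty$-decay part of the argument, are not needed here; the mixture of $L^\infty_t$- and $L^2_t$-type norms in $\mathcal{E}_N$ together with the exponential factor $E_\varepsilon(Y)$ are exactly what a direct energy method produces.)

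\emph{Step 1: linear energy identities.} Write $F=f'(Y;X)+g$. Since the operator has constant coefficients, it commutes with every Fourier multiplier, so it suffices to work at fixed frequency weights. Testing~\eqref{LE1} against $X_t$ gives the energy--dissipation identity
\[
\frac{d}{dt}\bigl(\|X_t\|_{L^2}^2+\|\partial_3X\|_{L^2}^2\bigr)+2\|\nabla X_t\|_{L^2}^2 = 2\langle X_t,\,F\rangle,
\]
while testing against $X$ gives, after integrating in time and using the first identity to absorb the $-\int_0^t\|X_t\|_{L^2}^2$ that appears,
\[
\|\nabla X(t)\|_{L^2}^2+\int_0^t\|\partial_3X\|_{L^2}^2\,ds\ \lesssim\ \int_0^t\bigl|\langle X,\,F\rangle\bigr|\,ds + (\text{cross and dissipation terms}),
\]
the cross term $\int X_t\cdot X$ being bounded by $\||D|^{-1}X_t\|\,\|\nabla X\|$ and absorbed. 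Applying the same two multipliers to $|D|^{-1}X$ (whose source is $|D|^{-1}F$) produces the negative-index quantities $\||D|^{-1}(X_t,\partial_3X)\|$ and the low-frequency part of $\|X_t\|_{L^2_t}$. Running this for the appropriate powers of $\langle D\rangle$ and $|D|$ --- here the anisotropy ($\partial_3^2X$ versus $\Delta X_t$) and the deliberate asymmetry of the indices $N+2$ versus $N+1$ in $\mathcal{E}_N$ are exactly what makes the frequency counts close --- and estimating each source pairing by Cauchy--Schwarz, using the divergence/gradient structure of $f'$ together with the parabolic dissipation to absorb top-order factors of $X_t$, one arrives at an inequality of the schematic form
\[
\mathcal{E}_N(t)^2\ \lesssim\ \int_0^t \mathcal{N}_N\bigl(F(s)\bigr)\,\Psi_N(s)\,ds,
\]
where, for $F=f'(Y;X)$, $\mathcal{N}_N(F)(s)$ is the sum of the norms of $f'(Y;X)(s)$ supplied by Propositions~\ref{S4.1prop1} and~\ref{S4.2prop1}, for $F=g$ it is $\|g(s)\|_{H^N}+\||D|^{-1}g(s)\|_{H^1}$, and $\Psi_N(s)$ is a combination of norms of $X$ with $\Psi_N\lesssim\mathcal{E}_N$ pointwise or $\Psi_N$ an $L^2_t$-dissipation factor.

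\emph{Step 2: the source $F=f'(Y;X)+g$.} For $g$, Cauchy--Schwarz in $(t,x)$ with the time weight $\langle t\rangle^{\pm(1+\varepsilon)/2}$ and $\int_0^\infty\langle t\rangle^{-1-\varepsilon}\,dt=C_\varepsilon<\infty$ turns the $g$-part of the right-hand side into $C_\varepsilon\bigl(\|\langle t\rangle^{\frac{1+\varepsilon}2}g\|_{L^2_tH^N}+\|\langle t\rangle^{\frac{1+\varepsilon}2}|D|^{-1}g\|_{L^2_tH^1}\bigr)\mathcal{E}_N(t)$, with only the $|D|^{-1}g$-term surviving when $N=0$. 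For $f'(Y;X)$, I insert Propositions~\ref{S4.1prop1} and~\ref{S4.2prop1}, which write the relevant norms of $f'(Y;X)$ as finite sums of $(\text{norm of }Y)\times(\text{norm of }X)$; these split into three types. (i) Terms with $Y$-factor a small norm of $\nabla Y$ (from $\mathcal{A}\mathcal{A}^{t}-Id\sim\nabla Y$) times a top-order $\nabla X_t$: absorbed into the left-hand dissipation. (ii) ``Diagonal'' terms, with $Y$-factor a power of $|\partial_3Y(s)|_1$ or $|Y_t(s)|_2$ and $X$-factor $\lesssim\mathcal{E}_N(s)$ (or a dissipation factor): using $|\partial_3Y(s)|_1\le\langle s\rangle^{-\frac12-\varepsilon}|\partial_3Y|_{\frac12+\varepsilon,1}$, $|Y_t(s)|_2\le\langle s\rangle^{-1-\varepsilon}|Y_t|_{1+\varepsilon,2}$, and H\"older in $s$ with exponents $(\tfrac32,3)$ on the mixed power $|\partial_3Y|^{4/3}\|\partial_3Y\|_{L^2}^{2/3}$, one gets $\int_0^t c(s)\,\mathcal{E}_N(s)^2\,ds$ with $\int_0^\infty c(s)\,ds\lesssim|\partial_3Y|_{\frac12+\varepsilon,1}^{4/3}\|\partial_3Y\|_{L^2_tL^2}^{2/3}+|\partial_3Y|_{\frac12+\varepsilon,1}^2+|Y_t|_{1+\varepsilon,2}$, i.e. the exponent in $E_\varepsilon(Y)$. (iii) ``Off-diagonal'' terms, with $X$-factor low-order ($\lesssim\mathcal{E}_0(s)$) and $Y$-factor a high-regularity norm: these reassemble into $\gamma_{\varepsilon,N+1}(Y)\,\sup_s\mathcal{E}_0(s)$ times a dissipation factor.

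\emph{Step 3: closing the loop.} For $N=0$ only types (i)--(ii) arise, so the estimate reduces to $\mathcal{E}_0(t)^2\lesssim C_\varepsilon\|\langle t\rangle^{\frac{1+\varepsilon}2}|D|^{-1}g\|_{L^2_tH^1}\,\mathcal{E}_0(t)+\int_0^t c(s)\,\mathcal{E}_0(s)^2\,ds$; Young's inequality followed by Gronwall gives the first line of~\eqref{S5eqE1}, the exponential being $\exp\!\bigl(\int_0^\infty c\bigr)$. For $N\ge1$ one repeats the scheme, now also retaining the direct $\|g\|_{H^N}$-pairing and controlling the type-(iii) terms by inserting the bound on $\mathcal{E}_0$ just obtained; Gronwall with the same coefficient $c$ yields the second line of~\eqref{S5eqE1}. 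The main obstacle is precisely the bookkeeping that ties Steps~1 and~2 together: one must calibrate the weighted multipliers so that every $X$-factor produced by Propositions~\ref{S4.1prop1} and~\ref{S4.2prop1} is matched to an $L^\infty_t$-piece of $\mathcal{E}_N$, to the $L^2_t$-dissipation, or (in the off-diagonal case) to $\mathcal{E}_0$, and simultaneously so that each $Y$-coefficient, after the decay substitution, is exactly integrable in time with integral no larger than the exponent of $E_\varepsilon(Y)$ or than $\gamma_{\varepsilon,N+1}(Y)$ --- this is what forces the particular exponents $4/3,2/3$ and the time weights $\tfrac12+\varepsilon,\,1+\varepsilon$ built into~\eqref{S5eqE2}--\eqref{S5eqE3}.
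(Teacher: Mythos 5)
Your overall strategy coincides with the paper's: multiply \eqref{LE1} by $X_t$, $X$ (and $(-\Delta)^{-1}X_t$, $-\Delta X_t$ and their $\dot H^k$ analogues), feed the source bounds of Propositions \ref{S4.1prop1} and \ref{S4.2prop1} into the resulting identities, convert the decaying $Y$-coefficients into an integrable Gronwall weight (which produces $E_\e(Y)$), and separate diagonal from off-diagonal terms to produce $\gamma_{\e,N+1}(Y)$. Your treatment of $g$, of the exponents $4/3,2/3$, and of the $N=0$ versus $N\ge1$ dichotomy all match the paper.

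There is, however, one genuine gap, and it is precisely the difficulty the paper flags before introducing \eqref{LEW1}. In your Step 1 you claim that the top-order piece $\nabla\cdot\bigl((\cA\cA^t-Id)\nabla X_t\bigr)$ of $f_0'(Y;X)$ is ``absorbed into the left-hand dissipation.'' This works for the multiplier $X_t$ (where the dissipation is $\|\nabla X_t\|^2$), but it fails for the multipliers that produce the $L^\infty_t$ control of $\|\nabla X\|_{H^{N+1}}$: pairing with $X$ (or with $-\Delta X$, $-\nabla\cdot(\cA\cA^t\nabla X)$ at higher order) yields, after integration by parts, a term of size $\delta_1\|\nabla X_t\|_{\dot H^{k+1}}\|\nabla X\|_{\dot H^{k+1}}$ integrated in time, and the equation provides no $L^2_t$ dissipation for the full gradient $\nabla X$ — only for $\partial_3X$ and $\nabla X_t$ — so Cauchy--Schwarz leaves an uncontrolled factor $\|\nabla X\|_{L^2_t\dot H^{k+1}}$ (equivalently a growing factor $t^{1/2}\sup_s\|\nabla X(s)\|_{\dot H^{k+1}}$). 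The paper resolves this structurally: it writes $\bigl(\nabla\cdot((\cA\cA^t-Id)\nabla X_t)\,|\,X\bigr)_{L^2}=-\tfrac12\tfrac{d}{dt}\bigl((\cA\cA^t-Id)\nabla X\,|\,\nabla X\bigr)_{L^2}+\tfrac12\int\partial_t(\cA\cA^t)|\nabla X|^2$, absorbs the total time derivative into a modified energy $(\cA\cA^t\nabla X\,|\,\nabla X)_{L^2}$ (coercive since $|\cA\cA^t-Id|_0\lesssim\delta_1$), and treats the remainder as a Gronwall term with the decaying coefficient $\partial_t(\cA\cA^t)\sim\nabla Y_t$; at higher order this forces the divergence-form rewriting \eqref{LEW1} with $\wt f_0'$ and the energy $\|\nabla\cdot(\cA\cA^t\nabla X)\|_{\dot H^k}^2$. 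Without this step your scheme does not close, so you should incorporate it before the rest of the argument goes through.
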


We notice that when we perform the energy estimates for the  derivatives of the solutions to
 \eqref{LE1}, we are not able to treat the term $\nabla\cdot\big(({\mathcal A}{\mathcal A}^t-Id)\nabla X_t\big),
$  which appears in $f_0'(Y;X)$ (see \eqref{f0'}),  as a source term. Instead, we need to rewrite \eqref{LE1} as
\begin{align}\label{LEW1}
X_{tt}-\na\cdot\p_t\bigl(\cA\cA^t\na X\bigr)-\p_3^2X =\wt{f}'(Y;X)+g
\end{align}
where $\wt{f}'(Y;X)=\wt{f}_0'(Y;X)+f_1'(Y;X)+f_2'(Y;X)$ with  $f_m'(Y;X),m=1,2,$ given by \eqref{fm'}, and  $\wt{f}_0'(Y;X)$  by
\begin{align}
\label{wf0'}\widetilde f_0'(Y;X)&=-\nabla \cdot\left({\mathcal A}\bigl(\nabla X{\mathcal A}+{\mathcal A}^t(\nabla X)^t\bigr){\mathcal A}^t
\nabla Y_t\right)-\nabla\cdot\big(\p_t({\mathcal A}{\mathcal A}^t)\nabla X\big).
\end{align}
With the energy estimates obtained in Theorem \ref{S5thm1}, we can work on the time-weighted energy estimate for the solutions of \eqref{LE1}.

\begin{col}\label{S5col1}
{\sl Under the assumptions of Theorem \ref{S5thm1}, we have
\begin{equation}\label{S5eqC1}
\begin{split}
&{\mathcal E}_0+\|(X_t,\p_3X)\|_{\frac12,1}+\|\langle t\rangle^\frac12\nabla X_t\|_{L_t^2(H^1)}
\leq C_\varepsilon\|\langle t\rangle^{\frac{1+\varepsilon}{2}}|D|^{-1}g\|_{L_t^2(H^1)}E_\e(Y),
\end{split}
\end{equation}
and for $N\geq 1$,
\begin{equation}\label{S5eqC2}
\begin{split}
&{\mathcal E}_{N}+\|(X_t,\p_3X)\|_{\frac12,N+1}+\|\langle t\rangle^\frac12\nabla X_t\|_{L_t^2( H^{N+1})}\\
&\qquad\leq  C_{\varepsilon,N}\Big(\|\langle t\rangle^{\frac{1+\varepsilon}2}g\|_{L_t^2( H^{N})}
+\gamma_{\varepsilon,N+1}(Y)\|\langle t\rangle^{\frac{1+\varepsilon}{2}}|D|^{-1}g\|_{L_t^2(H^1)}\Big)E_\e(Y).
\end{split}
\end{equation}}
\end{col}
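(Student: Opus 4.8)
\textbf{Plan of proof for Corollary \ref{S5col1}.}
The strategy is to upgrade the energy bounds of Theorem \ref{S5thm1} to time-weighted bounds by running essentially the same energy argument on the reformulated equation \eqref{LEW1}, but now with a time weight $\langle t\rangle$ inserted into the energy functional. Concretely, for the base case $N=0$ I would introduce the weighted energy $\langle t\rangle\bigl(\|X_t\|_{L^2}^2+\|\p_3X\|_{L^2}^2+\text{(lower order pieces controlled by }{\mathcal E}_0)\bigr)$ together with the corresponding $|D|^{-1}$-level quantity, differentiate in time, and note that the extra term produced by $\p_t\langle t\rangle = t\langle t\rangle^{-1}$ is of the form $\langle t\rangle^{-1}\cdot(\text{energy})$, which is \emph{favorable} because the parabolic-type dissipation $\|\nabla X_t\|_{L^2}^2$ coming from the $-\Delta X_t$ and the $-\na\cdot\p_t(\cA\cA^t\na X)$ terms, once one pays a factor of $\langle t\rangle$, must be balanced against $\langle t\rangle\|\nabla X_t\|_{L^2}^2$ on the left. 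The key algebraic point is that $\|X_t\|_{L^2}\lesssim\|\nabla X_t\|_{L^2}$ cannot be used directly, so one instead uses the already-established non-weighted control of ${\mathcal E}_0$ (hence of $\|X_t\|_{L_t^2(H^1)}$) to absorb the time-derivative-of-weight term after integrating in time: $\int_0^t \langle t'\rangle^{-1}\cdot t'\langle t'\rangle^{-1}\|X_t\|_{L^2}^2\,dt'\lesssim \int_0^t\|X_t(t')\|_{L^2}^2\,dt' \lesssim {\mathcal E}_0(t)^2$, which is bounded by the right-hand side of \eqref{S5eqE1}.

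The main steps, in order, are: (i) write the weighted energy identity for \eqref{LEW1} using the multiplier $\langle t\rangle X_t$ (and, at the $|D|^{-1}$ level, $\langle t\rangle |D|^{-2}X_t$ or the corresponding Fourier-side computation), exactly mirroring the structure used to prove Theorem \ref{S5thm1} but keeping track of all commutators with $\langle t\rangle$; (ii) handle the source terms $\wt f_0'(Y;X)$, $f_1'(Y;X)$, $f_2'(Y;X)$ and the forcing $g$ by the same estimates as in Theorem \ref{S5thm1}, now with the weight distributed as $\langle t\rangle^{(1+\varepsilon)/2}g$ on one side and $\langle t\rangle^{1/2}$ of dissipation on the other — the splitting of the weight $\langle t\rangle = \langle t\rangle^{1/2}\cdot\langle t\rangle^{1/2}$ between the two factors in each bilinear term is what forces the $\|\langle t\rangle^\frac12\nabla X_t\|_{L_t^2(H^{N+1})}$ quantity to appear on the left of \eqref{S5eqC1}--\eqref{S5eqC2}; (iii) for the $N\geq1$ case, apply $D^\alpha$ with $|\alpha|\leq N+1$, commute through the variable-coefficient principal part $\na\cdot\p_t(\cA\cA^t\na\,\cdot)$ (this is precisely why the reformulation \eqref{LEW1} rather than \eqref{LE1} is used — the top-order term $\na\cdot(\cA\cA^t\na X_t)$ stays in divergence form and can be integrated by parts against $\langle t\rangle D^\alpha X_t$ without losing a derivative), and estimate the resulting commutators using Proposition \ref{S4.1prop1}, Proposition \ref{S4.2prop1} and the product laws; (iv) close the Gronwall-type inequality, producing the exponential factor $E_\e(Y)$ and the polynomial factor $\gamma_{\varepsilon,N+1}(Y)$ exactly as in Theorem \ref{S5thm1}.

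The main obstacle I anticipate is step (iii): the commutator $[D^\alpha,\cA\cA^t]\na X_t$ generated by the principal part, weighted by $\langle t\rangle$, must be controlled in $L^2_t L^2$ by $\|\langle t\rangle^\frac12\nabla X_t\|_{L_t^2(H^{N+1})}$ times factors of $Y$ that are already absorbed into $E_\e(Y)$ or $\gamma_{\varepsilon,N+1}(Y)$; because $\cA=(Id+\na_yY)^{-1}$ depends on $\na Y$ only, the time-derivative $\p_t(\cA\cA^t)$ brings in $\na Y_t$, and one must check that $\|Y_t\|$-type norms appearing here are exactly those already present in the hypotheses \eqref{S5thmassum} and in $E_\e(Y)$ — in particular that no norm of $Y_t$ stronger than $|Y_t|_{1+\varepsilon,2}$ or $\|Y_t\|_{0,0}$ is needed. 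Once the bookkeeping of which $Y$-norms appear is done consistently with Theorem \ref{S5thm1}, the weighted estimate follows by the same Gronwall argument; the genuinely new content is purely the distribution of the time weight, and the fact that the weight-derivative term is lower order and harmless. I therefore expect the proof to be short, essentially a remark that the argument of Theorem \ref{S5thm1} goes through verbatim with $\langle t\rangle^{1/2}$ inserted, modulo the elementary observation about $\p_t\langle t\rangle$.
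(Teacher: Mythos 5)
Your proposal is correct and follows essentially the paper's route: the paper simply multiplies the differential inequalities already derived in the proof of Theorem \ref{S5thm1} (namely \eqref{S5eq6}, \eqref{S5eq10} and \eqref{S5H10}) by $\langle t\rangle$, integrates in time, and absorbs the term coming from $\p_t\langle t\rangle$ via the unweighted $L^2_t$ bounds contained in ${\mathcal E}_0$, ${\mathcal E}_N$ — exactly the mechanism you identify. The only simplification relative to your plan is that no new Gronwall argument and no recourse to the reformulation \eqref{LEW1} are needed for the weighted step, since the problematic piece $\nabla\cdot\bigl((\cA\cA^t-Id)\nabla X_t\bigr)$ is absorbed directly into the dissipation using the smallness $|\nabla Y|_0\leq\delta_1$, and the conclusions of Theorem \ref{S5thm1} are invoked as a black box.
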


\begin{prop}\label{S5thm2}
{\sl Under the assumptions of Theorem \ref{S5thm1}, we have
for $N\geq0$,
\begin{equation}\label{S5eqC2b}
\begin{split}
\|\na X_t\|_{1,N} &\leq  C_{\e,N}\big(\||D|^{-1}g\|_{1+\e,N+2}+\|\nabla Y\|_{N+2}\||D|^{-1}g\|_{1+\e,2}\big)\\
&\ +C_{\e,N}\Bigl(\|\w{t}^{\f{1+\e}2}g\|_{L^2_t(H^{N+1})}
+\gamma_{\e,N+2}(Y)\|\langle t\rangle^{\frac{1+\varepsilon}{2}}|D|^{-1}g\|_{L_t^2(H^1)}\\
&\qquad\qquad+ \|\nabla Y\|_{0,N+2} \big(\|\w{t}^{\f{1+\e}2}g\|_{L^2_t(H^1)}
+\gamma_{\e,2}(Y)\|\langle t\rangle^{\frac{1+\varepsilon}{2}}|D|^{-1}g\|_{L_t^2(H^1)}\big)\Bigr)E_\e(Y).
\end{split}
\end{equation}}
\end{prop}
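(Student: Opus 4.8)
The plan is to establish \eqref{S5eqC2b} by the same Duhamel/fundamental–solution scheme used for Propositions~\ref{S0prop1}--\ref{S2.3prop1}, now powered by the sharp rate-$\langle t\rangle^{-1}$ linear estimate \eqref{S2eq15'as}, feeding in the energy bound of Corollary~\ref{S5col1} and the source-term bound of Proposition~\ref{S4.2prop1} for the nonlinear contributions. Since $X$ solves \eqref{LE1} with vanishing Cauchy data, Duhamel's principle gives $\nabla X_t(t)=\int_0^t \nabla\partial_t V(t-s)[F(s)]\,ds$ with $F\eqdefa f_0'(Y;X)+f_1'(Y;X)+f_2'(Y;X)+g$, where $V(\tau)h$ denotes the solution of \eqref{S2eq1} with data $(0,h)$. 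Writing $\nabla\partial_t V=-R\,\Delta\partial_t V\,|D|^{-1}$ with $R$ a vector Riesz transform, \eqref{S2eq15'as} (applied with the first datum equal to $0$) yields $\|\nabla\partial_t V(\tau)[h]\|_{H^N}\le C_N\langle\tau\rangle^{-1}\||D|^{-1}h\|_{H^{N+2}}$, and when $h=\nabla\cdot w$, so that $|D|^{-1}\nabla\cdot$ is again a Riesz transform, $\|\nabla\partial_t V(\tau)[\nabla\cdot w]\|_{H^N}\le C_N\langle\tau\rangle^{-1}\|w\|_{H^{N+2}}$; these, together with the $L^2_t$-in-time companions \eqref{S2eq13as}--\eqref{S2eq16}, are the only facts about \eqref{S2eq1} the argument needs.

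First I would insert the bound above into the Duhamel integral for $g$ and for the ``regular'' part of $F$ and split $\int_0^t=\int_0^{t/2}+\int_{t/2}^t$. On $[0,t/2]$ one has $\langle t-s\rangle\gtrsim\langle t\rangle$, so $\langle t\rangle\langle t-s\rangle^{-1}\lesssim1$ and the $L^1_s$ integral of $\langle s\rangle^{-1-\e}$ against the source produces the weight-$(1+\e)$ norms; from $g$ directly this gives $\||D|^{-1}g\|_{1+\e,N+2}$, and from the product rule for $\mathcal{A}\mathcal{A}^t-Id$ (whose $H^{N+2}$ norm is $\lesssim\|\nabla Y\|_{N+2}$ and whose $L^\infty$ norm is $\lesssim\|\nabla Y\|_{\dot B^{3/2}_{2,1}}\le\delta_1$) it gives $\|\nabla Y\|_{N+2}\||D|^{-1}g\|_{1+\e,2}$, i.e. the first line of \eqref{S5eqC2b}. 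On $[t/2,t]$ one has $\langle s\rangle\gtrsim\langle t\rangle$, so the weight $\langle t\rangle$ is transferred onto the source and one is left with $\int_{t/2}^t\langle t-s\rangle^{-1}\,ds\sim\log\langle t\rangle$; absorbing this logarithm forces the slightly stronger weights $\langle t\rangle^{1+\e}$, $\langle t\rangle^{(1+\e)/2}$ and a constant $C_{\e,N}$, and converts (by Cauchy--Schwarz in $s$) the $L^1_s$ bounds into $\|\langle t\rangle^{(1+\e)/2}\,\cdot\,\|_{L^2_t}$ bounds. The contributions of $f_1'(Y;X)$, $f_2'(Y;X)$ and the good part of $f_0'(Y;X)$ are then controlled via Proposition~\ref{S4.2prop1}, after which every $X$-norm appearing there is replaced by its bound from Corollary~\ref{S5col1}; tracking the $Y$-prefactors reproduces the factors $\gamma_{\e,N+2}(Y)\,E_\e(Y)$ and $\|\nabla Y\|_{0,N+2}\bigl(\cdots\bigr)E_\e(Y)$ in \eqref{S5eqC2b}.

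The step I expect to be the main obstacle is the term $\nabla\cdot\bigl((\mathcal{A}\mathcal{A}^t-Id)\nabla X_t\bigr)$, the principal piece of $f_0'(Y;X)$ in \eqref{f0'}: it carries $\nabla X_t$ itself, so it cannot be treated as an external source without circularity, and since $\mathcal{A}\mathcal{A}^t-Id$ is small but does not decay in time, even feeding it into the divergence-form linear estimate above only bounds the $[t/2,t]$-part of its Duhamel integral by $\int_{t/2}^t\langle t-s\rangle^{-1}\|(\mathcal{A}\mathcal{A}^t-Id)\nabla X_t(s)\|_{H^{N+2}}\,ds$, whose crude estimate loses the $\langle t\rangle^{-1}$ rate. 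The way around it is the reformulation \eqref{LEW1}, in which this term is absorbed into the (variable-coefficient) principal operator $\nabla\cdot\partial_t(\mathcal{A}\mathcal{A}^t\nabla X)$; the surviving source $\widetilde f_0'(Y;X)$ of \eqref{wf0'} then involves only $\nabla X$ --- uniformly bounded by $\mathcal{E}_{N+1}$ of Corollary~\ref{S5col1} --- together with the factor $\partial_t(\mathcal{A}\mathcal{A}^t)\sim\nabla Y_t$ and $\nabla Y_t$, all of which are either already controlled or carry an extra half power of decay. Concretely, keeping the constant-coefficient propagator one rewrites the bad term as $\nabla\cdot\partial_t\bigl((\mathcal{A}\mathcal{A}^t-Id)\nabla X\bigr)$, expands the $\partial_t$, estimates the piece where $\partial_t$ falls on $\nabla X$ using the weighted bound $\|\langle s\rangle^{1/2}\nabla X_s\|_{L^2_s(H^{N+1})}$ from \eqref{S5eqC2}, and uses the smallness $\|\nabla Y\|_{\dot B^{3/2}_{2,1}}\le\delta_1$ of \eqref{S4.1prop1assum} to absorb the residual top-order occurrence of $\nabla X_t$ into the left-hand side. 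I expect this closing manoeuvre --- arranging the Moser-type product estimates so that the small factor $\|\nabla Y\|_{\dot B^{3/2}_{2,1}}$ always multiplies the quantity being bounded while the non-small $\|\nabla Y\|_{0,N+2}$ multiplies only already-estimated lower-order quantities, and keeping everything within the two-derivative margin built into the right side of \eqref{S5eqC2b} (the $N+2$ in $\||D|^{-1}g\|_{1+\e,N+2}$ and the $H^{N+1}$ in $\|\langle t\rangle^{(1+\e)/2}g\|_{L^2_t(H^{N+1})}$, both reflecting the loss in \eqref{S2eq15'as} and in the Riesz reduction) --- to be the technical core of the proof.
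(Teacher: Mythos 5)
You correctly isolate the critical difficulty — the self-referential term $\nabla\cdot\bigl((\cA\cA^t-Id)\nabla X_t\bigr)$ — but your proposed resolution of it does not close, and this is precisely where the paper departs from the Duhamel strategy you follow. In your physical-space scheme the only decay available for the propagator is $\|\nabla\p_t V(\tau)h\|_{H^N}\lesssim\langle\tau\rangle^{-1}\||D|^{-1}h\|_{H^{N+2}}$, and $\langle\tau\rangle^{-1}$ is not integrable; so the best bound for the Duhamel contribution of the bad term on $[t/2,t]$ is $\delta_1\log\langle t\rangle\,\sup_{s\le t}s\|\nabla X_t(s)\|_{H^{N+2}}$, and $\delta_1\log\langle t\rangle$ cannot be absorbed into the left-hand side for large $t$ (nor can you place a $\langle s\rangle^{-1-\e}$ weight on this term to kill the logarithm, since $\cA\cA^t-Id$ does not decay in time — that device only works for the external source $g$ and the already-controlled pieces of $f'$). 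Your "concrete" fix is moreover circular: rewriting the term as $\nabla\cdot\p_t\bigl((\cA\cA^t-Id)\nabla X\bigr)$ and then "expanding the $\p_t$" simply reproduces $\nabla\cdot\bigl((\cA\cA^t-Id)\nabla X_t\bigr)$ as "the piece where $\p_t$ falls on $\nabla X$", which is the term you set out to estimate; and falling back on Corollary \ref{S5col1} via Cauchy--Schwarz, $\int_{t/2}^t\langle t-s\rangle^{-1}\|\cdot\|ds\lesssim\delta_1\langle t\rangle^{-1/2}\|\langle s\rangle^{1/2}\nabla X_t\|_{L^2_s(H^{N+2})}$, yields only the rate $t^{-1/2}$ (with a derivative loss), not the claimed $t^{-1}$.

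The paper's actual proof (Section \ref{Sect6.3}, Proposition \ref{S3-Lem1}) is designed exactly to avoid this logarithm: it applies the anisotropic blocks $\Djl$ to \eqref{appeq2}, performs energy estimates block by block, and in each of the two frequency regimes ($j\le\f{\ell+1}2$ and $j>\f{\ell+1}2$) obtains an exponential kernel $2^{2j}e^{-c(t-s)2^{2j}}$ resp. $2^{2(\ell-j)}e^{-c(t-s)2^{2(\ell-j)}}$ whose $L^1_s$-mass is $O(1)$ \emph{uniformly in the block}. The self-referential term is then kept as a source and absorbed by smallness in the Chemin--Lerner norms $\|\nabla X_t\|_{\wt{L}^1_t(\dH^{0,0})}$ and $\|t\nabla X_t\|_{\wt{L}^\infty_t(\dH^{0,0})}$ (see \eqref{appeq15}--\eqref{appeq15'} and \eqref{appeq26}), with a clean factor $\delta_1$ and no logarithmic loss — the $\langle\tau\rangle^{-1}$ kernel only appears \emph{after} summing the blocks, which is too late to hurt. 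So while your treatment of $g$, $f_1'$, $f_2'$ and $\wt f_0'$ via the split $\int_0^{t/2}+\int_{t/2}^t$ and Propositions \ref{S4.2prop1}, \ref{S5col1} would reproduce the right-hand side of \eqref{S5eqC2b}, the argument as written does not establish the $\langle t\rangle^{-1}$ decay: the missing idea is the frequency-localized (Littlewood--Paley/Chemin--Lerner) reformulation of the fixed-point estimate, which is the entire content of Proposition \ref{S3-Lem1}.
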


We  emphasize that the decay estimates \eqref{S5eqC2b} can not be obtained by energy estimate.
In fact,  we will have to exploit anisotropic Littlewood-Paley analysis and the dissipative properties of the linear system \eqref{S1eq1}.
The proof of Proposition \ref{S5thm2} will be presented in Section \ref{Sect6.3}, which is of independent interest.

Let us summarize that under the assumptions \eqref{S4.1prop1assum}, \eqref{S5thmassum}
and if we assume  moreover
\begin{equation}\label{S5eqC4}
|\p_3Y|_{\frac12+\varepsilon,1}^{\frac43}\|\p_3Y\|_{L_t^2(L^2)}^\frac23+|\p_3Y|_{\frac12+\varepsilon,1}^2+|Y_t|_{1+\varepsilon,2}\leq1,
\end{equation}
we have the following energy estimates: for $N\geq0$, (we make the convention $\|u\|_{k,-1}=0$)
\begin{equation}\label{S5eqC3}
\begin{split}
&{\mathcal E}_{N}+\|(X_t,\p_3X)\|_{\frac12,N+1}+\|\langle t\rangle^\frac12\nabla X_t\|_{L_t^2( H^{N+1})}+\|\na X_t\|_{1,N-1} \leq C_{\varepsilon,N}\Big(\||D|^{-1}g\|_{1+\e,N+1}\\
&\qquad +\|\langle t\rangle^{\frac{1+\varepsilon}2}g\|_{L_t^2( H^{N})}+\widetilde\gamma_{\varepsilon,N+1}(Y)\big(\||D|^{-1}g\|_{1+\e,2}+\|\langle t\rangle^{\frac{1+\varepsilon}{2}}|D|^{-1}g\|_{L_t^2(H^1)}\big)\Big) \with\\
&\widetilde\gamma_{\varepsilon,N+1}(Y)\leq C\big(1+|\p_3Y|_{\frac12+\varepsilon,N+1}+| Y_t|_{1+\varepsilon,N+2}+|\nabla Y|_{0,N+1}+\|\nabla Y\|_{0,N+1}\big).
\end{split}
\end{equation}

In Section \ref{Sect7}, we shall present the  estimates to the nonlinear source term $f(Y)$ given by \eqref{S1eq2}.
The purpose of  Section \ref{Sect8} is concerned with the related estimates for the second derivatives, $f''(Y;X,W),$ of the nonlinear functional $f(Y)$, computed in Section \ref{Sect4.2}.

With the preparations in the previous sections, we can now exploit Nash-Moser iteration scheme to prove Theorem \ref{Th1}.
In order to do so,
 we first recall some basic properties of the smoothing operator from \cite{Kl80, Kl82}.
Let $\chi(t)\in C^\infty(\R;[0,1])$ be such that
$$\chi(t)=1\text{ for }t\leq\frac12,\quad \chi(t)=0\text{ for }t\geq1.$$
Define for $\theta\geq1$, the (cutoff-in-time) operator
\begin{equation}\label{S8eq1}
S^{(1)}(\theta)Y(t,y)\eqdefa\chi\Big(\frac{t}{\theta}\Big)Y(t,y).
\end{equation}
Then we have
\begin{equation*}
\begin{split}
|S^{(1)}(\theta)Y|_{k,N}&\leq C_{k,s}\theta^{k-s}|Y|_{s,N},\quad \text{if }\ k\geq s\geq0,
\end{split}
\end{equation*}
and
\begin{equation*}
\begin{split}
|\big(1-S^{(1)}(\theta)\big)Y|_{s,N}&\leq C_{k,s}\theta^{-(k-s)}|Y|_{k,N}\quad \text{if }\ k\geq s\geq0.
\end{split}
\end{equation*}

For $\theta'\geq1$, we define the usual mollifying operator $S^{(2)}(\theta')$ in the space variables by
\begin{equation}\label{S8eq2}
S^{(2)}(\theta')Y(t,y)\eqdefa\widehat\varphi\Big(\frac{D_y}{\theta'}\Big)Y(t,y)=(\theta')^3\int_{\R^3} \varphi(\theta'(y-z))Y(t,z)dz,
\end{equation}
where $\varphi\in {\mathcal S}(\R^3)$ satisfies
$$\widehat\varphi(\xi)=1\text{ for }|\xi|\leq\frac12,\quad \widehat\varphi(\xi)=0\text{ for }|\xi|\geq1,$$
so that
$$\int_{\R^3}\varphi(y)dy=1,\quad \int_{\R^3}y^\alpha\varphi(y)dy=0,\ \forall\ |\alpha|>0.$$
 We then have
\begin{equation*}
\begin{split}
|S^{(2)}(\theta')Y|_{k,N}&\leq C_{N,M}(\theta')^{N-M}|Y|_{k,M}\quad \text{if } \ N\geq M\geq0,
\end{split}
\end{equation*}
as well as
\begin{align*}
|\big(1-S^{(2)}(\theta')\big)Y|_{k,M}&\leq C_{N,M}(\theta')^{-(N-M)}|Y|_{k,N}\quad \text{if}\  N\geq M\geq0.
\end{align*}

Define the operator
\begin{equation}\label{S8eq3}
S(\theta,\theta')\eqdefa S^{(1)}(\theta)S^{(2)}(\theta'),\quad\mbox{for}\quad \theta,\ \theta'\geq1.
\end{equation}
Then  it follows that
\begin{equation}\label{S8eq4}
\begin{split}
|S(\theta,\theta')Y|_{k,N}&\leq C\theta^{k-s}(\theta')^{N-M}|Y|_{s,M},\\
\|\langle t\rangle^kS(\theta,\theta')g\|_{L_t^p(H^N)}&\leq C\theta^{k-s}(\theta')^{N-M}\|\langle t\rangle^sg\|_{L_t^p(H^M)}\ \ \ \text{if }\ k\geq s\geq0,\ N\geq M\geq 0.
\end{split}
\end{equation}
Moreover, due to  $$1-S(\theta,\theta')=\big(1-S^{(1)}(\theta)\big)+S^{(1)}(\theta)\big(1-S^{(2)}(\theta')\big),$$
one has
\begin{equation}\label{S8eq5}
\begin{split}
|\big(1-S(\theta,\theta')\big)Y|_{s,M}&\leq C\theta^{-(k-s)}|Y|_{k,M}+C(\theta')^{-(N-M)}|Y|_{s,N},\\
\|\langle t\rangle^s\big(1-S(\theta,\theta')\big)g\|_{L_t^p(H^M)}&\leq C\theta^{-(k-s)}\|\langle t\rangle^kg\|_{L_t^p(H^M)}+C(\theta')^{-(N-M)}\|\langle t\rangle^sg\|_{L_t^p(H^N)}
\end{split}
\end{equation}
provided that $k\geq s\geq0,\ N\geq M\geq0.$

Let us denote
$$\Phi(Y)\eqdefa Y_{tt}-\Delta Y_t-\p_3^2Y-f(Y),$$
for $f$ given by \eqref{S1eq2}. Then we can write \eqref{S1eq1} equivalently as
\begin{equation}\label{S9.1eq1}
\Phi(Y)=0,\quad
Y(0,y)=Y^{(0)},\quad Y_t(0,y)=Y^{(1)}.
\end{equation}
We aim to solve \eqref{S9.1eq1} via Nash-Moser iteration scheme in Section \ref{Sect10}.

Let us define $Y_0$ via
\begin{equation}\label{S9.1eq2}
\begin{cases}
\p_{tt}Y_{0}-\Delta \p_tY_{0}-\p_3^2Y_0=0,\\
Y_0(0,y)=Y^{(0)},\quad \p_tY_{0}(0,y)=Y^{(1)}.
\end{cases}
\end{equation}
Inductively, assume that we already determine $Y_p$.
In order to define $Y_{p+1}$, we  introduce a mollified version of $\Phi'(Y_p)$ as follows
\begin{equation}\label{S9.1eq3}
L_pX\eqdefa \Phi'(S_pY_p)X=X_{tt}-\Delta X_t-\p_3^2X-f'(S_pY_p;X),
\end{equation}
where $S_p$ is the smoothing operator defined by
\begin{equation}\label{S9.1eq4}
\begin{split}
S_p&=S(\theta_p,\theta_p'),\with  \theta_p=2^{p},\quad \theta_p'=\theta_p^{\bar\varepsilon}=2^{\bar\varepsilon p},\andf p\geq0,
\end{split}
\end{equation}
where $S(\theta,\theta')$ is defined in \eqref{S8eq3} and $\bar\varepsilon>0$ is a small constant to be chosen later on.
Then it follows from \eqref{S8eq4} and  \eqref{S8eq5} that
\begin{equation}\tag{S I}\label{S9eqSI}
\begin{split}
|S_pY|_{k,N}&\leq C\theta_p^{k-s}\theta_p^{\bar\varepsilon(N-M)}|Y|_{s,M}\\
\|\langle t\rangle^kS_pg\|_{L_t^2(H^N)}&\leq C\theta_p^{k-s}\theta_p^{\bar\varepsilon(N-M)}\|\langle t\rangle^sg\|_{L_t^2(H^M)}\\
\triplenorm{S_pg}_{L_t^1(\d,N)}&\leq C\theta_p^{\bar\varepsilon(N-M)}\triplenorm{g}_{L_t^1(\d,M)},
\end{split}
\end{equation}
and
\begin{equation}\tag{S II}\label{S9eqSII}
\begin{split}
|(1-S_p)Y|_{0,0}&\leq C_{k,N}\big(\theta_p^{-k}|Y|_{k,0}+\theta_p^{-\bar\varepsilon N}|Y|_{0,N}\big),\\
\|\langle t\rangle^s(1-S_p)g\|_{L_t^2(L^2)}&\leq C_{k,N}\big(\theta_p^{-(k-s)}\|\langle t\rangle^kg\|_{L_t^2(L^2)}+\theta_p^{-\bar\varepsilon N}\|\langle t\rangle^sg\|_{L_t^2(H^N)}\big),
\end{split}
\end{equation}
for $k\geq s\geq0$, $N\geq M\geq0,$ where the norm $\triplenorm{\cdot}_{L_t^1(\d,N)}$ is given by \eqref{S2eq19''}.

\begin{rmk}\label{S9rmk1} According to Remark \ref{S3rmk1} below, we can write
$$f'(S_pY_p;X)=f_0'(S_pY_p;X)+f_1'(S_pY_p;X)+f_2'(S_pY_p;X)$$ where
\begin{equation*}
\begin{split}
&f_0'(S_pY_p;X)=F'_{0,U}(S_p\nabla\p_t Y_{p},S_p\nabla Y_p)\nabla X_t+F'_{0,V}(S_p\nabla\p_t Y_{p},S_p\nabla Y_p)\nabla X,\\
&f_1'(S_pY_p;X)=F'_U(S_p\p_3Y_p,S_p\nabla Y_p)\p_3X+F'_V(S_p\p_3Y_p,S_p\nabla Y_p)\nabla X,\\
&f_2'(S_pY_p;X)=F'_U(S_p\p_tY_{p},S_p\nabla Y_p)X_t+F'_V(S_p\p_tY_{p},S_p\nabla Y_p)\nabla X,
\end{split}
\end{equation*}
where the functionals $F_0'$, $F'$ will be presented  in Remark \ref{S3rmk1}.
\end{rmk}

Following H\"ormander's version of Nash-Moser Scheme (\cite{Hormander}) (see also Klainerman's seminar papers \cite{Kl80, Kl82}), we define
\begin{equation}\label{S9.1eq5}
Y_{p+1}=Y_p+X_p,\with X_p=L_p^{-1}g_p,
\end{equation}
where $L_p^{-1}$ is a right inverse operator of $L_p$ with zero initial data, that is:  $X=L_p^{-1}g_p$ solves
\begin{equation}\label{S9.1eq6}
\begin{cases}
L_pX=g_p\quad \text{with $L_p$ given by \eqref{S9.1eq3}},\\
X(0,y)=0,\quad X_t(0,y)=0.
\end{cases}
\end{equation}
In order to prove the convergence of the scheme, we define
\begin{equation}\label{S9.1eq7}
\begin{split}
e_p'&\eqdefa\big(\Phi'(Y_p)-L_p\big)X_p,\quad
e_p''\eqdefa\Phi(Y_{p+1})-\Phi(Y_p)-\Phi'(Y_p)X_p,\andf
e_p\eqdefa e_p'+e_p'',
\end{split}
\end{equation}
from which, we infer
\begin{align*}
\Phi(Y_{p+1})-\Phi(Y_p)&=\Phi'(Y_p)X_p+e_p''=\Phi'(Y_p)L_p^{-1} g_p+e_p''\\
&=\big(\Phi'(Y_p)-L_p\big)L_p^{-1} g_p+g_p+e_p''=e_p'+e_p''+g_p.
\end{align*}
As a result, it comes out
\begin{equation}\label{S9.1eq11} \Phi(Y_{p+1})-\Phi(Y_p)=e_p+g_p \andf
\Phi(Y_{p+1})-\Phi(Y_0)=\sum_{j=0}^p(e_j+g_j).
\end{equation}
To achieve that the above limit as $p\to\infty$ is equal to $-\Phi(Y_0)$, we set
\begin{equation}\label{S9.1eq8}
\sum_{j=0}^p g_j+S_pE_p=-S_p\Phi(Y_0)\quad \text{with}\quad E_p\eqdefa\sum_{j=0}^{p-1}e_j.
\end{equation}
The last relation defines $g_p$ as follows
\begin{equation}\label{S9.1eq9}
\begin{split}
g_0&=-S_0\Phi(Y_0),\andf
g_p=-(S_p-S_{p-1})E_{p-1}-S_pe_{p-1}-(S_p-S_{p-1})\Phi(Y_0).
\end{split}
\end{equation}

\begin{rmk}\label{S9rmk2}
By virtue of  Remarks \ref{S9rmk1}, \ref{S3rmk1}, \ref{S7rmk1},
using a Taylor formula to \eqref{S9.1eq7}, we have
\begin{equation*}
\begin{split}
e_p'&=-\int_0^1f''\big(sY_p+(1-s)S_pY_p;(1-S_p)Y_p,X_p\big)ds,\andf\\
e_p''&=-\int_0^1(1-s)f''\big(sY_{p+1}+(1-s)Y_p;X_p,X_p\big)ds,
\end{split}
\end{equation*}
where $f''$ should be understood in the way explained in Remark \ref{S7rmk1}.
Then we have
\begin{equation}\label{S9.1eq10}
\begin{split}
&e_p=e_{p,0}+e_{p,1}+e_{p,2},\with e_{p,m}\eqdefa e_{p,m}'+e_{p,m}''\andf\\
&e_{p,m}'\eqdefa-\int_0^1f_m''\big(sY_p+(1-s)S_pY_p;(1-S_p)Y_p,X_p\big)ds,\\
&e_{p,m}''\eqdefa-\int_0^1(1-s)f_m''\big(Y_p+sX_p;X_p,X_p\big)ds,\quad\qquad m=0,1,2.
\end{split}
\end{equation}
\end{rmk}

Let us fix the small constants:  $\varepsilon$, $\bar\varepsilon$, $\delta>0,$  so that
\beq \label{S9eqebd} \bar\varepsilon\leq\frac1{20},\quad \delta+5\bar\varepsilon\leq \frac14,\quad \delta+\varepsilon+4\bar\varepsilon\leq\frac14.\eeq
Let us take
\begin{equation}\label{S9eqP}
\begin{split}
&\gamma=\frac14-\bar\varepsilon,\quad\beta=\frac14+\bar\varepsilon,
\end{split}
\end{equation}
and $N_0\in\N$ is chosen such that
\begin{equation}\label{S9eqP'}
\begin{split}
\bar\varepsilon N_0\geq \frac12=\gamma+\beta.
\end{split}
\end{equation}
In Section \ref{Sect10}, we shall inductively prove the following statements:
\begin{prop} \label{S9prop1}
{\sl Let  $\delta_1>0$ be determined by Propositions \ref{S4.1prop1}, \ref{S4.2prop1}, \ref{S6prop1}, \ref{S6prop2}, \ref{S7.2prop1}, \ref{S7.3prop1} and Theorem \ref{S5thm1}.
Then for the constants $\beta, \gamma, N_0, \e, \bar{\e}$ and $\d$ given by (\ref{S9eqebd}-\ref{S9eqP'}), for any $0\leq N\leq N_0,$ we have
\begin{equation}\tag{P$1,p$}
\begin{split}
&\bigl\||D|^{-1}(\p_3X_p,\p_tX_{p})\bigr\|_{0,N+2}+\|\nabla X_p\|_{0,N+1}+\|(\p_tX_{p},\p_3X_p)\|_{\frac12,N+1} \\
&\quad+\|\p_tX_{p}\|_{L_t^2(H^{N+2})}+\bigl\|(\p_3X_p,\langle t\rangle^\frac12\nabla \p_tX_{p})\bigr\|_{L_t^2(H^{N+1})}+\|\nabla \p_tX_{p}\|_{1,N-1}\leq \eta\theta_p^{-\beta+\bar \varepsilon N};
\end{split}
\end{equation}
and
\begin{equation}\tag{P$2,p$}
\begin{split}
|\p_3X_p|_{k,N}&\leq\eta\theta_p^{k-\frac12-\gamma+\bar\varepsilon N}\qquad \ \text{if}\quad \frac12\leq k\leq1,\\
|\p_tX_{p}|_{k,N}&\leq\eta\theta_p^{k-(1-\delta)-\gamma+\bar\varepsilon N}\quad\text{if}\quad 1-\d\leq k\leq \frac32-\delta,\\
|X_p|_{k,N}&\leq\eta \theta_p^{k-\gamma+\bar\varepsilon N}\qquad\quad\ \ \text{if}\quad 0\leq k\leq\frac12;
\end{split}
\end{equation}
and
\begin{equation}\tag{P$3,p$}
\begin{split}
&\|\nabla Y_p\|_{L^\infty_t(\dot B^\frac32_{2,1})}\leq \delta_1,\quad \|\nabla Y_p\|_{L^\infty_t(\dot B^\frac52_{2,1})}\leq1,\quad \|\p_t Y_p\|_{0,0}\leq1,\quad |\p_tY_p|_{0,1}\leq1,\\
&\qquad\qquad|\p_3Y_p|_{\frac12+\varepsilon,1}^{\frac43}\|\p_3Y_p\|_{L_t^2(L^2)}^\frac23+|\p_3Y_p|_{\frac12+\varepsilon,1}^2+|\p_tY_{p}|_{1+\varepsilon,2}\leq1.
\end{split}
\end{equation}}
\end{prop}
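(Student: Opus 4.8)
The plan is to prove Proposition \ref{S9prop1} by induction on $p$, following H\"ormander's version of the Nash-Moser scheme. The base case $p=0$ for $Y_0$ defined by \eqref{S9.1eq2} follows directly from the linear estimates of Proposition \ref{S0prop1}, applied to the initial data $(\bar Y^{(0)},Y^{(1)})$ whose norms are controlled by $\eta$ via \eqref{S1eq4}; one checks each of (P$1,0$), (P$2,0$), (P$3,0$) by matching the decay rates in \eqref{S2eq3as}--\eqref{S2eq15'as} against the exponents appearing in the statement, using that $\theta_0 = 1$ so the right-hand sides are just constant multiples of $\eta$ (and $\eta$ small handles (P$3,0$)). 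The inductive step is the heart of the matter: assuming (P$1,j$), (P$2,j$), (P$3,j$) for all $j\leq p$, one must derive (P$1,p+1$), (P$2,p+1$), (P$3,p+1$).

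The key steps of the inductive step, in order, are as follows. First I would establish bounds on $S_pY_p$ in the various norms using (S I), (S II) together with the inductive hypotheses; in particular (P$3,p$) combined with the smoothing estimates guarantees that $Y = S_pY_p$ satisfies the structural hypotheses \eqref{S4.1prop1assum}, \eqref{S5thmassum}, \eqref{S5eqC4} required by Propositions \ref{S4.1prop1}, \ref{S4.2prop1} and Theorem \ref{S5thm1}, so that $L_p^{-1}$ is well-defined and the linear machinery applies. Second, I would estimate the error terms $e_p', e_p''$ via the Taylor formula of Remark \ref{S9rmk2}: this requires the second-derivative bounds on $f''$ from Section \ref{Sect8}, fed the arguments $(1-S_p)Y_p$ and $X_p$ (respectively $X_p, X_p$), and then one uses (S II) to convert $(1-S_p)Y_p$ into a gain of a negative power of $\theta_p$, and (P$1,p$)--(P$2,p$) to control $X_p$. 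This yields a bound of the form $\triplenorm{e_p}_{\ldots}\lesssim \eta^2\theta_p^{-\alpha+\bar\varepsilon N}$ for a suitable positive $\alpha$ coming from the arithmetic of the exponents in \eqref{S9eqebd}. Third, I would estimate $g_p$ from \eqref{S9.1eq9} using the bound on $e_{p-1}$, the bound on $E_{p-1}=\sum_{j<p-1}e_j$ (a convergent geometric-type sum), the smoothing/difference estimates (S I), (S II) applied to $S_p - S_{p-1}$, and the bound on $\Phi(Y_0)$ from the nonlinear source estimates of Section \ref{Sect7}. Fourth, with $g_p$ controlled I would apply Corollary \ref{S5col1}, Proposition \ref{S5thm2} and \eqref{S5eqC3} (the energy and decay estimates for $X_p = L_p^{-1}g_p$) to obtain the bounds claimed in (P$1,p+1$); for (P$2,p+1$) I would instead combine Proposition \ref{S2.3prop1} (the cutoff-in-time decay estimate, with $\theta \sim \theta_p$) applied to $g_p$, using that $g_p$ is supported in time in $[0,\theta_p]$ by construction of $S_p^{(1)}$. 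Finally (P$3,p+1$) follows from (P$1,p+1$) and (P$2,p+1$) by summing the telescoping series $Y_{p+1} = Y_0 + \sum_{j\leq p} X_j$, where the geometric decay $\theta_p^{-\beta+\bar\varepsilon N}$ with $\beta > \bar\varepsilon N_0/\cdots$ forced by \eqref{S9eqP'} makes the sum small, absorbing it into $\delta_1$ and the various constants by taking $\eta$ small; embedding $\dot B^{3/2}_{2,1}\hookleftarrow H^{N}$ type inequalities convert the $H^s$ bounds in (P$1$) into the Besov bounds in (P$3$).

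The main obstacle I expect is the bookkeeping of exponents: one must verify that for every norm occurring in (P$1$)--(P$3$), the loss of derivatives in the smoothing operators, the quadratic gain in the error terms, the $\theta_p^{\bar\varepsilon(N-M)}$ factors, and the decay rates supplied by the linear propagator all combine to give a net negative power of $\theta_p$ (so the scheme converges) while keeping the $\bar\varepsilon N$ growth in $N$ exactly as stated. This is precisely what the constraints \eqref{S9eqebd}, \eqref{S9eqP}, \eqref{S9eqP'} are engineered to make work, and the delicate point is that the anisotropic decay rates ($3/4$ for $\p_3 Y$, $5/4$ for $Y_t$, $1/4$ for $Y$, as reflected in (P$2$)) must be propagated through the nonlinearity without degradation — this is why one needs the refined estimates of Propositions \ref{S4.1prop1}, \ref{S4.2prop1} expressed in terms of $\p_3 Y$ and $Y_t$ separately rather than $\nabla Y$ alone, and why the term $\nabla\cdot((\cA\cA^t-Id)\nabla X_t)$ had to be moved to the left-hand side in \eqref{LEW1}. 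A secondary technical point is ensuring the right inverse $L_p^{-1}$ with zero initial data genuinely exists and satisfies the stated estimates; this is where the hypothesis (P$3,p$) on $S_pY_p$ enters, guaranteeing the coefficient matrices $\cA\cA^t$ stay close enough to the identity for the energy method of Theorem \ref{S5thm1} to close.
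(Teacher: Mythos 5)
Your proposal follows essentially the same route as the paper: a simultaneous induction in $p$ in which one first transfers the inductive bounds on $\{X_j\}_{j\leq p}$ to $S_{p+1}Y_{p+1}$ and $(1-S_{p+1})Y_{p+1}$ via the smoothing estimates, then bounds $e_p$ through the Taylor formula and the $f''$ estimates of Section \ref{Sect8}, then $g_{p+1}$ from \eqref{S9.1eq9}, and finally closes (P$1$) with the energy/decay machinery of Theorem \ref{S5thm1}, Corollary \ref{S5col1}, Proposition \ref{S5thm2} and (P$2$) with Proposition \ref{S2.3prop1}. The one point to fix is the logical order at the end: (P$3,p+1$) cannot "follow from (P$1,p+1$) and (P$2,p+1$)" — it must be derived \emph{first}, directly from the telescoping sum $Y_{p+1}=Y_0+\sum_{j\leq p}X_j$ and the inductive hypotheses on $X_j$, $j\leq p$ (as your own formula indicates), because the energy and decay estimates that produce (P$1,p+1$) and (P$2,p+1$) are applied to $X_{p+1}=L_{p+1}^{-1}g_{p+1}$ with $L_{p+1}=\Phi'(S_{p+1}Y_{p+1})$, and they require the structural hypotheses \eqref{S4.1prop1assum}, \eqref{S5thmassum}, \eqref{S5eqC4} on $S_{p+1}Y_{p+1}$, i.e.\ precisely (P$3,p+1$); as written your closing step would be circular.
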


Recall the convention that $\|u\|_{k,-1}=0$.
We shall deduce the following propositions  from Proposition \ref{S9prop1}.

\begin{prop}\label{S9col1}
{\sl Under the assumptions of Proposition \ref{S9prop1}, we have, for $N\geq0$,
\begin{equation}\tag{I)  (i}
\begin{split}
|S_{p+1}\p_3Y_{p+1}|_{k,N}&\leq C_{k,N}\eta \theta_{p+1}^{k-\frac12-\gamma+\bar\varepsilon N}\ \ \qquad
\text{if\ \ $k\geq\frac12$,}\qquad\text{ $k-\frac12-\gamma+\bar\varepsilon N\geq\bar \varepsilon$,}\\
|S_{p+1}\p_tY_{p+1}|_{k,N}&\leq C_{k,N}\eta\theta_{p+1}^{k-(1-\delta)-\gamma+\bar \varepsilon N}\ \quad\text{if\ \ $k\geq1-\delta$,}\ \text{ $k-(1-\delta)-\gamma+\bar\varepsilon N\geq\bar \varepsilon$},\\
| S_{p+1}Y_{p+1}|_{k,N}&\leq C_{k,N}\eta\theta_{p+1}^{k-\gamma+\bar\varepsilon N} \qquad\qquad\text{if\ \ $k\geq0$,}\qquad\text{ $k-\gamma+\bar\varepsilon N\geq\bar \varepsilon$};
\end{split}
\end{equation}

\begin{equation}\tag{I)  (ii}
\begin{split}
&\D_{p+1}\eqdefa \bigl\||D|^{-1}S_{p+1}(\p_3Y_{p+1},\p_tY_{p+1})\bigr\|_{0,N+2}+\|S_{p+1}\nabla Y_{p+1}\|_{0,N+1}\\
&\ + \|S_{p+1}(\p_tY_{p+1},\p_3Y_{p+1})\|_{\frac12,N+1} +\bigl\|(S_{p+1}\p_3Y_{p+1},\langle t\rangle^\frac12S_{p+1} \nabla \p_tY_{p+1})\bigr\|_{L_t^2(H^{N+1})}\\
&\ +\|S_{p+1}\p_tY_{p+1}\|_{L_t^2(H^{N+2})} +\| S_{p+1}\nabla \p_tY_{p+1}\|_{1,N-1}\leq C_N\eta\theta_{p+1}^{-\beta+\bar \varepsilon N}\quad \text{if }-\beta+\bar\varepsilon N\geq\bar \varepsilon;
\end{split}
\end{equation}

\begin{equation}\tag{II)  (i}
\begin{split}
|S_{p+1}\p_3Y_{p+1}|_{k,N}&\leq C_{k,N}\eta\qquad\text{if\ \ $k\geq\frac12$,}\qquad\text{ $k-\frac12-\gamma+\bar\varepsilon N\leq-\bar \varepsilon$},\\
|S_{p+1}\p_tY_{p+1}|_{k,N}&\leq C_{k,N}\eta\qquad\text{if\ \ $k\geq1-\delta$,}\ \text{ $k-(1-\delta)-\gamma+\bar\varepsilon N\leq-\bar \varepsilon$},\\
| S_{p+1}Y_{p+1}|_{k,N}&\leq C_{k,N}\eta\qquad\text{if\ \ $k\geq0$,}\qquad\text{ $k-\gamma+\bar\varepsilon N\leq-\bar \varepsilon$};
\end{split}
\end{equation}

\begin{equation}\tag{II)  (ii}
\begin{split}
\D_{p+1}\leq C_N\eta\qquad
\text{if}\quad -\beta+\bar\varepsilon N\leq-\bar \varepsilon;
\end{split}
\end{equation}

\begin{equation}\tag{III}
\begin{split}
|(1-S_{p+1})\p_3Y_{p+1}|_{k,N}&\leq C_{k,N}\eta \theta_{p+1}^{k-\frac12-\gamma+\bar \varepsilon N} \
\qquad \text{if\ \ $\frac12\leq k\leq 1,\  N\leq N_0$},\\
|(1-S_{p+1})\p_tY_{p+1}|_{k,N}&\leq C_{k,N}\eta \theta_{p+1}^{k-(1-\delta)-\gamma+\bar \varepsilon N}\quad \text{if\ \ $1-\delta\leq k\leq \frac32-\delta,\  N\leq N_0$},\\
|(1- S_{p+1})Y_{p+1}|_{k,N}&\leq C_{k,N}\eta\theta_{p+1}^{k-\gamma+\bar \varepsilon N}\quad\qquad \ \ \text{if\ \ $0\leq k\leq \frac12,\  N\leq N_0$}.
\end{split}
\end{equation}
}\end{prop}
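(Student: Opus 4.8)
\medskip
\noindent\textbf{Proof proposal.} The plan is to reduce everything to the telescoping identity $Y_{p+1}=Y_0+\sum_{j=0}^{p}X_j$ and then to argue in two steps. \emph{Step 1 (un-smoothed bounds for $Y_{p+1}$).} In the bounded range of indices in which (P$1,p$)--(P$2,p$) live, I would simply sum the increments. With $\theta_j=2^{j}$ and $a=a(k,N)$ the exponent of the relevant line of (P$2,p$) (resp.\ of (P$1,p$) for the composite quantity $\D_{p+1}$), one has $|\p_3Y_{p+1}|_{k,N}\le|\p_3Y_0|_{k,N}+\eta\sum_{j=0}^{p}\theta_j^{a}$, and it suffices to use the elementary geometric-series estimates $\sum_{j=0}^{p}\theta_j^{a}\le C_{\bar\varepsilon}\,\theta_{p+1}^{a}$ if $a\ge\bar\varepsilon$ and $\sum_{j=0}^{p}\theta_j^{a}\le C_{\bar\varepsilon}$ if $a\le-\bar\varepsilon$; the $\pm\bar\varepsilon$ gaps built into the hypotheses of Proposition \ref{S9col1} are exactly what keep us off the borderline $a=0$, where one would pick up a spurious factor $\log\theta_{p+1}$. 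The profile $Y_0$ contributes $\le C\eta$ in every norm occurring in (P$1,p$)--(P$2,p$), by the linear estimates \eqref{S2eq3as} and \eqref{S2eq13as}--\eqref{S2eq15'as} of Proposition \ref{S0prop1} (the anisotropic summand $\|\nabla\p_tY_0\|_{1,N-1}$ needing the analysis of Section \ref{Sect6.3} applied to the homogeneous system \eqref{S2eq1}), so it is absorbed. Step 1 therefore yields, for $N\le N_0$ and $k$ in the good range of (P$2,p$), that $|\p_3Y_{p+1}|_{k,N}\le C\eta\,\theta_{p+1}^{\max(0,\,k-\frac12-\gamma+\bar\varepsilon N)}$, the bound improving to $\le C\eta$ as soon as that exponent is $\le-\bar\varepsilon$, and likewise for $\p_tY_{p+1}$, $Y_{p+1}$ and (with $S_{p+1}$ removed) for $\D_{p+1}$.

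\emph{Step 2 (inserting $S_{p+1}$ and $1-S_{p+1}$).} Since $\theta_{p+1}'=\theta_{p+1}^{\bar\varepsilon}$, \eqref{S8eq4} reads $|S_{p+1}u|_{k,N}\le C\theta_{p+1}^{(k-s)+\bar\varepsilon(N-M)}|u|_{s,M}$ for $k\ge s\ge0$, $N\ge M\ge0$ (with the corresponding $L^2_t(H^\bullet)$-in-time version for the time-integrated summands of $\D_{p+1}$), while \eqref{S8eq5} reads $|(1-S_{p+1})u|_{s,M}\le C\theta_{p+1}^{-(k-s)}|u|_{k,M}+C\theta_{p+1}^{-\bar\varepsilon(N-M)}|u|_{s,N}$. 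For (I)(i) I would apply \eqref{S8eq4} with $s=\min(k,k_\ast)$ and $M=\min(N,N_0)$, where $k_\ast$ is the top of the relevant good $k$-range ($1$, $\frac32-\delta$, $\frac12$ for $\p_3Y$, $\p_tY$, $Y$); for (I)(ii) one only needs to extend the Sobolev index, taking $M=N_0$. The key point in both cases is that the Step-1 exponent at $(s,M)$ is always $\ge\bar\varepsilon$ in the situations that occur --- one invokes $\bar\varepsilon N_0\ge\gamma+\beta$ from \eqref{S9eqP'} when $M=N_0$, and the hypothesis exponent itself when $(s,M)=(k_\ast,N)$ --- so the several powers of $\theta_{p+1}$ telescope exactly to the claimed $k-\frac12-\gamma+\bar\varepsilon N$ (resp.\ $-\beta+\bar\varepsilon N$). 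For (II)(i)--(II)(ii) the hypothesis ``exponent $\le-\bar\varepsilon$'' already forces $(k,N)$ into the good range (e.g.\ for $\p_3Y$ it gives $k<1$ and $\bar\varepsilon N<\frac14\le\bar\varepsilon N_0$), so no extension is needed and $|S_{p+1}u|_{k,N}\le C|u|_{k,N}\le C\eta$ by Step 1. For (III) I would apply \eqref{S8eq5} with the two auxiliary index pairs $(k_\ast,N)$ and $(k,N_0)$ --- legitimate exactly on the range $k\le k_\ast$, $N\le N_0$ of (III) --- and observe that the Step-1 exponents there, namely $\frac12-\gamma+\bar\varepsilon N$ and $k-\frac12-\gamma+\bar\varepsilon N_0$ (the latter being $\ge-\gamma+\bar\varepsilon N_0\ge\beta$), are both $\ge\bar\varepsilon$, hence in the growing regime, so the two summands telescope and each lands on $\theta_{p+1}^{\,k-\frac12-\gamma+\bar\varepsilon N}$.

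Because the proposition is in the end pure bookkeeping, the subtlety is calibration rather than analysis: every exponent produced above has to land in the correct regime (``$\ge\bar\varepsilon$'' for the growing cases, ``$\le-\bar\varepsilon$'' for the decaying ones) and the good index windows must leave enough room --- which is exactly why the arithmetic constraints \eqref{S9eqebd}--\eqref{S9eqP'} on $\gamma,\beta,N_0,\bar\varepsilon$ are imposed --- while the $p$-independence of the constants $C_{k,N}$ comes from the same fixed choices, since every geometric-series constant is $\le(1-2^{-\bar\varepsilon})^{-1}$ and the auxiliary indices $s,M$ in Step 2 stay in windows of size independent of $p$. The one ingredient that is not pure arithmetic --- and the step I expect to be the real obstacle --- is verifying that $Y_0$ itself satisfies the full strength of (P$1,p$), in particular the anisotropic bound $\|\nabla\p_tY_0\|_{1,N-1}$, whose low-frequency part escapes the naive interpolation $\|\nabla\p_tY_0\|_{H^{N-1}}\lesssim\|\p_tY_0\|_{H^N}^{1/2}\|\Delta\p_tY_0\|_{H^{N-2}}^{1/2}$; this should come out of the anisotropic Littlewood--Paley and dissipation argument of Section \ref{Sect6.3} applied to the homogeneous equation \eqref{S2eq1}, and it is the one place where the structure of the MHD system, rather than the mechanics of the iteration scheme, actually enters.
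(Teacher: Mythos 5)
Your proposal is correct and follows essentially the same route as the paper: the un-smoothed bounds on $Y_{p+1}$ by telescoping (P1,$j$)--(P2,$j$) over $j\le p$ with the $\pm\bar\varepsilon$ gap controlling the geometric sums, followed by \eqref{S8eq4} with the truncated indices $\min(k,k_\ast)$, $\min(N,N_0)$ for (I)--(II) and \eqref{S8eq5} for (III). The only cosmetic difference is in (III), where the paper computes the two-term bound explicitly at $(k,N)=(\tfrac12,0)$ and then interpolates with the endpoint cases $k=1$ and $N=N_0$, whereas you apply \eqref{S8eq5} directly at general $(k,N)$ — equivalent arithmetic; and your flagged concern about $\|\nabla\p_tY_0\|_{1,N-1}$ is legitimate but is resolved by the multiplier bound $t|\xi|^2\p_t\Gamma(t,\xi)\in L^\infty$ of Lemma \ref{S1prop2a} (combined with $\||D|^{-1}\cdot\|$ on the data) rather than by Section \ref{Sect6.3}, and in any case enters as the input \eqref{S9.2eq2} established before the proof of the proposition proper.
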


\begin{prop}\label{S9prop2}
{\sl Let $e_p,$ $g_p$ and  $R_{N,\theta}(g)$  be given by  \eqref{S9.1eq7},   \eqref{S9.1eq9} and \eqref{S2eq19'}  respectively. Let $\alpha\eqdefa\frac12-\delta-\bar\varepsilon>0$.
Then there hold

\noindent\underline{{\rm (1)}  Estimates for $e_p$.}
\begin{align}
&\|\langle t\rangle^{k+\frac12}|D|^{-1}e_{p}\|_{L_t^2(H^{N+1})}\lesssim\eta^2\theta_{p}^{k+\delta-\gamma-\beta+\bar\varepsilon(N+3)}\quad
 \text{if}\ \ 0\leq k\leq \alpha,\ 0\leq N\leq N_0-2, \tag{IV)  (i}\\
&\||D|^{-1}e_{p}\|_{1+k,N+1}\lesssim \eta^2\theta_p^{k+\delta-\gamma-\beta+\bar\varepsilon(N+2)}\quad \qquad\quad\ \text{if }\ \ 0\leq k\leq\frac12-\delta,\ N\leq N_0-2,\tag{IV) (ii}\\
&\triplenorm{\langle t\rangle^\frac12 e_{p}}_{L_t^2(\delta,N)}
\lesssim \eta^2\theta_p^{-\gamma+\bar\varepsilon(N+5)}\quad\quad \qquad\quad  \text{if}\ \ 0\leq N\leq N_0-6; \tag{IV)  (iii}
\end{align}

\noindent\underline{{\rm (2)} Estimates for $g_{p+1}$.}
\begin{align}
&\|\langle t\rangle^{k+\frac12}|D|^{-1}g_{p+1}\|_{L_t^2(H^{N+1})}\leq C\eta^2\theta_{p+1}^{k+\delta-\gamma-\beta+\bar\varepsilon(N+3)}\quad \text{if }k\geq0,\ N\geq 0,
\tag{V)  (i}\\
&\||D|^{-1}g_{p+1}\|_{1+k,N+1}\lesssim \eta^2\theta_{p+1}^{k+\delta-\gamma-\beta+\bar\varepsilon(N+2)}\qquad\qquad\quad \text{if }k\geq0,\ N\geq0, \tag{V) (ii}\\
&
\triplenorm{g_{p+1}}_{L_t^1(N)}\leq C\eta^2\theta_{p+1}^{-\gamma+\bar\varepsilon(N+6)}\ \text{if}\ -\gamma+\bar\varepsilon (N+5)\geq
\bar\varepsilon,\tag{V)  (iii}\\
&
\triplenorm{g_{p+1}}_{L_t^1(N)}\leq C\eta^2\theta_{p+1}^{\bar\e}\qquad\qquad\quad\quad\text{if}\ -\gamma+\bar\varepsilon (N+5)\leq -\bar\varepsilon;\tag{V)  (iv}
\end{align}

\noindent\underline{{\rm (3)} Estimates for $R_{N,\theta_{p+1}}(g_{p+1})$.}
\begin{align}
 &R_{N,\theta_{p+1}}(g_{p+1})\leq C\eta^2\theta_{p+1}^{\frac12-\gamma+\bar\varepsilon N}\quad \text{if }-\gamma+\bar\varepsilon (N+5)\geq\bar\varepsilon,\tag{VI)  (i}\\
&
 R_{0,\theta_{p+1}}(g_{p+1})\leq C\eta^2\theta_{p+1}^{\frac12-\gamma} \tag{VI)  (ii}.
\end{align}
}
\end{prop}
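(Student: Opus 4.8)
The plan is to run the H\"ormander--Nash--Moser bookkeeping for the iteration defined in Section~\ref{Sect10}, using three inputs available at that stage: the Taylor-formula representations of $e_p'$ and $e_p''$ from Remark~\ref{S9rmk2} (so $e_p=\sum_{m=0}^{2}(e_{p,m}'+e_{p,m}'')$, each piece built out of $f_m''$), the \emph{tame} second-derivative estimates for $f''=f_0''+f_1''+f_2''$ established in Section~\ref{Sect8}, and the inductive bounds (P$1,p$)--(P$3,p$) of Proposition~\ref{S9prop1} together with the smoothing estimates \eqref{S9eqSI}--\eqref{S9eqSII} and the $S_{p+1}Y_{p+1}$ bounds of Proposition~\ref{S9col1}. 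First I would estimate $e_p$ in the three norms of (IV); then read off $g_{p+1}$ from its recursive definition \eqref{S9.1eq9}; and finally assemble $R_{N,\theta_{p+1}}(g_{p+1})$ from the three pieces of \eqref{S2eq19'}. All the arithmetic is governed by $\gamma+\beta=\frac12$, $\alpha=\frac12-\delta-\bar\varepsilon$ and the constraints \eqref{S9eqebd}--\eqref{S9eqP'}.

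\emph{Estimating $e_p$.} Each $e_{p,m}'$ is an average of $f_m''(Z;(1-S_p)Y_p,X_p)$ over convex combinations $Z$ of $Y_p$ and $S_pY_p$, and each $e_{p,m}''$ an average of $f_m''(Z;X_p,X_p)$ over convex combinations $Z$ of $Y_p$ and $Y_{p+1}$; one first checks via (P$3,p$), Proposition~\ref{S9col1} and \eqref{S9eqSI} that every such $Z$ meets the smallness hypotheses \eqref{S4.1prop1assum}, \eqref{S5thmassum}, \eqref{S5eqC4}, so the Section~\ref{Sect8} estimates apply. Since those estimates are tame — linear in the top-order norm of each slot — one splits each term into a low-order times a high-order factor: low-order factors of $X_p$ (in the anisotropic $\p_3$-, $\p_t$- and plain norms, the time-weighted $L_t^2$ norms, and the $|D|^{-1}$ norms) come from (P$1,p$)--(P$2,p$); high-order factors at level $N_0$ come from (P$1,p$)--(P$2,p$) with the loss $\theta_p^{\bar\varepsilon N_0}$; and a low norm of $(1-S_p)Y_p$ is turned, through \eqref{S9eqSII}, into $\theta_p^{-\bar\varepsilon N_0}$ (or $\theta_p^{-k}$) times a high (resp.\ time-weighted) norm of $Y_p$, after which convexity in the regularity index lands the product at the intermediate regularity. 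Collecting the worst power of $\theta_p$ and using $\gamma+\beta=\frac12$, $\alpha=\frac12-\delta-\bar\varepsilon$ and \eqref{S9eqebd} yields (IV)(i)--(IV)(iii), the regularity budgets there merely recording how many derivatives each tame estimate spends.

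\emph{Estimating $g_{p+1}$ and $R_{N,\theta_{p+1}}(g_{p+1})$.} From \eqref{S9.1eq9}, $g_{p+1}=-(S_{p+1}-S_p)E_p-S_{p+1}e_p-(S_{p+1}-S_p)\Phi(Y_0)$ with $E_p=\sum_{j=0}^{p-1}e_j$. The term $S_{p+1}e_p$ is handled directly by \eqref{S9eqSI} applied to (IV), choosing the source regularity inside the allowed range — this is what lets $k$ and $N$ in (V) be arbitrary. For $(S_{p+1}-S_p)E_p$ one sums the $e_j$-bounds in the top-weight norm (where $\gamma+\beta=\frac12$ makes the exponents positive, so the geometric series is controlled by its last term) and then descends to the target norm using the two-sided, Littlewood--Paley-type bounds of the localized operator $S_{p+1}-S_p$; the net power of $\theta_{p+1}$ that comes out is exactly that of $e_p$. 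Since $Y_0$ solves the homogeneous system \eqref{S9.1eq2}, $\Phi(Y_0)=-f(Y_0)$ for $f$ as in \eqref{S1eq2}, and the nonlinear source estimates of Section~\ref{Sect7}, fed by the linear decay of $Y_0$ from Propositions~\ref{S0prop1} and \ref{S2.3prop1} and the data bound $\lesssim\eta$ of Theorem~\ref{Th1}, bound $f(Y_0)$ by $C\eta^2$; the localized operator $S_{p+1}-S_p$ then makes this last contribution negligible (here the largeness of $N_0$ absorbs the finite regularity of $f(Y_0)$). Adding the three pieces gives (V)(i)--(V)(iv), the two $L_t^1$-regimes reflecting whether $-\gamma+\bar\varepsilon(N+5)\ge\bar\varepsilon$ or $\le-\bar\varepsilon$. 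Finally, inserting the pieces of \eqref{S2eq19'}: the $\triplenorm{\cdot}_{L_t^1(\delta,N)}$ part is (V)(iii)--(V)(iv); the part $\theta_{p+1}^{1/2}\|\langle t\rangle^{1/2}|D|^{-1}g_{p+1}\|_{L_t^2(H^{N+3})}$ is $\theta_{p+1}^{1/2}$ times (V)(i) with $k=0$, $N\mapsto N+3$; and, since $\frac32-\delta=1+(\frac12-\delta)$, the part $\log\langle\theta_{p+1}\rangle\||D|^{-1}g_{p+1}\|_{\frac32-\delta,N+3}$ is a logarithm times (V)(ii) with $k=\frac12-\delta$, $N\mapsto N+3$, the logarithm being absorbed because $\bar\varepsilon>0$. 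Summing and using $\delta+5\bar\varepsilon\le\frac14$ collapses the lot to $C\eta^2\theta_{p+1}^{\frac12-\gamma+\bar\varepsilon N}$, which is (VI)(i); (VI)(ii) is the borderline $N=0$ case where the threshold in (VI)(i) is not met and one keeps the bare $\theta_{p+1}^{\frac12-\gamma}$.

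\emph{Main obstacle.} The real work is the $e_p$ estimate. One must track simultaneously the three distinct anisotropic decay rates of $\p_3X_p$, $\p_tX_p$ and $X_p$, the time-weighted $L_t^2$ norms, and the negative-order $|D|^{-1}$ norms forced by the double Riesz transform in $f$, through every bilinear piece of $f_0''$, $f_1''$, $f_2''$, and verify that once one interpolates between the smoothing gain $\theta_p^{-\bar\varepsilon N_0}$ and the high-norm growth $\theta_p^{\bar\varepsilon N}$, the net exponent is never worse than the one claimed in (IV) — a slip of a single $\bar\varepsilon$ destroys the quadratic gain $\eta^2\theta^{(\mathrm{neg})}$ that drives the convergence in Section~\ref{Sect10}. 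This is precisely where the largeness of $N_0$ in \eqref{S9eqP'} and the smallness of $\delta,\varepsilon,\bar\varepsilon$ in \eqref{S9eqebd} get consumed. A subsidiary but genuinely necessary step, easy to overlook, is checking that the smoothed profiles $S_pY_p$ and the convex combinations entering $e_p$ actually satisfy \eqref{S4.1prop1assum}, \eqref{S5thmassum}, \eqref{S5eqC4}, under which the estimates of Sections~\ref{Sect4}--\ref{Sect8} were proved.
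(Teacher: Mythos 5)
Your proposal is correct and follows essentially the same route as the paper: the paper likewise splits $e_p$ into the pieces $e_{p,m}'$, $e_{p,m}''$ of Remark~\ref{S9rmk2}, feeds the $f''$ estimates of Propositions~\ref{S7.2prop1}--\ref{S7.3prop1} with the inductive bounds (P$1,p$)--(P$3,p$), Proposition~\ref{S9col1} and \eqref{S9eqSI}--\eqref{S9eqSII}, and interpolates (this is the content of Lemmas~\ref{S9lem1}--\ref{S9lem3}, proved in Appendix~\ref{appendixa}), then treats $g_{p+1}$ via the three terms of \eqref{S9.1eq9} exactly as you describe and assembles $R_{N,\theta_{p+1}}(g_{p+1})$ from (V). The only caveat is that your write-up is a plan rather than the executed exponent bookkeeping, but the ingredients, decompositions and arithmetic constraints you identify are precisely those the paper uses.
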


The following interpolation lemma will be crucial in the proof of the above propositions, whose proof is exactly the same as that of Lemma 6.1 of \cite{Kl80}, which we omit the details here.

\begin{lem}[Interpolation lemma]
{\sl Let $ p\in [1,+\infty]$, $\theta\geq1$ and $\bar\varepsilon>0$, which satisfy
$$\beta>\bar\varepsilon,\quad k_0-\beta\geq \bar\varepsilon,\quad -\beta+\bar\varepsilon N_0\geq\bar\varepsilon.$$
Assume that $u\in C^\infty([0,+\infty)\times\R^n)$ satisfies
\begin{equation}\label{S8eq6}
\begin{split}
&\|u\|_{L_t^p(L^2)}\leq C\theta^{-\beta},\\
&\|\langle t\rangle^ku\|_{L_t^p(H^N)}\leq C\theta^{k-\beta+\bar \varepsilon N},\ \text{for }0\leq k\leq k_0,\ 0\leq N\leq N_0\text{ s.t }k-\beta+\bar \varepsilon N\geq\bar\varepsilon.
\end{split}
\end{equation}
Then for all $0\leq k\leq k_0$, $0\leq N\leq N_0$,
$$\|\langle t\rangle^ku\|_{L_t^p(H^N)}\leq C_{k_0,N_0} \theta^{k-\beta+\bar\varepsilon N}.$$}
\end{lem}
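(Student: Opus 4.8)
The plan is to run a logarithmic–convexity (three–point interpolation) argument, anchored at the three lattice corners $(0,0)$, $(k_0,0)$ and $(0,N_0)$ of the rectangle $R\eqdefa[0,k_0]\times[0,N_0]$, at which \eqref{S8eq6} already delivers the claimed bound. Introduce the affine function $\ell(k,N)\eqdefa k-\beta+\bar\varepsilon N$ and split $R$ into the ``good'' region $G\eqdefa\{(k,N)\in R:\ \ell(k,N)\geq\bar\varepsilon\}$, on which the conclusion $\|\langle t\rangle^ku\|_{L_t^p(H^N)}\leq C\theta^{\ell(k,N)}$ is exactly the hypothesis, and the ``bad'' corner $R\setminus G$ that contains the origin. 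The structural assumptions $k_0-\beta\geq\bar\varepsilon$ and $-\beta+\bar\varepsilon N_0\geq\bar\varepsilon$ say precisely that the corners $(k_0,0)$, $(0,N_0)$ (and a fortiori $(k_0,N_0)$) lie in $G$; only $(0,0)$, where $\ell=-\beta<\bar\varepsilon$, is excluded.

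The geometric heart of the argument is the claim that $R\setminus G$ is contained in the closed triangle $T$ with vertices $(0,0)$, $(k_0,0)$, $(0,N_0)$, i.e. $\{(k,N):k,N\geq0,\ k/k_0+N/N_0\leq1\}$. Indeed, if $\ell(k,N)<\bar\varepsilon$ then $0\leq k\leq\beta+\bar\varepsilon-\bar\varepsilon N$ and $0\leq N\leq(\beta+\bar\varepsilon)/\bar\varepsilon$; using $k_0\geq\beta+\bar\varepsilon$ and $N_0\geq(\beta+\bar\varepsilon)/\bar\varepsilon$ one gets $k/k_0+N/N_0\leq(\beta+\bar\varepsilon-\bar\varepsilon N)/(\beta+\bar\varepsilon)+\bar\varepsilon N/(\beta+\bar\varepsilon)=1$. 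Hence every $(k,N)\in R\setminus G$ admits a barycentric decomposition $(k,N)=\alpha_0(0,0)+\alpha_1(k_0,0)+\alpha_2(0,N_0)$ with $\alpha_i\geq0$ and $\alpha_0+\alpha_1+\alpha_2=1$; in particular $k=\alpha_1k_0$ and $N=\alpha_2N_0$.

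For such a decomposition I would interpolate along these barycentric coordinates. Pointwise in $t$, the Gagliardo--Nirenberg (Sobolev) inequality gives $\|u(t)\|_{H^N}\lesssim\|u(t)\|_{L^2}^{1-\alpha_2}\|u(t)\|_{H^{N_0}}^{\alpha_2}$ because $N=\alpha_2N_0$, and, splitting the weight as $\langle t\rangle^k=(\langle t\rangle^{k_0})^{\alpha_1}$ with $k=\alpha_1k_0$ and writing $1-\alpha_2=\alpha_0+\alpha_1$, one obtains
\begin{equation*}
\langle t\rangle^k\|u(t)\|_{H^N}\lesssim\big(\|u(t)\|_{L^2}\big)^{\alpha_0}\big(\langle t\rangle^{k_0}\|u(t)\|_{L^2}\big)^{\alpha_1}\big(\|u(t)\|_{H^{N_0}}\big)^{\alpha_2}.
\end{equation*}
Taking the $L_t^p$ norm and applying H\"older with exponents $1/\alpha_0,1/\alpha_1,1/\alpha_2$ (reciprocals summing to $1$; factors with $\alpha_i=0$ are dropped) yields
\begin{equation*}
\|\langle t\rangle^ku\|_{L_t^p(H^N)}\lesssim\|u\|_{L_t^p(L^2)}^{\alpha_0}\,\|\langle t\rangle^{k_0}u\|_{L_t^p(L^2)}^{\alpha_1}\,\|u\|_{L_t^p(H^{N_0})}^{\alpha_2}.
\end{equation*}
Now \eqref{S8eq6} bounds the first factor by $C\theta^{-\beta}$, and, since $(k_0,0),(0,N_0)\in G$, the other two by $C\theta^{k_0-\beta}$ and $C\theta^{\bar\varepsilon N_0-\beta}$. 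Multiplying, the exponent of $\theta$ is $-\beta\alpha_0+(k_0-\beta)\alpha_1+(\bar\varepsilon N_0-\beta)\alpha_2=\alpha_1k_0+\alpha_2\bar\varepsilon N_0-\beta=k+\bar\varepsilon N-\beta=\ell(k,N)$, exactly as required; together with the trivial case $(k,N)\in G$ this proves the lemma, with $C_{k_0,N_0}$ depending only on the finitely many Sobolev and H\"older constants invoked.

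No step is a genuine obstacle; the two points requiring care are (i) the containment $R\setminus G\subset T$, where the hypotheses $\beta>\bar\varepsilon$, $k_0-\beta\geq\bar\varepsilon$, $-\beta+\bar\varepsilon N_0\geq\bar\varepsilon$ enter essentially, and (ii) splitting the time–weight $\langle t\rangle^k$ and the differentiation order $H^N$ along the \emph{same} barycentric weights so that the power of $\theta$ closes with no loss — which is exactly why the two non-trivial anchor points are chosen as $(k_0,0)$ and $(0,N_0)$, both of integer $H$–order, so that no fractional Sobolev spaces are needed.
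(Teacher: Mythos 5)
Your argument is correct: the containment of the ``bad'' set $\{\ell<\bar\varepsilon\}\cap R$ in the triangle with vertices $(0,0),(k_0,0),(0,N_0)$ follows exactly from $k_0-\beta\geq\bar\varepsilon$ and $\bar\varepsilon N_0-\beta\geq\bar\varepsilon$, the three-point log-convexity estimate closes with exponent $\ell(k,N)$, and this is essentially the same interpolation argument as in Lemma 6.1 of Klainerman \cite{Kl80}, to which the paper defers instead of giving a proof. The only cosmetic remark is that the hypothesis $\beta>\bar\varepsilon$ is not actually needed in your containment step (only the other two inequalities are), so attributing an ``essential'' role to it there is a slight overstatement rather than a gap.
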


Finally with the previous propositions, we shall prove the convergence of the approximate solutions constructed by \eqref{S9.1eq5} in Subsection \ref{Subsect10.5},
and this completes the proof of Theorem \ref{Th1}.


\section{Decay estimates of the  linear equation}\label{Sect3}

\subsection{Decay estimates for the solution operator}

Following the strategy in \cite{Kl80, Kl82}, we first investigate the decay properties of the solutions to the linear equation \eqref{S2eq1} with $\cY_0=0$ and
$\cY_1=Y_1.$
 By taking Fourier transform to \eqref{S2eq1} with respect
to $y$ variables and solving the resulting ODE, we write
\beq\label{S2eq2} \cY(t,y)=\Ga(t,D)Y_1 \with
\Ga(t,\xi)=\f1{\la_2(\xi)-\la_1(\xi)}\left(e^{t\la_2(\xi)}-e^{t\la_1(\xi)}\right)
\eeq where $\la_1(\xi)$ and $\la_2(\xi)$ are given by \eqref{S2eq1qe}.

\begin{prop}\label{S1prop1}
{\sl Given $\delta\in [0,1[$ and $N\in\N$, there exists $C_{\d,N}>0$ such that there holds
\beq\label{S2eq3}
|\p_3\Gamma(t)Y_1|_{1,N}+|\p_3^2\Gamma(t)Y_1|_{\f32,N}+|\p_t\Gamma(t)Y_1|_{\frac32-\delta, N}+|\Gamma(t)Y_1|_{\frac12,N} \leq  C_{\d,N}\bigl\|(|D|^{2\delta}Y_1,|D|^{N+4}Y_1)\bigr\|_{L^1}.
\eeq
}
\end{prop}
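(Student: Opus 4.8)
The plan is to estimate the Fourier multiplier $\Gamma(t,\xi)$ and its derivatives directly, splitting frequency space into three regions according to the behavior of the eigenvalues $\lambda_1,\lambda_2$ in \eqref{S2eq1qe}, and then to pass from multiplier bounds to $L^\infty$ bounds via the trivial inequality $|m(D)f|_{L^\infty}\lesssim\|\widehat{m(D)f}\|_{L^1}\leq \|m(\cdot)\|_{L^\infty}\|\widehat f\|_{L^1}$, together with the observation that $\|\widehat f\|_{L^1}\lesssim \||D|^{2\delta}f\|_{L^1}+\||D|^{N+4}f\|_{L^1}$ after inserting appropriate powers of $|\xi|$ to handle low and high frequencies. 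Since $|\p_3^2\Gamma(t)Y_1|$, $|\p_3\Gamma(t)Y_1|$, $|\Gamma(t)Y_1|$ and $|\p_t\Gamma(t)Y_1|=|(\Delta\Gamma+\p_3^2\Gamma)(t)Y_1|$ are all of the form $m(t,D)Y_1$ with $m$ built from $\Gamma$ and polynomials in $\xi$, it suffices to prove, for each such multiplier $m_\ell(t,\xi)$ (with $\ell$ indexing the four quantities and the corresponding decay exponent $\sigma_\ell\in\{1,\frac32,\frac32-\delta,\frac12\}$), a pointwise bound of the shape
\beq\label{planbd}
|m_\ell(t,\xi)|\lesssim \langle t\rangle^{-\sigma_\ell}\,\bigl(|\xi|^{2\delta}\wedge 1\bigr)^{?}\cdot\min\bigl(1,|\xi|^{-(N+4)}\bigr)^{?}\cdot(\text{weights}),
\eeq
so that after multiplying and dividing by suitable powers of $|\xi|$ the right-hand side of \eqref{S2eq3} emerges.

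The key steps, in order: \textbf{(1)} Record the elementary algebra $\lambda_2-\lambda_1 = 2\sqrt{|\xi|^4/4-\xi_3^2}$, $\lambda_1\lambda_2=\xi_3^2$, $\lambda_1+\lambda_2=-|\xi|^2$, and note $\Gamma(t,\xi)=\int_0^1 t\,e^{(s\lambda_2+(1-s)\lambda_1)t}\,ds$ when the eigenvalues are close (to avoid the spurious singularity at $|\xi|^4=4\xi_3^2$), while $\Gamma(t,\xi)=(e^{t\lambda_2}-e^{t\lambda_1})/(\lambda_2-\lambda_1)$ is directly usable when they are well-separated. \textbf{(2)} Low-frequency region $|\xi|\lesssim 1$: here $\lambda_2\approx -|\xi|^2$ and $\lambda_1\approx -\xi_3^2/|\xi|^2$ (both nonpositive), both Fourier modes decay; one extracts the decay in $t$ by the standard device $\sup_{\xi}|\xi|^{a}e^{-c t|\xi|^2}\lesssim t^{-a/2}$ for the $\lambda_2$-branch and $\sup_{\xi}(|\xi_3|/|\xi|)^{b}e^{-ct\xi_3^2/|\xi|^2}\lesssim t^{-b/2}$ for the $\lambda_1$-branch, together with integrating the angular variable; the factor $|\xi|^{2\delta}$ is the one that has to be produced here (this is exactly why $\delta\in[0,1[$ enters and why $\sigma$ for $\p_t$ is $\frac32-\delta$). \textbf{(3)} Intermediate frequencies $|\xi|\sim 1$ away from the set $|\xi|^4=4\xi_3^2$: both eigenvalues have real part bounded away from zero, so $\Gamma$ and its $\xi$-derivatives decay exponentially in $t$, trivially beating any polynomial rate. \textbf{(4)} High frequencies $|\xi|\gg 1$: split further according to the angle of $\xi$ to $e_3$. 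When $|\xi_3|\ll|\xi|^2$ (the generic directions) one has $\lambda_1\approx -\xi_3^2/|\xi|^2$ small and $\lambda_2\approx -|\xi|^2$ very negative; the $\lambda_2$-branch is exponentially small, and the surviving $\lambda_1$-branch is handled as in step (2). When $\xi$ is nearly parallel to $e_3$, $\lambda_1\to -1$ and there is no decay from that mode, but then $\xi_h$ is comparatively small and one gains decay from integrating in $\xi_h$; in the borderline zone $|\xi|^4\sim 4\xi_3^2$ the two eigenvalues collide and one uses the integral representation from step (1), where the exponent $s\lambda_2+(1-s)\lambda_1$ has real part $\lesssim -\min(|\xi|^2, \xi_3^2/|\xi|^2)\lesssim -|\xi_h|$, again giving exponential-type decay. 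The factor $|D|^{N+4}$ absorbs the $N$ derivatives we commute through plus a fixed surplus of $4$ needed for the $L^1\to L^1$ passage and the angular integrations at high frequency.

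I expect the main obstacle to be the high-frequency analysis near the degenerate directions $\xi\parallel e_3$ (and the adjacent collision set $|\xi|^4=4\xi_3^2$), precisely because $\lambda_1(\xi)\to -1$ there so a naive estimate yields only a bounded, non-decaying multiplier; the decay must be recovered by trading it against the smallness of $|\xi_h|$ and an honest integration in the horizontal frequency, with careful bookkeeping of how many powers of $|\xi|$ are consumed. A secondary technical nuisance is producing the sharp exponent $\frac32-\delta$ for $\p_t\Gamma$ (equivalently for $\Delta\Gamma+\p_3^2\Gamma$) uniformly down to $\delta=0$, which forces one to be slightly careful that the $|\xi|^{2\delta}$ weight is genuinely available from the low-frequency part rather than lost. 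Once the pointwise multiplier bounds \eqref{planbd} are in place in all regions, summing them and invoking the $L^1$-to-$L^\infty$ estimate gives \eqref{S2eq3}; the bound for $|\Gamma(t)Y_1|_{\frac12,N}$ in fact appears already (with a different source-term norm) in the references \cite{Kl80,Kl82}, so the real novelty is the anisotropic refinement carried by $\p_3\Gamma$, $\p_3^2\Gamma$ and the improved $\p_t$-rate.
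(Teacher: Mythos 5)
Your plan has the right raw ingredients (the two-branch structure of $\Gamma$, the regularity of $(e^{t\la_2}-e^{t\la_1})/(\la_2-\la_1)$ across the collision set, the role of $|D|^{2\delta}$ at low frequency and of $|D|^{N+4}$ at high frequency), but the mechanism you propose for converting them into \eqref{S2eq3} --- establish pointwise-in-$\xi$ bounds with weights in $|\xi|$ only, then conclude via $|m(D)f|_{L^\infty}\le\|m\|_{L^\infty_\xi}\|\widehat f\|_{L^1_\xi}$ --- cannot work, because the symbols in question simply do not decay pointwise at the stated rates. For $\Gamma$ itself: at $|\xi|=1$, $\xi_3=0$ one has $\la_1=0$ and $\Gamma(t,\xi)=1-e^{-t}\to1$, so $\sup_\xi |\Gamma(t,\xi)|$, against any weight that is $\simeq 1$ on the unit sphere, does not decay at all. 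For $\p_3\Gamma$: taking $|\xi|=1$, $\xi_3=t^{-1/2}$ gives $|\xi|^2/2-\al(\xi)\simeq \xi_3^2=t^{-1}$ and hence $|\xi_3\Gamma(t,\xi)|\simeq t^{-1/2}e^{-1}$, so the best pointwise rate is $t^{-1/2}$, not the required $t^{-1}$; likewise $\p_3^2\Gamma$ saturates at $t^{-1}$ rather than $t^{-3/2}$, and $\p_t\Gamma(t,\xi)=e^{-t|\xi|^2}$ on the plane $\xi_3=0$, which no $|\xi|^{-2\delta}$ weight turns into a $t^{-(3/2-\delta)}$ pointwise bound. The decay asserted in this proposition is not a property of $\sup_\xi$ of the symbol; it is produced entirely by the $\xi$-integration, and your concluding step ("summing the pointwise bounds and invoking the $L^1$-to-$L^\infty$ estimate") discards exactly that.

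The repair --- which is the route the paper takes --- is to estimate $\|m(t,\cdot)\widehat{Y_1}\|_{L^1_\xi}$ directly: peel off $\||\xi|^{k}\widehat{Y_1}\|_{L^\infty_\xi}\le\||D|^{k}Y_1\|_{L^1}$ with $k\in\{2\delta,\tfrac12,1,\dots,N+4\}$ chosen region by region, and show that the remaining integral $\int|\xi|^{-k}|m(t,\xi)|\,d\xi$ carries the full decay. On the real-eigenvalue branch this is done in polar coordinates: with $\tau=\cos^2\phi\simeq\xi_3^2/|\xi|^2$ the Jacobian supplies $\sin\phi\cos\phi\,d\phi\simeq d\tau$, and $\int_0^1e^{-t\tau}(\cdots)\,d\tau\lesssim t^{-1}$ gives the rate for $\p_3\Gamma$, while the extra factors of $\cos\phi$, of $\sqrt\tau$, or of $r$ account for the rates $t^{-1/2}$, $t^{-3/2}$ and $t^{-(3/2-\delta)}$ of the other three quantities; on the $\la_2$-branch the radial integration $\int_{\R^3}|\xi|^{-2\delta}e^{-t|\xi|^2}\,d\xi\lesssim t^{-(3/2-\delta)}$ does the work. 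You do mention "integrating the angular variable" in passing, which is the right instinct, but as architected your proof would lose half a power of $t$ on $\p_3\Gamma$ and $\p_3^2\Gamma$ and all of the decay on $\Gamma$; it must be rebuilt around $L^1_\xi$ bounds of the weighted symbol rather than $L^\infty_\xi$ bounds. (Your two anticipated obstacles are in fact benign once this is done: $(1-e^{-2t\al})/(2\al)$ is bounded through $\al=0$, and the complex-eigenvalue region carries the harmless factor $e^{-t|\xi|^2/2}\sin(t\beta)/\beta$.)
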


\begin{proof} The estimate \eqref{S2eq3} for general $N\in\N$ follows from the case when  $N=0.$ Due to the anisotropic properties
of the eigenvalues $\la_1(\xi), \la_2(\xi),$ we shall split the frequency space into two parts:
$\{\xi\in\R^3:\ |\xi|^2\geq 2|\xi_3|\ \}$ and $\{\xi\in\R^3:\ |\xi|^2<2|\xi_3|\ \}.$
 When $|\xi|^2\geq 2|\xi_3|$, let us denote $\al(\xi)\eqdefa \sqrt{\f{|\xi|^4}4-\xi_3^2}.$ Then we have
\beno \la_1(\xi)=-\f{|\xi|^2}2+\al(\xi) \andf
\la_2(\xi)=-\f{|\xi|^2}2-\al(\xi),\eeno
and we write
\beq\label{S2eq4}
\Ga(t,\xi){\bf 1}_{|\xi|^2\geq
2|\xi_3|}=e^{-t\bigl(\f{|\xi|^2}2-\al(\xi)\bigr)}\f{1-e^{-2t\al(\xi)}}{2\al(\xi)}{\bf
1}_{|\xi|^2\geq 2|\xi_3|}. \eeq
When $|\xi|^2<2|\xi_3|$, let us denote $\beta(\xi)\eqdefa \sqrt{\xi_3^2-\f{|\xi|^4}4}.$ Then we have
\beno \la_1(\xi)=-\f{|\xi|^2}2+i\beta(\xi) \andf
\la_2(\xi)=-\f{|\xi|^2}2-i\beta(\xi),\eeno and we write
\beq\label{S2eq5} \Ga(t,\xi){\bf 1}_{|\xi|^2<
2|\xi_3|}=e^{-\f{t}2|\xi|^2}\f{\sin(t\beta(\xi))}{\beta(\xi)}{\bf
1}_{|\xi|^2< 2|\xi_3|}. \eeq
Next we handle the estimate of \eqref{S2eq3} term by term below.

\noindent$\bullet$\underline{\it Estimates of $\|\p_3\cY(t)\|_{L^\infty}$ and $\|\p_3^2\cY(t)\|_{L^\infty}.$}

In view of  \eqref{S2eq2}, we deduce that
\beq\label{S2eq6}
\begin{split}
\|\p_3\cY(t)\|_{L^\infty}\leq &
\|\Ga(t,\cdot)\xi_3\widehat{Y_1}(\cdot)\|_{L^1}\\
=&\int_{|\xi|^2\geq
2|\xi_3|}e^{-t\bigl(\f{|\xi|^2}2-\al(\xi)\bigr)}\f{1-e^{-2t\al(\xi)}}{2\al(\xi)}|\xi_3\widehat{Y_1}(\xi)|\,d\xi\\
&+ \int_{|\xi|^2<
2|\xi_3|}e^{-\f{t}2|\xi|^2}\f{|\sin(t\beta(\xi))|}{\beta(\xi)}|\xi_3\widehat{Y_1}(\xi)|\,d\xi
\eqdefa I_1+I_2.
\end{split}
\eeq It is easy to observe that \beno
\begin{split}
I_1=&\left(\int_{|\xi|\geq 3}+\int_{9>|\xi|^2\geq2|\xi_3|}\right)e^{-t\bigl(\f{|\xi|^2}2-\al(\xi)\bigr)}\f{1-e^{-2t\al(\xi)}}{2\al(\xi)}|\xi_3\widehat{Y_1}(\xi)|\,d\xi,
\end{split}
\eeno and \beno
\begin{split} &\int_{|\xi|\geq
3}e^{-t\bigl(\f{|\xi|^2}2-\al(\xi)\bigr)}\f{1-e^{-2t\al(\xi)}}{2\al(\xi)}|\xi_3\widehat{Y_1}(\xi)|\,d\xi\\
&\leq \||\xi|^3\widehat{Y_1}\|_{L^\infty}\int_{|\xi|\geq
3}e^{-t\f{\xi_3^2}{\f{|\xi|^2}2+\al(\xi)}}\f{|\xi_3|}{2\al(\xi)|\xi|^3}\,d\xi\\
&\leq
2\||\xi|^3\widehat{Y_1}\|_{L^\infty}\int_0^{\f{\pi}2}\int_3^{\infty}e^{-t\cos^2\phi}\f1{r\sqrt{r^2-4\cos^2\phi}}\sin\phi\cos\phi\,d\phi\,dr\\
&\leq C\||\xi|^3\widehat{Y_1}\|_{L^\infty}
\int_0^1e^{-t\tau}\int_3^\infty\f1{r\sqrt{r^2-4\tau^2}}\,dr\,d\tau\\
&\leq C{\w{t}}^{-1}\||D|^3Y_1\|_{L^1}.
\end{split}
\eeno
Exactly along the same line, we have \beno
\begin{split} &
 \int_{9>|\xi|^2\geq
2|\xi_3|}e^{-t\bigl(\f{|\xi|^2}2-\al(\xi)\bigr)}\f{1-e^{-2t\al(\xi)}}{2\al(\xi)}|\xi_3\widehat{Y_1}(\xi)|\,d\xi\\
&\leq
2\||\xi|\widehat{Y_1}\|_{L^\infty}\int_0^{\f{\pi}2}\int_{2\cos\phi}^3e^{-t\cos^2\phi}\f{\sin\phi\cos\phi}{\sqrt{r^2-4\cos^2\phi}}r\,dr\,d\phi\\
&\leq
C\||\xi|\widehat{Y_1}\|_{L^\infty}\int_0^1e^{-t\tau}\int_{2\sqrt{\tau}}^3\f{r}{\sqrt{r^2-4\tau}}\,dr\,d\tau\\
&\leq C{\w{t}}^{-1}\||D|Y_1\|_{L^1}.
\end{split}
\eeno This proves
\beq\label{S2eq7} I_1\leq
C{\w{t}}^{-1}\bigl(\||D|Y_1\|_{L^1}+\||D|^3Y_1\|_{L^1}\bigr).\eeq

The estimate of $I_2$ is much simpler. By virtue of \eqref{S2eq6},
we have \beq \label{S2eq8}
\begin{split}
I_2\leq&
2\||\xi|\widehat{Y_1}\|_{L^\infty}\int_0^{\f{\pi}2}\int_0^{2\cos\phi}e^{-\f{t}2r^2}\f1{\sqrt{4\cos^2\phi-r^2}}\sin\phi\cos\phi
r\,dr\,d\phi\\
\leq&
2\||\xi|\widehat{Y_1}\|_{L^\infty}\int_0^{1}e^{-\f{t}2r^2}r\int_{\f{r^2}4}^{1}\f1{\sqrt{4\tau-r^2}}
\,d\tau\,dr\\
\leq& C{\w{t}}^{-1}\||D|Y_1\|_{L^1}.
\end{split}
\eeq
As a result, we achieve
\beq
\label{S2eq9}
\|\p_3\cY(t)\|_{L^\infty}\leq {C}{\w{t}}^{-1}\bigl(\||D|Y_1\|_{L^1}+\||D|^3Y_1\|_{L^1}\bigr).\eeq

Along the same line to the proof of \eqref{S2eq9}, we infer
\beno
\begin{split}
\|\p_3^2\cY(t)\|_{L^\infty}\leq &2\||\xi|^4\widehat{Y_1}\|_{L^\infty}\int_0^{\f{\pi}2}\int_3^{\infty}e^{-t\cos^2\phi}\f1{r\sqrt{r^2-4\cos^2\phi}}\sin\phi\cos^2\phi\,d\phi\,dr\\
&+2\||\xi|^2\widehat{Y_1}\|_{L^\infty}\int_0^{\f{\pi}2}\int_{2\cos\phi}^3e^{-t\cos^2\phi}\f{\sin\phi\cos^2\phi}{\sqrt{r^2-4\cos^2\phi}}r\,dr\,d\phi\\
&+
2\||\xi|\widehat{Y_1}\|_{L^\infty}\int_0^{\f{\pi}2}\int_0^{2\cos\phi}e^{-\f{t}2r^2}\f1{\sqrt{4\cos^2\phi-r^2}}\sin\phi\cos^2\phi
r^2\,dr\,d\phi,
\end{split}
\eeno
so that for $t$ large enough, there holds
\beno
\begin{split}
\|\p_3^2\cY(t)\|_{L^\infty}\leq &Ct^{-\f12}\Bigl(\||\xi|^4\widehat{Y_1}\|_{L^\infty}\int_0^{\f{\pi}2}\int_3^{\infty}e^{-\f{t\cos^2\phi}2}\f1{r\sqrt{r^2-4\cos^2\phi}}\sin\phi\cos\phi\,d\phi\,dr\\
&+\||\xi|^2\widehat{Y_1}\|_{L^\infty}\int_0^{\f{\pi}2}\int_{2\cos\phi}^3e^{-\f{t\cos^2\phi}2}\f{\sin\phi\cos\phi}{\sqrt{r^2-4\cos^2\phi}}r\,dr\,d\phi\Bigr)\\
&+\||\xi|\widehat{Y_1}\|_{L^\infty}\int_0^{1}e^{-\f{t}2r^2}r^2\int_{\f{r^2}4}^{1}\f1{\sqrt{4\tau-r^2}}
\,d\tau\,dr.
\end{split}
\eeno
This gives rise to
\beq
\label{S2eq9a}
\|\p_3^2\cY(t)\|_{L^\infty}\leq {C}{\w{t}}^{-\f32}\bigl(\||D|Y_1\|_{L^1}+\||D|^4Y_1\|_{L^1}\bigr).\eeq

\noindent$\bullet$\underline{\it Estimate of $|\p_t\cY(t)|_{L^\infty}.$}

It follows from \eqref{S2eq2} that
\beno
\p_t\Ga(t,\xi)=\f1{\la_2(\xi)-\la_1(\xi)}\left(\la_2(\xi)e^{t\la_2(\xi)}-\la_1(\xi)e^{t\la_1(\xi)}\right).
\eeno So that one has \beq \label{S2eq10} \begin{split} \p_t\Ga(t,\xi){\bf
1}_{|\xi|^2\geq
2|\xi_3|}=&e^{-t\bigl(\f{|\xi|^2}2+\al(\xi)\bigr)}-e^{-t\bigl(\f{|\xi|^2}2-\al(\xi)\bigr)}\bigl(\f{|\xi|^2}2-\al(\xi)\bigr)
\f{1-e^{-2t\al(\xi)}}{2\al(\xi)},\\
 \p_t\Ga(t,\xi){\bf
1}_{|\xi|^2<
2|\xi_3|}=&e^{-\f{t}2|\xi|^2}\left(-\f{|\xi|^2}2\f{\sin(t\beta(\xi))}{\beta(\xi)}+\cos(t\beta(\xi))\right).
\end{split}
\eeq It is easy to observe that  for any $\d\in [0,1[,$
\beno
\begin{split}
\int_{\R^3}e^{-\f{t}2|\xi|^2}|\widehat{Y_1}(\xi)|\,d\xi\leq &\||\xi|^{2\d}\widehat{Y_1}\|_{L^\infty}\int_{\R^3}|\xi|^{-2\d}e^{-\f{t}2|\xi|^2}\,d\xi\\
\leq &Ct^{-\left(\f32-\d\right)}\||D|^{2\d}Y_1\|_{L^1},
\end{split}
\eeno
and
\beno
\begin{split}
\int_{\R^3}e^{-\f{t}2|\xi|^2}|\widehat{Y_1}(\xi)|\,d\xi\leq &\||\xi|^{2\d}\widehat{Y_1}\|_{L^\infty}\int_{|\xi|\leq 1}|\xi|^{-2\d}\,d\xi+
\||\xi|^{4}\widehat{Y_1}\|_{L^\infty}\int_{|\xi|>1}|\xi|^{-4}\,d\xi\\
\leq &C\bigl(\||D|^{2\d}Y_1\|_{L^1}+\||D|^{4}Y_1\|_{L^1}\bigr).
\end{split}
\eeno
This leads to
\beno
\int_{\R^3}e^{-\f{t}2|\xi|^2}|\widehat{Y_1}(\xi)|\,d\xi\leq C\w{t}^{-\left(\f32-\d\right)}\bigl(\||D|^{2\d}Y_1\|_{L^1}+\||D|^{4}Y_1\|_{L^1}\bigr).
\eeno
While similar to estimate of \eqref{S2eq7} and \eqref{S2eq8}, we infer
\beno
\begin{split}
&\int_{|\xi|^2\geq
2|\xi_3|}e^{-t\f{\xi_3^2}{\f{|\xi|^2}2+\al(\xi)}}\f{\xi_3^2}{\f{|\xi|^2}2+\al(\xi)}\f{1-e^{-2t\al(\xi)}}{2\al(\xi)}|\widehat{Y_1}(\xi)|\,d\xi\\
&\leq2\int_0^{\f{\pi}2}\int_0^{2\pi}\int_{2\cos\phi}^\infty
e^{-t\cos^2\phi}2\cos^2\phi\f{|\widehat{Y_1}(\xi(r,\th,\phi))|}{\sqrt{r^2-4\cos^2\phi}}\sin\phi
r\,dr\,d\th\,d\phi\\
&\leq
2\||\xi|^{2\d}\widehat{Y_1}\|_{L^\infty}\int_0^1e^{-t\tau}\sqrt{\tau}\int_{2\sqrt{\tau}}^3\f{r^{1-2\d}}{\sqrt{r^2-4\tau}}\,dr\,d\tau\\
&\quad+2\||\xi|^2\widehat{Y_1}\|_{L^\infty}\int_0^1e^{-t\tau}\sqrt{\tau}\int_3^\infty\f1{r\sqrt{r^2-4\tau}}\,dr\,d\tau\\
&\leq{C}{\w{t}^{-\f32}}\bigl(\||D|^{2\d}Y_1\|_{L^1}+\||D|^2Y_1\|_{L^1}\bigr),
\end{split}
\eeno
and  \beno
\begin{split}
\int_{|\xi|^2\leq
2|\xi_3|}e^{-\f{t}4|\xi|^2}\f{|\xi_3|}{\beta(\xi)}|\widehat{Y_1}(\xi)|\,d\xi
&\leq 2\||\xi|^{2\d}\widehat{Y_1}\|_{L^\infty}\int_0^{\f{\pi}2}\f{2\sin\phi\cos\phi}{\sqrt{4\cos^2\phi-r^2}}\int_0^{2\cos\phi}e^{-\f{t}4r^2}r^{2(1-\d)}\,dr\,d\phi\\
&\leq C\w{t}^{-\left(\f32-\d\right)}\||\xi|^{2\d}\widehat{Y_1}\|_{L^\infty}\leq
C\w{t}^{-\left(\f32-\d\right)}\||D|^{2\d}Y_1\|_{L^1}.
\end{split}
\eeno Hence by virtue of \eqref{S2eq10}, we obtain \beq\label{S2eq11}
\|\p_t\cY(t)\|_{L^\infty}\leq {C}{\w{t}^{-\left(\f32-\d\right)}}\bigl(\||D|^{2\d}Y_1\|_{L^1}+\|D^4 Y_1\|_{L^1}\bigr). \eeq

\noindent$\bullet$\underline{\it Estimate of $\|\cY(t)\|_{L^\infty}.$}

Note that
\beno
\begin{split}
\int_0^1\int_{2\tau}^{3}e^{-t\tau^2}\frac{r^{\f12}}{\sqrt{r^2-4\tau^2}}dr\,d\tau\leq \int_0^1e^{-t\tau^2}\int_{2\tau}^{3}(r-2\tau)^{-\f12}dr\,d\tau\leq C\w{t}^{-\f12}.
\end{split}
\eeno
we find
\begin{align*}
&\int_{|\xi|^2\geq2|\xi_3|}e^{-t\big(\frac{|\xi|^2}{2}-\alpha(\xi)\big)}\frac{1-e^{-2t\alpha(\xi)}}{2\alpha(\xi)}|\widehat{Y_1}(\xi)|d\xi\\
&\leq\int_0^{\frac{\pi}{2}}\int_0^{2\pi}\int_{2\cos\phi}^\infty e^{-t\cos^2\phi}\frac{|\widehat{Y_1}(\xi(r,\theta,\phi))|}{\frac12r\sqrt{r^2-4\cos^2\phi}}r^2\sin\phi\, dr\,d\theta\,d\phi\\
&=C\||\xi|^{\f12}\widehat{Y_1}\|_{L^\infty}\int_0^1\int_{2\tau}^{3}e^{-t\tau^2}\frac{r^{\f12}}{\sqrt{r^2-4\tau^2}}dr\,d\tau+C\||\xi|^2\widehat{Y_1}\|_{L^\infty}\int_0^1e^{-t\tau^2}\int_3^\infty\frac{1}{r\sqrt{r^2-4\tau^2}}dr\,d\tau\\
&\leq {C}{\w{t}}^{-\frac12}\bigl(\||D|^{\f12}Y_1\|_{L^1}+\||D|^2Y_1\|_{L^1}\bigr).
\end{align*}
Similarly, we have
\begin{align*}
\int_{|\xi|^2<2|\xi_3|}e^{-t\frac{|\xi|^2}{2}}\frac{\sin(t\beta(\xi))}{2\beta(\xi)}|\widehat{Y_1}(\xi)|d\xi
&\leq \int_0^\frac{\pi}{2}\int_0^{2\cos\phi} e^{-\frac{t}{2}r^2}\frac{|\widehat{Y_1}(\xi(r,\theta,\phi))|}{r\sqrt{4\cos^2\phi-r^2}}r^2\sin\phi dr\,d\phi\\
&\leq\||\xi|^{\f12}\widehat{Y_1}\|_{L^\infty}\int_0^2e^{-\frac{t}{2}r^2}\int_{r/2}^1\frac{r^{\f12}}{\sqrt{4\tau^2-r^2}}d\tau\,dr\\
&\leq {C}{\w{t}}^{-\frac12}\||D|^{\f12}Y_1\|_{L^1}.
\end{align*}
As a result, by virtue of \eqref{S2eq4} and \eqref{S2eq5}, it comes out
\begin{equation}\label{S2eq12}
\|\cY(t)\|_{L^\infty}\leq {C}{\w{t}}^{-\frac12}\bigl(\||D|^{\f12}Y_1\|_{L^1}+\|D^2Y_1\|_{L^1}\bigr).
\end{equation}
\eqref{S2eq9} together with \eqref{S2eq9a}, \eqref{S2eq11} and \eqref{S2eq12} imply the estimate \eqref{S2eq3} for $N=0.$
\end{proof}

\begin{lem}\label{S1prop2a}
{\sl For $N\in\N$, there exists $C_N>0$ such that for $t>0$,
\beq\begin{split}
&\|t\Delta\p_t\Gamma(t)Y_1\|_{L_t^\infty(H^N)}\leq C_N\|Y_1\|_{N} \andf \|t\na\p_3^2\Gamma(t)Y_1\|_{L_t^\infty(H^N)}\leq C_N\|Y_1\|_{N+1}. \end{split} \label{S2eq15'} \eeq
}
\end{lem}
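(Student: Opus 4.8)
The plan is to reduce \eqref{S2eq15'} to two pointwise bounds on the Fourier multipliers involved and then to invoke the explicit formulas for $\Gamma$ and $\p_t\Gamma$ already isolated in the proof of Proposition \ref{S1prop1}. Since $\Delta\p_t\Gamma(t,D)$ is the Fourier multiplier with symbol $-|\xi|^2\p_t\Gamma(t,\xi)$ and $\nabla\p_3^2\Gamma(t,D)$ the one with symbol $-i\xi\,\xi_3^2\,\Gamma(t,\xi)$, Plancherel's theorem reduces \eqref{S2eq15'} to the uniform estimates
\[
t\,|\xi|^2\,|\p_t\Gamma(t,\xi)|\leq C \andf t\,|\xi|\,\xi_3^2\,|\Gamma(t,\xi)|\leq C\w{\xi}\qquad\text{for all }t>0,\ \xi\in\R^3.
\]
Multiplying by $\w{\xi}^N$ and integrating then yields $\|t\Delta\p_t\Gamma(t)Y_1\|_{H^N}\leq C\|Y_1\|_N$ from the first estimate and $\|t\nabla\p_3^2\Gamma(t)Y_1\|_{H^N}\leq C\|Y_1\|_{N+1}$ from the second; the constants being independent of $t$, the $L_t^\infty$ bounds follow.

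\textbf{Proof of the multiplier bounds.} To establish these I would split the frequency space into the two regions $\{|\xi|^2\geq 2|\xi_3|\}$ and $\{|\xi|^2<2|\xi_3|\}$ used in the proof of Proposition \ref{S1prop1}, and use the representations \eqref{S2eq4}, \eqref{S2eq5} for $\Gamma$ and \eqref{S2eq10} for $\p_t\Gamma$. Two elementary observations simplify the region $\{|\xi|^2<2|\xi_3|\}$: it is contained in $\{|\xi|<2\}$ (because $\xi_3^2\leq|\xi|^2<2|\xi_3|$ forces $|\xi_3|<2$), so there $\w{\xi}\sim1$ and it suffices to bound $t|\xi|\xi_3^2|\Gamma|$ by a constant; and $\beta(\xi)<|\xi|$ throughout this region. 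Using $|\sin(t\beta)|\leq\min\{1,t\beta\}$, the identity $\xi_3^2=\beta^2+|\xi|^4/4$, and the factor $e^{-t|\xi|^2/2}$ carried by both $\Gamma$ and $\p_t\Gamma$, every term in this region collapses to a quantity of the form $s^{a}e^{-cs}$ with $s=t|\xi|^2$, hence is controlled by $\sup_{s\geq0}s^ae^{-cs}<\infty$. In the region $\{|\xi|^2\geq 2|\xi_3|\}$ I would set $\mu=\tfrac{|\xi|^2}{2}-\alpha(\xi)$, $\nu=\tfrac{|\xi|^2}{2}+\alpha(\xi)$, so that $\mu\nu=\xi_3^2$, $\mu+\nu=|\xi|^2$, and
\[
\Gamma(t,\xi)=\frac{e^{-t\mu}-e^{-t\nu}}{\nu-\mu},\qquad \p_t\Gamma(t,\xi)=\frac{\nu e^{-t\nu}-\mu e^{-t\mu}}{\nu-\mu},
\]
and distinguish whether the two roots are separated or close. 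If $\nu\geq 2\mu$ (i.e. $\alpha\gtrsim|\xi|^2$), then $\nu-\mu\gtrsim|\xi|^2$, and bounding the numerators crudely by $\nu e^{-t\nu}+\mu e^{-t\mu}$ reduces everything once more to $s^ae^{-cs}$-type bounds. If $\nu<2\mu$ (so $\mu\sim\nu\sim|\xi|^2$), I would apply the mean value theorem to $x\mapsto xe^{-tx}$ and $x\mapsto e^{-tx}$ to rewrite the difference quotients as $(1-tc)e^{-tc}$ and $te^{-tc}$ with $c\in[\mu,\nu]$, and conclude using $tc\sim t|\xi|^2$ together with $\mu\nu\leq|\xi|^4/4$ for the second multiplier.

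\textbf{Main obstacle.} The only genuinely delicate point is uniformity near the degenerate set $|\xi|^4=4\xi_3^2$, where the two eigenvalues coalesce ($\alpha=\beta=0$), and as $|\xi|\to0$; there the naive bounds $\frac{1-e^{-2t\alpha}}{2\alpha}\sim t$ and $\frac{\sin(t\beta)}{\beta}\sim t$ generate spurious powers of $t$. The coalescence is absorbed by the roots-close sub-case above, since the mean value theorem representation is smooth across $\mu=\nu$ (and reduces there to $\Gamma=te^{-t|\xi|^2/2}$, $\p_t\Gamma=(1-\tfrac{t|\xi|^2}{2})e^{-t|\xi|^2/2}$); the low-frequency behaviour is handled by retaining the full power of $|\xi|$ — in particular the extra $|\xi|$ in the second multiplier and the $\w{\xi}$ on its right-hand side, which cannot be dispensed with since $t|\xi|\xi_3^2|\Gamma|$ is genuinely unbounded near the double-root set — so that all remaining expressions are functions of $s=t|\xi|^2$ alone. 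Since the individual estimates are elementary and run parallel to those already carried out in the proof of Proposition \ref{S1prop1}, I would present them compactly rather than in full detail.
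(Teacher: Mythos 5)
Your proposal is correct and follows essentially the same route as the paper: the lemma is reduced to exactly the pointwise multiplier bounds \eqref{S2eq15'a}, and these are verified by the same splitting of frequency space according to whether the eigenvalues are real or complex and whether they are separated or nearly coalescent (the paper's thresholds $|\xi_3|\lessgtr\frac{\sqrt3}{4}|\xi|^2$ and $|\xi|^2\lessgtr\sqrt3|\xi_3|$ play the role of your $\nu\lessgtr2\mu$ dichotomy). The only cosmetic difference is your use of the mean value theorem for $x\mapsto e^{-tx}$ and $x\mapsto xe^{-tx}$ across the coalescence locus, where the paper instead uses the direct bounds $\frac{1-e^{-2t\alpha}}{2\alpha}\leq t$ together with the exponential decay $e^{-ct|\xi_3|}$; these are interchangeable elementary devices.
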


\begin{proof} The two inequalities of \eqref{S2eq15'} follows from the claim that
\beq \label{S2eq15'a}
 t|\xi|^2\p_t\Gamma(t,\xi)\in L^\infty_{t}(L^\infty_{\xi}),\andf \f{t|\xi|}{1+|\xi|}|\xi_3|^{2}\Gamma(t,\xi)\in L_t^\infty(L_\xi^\infty).
\eeq

\no (1)\ When $|\xi|^2\geq 2|\xi_3|$, we separate the proof of \eqref{S2eq15'a} into the following two cases:
\begin{itemize}
\item
If $\frac{\sqrt{3}}{4}|\xi|^2\leq |\xi_3|\leq\frac12|\xi|^2$, we deduce from \eqref{S2eq10} that
\beno
\begin{split}
 &|\p_t\Gamma(t,\xi){\bf 1}_{\frac{\sqrt{3}}{4}|\xi|^2\leq |\xi_3|\leq\frac12|\xi|^2}|\leq e^{-t\frac{|\xi|^2}{4}}(1+|\xi|^2t),\\
 &|\xi_3^2\Gamma(t,\xi){\bf 1}_{\frac{\sqrt{3}}{4}|\xi|^2\leq |\xi_3|\leq\frac12|\xi|^2}\leq t\xi_3^2e^{-t\f{\xi_3^2}{\f{|\xi|^2}2+\al(\xi)}}{\bf 1}_{\frac{\sqrt{3}}{4}|\xi|^2\leq |\xi_3|\leq\frac12|\xi|^2}|\leq Ct\xi_3^2e^{-ct|\xi_3|}.
\end{split}\eeno
As a result, it comes out
\begin{align*}
t|\xi|^{2}|\p_t\Gamma(t,\xi)|{\bf 1}_{\frac{\sqrt{3}}{4}|\xi|^2\leq |\xi_3|\leq\frac12|\xi|^2}\leq C \andf \f{t|\xi|}{1+|\xi|}|\xi_3|^{2}\Gamma(t,\xi){\bf 1}_{\frac{\sqrt{3}}{4}|\xi|^2\leq |\xi_3|\leq\frac12|\xi|^2}\leq C.
\end{align*}

\item
If $|\xi_3|\leq\frac{\sqrt{3}}{4}|\xi|^2$, then $\f{|\xi|^2}4\leq\alpha(\xi)\leq \frac{|\xi|^2}2,$ we  deduce from \eqref{S2eq10} that
\begin{align*}
& |\p_t\Gamma(t,\xi)|{\bf 1}_{|\xi_3|\leq\frac{\sqrt{3}}{4}|\xi|^2}\leq e^{-t\frac{|\xi|^2}2}+e^{-t\frac{\xi_3^2}{|\xi|^2}}\frac{\xi_3^2}{|\xi|^4},\\
&\xi_3^2|\Gamma(t,\xi)|{\bf 1}_{|\xi_3|\leq\frac{\sqrt{3}}{4}|\xi|^2}\leq \f{\xi_3^2}{\alpha(\xi)}e^{-t\f{\xi_3^2}{\f{|\xi|^2}2+\al(\xi)}}\leq C\f{\xi_3^2}{\f{|\xi|^2}2+\alpha(\xi)}e^{-t\f{\xi_3^2}{\f{|\xi|^2}2+\al(\xi)}},
\end{align*}
so that there holds
\begin{align*}
&t|\xi|^{2}|\p_t\Gamma(t,\xi)|\xi_3|{\bf 1}_{|\xi_3|\leq\frac{\sqrt{3}}{4}|\xi|^2}\leq t|\xi|^{2}e^{-t\frac{|\xi|^2}2}+te^{-t\frac{\xi_3^2}{|\xi|^2}}\frac{\xi_3^2}{|\xi|^2}\leq C,\\
&\f{t|\xi|}{1+|\xi|}|\xi_3|^{2}\Gamma(t,\xi){\bf 1}_{|\xi_3|\leq\frac{\sqrt{3}}{4}|\xi|^2}\leq C.
\end{align*}

\end{itemize}

\no (2)\ When $|\xi|^2>2|\xi_3|$, we infer from \eqref{S2eq10} that
\begin{align*}
 |\p_t\Gamma(t,\xi)|{\bf 1}_{|\xi|^2>2|\xi_3|}\leq e^{-t\frac{|\xi|^2}{2}}(|\xi|^2t+1),
\end{align*}
which implies
\begin{align*}
 t|\xi|^{2}|\p_t\Gamma(t,\xi){\bf 1}_{|\xi|^2>2|\xi_3|}|\leq C.
\end{align*}
To prove the second estimate of \eqref{S2eq15'a}, we divide further the region, $\{  |\xi|^2>2|\xi_3|\},$ into two parts.

\begin{itemize}
\item
If $|\xi|^2\leq \sqrt{3}|\xi_3|$, then we have $\f{|\xi_3|}2\leq \beta(\xi)\leq |\xi_3|,$ and it follows from \eqref{S2eq5} that
\beno
\xi_3^2|\Gamma(t,\xi)|{\bf 1}_{|\xi|^2\leq\sqrt{3}|\xi_3|}\leq C|\xi_3|e^{-t\f{|\xi|^2}2}\leq \f{C}{t|\xi|}.
\eeno

\item  When $ \sqrt{3}|\xi_3|<|\xi|^2\leq 2|\xi_3|$, we have
\beno
\xi_3^2|\Gamma(t,\xi)|{\bf 1}_{\sqrt{3}|\xi_3|<|\xi|^2\leq 2|\xi_3|}\leq Ct|\xi_3|^2e^{-ct|\xi_3|}\leq \f{C}{t}.
\eeno

\end{itemize}
By summarizing the above estimates, we obtain
the second estimate of \eqref{S2eq15'a}. This completes the proof of Lemma \ref{S1prop2a}.
\end{proof}

\subsection{Energy estimates for the linear equation}

\begin{lem}\label{S1lem2}
{\sl Let $\cY(t)$ be a smooth enough solution of the linear equation \eqref{S2eq1} with initial data $(\cY_0,\cY_1)$. Then for any $N\in\N$, there exists $C_N>0$ such that there hold \eqref{S2eq13as} and \eqref{S2eq16}.}
\end{lem}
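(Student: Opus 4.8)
The plan is to prove Lemma \ref{S1lem2} — that is, the two energy estimates \eqref{S2eq13as} and \eqref{S2eq16} for the linear system \eqref{S2eq1} — by Fourier analysis and a weighted energy functional, exactly in the spirit of the preceding proofs. Since the system decouples in frequency, I would work with the symbol $\Gamma(t,\xi)$ from \eqref{S2eq2} together with its companion symbol $\Gamma_0(t,\xi)$ that propagates the first datum (the solution is $\cY = \Gamma_0(t,D)\cY_0 + \Gamma(t,D)\cY_1$). The key elementary fact, already visible in \eqref{S2eq4}--\eqref{S2eq5}, is that both Fourier modes decay like $e^{-ct\,\mu(\xi)}$ with $\mu(\xi)\eqdefa \min\{|\xi|^2,\ \xi_3^2/|\xi|^2\}$ (equivalently $|\xi_3|^2/(1+|\xi|^2)$ up to constants, since the $\lambda_1$ branch saturates near $-1$ at high frequency in the $\xi_3$ direction). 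So the scheme is: (i) record the pointwise symbol bounds $|\Gamma(t,\xi)| \lesssim \min\{t, |\xi_3|^{-1}, \dots\}$ and $|\partial_t\Gamma(t,\xi)|, |\partial_3\Gamma(t,\xi)/|\xi_3||, \dots \lesssim e^{-ct\mu(\xi)}$ — these are already essentially proved in Proposition \ref{S1prop1} and Lemma \ref{S1prop2a} — and analogous bounds for $\Gamma_0$; (ii) multiply by $|\xi|^N$ and apply Plancherel.

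Concretely, for \eqref{S2eq13as} I would note that each of $\widehat{\partial_t\cY}$, $\widehat{\partial_3\cY}$, $|\xi|^2\widehat{\cY}$ is a bounded multiplier times $(\widehat{\partial_3\cY_0}, \widehat{\cY_1})$ and $|\xi|^2\widehat{\cY_0}$ uniformly in $t$ — this gives the $L^\infty_t(H^{N+1})$ and $L^\infty_t(H^N)$ parts. For the space-time $L^2_t$ parts, I would use that the energy identity for \eqref{S2eq1}, obtained by testing against $\cY_t$, produces on the left the time derivative of $\|\partial_3\cY\|^2 + \|\cY_t\|^2$ and on the right $-\|\nabla\cY_t\|_{L^2}^2$; an analogous identity testing against $-\Delta\cY_t$ (or working directly at the symbol level, $\int_0^\infty |\xi|^2 |\xi|^{2N+2} |\widehat{\cY_t}(t,\xi)|^2 \, dt$) controls $\|\nabla\partial_t\cY\|_{L^2_t(H^{N+1})}$. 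To also extract $\|\nabla\partial_3\cY\|_{L^2_t(H^N)}$ one needs an extra coercive term: here I would exploit the structure by testing \eqref{S2eq1} against $-\Delta^{-1}\partial_3^2\cY_t$ or, cleaner, show at the symbol level that $\int_0^\infty |\xi_3|^2|\xi|^{2N}|\xi|^2|\widehat{\cY}(t,\xi)|^2\,dt \lesssim |\xi_3|^2|\xi|^{2N}|\widehat{\cY_0}(\xi)|^2 + \dots$, using $\int_0^\infty e^{-ct\mu(\xi)}\,dt \lesssim \mu(\xi)^{-1}$ and $|\xi|^2 \mu(\xi) \gtrsim |\xi_3|^2$ when $|\xi|^2 \le 2|\xi_3|$ while $|\xi|^2\mu(\xi) = |\xi|^4 \gtrsim |\xi_3|^2$ in the complementary region — so the $\xi_3$-weighted $L^2_t$ norm of $\nabla\cY$ is bounded by the $H^N$ norm of $(\partial_3\cY_0,\cY_1)$ plus $\|\Delta\cY_0\|_N$, after inserting one $\partial_3$ and balancing powers.

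For \eqref{S2eq16} the mechanism is the same but with a time weight $\langle t\rangle$: I would use $\sup_{t>0}\langle t\rangle^{1/2}\|\partial_t\cY(t)\|_{H^N} \lesssim \||D|^{-1}(\partial_3\cY_0,\cY_1)\|_{H^{N+1}} + \|\nabla\cY_0\|_{H^N}$, which at the symbol level reduces to $\langle t\rangle |\xi|^2 |\widehat{\cY_t}|^2 \lesssim \langle t\rangle e^{-ct\mu(\xi)}(\cdots)$ and the elementary bound $\sup_{t}\langle t\rangle e^{-ct\mu(\xi)} \lesssim \mu(\xi)^{-1}$, while $\mu(\xi)^{-1} \le |\xi|^{-2} + |\xi|^2/|\xi_3|^2$, so multiplying through produces exactly the $|D|^{-1}$-shifted $H^{N+1}$ norm of $(\partial_3\cY_0,\cY_1)$ and the $\dot H^1\cap H^N$-type norm of $\cY_0$. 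The time-weighted $L^2_t$ piece $\|\langle t\rangle^{1/2}\nabla\partial_t\cY\|_{L^2_t(H^N)}$ comes from $\int_0^\infty \langle t\rangle |\xi|^2 e^{-2ct\mu(\xi)}\,dt \lesssim \mu(\xi)^{-2}$ times $|\xi|^2$, giving $|\xi|^2\mu(\xi)^{-2} \lesssim \mu(\xi)^{-1}\cdot(\text{bounded})$ after the same casework, hence the $|D|^{-1}$-type norm again.

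The main obstacle — and the only place requiring real care rather than bookkeeping — is the anisotropic low/high-frequency casework controlling $\mu(\xi)$, in particular making sure that in the regime $|\xi|^2 > 2|\xi_3|$ with $|\xi|$ large, the $\lambda_1$-mode, which only decays like $e^{-ct\xi_3^2/|\xi|^2}$ and \emph{not} like $e^{-ct|\xi|^2}$, still contributes a convergent $t$-integral after the weights are inserted; this forces the $|D|^{-1}$ shift in \eqref{S2eq16} (a true $\dot H^N$ bound would fail) and is the reason the right-hand sides of \eqref{S2eq13as}, \eqref{S2eq16} mix $\partial_3\cY_0$, $\nabla\cY_0$, $\Delta\cY_0$ rather than plain $\cY_0$. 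Everything else is Plancherel plus the symbol estimates already established in Proposition \ref{S1prop1} and Lemma \ref{S1prop2a}, together with the scalar integrals $\int_0^\infty e^{-c\tau s}s^a\,ds \lesssim \tau^{-(a+1)}$.
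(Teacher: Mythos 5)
Your proposal is correct in outline but takes a genuinely different route from the paper. The paper proves Lemma \ref{S1lem2} by a pure physical-space energy method: it tests \eqref{S2eq1} against $\cY_t$ and against the combination $\cY_t-\frac14\Delta\cY-\Delta\cY_t$, obtaining two exact identities whose coercive terms are $\|\nabla\cY_t\|_0^2$ and $\frac34\|\nabla\cY_t\|_0^2+\|\Delta\cY_t\|_0^2+\frac14\|\nabla\p_3\cY\|_0^2$; the multiplier $-\frac14\Delta\cY$ is what generates $\|\nabla\p_3\cY\|_{L^2_t(L^2)}^2$, via $(\p_3^2\cY\,|\,\Delta\cY)_{L^2}=\|\nabla\p_3\cY\|_0^2$ (your suggested multiplier $-\Delta^{-1}\p_3^2\cY_t$ would instead yield $\|\p_3\cY_t\|_{L^2_t(L^2)}^2$). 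Integrating in time gives \eqref{S2eq13as}; then \eqref{S2eq16} follows by multiplying the $H^N$ energy identity by $\langle t\rangle$, integrating, and absorbing $\int_0^t(\|\cY_t\|_N^2+\|\p_3\cY\|_N^2)\,ds$ through \eqref{S2eq13as} applied to $|D|^{-1}\cY$. This never touches the symbol $\Gamma(t,\xi)$, and in particular avoids the degenerate region $|\xi|^2\approx2|\xi_3|$ where $\la_1\approx\la_2$ and the factor $(\la_2-\la_1)^{-1}$ blows up. Your Fourier route buys pointwise-in-frequency information and unifies the lemma with Proposition \ref{S1prop1}, at the price of precisely the anisotropic casework you identify as the crux.

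Two places in your sketch need more than bookkeeping before the scheme closes. First, for the $\cY_1$-contribution to $\|\langle t\rangle^{1/2}\p_t\cY\|_{L^\infty_t(H^N)}$ in \eqref{S2eq16}, the combination $\sup_t\langle t\rangle e^{-ct\mu(\xi)}\lesssim\mu(\xi)^{-1}$ with $\mu(\xi)^{-1}\le|\xi|^{-2}+|\xi|^2/\xi_3^2$ applied to a crude bound $|\widehat{\p_t\cY}|\lesssim e^{-ct\mu(\xi)}|\widehat{\cY_1}|$ produces the factor $|\xi|^2/\xi_3^2$, which is \emph{not} controlled by the multiplier $|\xi|^{-2}\langle\xi\rangle^{2}$ corresponding to $\||D|^{-1}\cY_1\|_{N+1}$ as $\xi_3\to0$; one must keep track that the slowly decaying $\la_1$-mode of $\p_t\Gamma$ enters with amplitude $|\la_1|/(2\al)\sim\xi_3^2/|\xi|^4$, which supplies the compensating power of $\xi_3$. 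Second, the $\cY_0$-dependence must be recombined using the equation, $\p_t^2\Gamma+|\xi|^2\p_t\Gamma=-\xi_3^2\Gamma$, so that $\p_t\cY=\p_3\Gamma(t)\p_3\cY_0+\p_t\Gamma(t)\cY_1$ on the Fourier side; estimating $\p_t^2\Gamma\,\cY_0$ and $\p_t\Gamma\,\Delta\cY_0$ separately destroys this cancellation and forces $\|\Delta\cY_0\|_N$ on the right of \eqref{S2eq16} in place of $\|\nabla\cY_0\|_N$.
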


\begin{proof}
Taking the $L^2$-inner product of the equation \eqref{S2eq1} with $\cY_t$ and $\cY_t-\frac14\Delta\cY-\Delta \cY_t$, respectively, we get
$$\frac12\frac{d}{dt}(\|\cY_t\|_{0}^2+\|\p_3\cY\|_{0}^2)+\|\nabla \cY_t\|_{0}^2=0$$
and
\begin{align*}
\frac{d}{dt}\Big(\frac12\|\cY_t\|_{1}^2+\|\p_3\cY\|_{1}^2+&\frac14\|\Delta \cY\|_{0}^2-\frac14(\cY_t|\Delta \cY)_{L^2}\Big)+\frac34\|\nabla\cY_t\|_{0}^2+\|\Delta\cY_t\|_{0}^2+\frac14\|\nabla\p_3\cY\|_{0}^2=0.
\end{align*}
Integrating the above equalities with respect to $t$ gives rise to
$$\|(\cY_t,\p_{3}\cY)\|_{L^\infty_t(L^2)}+\|\nabla\p_t \cY\|_{L_t^2(L^2)}\leq \|(\p_3\cY_0,\cY_1)\|_{0} \andf $$
$$
\|(\cY_t,\p_3\cY)\|_{L^\infty_t(H^1)}+\|\Delta\cY\|_{L^\infty_t(L^2)}
+\|\na\cY_t\|_{L^2_t(H^1)}+\|\na\p_3\cY\|_{L^2_t(L^2)}\leq C\bigl(\|(\p_3\cY_0,\cY_1)\|_{1}+\|\D \cY_0\|_0\bigr). $$
This proves \eqref{S2eq13as} for $N=0$. The general case with $N>0$ follows similarly.

To show \eqref{S2eq16}, we first get, by
taking the $H^N$-inner product of the equation \eqref{S2eq1} with $\cY_t$, that
$$\frac{1}{2}\frac{d}{dt}\big(\|\cY_t\|_{N}^2+\|\p_3\cY\|_{N}^2\big)+\|\nabla\cY_t\|_{N}^2=0.$$
So that for any nonegative $f(t)\in C^1([0,\infty[)$, we have
\begin{align*}
\frac{d}{dt}\left(f(t)\big(\|\cY_t\|_{N}^2+\|\p_3\cY\|_{N}^2\big)\right)+2f(t)\|\nabla \cY_t\|_{{N}}^2=f'(t)\big(\|\cY_t\|_{N}^2+\|\p_3\cY\|_{N}^2\big).
\end{align*}
Taking $f(t)=\langle t\rangle$ and integrating the resulting equality over $[0,t]$, we find
\begin{equation}\label{S2eq17}\begin{split}
\langle t\rangle\big(\|\cY_t(t)\|_{N}^2+\|\p_3\cY(t)\|_{N}^2\big)+&2\int_0^t\langle s\rangle\|\nabla \cY_t(s)\|_{{N}}^2ds\leq\|(\p_3\cY_0,\cY_1)\|_{N}^2+\int_0^t\big(\|\cY_t\|_{N}^2+\|\p_3\cY\|_{N}^2\big)ds.
\end{split}
\end{equation}
Yet it from \eqref{S2eq13as} that
\beno \|\cY_t\|_{L^2_t(H^{N+1})}+\|\p_3\cY\|_{L^2_t(H^N)}\leq C_N\bigl(\||D|^{-1}(\p_3\cY_0,\cY_1)\|_{{N+1}}+\|\na\cY_0\|_{N}\bigr),
\eeno
which together with \eqref{S2eq17} ensures \eqref{S2eq16}.
\end{proof}

Recall that $\cY(t)=\Gamma(t)Y_1$ is the solution to \eqref{S2eq1} with initial data $(\cY_0,\cY_1)=(0,Y_1)$,
so that one can deduce estimates for the operator $\Gamma$ from the energy estimates \eqref{S2eq13as} and \eqref{S2eq16}:
Indeed
combining \eqref{S2eq15'} with \eqref{S2eq13as} gives
\begin{equation}\label{S2eq15'c}
\begin{split}
&\|\langle t\rangle\D\p_t\Gamma(t)Y_1\|_{L_t^\infty(H^N)}+
\|\langle t\rangle\p_3^2\Gamma(t)Y_1\|_{L_t^\infty(H^N)}\leq C_N\|Y_1\|_{N+2}.
\end{split}
\end{equation}
Let us remark that
\begin{equation}\label{S2eq14}
\begin{split}
\|Y\|_{L^\infty}\leq \int|\widehat Y(\xi)|d\xi&\leq \int_{|\xi|\leq1}{|\xi|}^{-1}\cdot|\xi||\widehat Y(\xi)|d\xi+\int_{|\xi|>1}|\xi|^{-2}\cdot|\xi|^2|\widehat Y(\xi)|d\xi\\
&\leq C(\||D|Y\|_{L^2}+\||D|^2Y\|_{L^2})\leq C\||D|Y\|_{1}.
\end{split}
\end{equation}
 summarizing \eqref{S2eq13as}, \eqref{S2eq15'c} and \eqref{S2eq14} then leads to
%
\begin{col}
{\sl For $N\geq0$, there exists $C_N>0$ such that
\beq \label{S2eq15}
\begin{split}
&\|\Gamma(t)Y_1\|_{L^\infty_t(W^{N,\infty})}\ \leq C_N\||D|^{-1}Y_1\|_{N+2},\\
&\|\p_3\Gamma(t)Y_1\|_{L_t^2(W^{N,\infty})}\leq C_N\|Y_1\|_{N+2},\\
&\|\langle t\rangle\p_t\Gamma(t)Y_1\|_{L_t^\infty(W^{N,\infty})}\leq C_N\||D|^{-1}Y_1\|_{N+3},
\end{split} \eeq
where $\Gamma(t)$ is the solution operator given by \eqref{S2eq2}.}
\end{col}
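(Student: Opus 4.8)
The plan is to obtain all three inequalities from the $L^2$-based energy estimates \eqref{S2eq13as} and \eqref{S2eq15'c}, combined with the elementary Sobolev embedding \eqref{S2eq14}, exploiting the fact that the solution operator $\Gamma(t)$ from \eqref{S2eq2} is a Fourier multiplier and hence commutes with $|D|$ and with $|D|^{-1}$. First I would upgrade \eqref{S2eq14} to $\|f\|_{W^{N,\infty}}\le C_N\||D|f\|_{H^{N+1}}$, by applying \eqref{S2eq14} to $D^\alpha f$ for each $|\alpha|\le N$ and summing; this reduces each of the three estimates for $\Gamma(t)Y_1$, $\p_3\Gamma(t)Y_1$, $\p_t\Gamma(t)Y_1$ in $W^{N,\infty}$ to a bound on $\||D|(\cdot)\|_{H^{N+1}}$.

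For the first estimate, since $\Gamma(t)$ commutes with $|D|^{-1}$ one has $|D|\Gamma(t)Y_1=\Delta\bigl(\Gamma(t)|D|^{-1}Y_1\bigr)=\Delta\mathcal Z(t)$, where $\mathcal Z$ solves \eqref{S2eq1} with data $(\mathcal Z_0,\mathcal Z_1)=(0,|D|^{-1}Y_1)$. Applying \eqref{S2eq13as} with $N$ replaced by $N+1$ gives $\|\Delta\mathcal Z\|_{L^\infty_t(H^{N+1})}\le C_{N+1}\|\mathcal Z_1\|_{N+2}=C_{N+1}\||D|^{-1}Y_1\|_{N+2}$, which together with the embedding is exactly $\|\Gamma(t)Y_1\|_{L^\infty_t(W^{N,\infty})}\le C_N\||D|^{-1}Y_1\|_{N+2}$. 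For the third estimate the same substitution yields $|D|\p_t\Gamma(t)Y_1=\Delta\p_t\mathcal Z$, and \eqref{S2eq15'c} applied with $Y_1$ replaced by $|D|^{-1}Y_1$ and $N$ replaced by $N+1$ gives $\|\langle t\rangle\Delta\p_t\mathcal Z\|_{L^\infty_t(H^{N+1})}\le C_{N+1}\||D|^{-1}Y_1\|_{N+3}$, i.e. the claimed bound on $\langle t\rangle\p_t\Gamma(t)Y_1$.

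The second estimate is handled slightly differently, since its right-hand side carries no negative power of $|D|$: here I would use that the multipliers $|\xi||\xi_3|$ and $(\xi_i\xi_3)_{i=1,2,3}$ are comparable, so that $\||D|\p_3 g\|_{H^{N+1}}\le C\|\nabla\p_3 g\|_{H^{N+1}}$. Applying this with $g=\Gamma(t)Y_1$ and then using the $\|\nabla\p_3\mathcal Y\|_{L^2_t(H^N)}$ term of \eqref{S2eq13as} (with $N$ replaced by $N+1$ and zero initial position $\mathcal Y_0=0$) bounds $\||D|\p_3\Gamma(t)Y_1\|_{L^2_t(H^{N+1})}$ by $C_{N+1}\|Y_1\|_{N+2}$, whence $\|\p_3\Gamma(t)Y_1\|_{L^2_t(W^{N,\infty})}\le C_N\|Y_1\|_{N+2}$ via the embedding.

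The computations are routine and no genuine analytic difficulty arises beyond the estimates already established in Lemmas \ref{S1prop2a} and \ref{S1lem2}; the only point requiring a little care is the frequency bookkeeping at the low end — one must verify that replacing $Y_1$ by $|D|^{-1}Y_1$ is harmless (it is, because in the first and third estimates \eqref{S2eq13as} and \eqref{S2eq15'c} only see $|D|^{-1}Y_1$ through $H^s$-norms with $s\ge0$), and, for the second estimate, that one works with $\nabla\p_3$ rather than with $\Delta$ so as not to introduce a spurious negative power of $|D|$ and so as to retain the anisotropic dissipation. I expect this low/high-frequency bookkeeping to be the only mild obstacle.
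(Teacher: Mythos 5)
Your proposal is correct and follows essentially the same route as the paper, which simply states that the corollary follows by "summarizing" the energy estimate \eqref{S2eq13as}, the weighted estimate \eqref{S2eq15'c}, and the embedding \eqref{S2eq14}; your argument supplies exactly the bookkeeping the paper leaves implicit (commuting $|D|^{\pm1}$ through $\Gamma(t)$, applying \eqref{S2eq13as} and \eqref{S2eq15'c} with $Y_1$ replaced by $|D|^{-1}Y_1$ and $N$ by $N+1$, and using $\||D|\p_3 g\|_{H^{N+1}}=\|\nabla\p_3 g\|_{H^{N+1}}$ for the second estimate). No gaps.
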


Now we are in a position to complete the proof of Proposition \ref{S0prop1}.

\begin{proof}[Proof of Proposition \ref{S0prop1}]
\eqref{S2eq13as} and \eqref{S2eq16} are already proved by Lemma \ref{S1lem2}. So it remains to
deal with the estimates of \eqref{S2eq3as} and \eqref{S2eq15'as}. As a matter of fact,
according to the definition of the solution operator $\Gamma(t)$ given by \eqref{S2eq2},
we have
\beq\label{S2eq15'b}
\cY(t)=\p_t\Gamma(t)\cY_{0}+\Gamma(t)(\cY_{1}-\D \cY_{0}),
\eeq
from which and \eqref{S2eq3}, we infer
that for any $\d\in ]0,1[$ and for $ N\in\N$,
\beq\label{S2eq15'b}
\begin{split}
|\p_3\cY|_{1,N}+& |\p_t\cY|_{\frac32-\delta,N}+ |\cY|_{\frac12,N}\leq |\p_3\p_t\Gamma(t)\cY_{0}|_{1,N}+|\p_t^2\Gamma(t)\cY_{0}|_{\frac32-\delta,N}\\
&+|\p_t\Gamma(t)\cY_{0}|_{\frac12,N}
+
C_{N}\bigl(\||D|^{2\d}(\D \cY_{0},\cY_{1})\|_{L^1}+\||D|^{N+4}(\D \cY_{0},\cY_{1})\|_{L^1}\bigr).\end{split} \eeq
While notice that $\p_t^2\Gamma(t)\cY_{0}=\D\p_t\Gamma(t)\cY_{0}+\p_3^2\Gamma(t)\cY_{0}, $ we get, by
applying \eqref{S2eq3} once again, that
\beno
\begin{split}
|\p_3\p_t\Gamma(t)\cY_{0}|_{1,N}=&|\p_t\Gamma(t)\p_3\cY_{0}|_{1,N}\leq C_{N}\bigl(\||D|^{2\d}\p_3 \cY_{0}\|_{L^1}
+\||D|^{N+4}\p_3\cY_{0}\|_{L^1}\bigr),\\
|\p_t^2\Gamma(t)\cY_{0}|_{\frac32-\delta,N}\leq& |\D\p_t\Gamma(t)\cY_{0}|_{\frac32-\delta,N}+|\p_3^2\Gamma(t)\cY_{0}|_{\frac32,N}
\leq  C_{N}\bigl(\||D|^{2\d} \cY_{0}\|_{L^1}
+\||D|^{N+6}\cY_{0}\|_{L^1}\bigr),\\
|\p_t\Gamma(t)\cY_{0}|_{\frac12,N}\leq &C_{N}\bigl(\||D|^{2\d} \cY_{0}\|_{L^1}
+\||D|^{N+4}\cY_{0}\|_{L^1}\bigr).
\end{split}
\eeno
Inserting the above estimates into \eqref{S2eq15'b} leads to \eqref{S2eq3as}.

Finally notice that $\D\p_t^2\Gamma(t)\cY_0=\D^2\p_t\Gamma(t)\cY_0 +\D\p_3^2\Gamma(t)\cY_0.$
Then by virtue of \eqref{S2eq15'c}, we deduce
\beno
\begin{split}
\|\langle t\rangle\D\p_t\cY(t)\|_{L_t^\infty(H^N)} &\leq  \|\langle t\rangle \D\p_t^2\Gamma(t)\cY_0\|_{L_t^\infty(H^N)}+\|\langle t\rangle\D\p_t\Gamma(t)(\cY_1-\D\cY_0)\|_{L_t^\infty(H^N)}\\
&\leq C_N\bigl(\|\D\cY_0\|_{N+2}+\|(\D \cY_0,\cY_1)\|_{N+2}\bigr).
\end{split}
\eeno
This proves \eqref{S2eq15'as}, and thus we complete the proof of Proposition \ref{S0prop1}.
\end{proof}


\subsection{Decay estimates for the inhomogeneous equation}\label{Subsect3.3}

\begin{proof}[Proof of Proposition \ref{S2.3prop1}]  In view of \eqref{S2eq2}, we get, by applying Duhamel's principle to \eqref{S2eq18}, that
\begin{equation}\label{S2eq20}
Y(t)=\int_0^t\Gamma(t-s)g(s)ds.
\end{equation}
In what follows, we shall present the proof of \eqref{S2eq19} term by term.

\noindent$\bullet$\underline{\it Decay estimate of $\p_3Y.$}

We first separate the integral in \eqref{S2eq20} as
\begin{align*}
\p_3Y(t,y)&=\int_0^t\p_3\Gamma(t-s)g(s)ds=\int_0^{t/2}\p_3\Gamma(t-s)g(s)ds+\int_{t/2}^t\p_3\Gamma(t-s)g(s)ds.
\end{align*}
We deduce from  \eqref{S2eq9} that
\begin{align*}
\langle t\rangle\int_0^{t/2}\big|\p_3\Gamma(t-s)g(s)\big|_{N}\,ds\leq& C_N\langle t\rangle\int_0^{t/2}\langle t-s\rangle^{-1}\triplenorm{|D| g(s)}_{N+2}\,ds\\
\leq &C_N\int_0^{t/2}\triplenorm{|D| g(s)}_{N+2}ds
\leq C\||D|g\|_{L_t^1(W^{N+2,1})}.
\end{align*}
While it follows from the second inequality in \eqref{S2eq15} that
\begin{align*}
&\langle t\rangle\int_{t/2}^t\big|\p_3\Gamma(t-s)g(s)\big|_{N}ds\leq\langle t\rangle\Big(\int_{t/2}^t\| g(s)\|_{N+2}^2ds\Big)^\frac12\leq C\theta^\frac12\|\langle t\rangle^\frac12g\|_{L_t^2(H^{N+2})}.
\end{align*}
Hence we achieve
\begin{equation}\label{S2eq21}
\begin{split}
&|\p_3Y|_{1,N}\leq C_N\left(\||D| g\|_{L_t^1(W^{N+2,1})}+\theta^\frac12\|\langle t\rangle^\frac12g\|_{L_t^2(H^{N+2})}\right).
\end{split}
\end{equation}

\noindent$\bullet$\underline{\it Decay estimate of $Y_t.$}

Noticing that $\Gamma(0)=0$, we have
$$Y_t(t)=\int_0^t\p_t\Gamma(t-s)g(s)ds=\int_0^{t/2}\p_t\Gamma(t-s)g(s)ds+\int_{t/2}^t\p_t\Gamma(t-s)g(s)ds.$$
It follows from \eqref{S2eq11} that
\begin{align*}
\langle t\rangle^{\frac32-\delta}\int_0^{t/2}\big|\p_t\Gamma(t-s)g(s)\big|_{N}ds
&\leq C_N\langle t\rangle^{\frac32-\delta}\int_0^{t/2}\langle t-s\rangle^{-\left(\frac32-\delta\right)}\bigl(\triplenorm{|D|^{2\delta}g(s)}_{N}+\triplenorm{D^4g(s)}_{N}\bigr)ds\\
&\leq C_N\bigl(\||D|^{2\delta}g\|_{L_t^1(W^{N,1})}+\|D^4g\|_{L_t^2(W^{N,1})}\bigr).
\end{align*}
Whereas it follows from the third inequality in \eqref{S2eq15} that
\begin{align*}
\langle t\rangle^{\frac32-\delta}\int_{t/2}^t\big|\p_t\Gamma(t-s)g(s)\big|_{N}ds\leq& \int_{t/2}^t\langle t-s\rangle^{-1}\||D|^{-1}g(s)\|_{N+3}\langle s\rangle^{\frac32-\delta}ds\\
\leq& C_N\log\langle\theta\rangle\|\langle t\rangle^{\frac32-\delta}|D|^{-1}g\|_{L_t^\infty(H^{N+3})}\leq C_N\log\langle\theta\rangle\big\||D|^{-1}g\big\|_{\frac32-\delta,N+3}.
\end{align*}
As a result, it comes out
\begin{equation}\label{S2eq22}
\begin{split}
&|Y_t|_{\frac32-\d,N}\leq C_N\left(\bigl(\||D|^{2\d}g\|_{L_t^1(W^{N,1})}+\||D|^{4}g\|_{L_t^1(W^{N,1})}\big)+\log\langle\theta\rangle\big\||D|^{-1}g\big\|_{\frac32-\delta,N+3}\right).
\end{split}
\end{equation}

\noindent$\bullet$\underline{\it Decay estimate of $Y.$}

As in the previous steps,
we first split the integral \eqref{S2eq20} into two parts. For the integral from $0$ to $t/2$, we use \eqref{S2eq12} to deduce that
\begin{align*}
\langle t\rangle^\frac12\int_0^{t/2}\big|\Gamma(t-s)g(s)\big|_{N}ds
&\leq \langle t\rangle^\frac12\int_0^{t/2}C_N\langle t-s\rangle^{-\frac12}\bigl(\triplenorm{|D|^{\f12}g(s)}_{N}+\triplenorm{|D|^{2}g(s)}_{N}\bigr)ds\\
&\leq C_N\left(\||D|^{\f12}g\|_{L_t^1(W^{N,1})}+\||D|^{2}g\|_{L_t^1(W^{N,1})}\right).
\end{align*}
For the integral from $t/2$ to $t$, we apply the first inequality of \eqref{S2eq15} to get
\begin{align*}
\langle t\rangle^\frac12\int_{t/2}^t\big|\Gamma(t-s)g(s)\big|_{N}ds
&\leq C_N\langle t\rangle^\frac12\int_{t/2}^t\||D|^{-1}g(s)\|_{N+2}ds\\
&\leq C_N\langle t\rangle\Big(\int_{t/2}^t\||D|^{-1}g(s)\|_{N+2}^2ds\Big)^\frac12\leq C_N\theta^\frac12\|\langle t\rangle^\frac12|D|^{-1}g\|_{L_t^2(H^{N+2})}.
\end{align*}
Hence we obtain
\begin{equation}\label{S2eq23}
\begin{split}
&|Y|_{\frac12,N}\leq C_N\left(\big(\||D|^{\f12}g\|_{L_t^1(W^{N,1})}+\||D|^{2}g\|_{L_t^1(W^{N,1})}\big)+\theta^{\frac12}\|\langle t\rangle^\frac12|D|^{-1}g\|_{L_t^2(H^{N+2})}\right).
\end{split}
\end{equation}
By summarizing the estimates \eqref{S2eq21}, \eqref{S2eq22} and \eqref{S2eq23}, we complete the proof of  \eqref{S2eq19}.
\end{proof}


\section{The derivatives of $f$ given by \eqref{S1eq2}}\label{Sect4}

\subsection{Computation of $f'(Y;X)$} The goal of this subsection is to derive the linearized equations of the system (\ref{S1eq1}-\ref{S1eq2}).
We first decompose the pressure function $\vv p$ given by \eqref{S1eq2} as $\vv p=\vv p_{1}+\vv p_2$ with
\begin{align}
\label{p1}&\vv p_1\eqdefa-\Delta^{-1}\dv\big(({\mathcal A}{\mathcal A}^t-Id)\nabla \vv  p_1\big)+\Delta^{-1}\dv\Big({\mathcal A}\dv\big({\mathcal A}(\p_3Y\otimes\p_3Y)\big)\Big)\\
\label{p2}&\vv p_2\eqdefa-\Delta^{-1}\dv\big(({\mathcal A}{\mathcal A}^t-Id)\nabla \vv p_2\big)+\Delta^{-1}\dv\Big({\mathcal A}\dv\big({\mathcal A}(Y_t\otimes Y_t)\big)\Big).
\end{align}
Let us denote
\begin{equation}\label{fm}
\begin{split}
 f_0\eqdefa\na_y\cdot\bigl((\mathcal{A}\mathcal{A}^{t}-Id)\na_y Y_t\bigr),\quad
 f_1\eqdefa\mathcal{A}^{t}\na_y\vv p_1 \andf f_2\eqdefa\mathcal{A}^{t}\na_y\vv p_2.
 \end{split}
 \end{equation}
Then the functional $f$   given by \eqref{S1eq2} can be decomposed as $f_0-f_1+f_2$.

Before proceeding, let us recall that for a map $f:{\mathcal U}\to {\mathcal Y} $, where  ${\mathcal U}$ is an open set of $ {\mathcal X}$  and ${\mathcal X}\eqdefa C^\infty([0,\infty[\times\R^3;\R^3)$,
the differentiation  of $f$ at $Y\in {\mathcal U}$ along the direction $X\in{\mathcal X}$ is defined as
$$f'(Y;X)\eqdefa\lim_{s\to0}\frac{f(Y+sX)-f(Y)}{s}=\frac{d}{ds}f(Y+sX)|_{s=0}.$$
For $f\in C^\infty([0,+\infty)\times \R^3;M_{3\times3}(\R))$, $g\in C^\infty([0,+\infty)\times \R^3;\R^3)$, we have
\begin{align*}
&(fg)'(Y;X)=f'(Y;X)g(Y)+f(Y)g'(Y;X).
\end{align*}
Then for ${\mathcal A}(Y)=(Id+\nabla Y)^{-1}$, we have
\beq\label{S7.1eq1}
\begin{split}
{\mathcal A}'(Y;X)={\mathcal A}(-\nabla X){\mathcal A},\andf
({\mathcal A}^t)'(Y;X)={\mathcal A}^t(-\nabla X)^t{\mathcal A}^t
\end{split}\eeq
and thus
\begin{align} \label{S7.1eq3}
({\mathcal A}{\mathcal A}^t-Id)'(Y;X)&={\mathcal A}(-\nabla X){\mathcal A}{\mathcal A}^t+{\mathcal A}{\mathcal A}^t(-\nabla X)^t{\mathcal A}^t.
\end{align}
As a result, we deduce that
\beq\label{f0'}
\begin{split}
f_0'(Y;X)&=\nabla\cdot\Big(({\mathcal A}{\mathcal A}^t-Id)'(Y;X)\nabla Y_t\Big)+\nabla\cdot\big(({\mathcal A}{\mathcal A}^t-Id)\nabla X_t\big)\\
&=\nabla \cdot\Big(\big({\mathcal A}(-\nabla X){\mathcal A}{\mathcal A}^t+{\mathcal A}{\mathcal A}^t(-\nabla X)^t{\mathcal A}^t\big)\nabla Y_t\Big)+\nabla\cdot\big(({\mathcal A}{\mathcal A}^t-Id)\nabla X_t\big).
\end{split} \eeq
For $m=1,2$, we have
\beq \label{fm'}
\begin{split}
f_m'(Y;X)&=({\mathcal A}^t)'(Y;X)\nabla \vv p_m(Y)+{\mathcal A}^t\nabla \vv p_m'(Y;X)\\
&=-\cA^t(\na X)^t\cA^t(\na\vv p_m)(Y)+{\mathcal A}^t\nabla \vv p_m'(Y;X).
\end{split} \eeq  Moreover, it follows from \eqref{p1} that
\begin{equation}\label{p1'}
\begin{split}
 \vv p_1'(Y;X)
=&\Delta^{-1}\dv\Big(-({\mathcal A}{\mathcal A}^t-Id)\nabla \vv p_1'(Y;X)-\big({\mathcal A}(-\nabla X){\mathcal A}{\mathcal A}^t+{\mathcal A}{\mathcal A}^t(-\nabla X){\mathcal A}^t\big)\nabla \vv p_1(Y)\\
&\ \ +{\mathcal A}\dv\big(({\mathcal A}(-\nabla X){\mathcal A})(\p_{3}Y\otimes \p_{3}Y)\big)+({\mathcal A}(-\nabla X){\mathcal A})\dv\big({\mathcal A}(\p_{3}Y\otimes \p_{3}Y)\big)\\
&\ \ +{\mathcal A}\dv\big({\mathcal A}(\p_{3}Y\otimes\p_{3}X+\p_{3}X\otimes \p_{3}Y)\big)\Big).
\end{split}
\end{equation}
And similarly, it follows from \eqref{p2} that
\begin{equation}\label{p2'}
\begin{split}
 \vv p_2'(Y;X)
=\Delta^{-1}\dv\Big(&-({\mathcal A}{\mathcal A}^t-Id)\nabla \vv p_2'(Y;X)-\big({\mathcal A}(-\nabla X){\mathcal A}{\mathcal A}^t+{\mathcal A}{\mathcal A}^t(-\nabla X){\mathcal A}^t\big)\nabla \vv p_2(Y)\\
&+{\mathcal A}\dv\big(({\mathcal A}(-\nabla X){\mathcal A})(Y_t\otimes Y_t)\big)+({\mathcal A}(-\nabla X){\mathcal A})\dv\big({\mathcal A}(Y_t\otimes Y_t)\big)\\
&+
{\mathcal A}\dv\big({\mathcal A}(Y_t\otimes X_t+X_t\otimes Y_t)\big)\Big).
\end{split}
\end{equation}
The linearized equation  of (\ref{S1eq1}-\ref{S1eq2}) then reads as \eqref{LE1}.

\begin{rmk}\label{S3rmk1}
Let $V\in C^\infty([0,+\infty)\times\R^3;M_{3\times3}(\R))$ and  $U\in C^\infty([0,+\infty)\times\R^3;\R^3)$, we denote
 $h(V)\eqdefa(Id+V)^{-1}$,  and
 \beno
 \begin{split}
 F_0(U,&V)\eqdefa\nabla\cdot \Big(\big(h(V)h(V)^t-Id\big)U\Big),\qquad F(U,V)\eqdefa h(V)^t\vv q(U,V) \with\\
\vv q\eqdefa &-\Delta^{-1}\dv\big((h(V)h(V)^t-Id)\nabla \vv  q\big)+\Delta^{-1}\dv\Big(h(V)\dv\big(h(V)(U\otimes U)\big)\Big).\end{split}
\eeno
Then $f_0,f_1,f_2$ defined by \eqref{fm} can be written as
\begin{equation*}
\begin{split}
f_0=F_0(\nabla Y_t,\nabla Y),\quad
f_1=F(\p_3Y,\nabla Y) \andf  f_2=F(Y_t,\nabla Y),
\end{split}
 \end{equation*}
and hence $f_0',f_1'$ and $f_2'$ read
\begin{equation*}
\begin{split}
&f_0'(Y;X)=F'_{0,U}(\nabla Y_t,\nabla Y)\nabla X_t+F'_{0,V}(\nabla Y_t,\nabla Y)\nabla X,\\
&f_1'(Y;X)=F'_U(\p_3Y,\nabla Y)\p_3X+F'_V(\p_3Y,\nabla Y)\nabla X,\\
&f_2'(Y;X)=F'_U(Y_t,\nabla Y)X_t+F'_V(Y_t,\nabla Y)\nabla X,
\end{split}
\end{equation*}
where the functionals $F'_{0,U}(U,V),$ $
F'_{0,V}(U,V),$
$F'_U(U,V)$ and $
F'_V(U,V)$ are given by
\begin{equation*}
\begin{split}
F'_{0,U}(U,V)\dot U&=\nabla\cdot \Big(\big(h(V)h(V)^t-Id\big)\dot U\Big),\\
F'_{0,V}(U,V)\dot V&=\nabla\cdot \left(\big((h'(V)\dot V)h(V)^t+h(V)(h'(V)\dot V)^t\big) U\right),\\
F'_U(U,V)\dot U&=h(V)^t\vv q'_U(U,V)\dot U\andf F'_V(U,V)\dot V=\big(h'(V)\dot V\big)^t\vv q(U,V)+h(V)^t\vv q'_V(U,V)\dot V,
\end{split}
\end{equation*}
and
\begin{equation*}
\begin{split}
h'(V)\dot V&=(Id+V)^{-1}(-\dot V)(Id+V)^{-1},\\
\vv q'_U(U,V)\dot U&=-\Delta^{-1}\dv\Big((h(V)h(V)^t-Id)\nabla \vv q'_U(U,V)\dot U-h(V)\dv\big(h(V)(U\otimes\dot U+\dot U\otimes U)\big)\Big)\\
\vv q'_V(U,V)\dot V&=-\Delta^{-1}\dv\Big((h(V)h(V)^t-Id)\nabla \vv q'_V(U,V)\dot V-(h'(V)\dot V)\dv\big(h(V)(U\otimes U)\big)
\\
&\qquad-h(V)\dv\big((h'(V)\dot V)(U\otimes U)\big)+\big((h'(V)\dot V) h(V)^t+h(V)(h'(V)\dot V)^t\big)\nabla \vv q\Big).
\end{split}
\end{equation*}
\end{rmk}

\subsection{Computation of $f''(Y;X,W)$}\label{Sect4.2}
 In order to estimate the error arisen in the Nash-Moser iteration scheme, we need the second derivatives of $f.$ Toward this, let us recall the product rule
 \beq\label{S7.1eq0}
\begin{split}
(fg)''(Y;X,W)&=f''(Y;X,W)g(Y)+f(Y)g''(Y;X,W)\\
&\quad +f'(Y;X)g'(Y;W)+f'(Y;W)g'(Y;X).
\end{split} \eeq
It is easy to observe from \eqref{S7.1eq1} that
\begin{equation}\label{S7.1eq2}
{\mathcal A}''(Y;X,W)
={\mathcal A}(\nabla X){\mathcal A}(\nabla W){\mathcal A}+{\mathcal A}(\nabla W){\mathcal A}(\nabla X){\mathcal A}.
\end{equation}
Then applying the product rule \eqref{S7.1eq0} and \eqref{S7.1eq1} gives
\begin{equation}\label{S7.1eq4}
\begin{split}
({\mathcal A}{\mathcal A}^t-Id)''(Y;X,W)&={\mathcal A}(\nabla X){\mathcal A}(\nabla W){\mathcal A}{\mathcal A}^t+{\mathcal A}(\nabla W){\mathcal A}(\nabla X){\mathcal A}{\mathcal A}^t\\
&\quad+{\mathcal A}{\mathcal A}^t(\nabla X)^t{\mathcal A}^t(\nabla W)^t{\mathcal A}^t+{\mathcal A}{\mathcal A}^t(\nabla W)^t{\mathcal A}^t(\nabla X)^t{\mathcal A}^t\\
&\quad+{\mathcal A}(\nabla X){\mathcal A}{\mathcal A}^t(\nabla W)^t{\mathcal A}^t+{\mathcal A}(\nabla W){\mathcal A}{\mathcal A}^t(\nabla X)^t{\mathcal A}^t.
\end{split}
\end{equation}
Recall that $f_0$ is given by \eqref{fm}, we deduce from \eqref{S7.1eq0} that
\begin{equation}\label{S7.1eq5}
\begin{split}
f_0''(Y;X,W)=&\nabla\cdot\Big(({\mathcal A}{\mathcal A}^t-Id)''(Y;X,W)\nabla Y_t\Big)\\
&+\nabla\cdot\big(({\mathcal A}{\mathcal A}^t-Id)'(Y;X)\nabla W_t\big)+\nabla\cdot\big(({\mathcal A}{\mathcal A}^t-Id)'(Y;W)\nabla X_t\big).
\end{split}
\end{equation}
Similarly for $f_m(Y)={\mathcal A}^t\nabla\vv p_m$, $m=1,2$, we have
\begin{equation}\label{S7.1eq6}
\begin{split}
f_m''(Y;X,W)&=({\mathcal A}^t)''(Y;X,W)\nabla\vv p_m(Y)+{\mathcal A}^t\nabla \big(\vv p_m''(Y;X,W)\big)\\
&\quad+({\mathcal A}^t)'(Y;X)\nabla\big(\vv p_m'(Y;W)\big)+({\mathcal A}^t)'(Y;W)\nabla\big(\vv p_m'(Y;X)\big).
\end{split}
\end{equation}
Then in view of \eqref{p1'}, \eqref{p2'}, to obtain the expression of $f_m''(Y;X,W), m=1,2,$ it remains to calculate  $\vv p_m''(Y;X,W),$ $m=1,2.$  Indeed, it follows from \eqref{p1}, \eqref{p2} and \eqref{S7.1eq0} that
\begin{equation}\label{p1''}
\begin{split}
&\vv p_1''(Y;X,W)=-\Delta^{-1}\dv\Big(({\mathcal A}{\mathcal A}^t-Id)\nabla \vv p_1''(Y;X,W)+({\mathcal A}{\mathcal A}^t-Id)''(Y;X,W)\nabla \vv p_1(Y)\big)\\
&\quad+({\mathcal A}{\mathcal A}^t-Id)'(Y;X)\nabla \vv p_1'(Y;W)+({\mathcal A}{\mathcal A}^t-Id)'(Y;W)\nabla \vv p_1'(Y;X)\\
&\quad-{\mathcal A}\dv\big[{\mathcal A}(\p_{3}X\otimes\p_{3}W+\p_3W\otimes\p_3X) +{\mathcal A}''(Y;X,W)(\p_{3}Y\otimes \p_{3}Y)\\
&\quad+{\mathcal A}'(Y;X)(\p_{3}Y\otimes \p_{3}W+\p_{3}W\otimes\p_{3}Y)\big)+{\mathcal A}'(Y;W)(\p_{3}Y\otimes \p_{3}X+\p_{3}X\otimes\p_{3}Y)\bigr]\\
&\quad -{\mathcal A}'(Y;X)\dv\bigl[{\mathcal A}'(Y;W)(\p_{3}Y\otimes \p_{3}Y)+{\mathcal A}(\p_{3}Y\otimes \p_{3}W+\p_{3}W\otimes\p_{3}Y)\bigr]\\
& \quad -{\mathcal A}'(Y;W)\dv\bigl[{\mathcal A}'(Y;X)(\p_{3}Y\otimes \p_{3}Y)+{\mathcal A}(\p_{3}Y\otimes \p_{3}X+\p_{3}X\otimes\p_{3}Y)\bigr]\\
&\qquad\qquad\qquad\qquad\qquad\qquad\qquad\qquad\qquad  -{\mathcal A}''(Y;X,W)\dv\big({\mathcal A}(\p_{3}Y\otimes \p_{3}Y)\big)\Big).
\end{split}
\end{equation}
and
\begin{equation}\label{p2''}
\begin{split}
\vv p_2''(Y;X,W)=&-\Delta^{-1}\dv\Bigl(({\mathcal A}{\mathcal A}^t-Id)\nabla \vv p_2''(Y;X,W)+({\mathcal A}{\mathcal A}^t-Id)''(Y;X,W)\nabla \vv p_2(Y)\\
&+({\mathcal A}{\mathcal A}^t-Id)'(Y;X)\nabla \vv p_2'(Y;W)+({\mathcal A}{\mathcal A}^t-Id)'(Y;W)\nabla \vv p_2'(Y;X)\\
& -{\mathcal A}\dv\big[{\mathcal A}(X_t\otimes W_t+W_t\otimes X_t)+{\mathcal A}''(Y;X,W)(Y_t\otimes Y_t)\\
& +{\mathcal A}'(Y;X)(Y_t\otimes W_t+W_t\otimes Y_t)+{\mathcal A}'(Y;W)(Y_t\otimes X_t+X_t\otimes Y_t)\big]\\
& -{\mathcal A}'(Y;X)\dv\big[{\mathcal A}(Y_t\otimes W_t+W_t\otimes Y_t)+{\mathcal A}'(Y;W)(Y_t\otimes Y_t)\big]\\
& -{\mathcal A}'(Y;W)\dv\big[{\mathcal A}(Y_t\otimes X_t+X_t\otimes Y_t)+{\mathcal A}'(Y;X)(Y_t\otimes Y_t)\big]\\
&\qquad\qquad\qquad\qquad\qquad\qquad\quad - {\mathcal A}''(Y;X,W)\dv\big({\mathcal A}(Y_t\otimes Y_t)\Bigr).
\end{split}
\end{equation}

\begin{rmk}\label{S7rmk1}
In view of Remark \ref{S3rmk1}, $f_m''$ can be written as follows
\begin{equation*}
\begin{split}
f_0''(Y;X,W)&=F''_{0,UU}(\nabla Y_t,\nabla Y)\nabla X_t\cdot\nabla W_t+F''_{0,UV}(\nabla Y_t,\nabla Y)\nabla X_t\cdot \nabla W\\
&\qquad+F''_{0,VU}(\nabla Y_t,\nabla Y)\nabla W_t\cdot \nabla X+F''_{0,VV}(\nabla Y_t,\nabla Y)\nabla X\cdot \nabla W,\\
f_1''(Y;X,W)&=F''_{UU}(\p_3Y,\nabla Y)\p_3X\cdot\p_3W+F''_{UV}(\p_3Y,\nabla Y)\p_3 X\cdot \nabla W\\
&\qquad\qquad+F''_{VU}(\p_3Y,\nabla Y)\nabla X\cdot\p_3W+F''_{VV}(\p_3Y,\nabla Y)\nabla X\cdot\nabla W,\\
f_2''(Y;X,W)&=F''_{UU}(Y_t,\nabla Y)X_t\cdot W_t+F''_{UV}(Y_t,\nabla Y) X_t\cdot\nabla W\\
&\qquad\qquad+F''_{VU}(Y_t,\nabla Y)\nabla X\cdot W_t+F''_{VV}(Y_t,\nabla Y)\nabla X\cdot\nabla W,
\end{split}
\end{equation*}
where $F''_{0,UU}(U,V)\dot U_1\cdot \dot U_2=0$,
\begin{equation*}
\begin{split}
F''_{0,UV}(U,V)\dot U\cdot \dot V&=\nabla\cdot \Big(\big((h'(V)\dot V)h(V)^t+h(V)(h'(V)\dot V)^t\big) \dot U\Big)=F''_{0,VU}(U,V)\dot V\cdot \dot U,\\
F''_{0,VV}(U,V)\dot V_1\cdot \dot V_2&=\nabla\cdot \Big(\big((h''(V)\dot V_1\cdot \dot V_2)h(V)^t+h(V)(h''(V)\dot V_1\cdot \dot V_2)^t+\\
&\quad+\big((h'(V)\dot V_1)(h'(V)\dot V_2)^t+(h'(V)\dot V_2)(h'(V)\dot V_1)^t\big) U\Big) \with\\
 h''(V)\dot V_1\cdot \dot V_2&=(Id+V)^{-1}(-\dot V_1)(Id+V)^{-1}(-\dot V_2)(Id+V)^{-1}\\
&\quad+(Id+V)^{-1}(-\dot V_2)(Id+V)^{-1}(-\dot V_1)(Id+V)^{-1};
\end{split}
\end{equation*}
and
\begin{equation*}
\begin{split}
F''_{UU}(U,V)\dot U_1\cdot\dot U_2&=h(V)^t\vv q''_{UU}(U,V)\dot U_1\cdot\dot U_2,\\
F''_{UV}(U,V)\dot U\cdot\dot V&=h(V)^t\vv q''_{UV}(U,V)\dot U\cdot\dot V+(h'(V)\dot V)^t\vv q'_U(U,V)\dot U=F''_{VU}(U,V)\dot V\cdot\dot U,\\
F''_{VV}(U,V)\dot V_1\cdot\dot V_2&=\big(h''(V)\dot V_1\cdot\dot V_2\big)^t\vv q(U,V)+h(V)^t\vv q''_{VV}(U,V)\dot V_1\cdot \dot V_2\\
&\qquad+(h'(V)\dot V_1)^t\vv q'_V(U,V)\dot V_2+(h'(V)\dot V_2)^t\vv q_V'(U,V)\dot V_1,
\end{split}
\end{equation*}
where
\begin{equation*}
\begin{split}
 \vv q''_{UU}(U,V)\dot U_1\cdot\dot U_2&=-\Delta^{-1}\dv\Big((h(V)h(V)^t-Id)\nabla \vv q''_{UU}(U,V)\dot U_1\cdot \dot U_2\\
 &\qquad\qquad\quad-
 h(V)\dv\big(h(V)(\dot U_1\otimes\dot U_2+\dot U_2\otimes \dot U_1)\big)\Big),
 \end{split}
\end{equation*}
\begin{equation*}
\begin{split}
&\vv q''_{UV}(U,V)\dot U\cdot\dot V=-\Delta^{-1}\dv\Big((h(V)h(V)^t-Id)\nabla \vv q''_{UV}(U,V)\dot U\cdot\dot V\\
&\quad+\big((h'(V)\dot V) h(V)^t+h(V)(h'(V)\dot V)^t\big)\nabla \vv q'_U(U,V)\dot U-(h'(V)\dot V)\dv\big(h(V)(\dot U\otimes U+U\otimes\dot U)\big)\\
&\quad-h(V)\dv\big((h'(V)\dot V)(\dot U\otimes U+U\otimes\dot U)\big)\Big)=\vv q''_{VU}(U,V)\dot V\cdot\dot U,
\end{split}
\end{equation*}
and
\begin{equation*}
\begin{split}
&\vv q''_{VV}(U,V)\dot V_1\cdot\dot V_2
=-\Delta^{-1}\dv\Big(\big((h'(V)\dot V_2)h(V)^t+h(V)(h'(V)\dot V_2)^t\big)\nabla \vv q'_V(U,V)\dot V_1\\
&\quad+(h(V)h(V)^t-Id)\nabla \vv q''_{VV}(U,V)\dot V_1\cdot\dot V_2-(h''(V)\dot V_1\cdot \dot V_2)\dv\big(h(V)(U\otimes U)\big)\\
&\quad+\big((h'(V)\dot V_1) h(V)^t+h(V)(h'(V)\dot V_1)^t\big)\nabla \vv q_V'(U,V)\dot V_2-(h'(V)\dot V_1)\dv\big((h'(V)\dot V_2)(U\otimes U)\big)\\
&\quad+\big((h''(V)\dot V_1\cdot\dot V_2) h(V)^t+h(V)(h''(V)\dot V_1\cdot\dot V_2)^t-(h'(V)\dot V_2)\dv\big((h'(V)\dot V_1)(U\otimes U)\big)\\
&\quad+(h'(V)\dot V_1) (h'(V)\dot V_2)^t+(h'(V)\dot V_2)(h'(V)\dot V_1)^t\big)\nabla \vv q-h(V)\dv\big((h''(V)\dot V_1\cdot\dot V_2)(U\otimes U)\big)\Big).
\end{split}
\end{equation*}
\end{rmk}

\section{The estimates of  $f'(Y;X)$ } \label{Sect5}

\subsection{The estimate of $\triplenorm{f'(Y;X)}_{\d,N}$}\label{Sect5.1} The main result of this subsection is listed in Proposition \ref{S4.1prop1}.
As we explained in the Section \ref{Sect2}, the main idea is to use the norm of the homogeneous Besov spaces $\dot B^{s}_{1,1}$ to replace
the norm of the classical Sobolev spaces $\dot W^{s,1}.$ In order to do so, we  need not only the product law \eqref{product1} but also the following one.

\begin{lem}
For any $s>0,$ there holds
\beq \label{product3}
\|ab\|_{\dB^s_{1,1}}\leq C\left(\min\bigl(|a|_0\|b\|_{\dB^s_{1,1}}, \|a\|_0\|b\|_{\dB^s_{2,1}}\bigr)+\|a\|_{\dB^s_{2,1}}\|b\|_0\right).
\eeq
\end{lem}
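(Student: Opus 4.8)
The plan is to run Bony's paraproduct decomposition block by block, in the same spirit as the proof of the product law \eqref{product1} recalled from \cite{BCD}. Write
\[
ab=T_ab+T_ba+R(a,b),\qquad T_ab\eqdef\sum_{j}S_{j-1}a\,\Delta_jb,\qquad R(a,b)\eqdef\sum_{|j-k|\le1}\Delta_ja\,\Delta_kb,
\]
and recall the two localization facts that steer the argument: the Fourier support of $S_{j-1}a\,\Delta_jb$ sits in an annulus $\{|\xi|\sim2^j\}$ (so only $O(1)$ values of $j$ feed into a given dyadic block of $T_ab$, and likewise for $T_ba$), whereas the Fourier support of $\Delta_ja\,\Delta_kb$ with $|j-k|\le1$ sits in a ball $\{|\xi|\lesssim2^k\}$, which is precisely what forces the hypothesis $s>0$.

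For the first paraproduct, estimate the generic block $S_{j-1}a\,\Delta_jb$ in $L^1$ in two different ways. H\"older's inequality in $L^\infty\times L^1$ together with $\|S_{j-1}a\|_{L^\infty}\le C|a|_0$ gives $\|S_{j-1}a\,\Delta_jb\|_{L^1}\le C|a|_0\|\Delta_jb\|_{L^1}$, hence, after multiplying by $2^{js}$, summing over $j$ and using the annular localization, $\|T_ab\|_{\dot B^s_{1,1}}\le C|a|_0\|b\|_{\dot B^s_{1,1}}$; H\"older's inequality in $L^2\times L^2$ together with $\|S_{j-1}a\|_{L^2}\le C\|a\|_0$ gives $\|S_{j-1}a\,\Delta_jb\|_{L^1}\le C\|a\|_0\|\Delta_jb\|_{L^2}$, hence $\|T_ab\|_{\dot B^s_{1,1}}\le C\|a\|_0\|b\|_{\dot B^s_{2,1}}$. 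Keeping the smaller of the two bounds produces the $\min(\cdot,\cdot)$ term in \eqref{product3}. For the second paraproduct, H\"older in $L^2\times L^2$ gives $\|S_{j-1}b\,\Delta_ja\|_{L^1}\le C\|b\|_0\|\Delta_ja\|_{L^2}$, and summing $2^{js}\|\cdot\|_{L^1}$ over $j$ yields $\|T_ba\|_{\dot B^s_{1,1}}\le C\|b\|_0\|a\|_{\dot B^s_{2,1}}$, which is the remaining term on the right of \eqref{product3}.

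The remainder is the one place where positivity of $s$ enters. Rewrite $R(a,b)=\sum_k\Delta_ka\,\widetilde\Delta_kb$ with $\widetilde\Delta_k\eqdef\Delta_{k-1}+\Delta_k+\Delta_{k+1}$; by the ball localization there is a fixed integer $N_0$ with $\Delta_jR(a,b)=\sum_{k\ge j-N_0}\Delta_j\bigl(\Delta_ka\,\widetilde\Delta_kb\bigr)$. Using $\|\Delta_ka\,\widetilde\Delta_kb\|_{L^1}\le\|\Delta_ka\|_{L^2}\|\widetilde\Delta_kb\|_{L^2}\le C\|b\|_0\|\Delta_ka\|_{L^2}$ and setting $c_k\eqdef2^{ks}\|\Delta_ka\|_{L^2}$ (so that $\sum_kc_k=\|a\|_{\dot B^s_{2,1}}$), we obtain
\[
\sum_j2^{js}\|\Delta_jR(a,b)\|_{L^1}\le C\|b\|_0\sum_j\sum_{k\ge j-N_0}2^{(j-k)s}c_k=C\|b\|_0\sum_kc_k\sum_{j\le k+N_0}2^{(j-k)s}\le C'\|b\|_0\|a\|_{\dot B^s_{2,1}},
\]
the inner sum $\sum_{m\ge-N_0}2^{-ms}$ being convergent precisely because $s>0$. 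Adding the estimates for $T_ab$, $T_ba$ and $R(a,b)$ gives \eqref{product3}. The argument has no real obstacle: it is a verbatim adaptation of the proof of Corollary~2.54 in \cite{BCD}, the only point requiring attention being the remainder term and the role of $s>0$ in summing the geometric series; since every dyadic block is controlled directly in $L^1$ and no negative-order low-frequency summation occurs, the customary convergence subtleties of homogeneous Besov norms do not intervene.
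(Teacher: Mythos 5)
Your proof is correct and follows essentially the same route as the paper: Bony's decomposition, H\"older on each dyadic block (in $L^\infty\times L^1$ and in $L^2\times L^2$ for the paraproduct $T_ab$, yielding the minimum), and the geometric series with $s>0$ for the low-frequency-output part. The only cosmetic difference is that the paper groups $T_ba$ and the remainder into a single term $R'(a,b)=\sum_j\Delta_j a\,S_{j+2}b$ and estimates it once by $\|a\|_{\dB^s_{2,1}}\|b\|_0$, whereas you treat $T_ba$ and $R(a,b)$ separately; both yield the same bound.
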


\begin{proof} We first get, by applying Bony's decomposition \cite{Bo} that
\beno
\begin{split}
ab&=T_ab+R'(a,b)\with\\
 T_ab=\sum_{j\in\Z}S_{j-1}a\D_jb&\andf R'(a,b)=\sum_{j\in\Z}\D_{j}a S_{j+2}b.
 \end{split}
\eeno
Due to the support properties to the Fourier transform of the terms in $T_ab,$ we have
\beno
\begin{split}
\|\dD_j(T_ab)\|_{L^1}\leq &\sum_{|j'-j|\leq 4}|S_{j'-1}a|_0\|\dD_{j'}b\|_{L^1}
\lesssim d_j2^{-js}|a|_0\|b\|_{\dB^s_{1,1}},
\end{split}
\eeno
where $(d_j)_{j\in\Z}$ is a non-negative generic element of $\ell^1(\Z)$ so that $\sum_{j\in \Z}d_j=1.$

Along the same line, we also have
\beno
\begin{split}
\|\dD_j(T_ab)\|_{L^1}\leq &\sum_{|j'-j|\leq 4}\|S_{j'-1}a\|_0\|\dD_{j'}b\|_0
\lesssim d_j2^{-js}\|a\|_0\|b\|_{\dB^s_{2,1}},
\end{split}
\eeno
and
\beno
\begin{split}
\|\dD_j(R'(a,b))\|_{L^1}\leq &\sum_{j'\geq j-N_0}\|\dD_{j'}a\|_0\|S_{j'+2}b\|_0\\
\leq &\sum_{j'\geq j-N_0}d_{j'}2^{-j's}\|a\|_{\dB^s_{2,1}}\|b\|_0
\lesssim d_j2^{-js}\|a\|_{\dB^s_{2,1}}\|b\|_0,
\end{split}
\eeno
where in the last step, we used the fact that $s>0.$
By summing up the above inequalities, we arrive at \eqref{product3}.
\end{proof}

Notice that  ${\mathcal A}(\na Y)=(Id+\nabla Y)^{-1}$, we write
$${\mathcal A}{\mathcal A}^t-Id=({\mathcal A}-Id)({\mathcal A}-Id)^t+({\mathcal A}-Id)+({\mathcal A}-Id)^t,\qquad
{\mathcal A}-Id=\sum_{n=1}^\infty(-1)^{n}(\nabla Y)^n.$$
So that under the assumption of \eqref{S4.1prop1assum},  for $s>0$,  we get, by applying
 \eqref{product1}, that
 \beq\label{product4}
 \begin{split}
\|{\mathcal A}f\|_{\dot B^s_{p,r}}\lesssim &(1+|{\mathcal A}-Id|_0)\|f\|_{\dot B^s_{p,r}}+\|{\mathcal A}-Id\|_{\dot B^s_{p,r}}|f|_0\\
\lesssim &\|f\|_{\dot B^s_{p,r}}+\|\nabla Y\|_{\dot B^s_{p,r}}|f|_0,
\end{split}\eeq
Along the same line, we get, by applying
 \eqref{product3}, that
\begin{equation}\label{product4a}
\|{\mathcal A}f\|_{\dot B^s_{1,1}}\lesssim \|f\|_{\dot B^s_{1,1}}+\|\nabla Y\|_{\dot B^s_{2,1}}\|f\|_0,
\end{equation}

\subsubsection{Estimate of $\|f_0'(Y;X)\|_{\dot B^s_{1,1}}$}
In view of \eqref{f0'}, we have
\begin{align*}
\|f_0'(Y;X)\|_{\dot B^{s}_{1,1}}
&\leq\bigl\|{\mathcal A}\bigl(\nabla X{\mathcal A}+{\mathcal A}^t(\nabla X)^t\bigr){\mathcal A}^t\nabla Y_t\bigr\|_{\dot B^{s+1}_{1,1}}+\|({\mathcal A}{\mathcal A}^t-Id)\nabla X_t\|_{\dot B^{s+1}_{1,1}}.
\end{align*}
It follows from \eqref{S4.1prop1assum} and \eqref{product3} that
\begin{align*}
\|({\mathcal A}{\mathcal A}^t-Id)\nabla X_t\|_{\dot B^{s+1}_{1,1}}
&\lesssim \|\nabla Y\|_0\|\nabla X_t\|_{\dot B^{s+1}_{2,1}}+\|\nabla Y\|_{\dot B^{s+1}_{2,1}}\|\nabla X_t\|_0.
\end{align*}
While applying \eqref{product4a} gives
\beno
\|{\mathcal A}\nabla X{\mathcal A}{\mathcal A}^t\nabla Y_t\|_{\dot B^{s+1}_{1,1}}
\lesssim \|\nabla X{\mathcal A}{\mathcal A}^t\nabla Y_t\|_{\dot B^{s+1}_{1,1}}+\|\nabla Y\|_{\dot B^{s+1}_{2,1}}\|\nabla X{\mathcal A}{\mathcal A}^t\nabla Y_t\|_0, \eeno
yet it follows from  \eqref{product1} and  \eqref{product3}  that
\begin{align*}
\|\nabla X{\mathcal A}{\mathcal A}^t\nabla Y_t\|_{\dot B^{s+1}_{1,1}}
\lesssim &\|\nabla X\|_0\|{\mathcal A}{\mathcal A}^t\nabla Y_t\|_{\dot B^{s+1}_{2,1}}+\|\nabla X\|_{\dot B^{s+1}_{2,1}}\|{\mathcal A}{\mathcal A}^t\nabla Y_t\|_0
\\
\lesssim& \|\nabla X\|_0\big(\|\nabla Y_t\|_{\dot B^{s+1}_{2,1}}+\|\nabla Y\|_{\dot B^{s+1}_{2,1}}|\nabla Y_t|_0\big)
+\|\nabla X\|_{\dot B^{s+1}_{2,1}}\|\nabla Y_t\|_0,
\end{align*}
so that there holds
\beno  \|{\mathcal A}\nabla X{\mathcal A}{\mathcal A}^t\nabla Y_t\|_{\dot B^{s+1}_{1,1}} \lesssim \|\nabla Y_t\|_0\|\nabla X\|_{\dot B^{s+1}_{2,1}}+\big(\|\nabla Y_t\|_{\dot B^{s+1}_{2,1}}+|\nabla Y_t|_0\|\nabla Y\|_{\dot B^{s+1}_{2,1}}\big)\|\nabla X\|_0.
\eeno
The same estimate holds for $\|{\mathcal A}{\mathcal A}^t(-\nabla X)^t{\mathcal A}^t\nabla Y_t\|_{\dot B^{s+1}_{1,1}}$.
As a result, we obtain
\begin{equation}\label{S4.1eq1}
\begin{split}
\|f_0'(Y;X)\|_{\dot B^s_{1,1}}
\leq& \|\nabla Y\|_0\|\nabla X_t\|_{\dot B^{s+1}_{2,1}}+\|\nabla Y\|_{\dot B^{s+1}_{2,1}}\|\nabla X_t\|_0\\
&+\|\nabla Y_t\|_0\|\nabla X\|_{\dot B^{s+1}_{2,1}}+\big(\|\nabla Y_t\|_{\dot B^{s+1}_{2,1}}+\|\nabla Y\|_{\dot B^{s+1}_{2,1}}|\nabla Y_t|_0\big)\|\nabla X\|_0.
\end{split}
\end{equation}

\subsubsection{Estimate of $\|f_m'(Y;X)\|_{\dot B^s_{1,1}}$, $m=1,2$}
In view of \eqref{fm'},
we have
\begin{align*}
&\|f_m'(Y;X)\|_{\dot B^s_{1,1}}
\leq \|{\mathcal A}^t(\nabla X)^t{\mathcal A}^t\nabla \vv p_m\|_{\dot B^{s}_{1,1}}+\|{\mathcal A}^t\nabla(\vv p_m'(Y;X))\|_{\dot B^s_{1,1}}.
\end{align*}
Applying \eqref{product4a} gives
\beno
&&\|{\mathcal A}^t(\nabla X)^t{\mathcal A}^t\nabla \vv p_m\|_{\dot B^s_{1,1}}
\lesssim \|(\nabla X)^t{\mathcal A}^t\nabla \vv p_m\|_{\dot B^s_{1,1}}+\|\nabla Y\|_{\dot B^s_{2,1}}\|(\nabla X)^t{\mathcal A}^t\nabla \vv p_m\|_0,\\
&&\|{\mathcal A}^t\nabla(\vv p_m'(Y;X))\|_{\dot B^s_{1,1}}\lesssim \|\nabla\big (\vv p_m'(Y;X)\big)\|_{\dot B^s_{1,1}}+\|\nabla Y\|_{\dot B^s_{2,1}}\|\nabla \big(\vv p_m'(Y;X)\big)\|_0.
\eeno
While applying  \eqref{product1} and \eqref{product3} leads to
\begin{align*}
\|(\nabla X)^t{\mathcal A}^t\nabla \vv p_m\|_{\dot B^s_{1,1}}
&\lesssim \|\nabla X\|_0(\|\nabla \vv p_m\|_{\dot B^s_{2,1}}+\|\nabla Y\|_{\dot B^s_{2,1}}|\nabla\vv p_m|_0)+\|\nabla X\|_{\dot B^s_{2,1}}\|\nabla \vv p_m\|_0,
\end{align*}
which yields
\begin{align*}
\|{\mathcal A}^t(\nabla X)^t{\mathcal A}^t\nabla \vv p_m\|_{\dot B^s_{1,1}}
\lesssim\|\nabla\vv p_m\|_0\|\nabla X\|_{\dot B^s_{2,1}}+ \big(\|\nabla \vv p_m\|_{\dot B^{s}_{2,1}}+\|\nabla Y\|_{\dot B^s_{2,1}}|\nabla\vv p_m|_0\big)\|\nabla X\|_0.
\end{align*}
Hence it comes out
\begin{align}
\notag\|f_m'(Y;X)\|_{\dot B^s_{1,1}}&\lesssim\|\nabla\vv p_m\|_0\|\nabla X\|_{\dot B^s_{2,1}}+\big(\|\nabla \vv p_m\|_{\dot B^{s}_{2,1}}+\|\nabla Y\|_{\dot B^s_{2,1}}|\nabla\vv p_m|_0\big)\|\nabla X\|_0\\
\label{S4.1eq2}&\qquad+\|\nabla\big (\vv p_m'(Y;X)\big)\|_{\dot B^s_{1,1}}+\|\nabla Y\|_{\dot B^s_{2,1}}\|\nabla \big(\vv p_m'(Y;X)\big)\|_0.
\end{align}
It remains to handle the  estimates of
$$\|\nabla\vv p_m\|_0,\quad \|\nabla \vv p_m\|_{\dot B^s_{2,1}},\quad \|\nabla\big(\vv p_m'(Y;X)\big)\|_{\dot B^s_{1,1}}\quad\text{and}\quad \|\nabla\big(\vv p_m'(Y;X)\big)\|_0.$$

\noindent$\bullet${\underline{Estimate of $\|\nabla\vv p_m\|_0.$}}

We first deduce from \eqref{p1} that
\begin{align*}
\|\nabla \vv p_1\|_0
&\leq |{\mathcal A}{\mathcal A}^t-Id|_0\|\nabla \vv p_1\|_0+|{\mathcal A}|_0\|{\mathcal A}(\p_3Y\otimes\p_3Y)\|_{\dot H^1}\\
&\leq |{\mathcal A}{\mathcal A}^t-Id|_0\|\nabla \vv p_1\|_0+|{\mathcal A}|_0\bigl(1+\|\cA-Id\|_{\dB^{\f32}_{2,1}}\bigr)\|\p_3Y\otimes\p_3Y\|_{\dot H^1}.
\end{align*}
Due to the assumption \eqref{S4.1prop1assum}, one has
$$|{\mathcal A}{\mathcal A}^t-Id|_0\lesssim |\nabla Y|_0\lesssim \|\nabla Y\|_{\dot B^\frac32_{2,1}}\leq \delta_1,$$
so that we infer
\begin{align}
\label{S4.1eq3}\|\nabla \vv p_1\|_0\lesssim \|\p_3Y\otimes\p_3Y\|_{\dot H^1}\lesssim |\p_3Y|_0\|\p_3Y\|_{1}.
\end{align}
Similarly, we have
\begin{align}
\label{S4.1eq4}\|\nabla \vv p_2\|_0&\lesssim |Y_t|_0\|Y_t\|_{1}.
\end{align}

\noindent$\bullet${\underline{Estimates of $\|\nabla \vv p_m\|_{\dot B^s_{2,1}}$ for $s>0.$}}

We start with the estimate of  $\|\nabla \vv p_m\|_{\dot B^\frac32_{2,1}}.$ Indeed by \eqref{p1}, one has
\begin{align*}
\|\nabla \vv p_1\|_{\dot B^\frac32_{2,1}}
&\lesssim \|{\mathcal A}{\mathcal A}^t-Id\|_{\dot B^\frac32_{2,1}}\|\nabla\vv p_1\|_{\dot B^\frac32_{2,1}}+\|{\mathcal A}\dv\big({\mathcal A}(\p_3Y\otimes\p_3Y)\big)\|_{\dot B^\frac32_{2,1}},
\end{align*}
from which, \eqref{S4.1prop1assum} and the product law \eqref{product1}, we infer
\begin{align*}
\|\nabla \vv p_1\|_{\dot B^\frac32_{2,1}}
&\lesssim \bigl(1+\|{\mathcal A}-Id\|_{\dot B^\frac32_{2,1}}\bigr)\|{\mathcal A}(\p_3Y\otimes\p_3Y)\|_{\dot B^\frac52_{2,1}}\\
&\lesssim (1+|{\mathcal A}-Id|_0)\|\p_3Y\otimes\p_3Y\|_{\dot B^\frac52_{2,1}}+\|{\mathcal A}-Id\|_{\dot B^\frac52_{2,1}}|\p_3Y\otimes\p_3Y|_0.
\end{align*}
As a result, by virtue of \eqref{S4.1prop1assum}, it comes out
\begin{align}
 \|\nabla \vv p_1\|_{\dot B^\frac32_{2,1}}
\label{S4.1eq5}&\lesssim |\p_3Y|_0\big(\|\p_3Y\|_{\dot B^\frac52_{2,1}}+\|\nabla Y\|_{\dot B^\frac52_{2,1}}|\p_3Y|_0\big)\lesssim |\p_3Y|_0\|\p_3Y\|_{{3}}.
\end{align}

In general, for $s>0$, we deduce from \eqref{p1} that
\begin{align*}
\|\nabla \vv p_1\|_{\dot B^s_{2,1}}
&\lesssim |{\mathcal A}{\mathcal A}^t-Id|_0\|\nabla \vv p_1\|_{\dot B^s_{2,1}}+\|{\mathcal A}{\mathcal A}^t-Id\|_{\dot B^s_{2,1}}|\nabla \vv p_1|_0+\|{\mathcal A}\dv\big({\mathcal A}(\p_3Y\otimes\p_3Y)\big)\|_{\dot B^s_{2,1}},
\end{align*}
from which and \eqref{S4.1prop1assum}, we infer
\begin{align*}
\|\nabla \vv p_1\|_{\dot B^s_{2,1}}
&\lesssim \|{\mathcal A}{\mathcal A}^t-Id\|_{\dot B^s_{2,1}}|\nabla \vv p_1|_0+\|{\mathcal A}\dv\big({\mathcal A}(\p_3Y\otimes\p_3Y)\big)\|_{\dot B^s_{2,1}}.
\end{align*}
Yet it follows from the product law \eqref{product4} that
\begin{align*}
\|{\mathcal A}\dv\big({\mathcal A}(\p_3Y\otimes\p_3Y)\big)\|_{\dot B^s_{2,1}}
&\lesssim\|\p_3Y\otimes\p_3Y\|_{\dot B^{s+1}_{2,1}}+\|\nabla Y\|_{\dot B^{s+1}_{2,1}}|\p_3Y\otimes\p_3Y|_0\\
&\qquad +\|\nabla Y\|_{\dot B^s_{2,1}}\big(|\p_3Y|_{1}|\p_3Y|_0+|\nabla Y|_{1}|\p_3Y|_0^2\big),
\end{align*}
which together with  \eqref{S4.1prop1assum} and \eqref{S4.1eq5} ensures that
\begin{equation}\label{S4.1eq6}
\|\nabla \vv p_1\|_{\dot B^s_{2,1}}
\lesssim  |\p_3Y|_0\left(\|\p_3Y\|_{\dot B^{s+1}_{2,1}}+\bigl(\|\nabla Y\|_{\dot B^{s}_{2,1}}+\|\nabla Y\|_{\dot B^{s+1}_{2,1}}\bigr)\|\p_3Y\|_{3}\right).
\end{equation}

Exactly along the same line, we have
\begin{align}
& \|\nabla \vv p_2\|_{\dot B^\frac32_{2,1}}
\label{S4.1eq7}\lesssim  |Y_t|_0\|Y_t\|_{3} \andf\\
&\label{S4.1eq8}
\|\nabla \vv p_2\|_{\dot B^s_{2,1}}
\lesssim |Y_t|_0\left(\|Y_t\|_{\dot B^{s+1}_{2,1}}+\bigl(\|\nabla Y\|_{\dot B^{s}_{2,1}}+\|\nabla Y\|_{\dot B^{s+1}_{2,1}}\bigr)\|Y_t\|_{3}\right).
\end{align}


\noindent$\bullet${\underline{Estimate of $\|\nabla\vv p_m'(Y;X)\|_0.
$}}

We first deduce from \eqref{p1'} that
\beq\label{S4.1eq8a}
\begin{split}
\|\na\vv p_1'(Y;X)\|_0\lesssim &\d_1\|\na p_1'(Y;X)\|_0+\bigl\|{\mathcal A}\left(\nabla X{\mathcal A}
+{\mathcal A}^t\nabla X\right){\mathcal A}^t\nabla \vv p_1\bigr\|_0\\
&+\|{\mathcal A}\dv\big({\mathcal A}\nabla X{\mathcal A}(\p_{3}Y\otimes \p_{3}Y)\big)\|_0
+\|{\mathcal A}\nabla X{\mathcal A}\dv\big({\mathcal A}(\p_{3}Y\otimes \p_{3}Y)\big)\|_0\\
&+\|{\mathcal A}\dv\big({\mathcal A}(\p_{3}Y\otimes\p_{3}X+\p_{3}X\otimes \p_{3}Y)\big)\|_0.
\end{split}
\eeq
We observe that
\beno
\|{\mathcal A}\nabla X{\mathcal A}{\mathcal A}^t\nabla \vv p_1\|_0\lesssim \|\na X\|_{L^6}\|\na \vv p_1\|_{L^3}.
\eeno
Yet it follows by a similar derivation of \eqref{S4.1eq3} that
\beq \label{S4.1eq9}
\begin{split}
\|\na\vv p_1\|_{L^3}\lesssim &\|\cA(\p_3Y\otimes\p_3Y)\|_{W^{1,3}}
\lesssim |\p_3Y|_{1}\|\p_3Y\|_{L^3}\lesssim |\p_3Y|_{1}^{\f43}\|\p_3Y\|_0^{\f23},
\end{split}
\eeq
so that
\beno
\|{\mathcal A}\nabla X{\mathcal A}{\mathcal A}^t\nabla \vv p_1\|_0\leq \|\na X\|_{L^6}\|\na \vv p_1\|_{L^3}\lesssim |\p_3Y|_{1}^{\f43}\|\p_3Y\|_0^{\f23}\|\na X\|_{1}.
\eeno
Let us handle the remaining terms in \eqref{S4.1eq8a}.  Indeed with the assumption \eqref{S4.1prop1assum},
 a direct calculation shows that
\beno \begin{split}
&\|{\mathcal A}\dv\big({\mathcal A}\nabla X{\mathcal A}(\p_{3}Y\otimes \p_{3}Y)\big)\|_0
\lesssim\|\cA\na X\|_{1}|\cA(\p_3Y\otimes\p_3Y)|_{1}
\lesssim |\p_3Y|_{1}^2\|\na X\|_{1},\\
&
\|{\mathcal A}\nabla X{\mathcal A}\dv\big({\mathcal A}(\p_{3}Y\otimes \p_{3}Y)\big)\|_0\lesssim  \|\na X\|_0|{\mathcal A}(\p_{3}Y\otimes \p_{3}Y)|_{1}
\lesssim |\p_3Y|_{1}^2\|\na X\|_0,\\
&
\|{\mathcal A}\dv\big({\mathcal A}(\p_{3}Y\otimes\p_{3}X+\p_{3}X\otimes \p_{3}Y)\big)\|_0
\lesssim  \|\p_3Y\otimes\p_3X\|_{1}\lesssim |\p_3Y|_{1}\|\p_3X\|_{1}.
\end{split}
\eeno
Substituting the above estimates into \eqref{S4.1eq8a} leads to
\beq
\|\na\vv p_1'(Y;X)\|_0\lesssim \big(|\p_3Y|_{1}^{\f43}\|\p_3Y\|_0^\frac23+|\p_3Y|_{1}^2\big)\|\na X\|_{1}+|\p_3Y|_{1}\|\p_3X\|_{1}. \label{S4.1eq10}
\eeq
The same procedure gives rise to
\begin{align}
& \|\na\vv p_2\|_{L^3}\lesssim |Y_t|_{1}^{\f43}\|Y_t\|_0^\frac23;\andf \label{S4.1eq11}\\
&\|\na\vv p_2'(Y;X)\|_0\lesssim \big( |Y_t|_{1}^{\f43}\|Y_t\|_0^\frac23+|Y_t|_{1}^2\big)\|\na X\|_{1}+|Y_t|_{1}\|X_t\|_{1}. \label{S4.1eq12}
\end{align}

\noindent$\bullet${\underline{Estimate of $\|\nabla\vv p_m'(Y;X)\|_{\dot B^s_{1,1}}$ with $s>0$}}

For any $s>0,$ we deduce from \eqref{p1'} that
\beq\label{S4.1eq12a}
\begin{split}
\|\na\vv p_1'(Y;X)\|_{\dB^s_{1,1}}&\lesssim \big\|({\mathcal A}{\mathcal A}^t-Id)\nabla \vv p_1'(Y;X)\bigr\|_{\dB^s_{1,1}}
+ \bigl\|{\mathcal A}\big(\nabla X{\mathcal A}+{\mathcal A}^t\nabla X\big){\mathcal A}^t\nabla \vv p_1\bigr\|_{\dB^s_{1,1}}\\
&+\bigl\|{\mathcal A}\dv\big({\mathcal A}\nabla X{\mathcal A}(\p_{3}Y\otimes \p_{3}Y)\big)\bigr\|_{\dB^s_{1,1}} +\bigl\|{\mathcal A}\nabla X{\mathcal A}\dv\big({\mathcal A}(\p_{3}Y\otimes \p_{3}Y)\big)\bigr\|_{\dB^s_{1,1}}\\
 &+\bigl\|{\mathcal A}\dv\big({\mathcal A}(\p_{3}Y\otimes\p_{3}X+\p_{3}X\otimes \p_{3}Y)\big)\bigr\|_{\dB^s_{1,1}}.
\end{split}
\eeq
It follows from \eqref{product3} that
\beno
\begin{split}
\big\|({\mathcal A}{\mathcal A}^t-Id)\nabla \vv p_1'(Y;X)\bigr\|_{\dB^s_{1,1}}
\lesssim& \d_1 \|\na\vv p_1'(Y;X)\|_{\dB^s_{1,1}}+\|\na Y\|_{\dB^s_{2,1}}\|\na\vv p_1'(Y;X)\|_0.
\end{split}
\eeno
And applying \eqref{product4} and  \eqref{product3} gives
\beno
\begin{split}
\bigl\|{\mathcal A}\bigl(\nabla X{\mathcal A}+{\mathcal A}^t(\nabla X)^t\bigr){\mathcal A}^t\nabla \vv p_1\bigr\|_{\dB^s_{1,1}}
\lesssim &\|\nabla \vv p_1\|_0\|\nabla X\|_{\dot B^s_{2,1}}+\big(\|\nabla \vv p_1\|_{\dot B^s_{2,1}}+\|\nabla Y\|_{\dot B^s_{2,1}}|\nabla\vv p_1|_0\big)\|\nabla X\|_0,
\end{split}
\eeno
and
\begin{align*}
\bigl\|{\mathcal A}&\dv\big({\mathcal A}\nabla X{\mathcal A}(\p_{3}Y\otimes \p_{3}Y)\big)\bigr\|_{\dB^s_{1,1}}\\
&\lesssim \|\nabla X\|_{\dot B^{s+1}_{2,1}}\|{\mathcal A}(\p_{3}Y\otimes \p_{3}Y)\|_0+
\|\nabla X\|_0\|{\mathcal A}(\p_{3}Y\otimes \p_{3}Y)\|_{\dot B^{s+1}_{2,1}}\\
&\qquad+\|\nabla Y\|_{\dot B^{s+1}_{2,1}}\|\nabla X\|_0|\p_{3}Y|_0^2+\|\nabla Y\|_{\dot B^s_{2,1}}\|\nabla X\|_{1}|\p_{3}Y|_{1}|\p_{3}Y|_0\\
&\lesssim |\p_{3}Y|_0\| \p_{3}Y\|_0 \|\nabla X\|_{\dot B^{s+1}_{2,1}}+|\p_3Y|_0\Big(\| \p_{3}Y\|_{\dot B^{s+1}_{2,1}}+(\|\nabla Y\|_{\dot B^{s+1}_{2,1}}+\|\nabla Y\|_{\dot B^s_{2,1}})|\p_{3}Y|_{1}\Big)\|\nabla X\|_{1}.
\end{align*}
Exactly along the same line, we find
$$\longformule{
\bigl\|{\mathcal A}\nabla X{\mathcal A}\dv\big({\mathcal A}(\p_{3}Y\otimes \p_{3}Y)\big)\bigr\|_{\dB^s_{1,1}}
\lesssim |\p_3Y|_0\|\p_3Y\|_{\dot H^1}\|\nabla X\|_{\dot B^s_{2,1}}}{{}
+|\p_3Y|_0\Big(\| \p_{3}Y\|_{\dot B^{s+1}_{2,1}}+(\|\nabla Y\|_{\dot B^{s+1}_{2,1}}+\|\nabla Y\|_{\dot B^s_{2,1}})|\p_{3}Y|_{1}\Big)\|\nabla X\|_0,
} $$
and
\begin{align*}
\bigl\|{\mathcal A}\dv\big({\mathcal A}(\p_{3}Y\otimes\p_{3}X+\p_{3}X\otimes \p_{3}Y)\big)\bigr\|_{\dB^s_{1,1}}
&\lesssim \|\p_3Y\|_0\|\p_3X\|_{\dot B^{s+1}_{2,1}}+\|\p_3Y\|_{\dot B^{s+1}_{2,1}}\|\p_3X\|_0\\
&+\|\nabla Y\|_{\dot B^{s+1}_{2,1}}|\p_3Y|_0\|\p_3X\|_0+\|\nabla Y\|_{\dot B^{s}_{2,1}}|\p_3Y|_{1}\|\p_3X\|_{1}.
\end{align*}
Substituting the above estimates into \eqref{S4.1eq12a} and using the estimates \eqref{S4.1eq3}, \eqref{S4.1eq5}, \eqref{S4.1eq6} and \eqref{S4.1eq10},
we obtain
\begin{equation}\label{S4.1eq13}
\begin{split}
\|\nabla\vv p_1'(Y;X)\|_{\dot B^s_{1,1}}
\lesssim & \frak{g}_1(\p_3Y,\p_3X) \with\\
\frak{g}_1(\frak{x},\frak{y})\eqdefa & \|\frak{x}\|_0\|\frak{y}\|_{\dot B^{s+1}_{2,1}}+|\frak{x}|_0\big(\|\frak{x}\|_0\|\nabla X\|_{\dot B^{s+1}_{2,1}}
+\|\frak{x}\|_1\|\nabla X\|_{\dot B^s_{2,1}}\big)\\
& +\big(\|\frak{x}\|_{\dot B^{s+1}_{2,1}}+(\|\nabla Y\|_{\dot B^{s+1}_{2,1}}+\|\nabla Y\|_{\dot B^s_{2,1}})|\frak{x}|_{1}\big)\|\frak{y}\|_{1}\\
&+|\frak{x}|_{1}\big(\|\frak{x}\|_{\dot B^{s+1}_{2,1}}+(\|\nabla Y\|_{\dot B^{s+1}_{2,1}}+\|\nabla Y\|_{\dot B^s_{2,1}})\|\frak{x}\|_{3}\big)\|\nabla X\|_{1}.
\end{split}
\end{equation}
The same procedure gives rise to
\begin{equation}\label{S4.1eq14}
\begin{split}
\|\nabla\vv p_2'(Y;X)\|_{\dot B^s_{1,1}}
\lesssim & \frak{g}_1(Y_t,X_t). 
\end{split}
\end{equation}

Inserting the estimates \eqref{S4.1eq3}, \eqref{S4.1eq5}, \eqref{S4.1eq6}, \eqref{S4.1eq10} and \eqref{S4.1eq13} into \eqref{S4.1eq2} for $m=1$ yields
\begin{equation}\label{S4.1eq15}
\begin{split}
\|f_1'(Y;X)\big)\|_{\dot B^s_{1,1}}
\lesssim & \frak{g}_1(\p_3Y,\p_3X).
\end{split}
\end{equation}
While by inserting the estimates \eqref{S4.1eq4}, \eqref{S4.1eq7}, \eqref{S4.1eq8}, \eqref{S4.1eq11}, \eqref{S4.1eq12} and \eqref{S4.1eq14} into \eqref{S4.1eq2} for $m=2,$
we obtain
\begin{equation}\label{S4.1eq16}
\begin{split}
\|f_2'(Y;X)\big)\|_{\dot B^s_{1,1}}
\lesssim & \frak{g}_1(Y_t,X_t).
\end{split}
\end{equation}

Let us now complete the proof of Proposition \ref{S4.1prop1}.

\begin{proof}[Proof of Proposition \ref{S4.1prop1}]
Note that for $s_1<s<s_2$ and $\alpha=\frac{s_2-s}{s_2-s_1},$  one has
$$\|f\|_{\dot B^s_{2,1}}\leq C\left(\frac1{s-s_1}+\frac1{s_2-s}\right)\|f\|_{\dot H^{s_1}}^\alpha \|f\|_{\dot H^{s_2}}^{1-\alpha}.$$
In particular, for $s>0$, this yields
\begin{equation}\label{S4.1eq17}
\|f\|_{\dot B^{s}_{2,1}}\leq C\big(\|f\|_0+\|f\|_{\dot H^{[s]+1}}\big)\leq C\|f\|_{{[s]+1}}.
\end{equation}
On the other hand, recall \eqref{S2eq19''}, we deduce from  \eqref{S4.1eq1} that
\beno
\begin{split}
&\triplenorm{f_0'(Y;X)}_{\d,N}\lesssim \|\nabla Y\|_0\bigl(\|\nabla X_t\|_{\dot B^{2\d+1}_{2,1}}+\|\nabla X_t\|_{\dot B^{N+5}_{2,1}}\bigr)\\
&\qquad+\bigl(\|\nabla Y\|_{\dot B^{2\d+1}_{2,1}}+|\nabla Y\|_{\dot B^{N+5}_{2,1}}\bigr)\|\nabla X_t\|_0 +\|\nabla Y_t\|_0\bigl(\|\nabla X\|_{\dot B^{2\d+1}_{2,1}}+\|\nabla X\|_{\dot B^{N+5}_{2,1}}\bigr)\\
&\qquad+\big(\|\nabla Y_t\|_{\dot B^{2\d+1}_{2,1}}+\|\nabla Y_t\|_{\dot B^{N+5}_{2,1}}+(\|\nabla Y\|_{\dot B^{2\d+1}_{2,1}}
+\|\nabla Y\|_{\dot B^{N+5}_{2,1}})|\nabla Y_t|_0\big)\|\nabla X\|_0,
\end{split}
\eeno
which together with \eqref{S4.1eq17} ensures
\eqref{S4.1eqf0}.
Along the same line, we deduce \eqref{S4.1eqf1} and \eqref{S4.1eqf2} from  \eqref{S4.1eq15} and \eqref{S4.1eq16} respectively.
This completes the proof of Proposition \ref{S4.1prop1}. \end{proof}

\subsection{The  estimate of $\||D|^{-1}f'(Y;X)\|_{N}$}
 The purpose of this subsection is to prove Proposition  \ref{S4.2prop1}. We split its proof into the following steps:

\subsubsection{The  estimate of $\||D|^{-1}f_0'(Y;X)\|_{N}$}
We first deduce from \eqref{f0'} that
\begin{align}\label{S4.2eqf2a}
 \||D|^{-1}f_0'(Y;X)\|_{N}
\lesssim \|({\mathcal A}{\mathcal A}^t-Id)\nabla X_t\|_{N}+\|{\mathcal A}\bigl(\nabla X{\mathcal A}+{\mathcal A}^t(\nabla X)^t\bigr){\mathcal A}^t\nabla Y_t\|_{N}.
\end{align}
Applying Moser type inequality  and using \eqref{S4.1prop1assum} gives
\begin{align*}
\|({\mathcal A}{\mathcal A}^t-Id)\nabla X_t\|_{N}
&\lesssim |\nabla Y|_0\|\nabla X_t\|_{N}+|\nabla Y|_{N}\|\nabla X_t\|_0,\\
\|{\mathcal A}\nabla X{\mathcal A}{\mathcal A}^t\nabla Y_t\|_{N}
&\lesssim |\nabla Y_t|_0\|\nabla X\|_{N}+\big(|\nabla Y_t|_{N}+|\nabla Y_t|_0|\nabla Y|_{N}\big)\|\nabla X\|_0.
\end{align*}
Substituting the above estimates into \eqref{S4.2eqf2a} leads to \eqref{S4.2eqf0}.

\subsubsection{$L^2$-estimates for $f_m'(Y;X)$} We shall divide the proof of \eqref{S4.2eqf1} and \eqref{S4.2eqf2} into the following steps:

\no (i) \underline{ Estimates of $\||D|^{-1}f_m'(Y;X)\|_0.$}

By virtue of \eqref{fm'}, we have
\beq\label{S4.2eqf2b}
\||D|^{-1}f_m'(Y;X)\|_0\leq \bigl\||D|^{-1}\cA^t(\na X)^t\cA^t(\na\vv p_m)(Y)\bigr\|_0+\bigl\|{\mathcal A}^t\nabla \vv p_m'(Y;X)\bigr\|_0. \eeq
It follows from the law of product in Besov spaces and the imbedding: $L^{\f65}(\R^3)\hookrightarrow \dH^{-1}(\R^3),$ that
\beq\label{S4.2eqf2c}
\begin{split}
\||D|^{-1}\cA^t(\na X)^t\cA^t\na\vv p_m\|_0\leq &\bigl(1+\|\cA-Id\|_{\dB^{\f32}_{2,1}}\bigr)\|(\na X)^t\cA^t\na\vv p_m\|_{\dH^{-1}}
\leq C\|\na X\|_0\|\na\vv p_m\|_{L^3},
\end{split}
\eeq
 from which \eqref{S4.1eq9} and \eqref{S4.1eq11}, we infer
 \beno
\begin{split}
&\||D|^{-1}\cA^t(\na X)^t\cA^t\na\vv p_1\|_0\leq C|\p_3Y|_{1}^{\f43}\|\p_3Y\|_0^\frac23\|\na X\|_0,\\
&\||D|^{-1}\cA^t(\na X)^t\cA^t\na\vv p_2\|_0\leq C|Y_t|_{1}^{\f43}\|Y_t\|_0^\frac23\|\na X\|_0.
\end{split}
\eeno
Similarly, we get, by applying the law of product in Besov spaces, that
\beno
\||D|^{-1}\cA^t\na\vv p_m'(Y;X)\|_0\lesssim \bigl(1+\|\cA-Id\|_{\cB^{\f32}_{2,1}}\bigr)\|\na\vv p_m'(Y;X)\|_{\dH^{-1}}.
\eeno
To deal with the estimate of $\|\na\vv p_m'(Y;X)\|_{\dH^{-1}},$  we deduce from \eqref{p1'} and a similar derivation of \eqref{S4.2eqf2c} that
\beno
\begin{split}
\|&\vv p_1'(Y;X)\|_0
 \lesssim \|\cA\cA^t-Id\|_{\dot B^{\f32}_{2,1}}\|\na\vv p_1'(Y;X)\|_{\dH^{-1}}+\|\na X\|_0\|\na\vv p_1\|_{L^3} +\bigl(1+\|\cA-Id\|_{\dot B^{\f32}_{2,1}}\bigr)\times\\
 &\quad\times\Bigl(\|\cA\na X\cA(\p_3Y\otimes\p_3Y)\|_0 +\|\na X\|_0\|\cA\dive(\cA(\p_3Y\otimes\p_3Y))\|_{L^3}+\|\cA\p_{3}Y\otimes\p_{3}X\|_0\Big)\\
 &\lesssim \d_1\| \vv p_1'(Y;X)\|_0+\bigl(|\p_3Y|_{1}^{\f43}\|\p_3Y\|_0^\frac23+|\p_3Y|_0^2\bigr)\|\na X\|_0
 +|\p_3Y|_0\|\p_3X\|_0,
 \end{split}
 \eeno
 which together with \eqref{S4.1prop1assum} ensures that
 \beq \label{S4.2eq3}
 \| \vv p_1'(Y;X)\|_0\lesssim\big( |\p_3Y|_{1}^{\f43}\|\p_3Y\|_0^\frac23+|\p_3Y|_0^2\big)\|\na X\|_0+|\p_3Y|_0\|\p_3X\|_0. \eeq
 Exactly along the same line,
 we deduce from \eqref{p2'} that
\beq \label{S4.2eq4}
\|\vv p_2'(Y;X)\|_0\lesssim \big(|Y_t|_{1}^{\f43}\|Y_t\|_0^\frac23+|Y_t|_0^2\big)\|\na X\|_0+|Y_t|_0\|X_t\|_0.
\eeq
Inserting the above estimates into \eqref{S4.2eqf2b} leads to
\begin{align}
\label{S4.2eq5}&\||D|^{-1}f_1'(Y;X)\|_0\lesssim \big( |\p_3Y|_{1}^{\f43}\|\p_3Y\|_0^\frac23+|\p_3Y|_0^2\big)\|\na X\|_0 +|\p_3Y|_0\|\p_3X\|_0,\\
\label{S4.2eq6}&\||D|^{-1}f_2'(Y;X)\|_0\lesssim\big(|Y_t|_{1}^{\f43}\|Y_t\|_0^\frac23+|Y_t|_0^2\big)\|\na X\|_0+|Y_t|_0\|X_t\|_0.
\end{align}

\no (ii) \underline{Estimates of $\|f_m'(Y;X)\|_{\dot H^k}$ for $k\geq0$}
By \eqref{fm'} we have
\begin{align} \label{S4.2eq6a}
\|f_m'(Y;X)\|_{\dot H^k}&\leq\|{\mathcal A}^t(\nabla X)^t{\mathcal A}^t\nabla\vv p_m\|_{\dot H^k}+\|{\mathcal A}^t\nabla\big(\vv p_m'(Y;X)\big)\|_{\dot H^k}.
\end{align}

\noindent$\bullet$\underline{ Estimates for $\|{\mathcal A}^t(\nabla X)^t{\mathcal A}^t\nabla\vv p_m\|_{\dot H^k}.$}

We get, by applying Moser type inequalities, that
\begin{align*}
\|{\mathcal A}^t(\nabla X)^t{\mathcal A}^t\nabla\vv p_m\|_{\dot H^k}
&\lesssim \|\cA^t(\na X)^t\|_{L^6}\|D^k(\cA^t\na \vv p_m)\|_{L^3}+\|D^k(\cA^t(\na X)^t)\|_{L^6}\|\cA^t\na\vv p_m\|_{L^3}\\
&\lesssim \|D^{k}\nabla X\|_{L^6}\|\nabla\vv p_m\|_{L^3}+\|\nabla X\|_{L^6}\big(\|D^{k}\nabla\vv p_m\|_{L^3}+|D^k{\mathcal A}|_0\|\nabla \vv p_m\|_{L^3}\big).
\end{align*} Here and in all that follows, we always denote $D^k=\sum_{|\al|=k}\p^\al.$

In view of \eqref{p1}, applying Moser type inequalities yields
\begin{align*}
\|D^k\nabla \vv p_1\|_{L^3}
\leq &|{\mathcal A}{\mathcal A}^t-Id|_0\|D^k\nabla\vv p_1 \|_{L^3}+|{\mathcal A}{\mathcal A}^t-Id|_{k}\|\nabla \vv p_1\|_{L^3}+\|D^{k+1}\big({\mathcal A}(\p_3Y\otimes\p_3Y)\big)\|_{L^3},
\end{align*}
from which and \eqref{S4.1prop1assum}, we infer
\begin{align*}
\|D^k\nabla \vv p_1\|_{L^3}&\lesssim|\nabla Y|_{k}\|\nabla \vv p_1\|_{L^3}+\|D^{k+1}\big({\mathcal A}(\p_3Y\otimes\p_3Y)\big)\|_{L^3}.
\end{align*}
While it is easy to observe that
\begin{align*}
\|D^{k+1}\big({\mathcal A}(\p_3Y\otimes\p_3Y)\big)\|_{L^3}&\lesssim \|D^{k+1}(\p_3Y\otimes\p_3Y)\|_{L^3}+|D^{k+1}{\mathcal A}|_0\|\p_3Y\otimes\p_3Y\|_{L^3}\\
&\lesssim |\p_3Y|_{k+1}\|\p_3Y\|_{L^3}+|\nabla Y|_{k+1}|\p_3Y|_0\|\p_3Y\|_{L^3},
\end{align*}
which together with \eqref{S4.1eq9} ensures that
\begin{align*}
\|D^k\nabla \vv p_1\|_{L^3}&\leq \big(|\p_3Y|_{k+1}|\p_3Y|_0^\frac13+|\nabla Y|_{k+1}|\p_3Y|_0^\frac43\big)\|\p_3Y\|_0^\frac23,
\end{align*}
and hence, we obtain
\beq \label{S4.2eq6b}
\begin{split}
\|{\mathcal A}^t(\nabla X)^t{\mathcal A}^t\nabla\vv p_1\|_{\dot H^k}
\leq&  |\p_3Y|_{1}^\frac43\|\p_3Y\|_0^\frac23\|\nabla X\|_{\dot H^{k+1}}\\
&+\big( |\p_3Y|_{k+1}|\p_3Y|_0^\frac13+|\nabla Y|_{k+1}|\p_3Y|_0^\frac43\big)\|\p_3Y\|_0^\frac23\|\nabla X\|_{\dot H^1}.
\end{split} \eeq
By the same procedure, we can show that
\begin{align*}
\|D^k\nabla \vv p_2\|_{L^3}&\leq \big(|Y_t|_{k+1}|Y_t|_0^\frac13+|\nabla Y|_{k+1}|Y_t|_0^\frac43\big)\|Y_t\|_0^\frac23
\end{align*}
and
\beq\label{S4.2eq6c}
\begin{split}
\|{\mathcal A}^t(\nabla X)^t{\mathcal A}^t\nabla\vv p_2\|_{\dot H^k}
\leq&  |Y_t|_{1}^\frac43\|Y_t\|_0^\frac23\|\nabla X\|_{\dot H^{k+1}}\\
&+\big( |Y_t|_{k+1}|Y_t|_0^\frac13+|\nabla Y|_{k+1}|Y_t|_0^\frac43\big)\|Y_t\|_0^\frac23\|\nabla X\|_{\dot H^1}.
\end{split} \eeq
Furthermore, there hold
\begin{align}
\label{S4.2eq7}&\|\nabla\vv p_1\|_{W^{N,3}}\leq  \big(|\p_3Y|_{N+1}|\p_3Y|_0^\frac13+|\nabla Y|_{N+1}|\p_3Y|_0^\frac43\big)\|\p_3Y\|_0^\frac23,\\
\label{S4.2eq8}&\|\nabla \vv p_2\|_{W^{N,3}}\leq \big(|Y_t|_{N+1}|Y_t|_0^\frac13+|\nabla Y|_{N+1}|Y_t|_0^\frac43\big)\|Y_t\|_0^\frac23.
\end{align}

\noindent$\bullet$\underline{ Estimates of $\|{\mathcal A}^t\nabla\left(\vv p_m'(Y;X)\right)\|_{\dot H^k}.$}

Applying Moser type inequality gives
\begin{align}\label{S4.2eq8a}
\|{\mathcal A}^t\nabla\left(\vv p_m'(Y;X)\right)\|_{\dot H^k}\leq \|\nabla\big(\vv p_m'(Y;X)\big)\|_{\dot H^k}+|{\mathcal A}^t-Id|_{k}\|\nabla\left(\vv p_m'(Y;X)\right)\|_0.
\end{align}
Yet in view of \eqref{p1'}, we have
\beno
\begin{split}
\|\na\vv p_1'(Y;X)\|_{\dot H^k}\lesssim &\|({\mathcal A}{\mathcal A}^t-Id)\na\vv p_1'(Y;X)\|_{\dot H^k}
+\|{\mathcal A}\bigl(\nabla X{\mathcal A}+{\mathcal A}^t\nabla X\bigr){\mathcal A}^t\nabla \vv p_1\|_{\dot H^k}\\
&+\|{\mathcal A}\dv\big({\mathcal A}\nabla X{\mathcal A}(\p_{3}Y\otimes \p_{3}Y)\big)\|_{\dot H^k} +\|{\mathcal A}\nabla X{\mathcal A}\dv\big({\mathcal A}(\p_{3}Y\otimes \p_{3}Y)\big)\|_{\dot H^k}\\
&+\|{\mathcal A}\dv\big({\mathcal A}(\p_{3}Y\otimes\p_{3}X+\p_{3}X\otimes \p_{3}Y)\big)\|_{\dot H^k}.
\end{split}
\eeno
It follows from a similar derivation of \eqref{S4.2eq6b} that
\begin{align*}
\|{\mathcal A}\nabla X{\mathcal A}{\mathcal A}^t\nabla \vv p_1\|_{\dot H^k}
\leq & |\p_3Y|_{1}^\frac43\|\p_3Y\|_0^\frac23\|\nabla X\|_{\dot H^{k+1}}\\
&+\big( |\p_3Y|_{k+1}|\p_3Y|_0^\frac13+|\nabla Y|_{k+1}|\p_3Y|_0^\frac43\big)\|\p_3Y\|_0^\frac23\|\nabla X\|_{\dot H^1}
\end{align*}
And we get, by applying Moser type inequality, that
\begin{align*}
\|({\mathcal A}{\mathcal A}^t-Id)\na\vv p_1'(Y;X)\|_{\dot H^k}
&\leq C\delta_1\|\nabla\vv p_1'(Y;X)\|_{\dot H^k}+|\nabla Y|_{k}\|\nabla\vv p_1'(Y;X)\|_0,
\end{align*}
and
\beno \begin{split}
\|{\mathcal A}&\dv\big({\mathcal A}\nabla X{\mathcal A}(\p_{3}Y\otimes \p_{3}Y)\big)\|_{\dot H^k}
\lesssim|\p_3Y|_0^2\|\nabla X\|_{\dot H^{k+1}}+\big(|\p_3Y|_{k+1}|\p_3Y|_0+|\nabla Y|_{k+1}|\p_3Y|_0^2\big)\|\nabla X\|_0,
\end{split}
\eeno
and
\beno \begin{split}
\|{\mathcal A}\nabla X{\mathcal A}\dv\big({\mathcal A}(\p_{3}Y\otimes \p_{3}Y)\big)\|_{\dot H^k}
\lesssim &|\p_3Y|_{1}|\p_3Y|_0\|\na X\|_{\dot H^k}\\
&+\big(|\p_3Y|_{k+1}|\p_3Y|_0+|\nabla Y|_{k+1}|\p_3Y|_0^2\big)\|\nabla X\|_0,
\end{split}
\eeno
and finally
\beno
\begin{split}
\|{\mathcal A}\dv\big({\mathcal A}(\p_{3}Y\otimes\p_{3}X+\p_{3}X\otimes \p_{3}Y)\big)\|_{\dot H^k}
\lesssim & |\p_3Y|_0\|\p_3X\|_{\dot H^{k+1}}\\
&+\big(|\p_3Y|_{k+1}+|\nabla Y|_{k+1}|\p_3Y|_0\big)\|\p_3X\|_0.
\end{split}
\eeno
As a result, by virtue of  \eqref{S4.1eq10}, it comes out
\begin{equation}\label{S4.2eq11}
\begin{split}
\|\na\vv p_1'(Y;X)\|_{\dot H^k}\lesssim&\frak{g}_2(\p_3Y,\p_3X) \with\\
\frak{g}_2(\frak{x},\frak{y})\eqdefa&\big(|\frak{x}|_{1}^\frac43\|\frak{x}\|_0^\frac23+|\frak{x}|_1^2\big)\bigl(\|\na X\|_{\dot H^{k+1}}+|\nabla Y|_{k+1}\|\nabla X\|_{1}\bigr)+ |\frak{x}|_0\|\frak{y}\|_{\dot H^{k+1}}\\
&+\big(|\frak{x}|_{k+1}+|\nabla Y|_{k+1}|\frak{x}|_{1}\big)\|\frak{y}\|_{1}+|\frak{x}|_{k+1}\big(|\frak{x}|_0^\frac13\|\frak{x}\|_0^\frac23+|\frak{x}|_0\big)\|\nabla X\|_{1}.
\end{split}
\end{equation}
Substituting the above estimate and \eqref{S4.1eq10} into \eqref{S4.2eq8a} for $m=1$ shows that $\|{\mathcal A}^t\nabla\left(\vv p_1'(Y;X)\right)\|_{\dot H^k}$
shares the same estimate as above.

Similarly, we can show that
\begin{equation}\label{S4.2eq11a}
\begin{split}
\|\na\vv p_2'(Y;X)\|_{\dot H^k}
&\lesssim  \frak{g}_2(Y_t,X_t).
\end{split} \eeq
Substituting the above estimate and \eqref{S4.1eq12} into \eqref{S4.2eq8a} for $m=2$ shows that $\|{\mathcal A}^t\nabla\left(\vv p_2'(Y;X)\right)\|_{\dot H^k}$
shares the same estimate as above.

Let us now turn to the estimates of $\|f_1'(Y;X)\|_{\dot H^k}$ and $\|f_2'(Y;X)\|_{\dot H^k}.$
As a matter of fact, by inserting \eqref{S4.2eq6b} and \eqref{S4.2eq11} into \eqref{S4.2eq6a} for $m=1,$  we achieve
\begin{equation}\label{S4.2eq9}
\begin{split}
\|f_1'(Y;X)\|_{\dot H^k}
&\lesssim \frak{g}_2(\p_3Y,\p_3X).
\end{split}
\end{equation}
Similarly  by inserting \eqref{S4.2eq6c} and \eqref{S4.2eq11a} into \eqref{S4.2eq6a} for $m=2,$  we obtain
\begin{equation}\label{S4.2eq10}
\begin{split}
\|f_2'(Y;X)\|_{\dot H^k}
&\lesssim  \frak{g}_2(Y_t,X_t).
\end{split}
\end{equation}

Now we are in a position to complete the proof of Proposition \ref{S4.2prop1}.

\begin{proof}[Proof of Proposition \ref{S4.2prop1}] It remains to prove \eqref{S4.2eqf1} and  \eqref{S4.2eqf2}. Indeed, combining \eqref{S4.2eq5} with \eqref{S4.2eq9}, we obtain \eqref{S4.2eqf1}. While combining \eqref{S4.2eq6} with \eqref{S4.2eq10} leads to \eqref{S4.2eqf2}. This completes
the proof of Proposition \ref{S4.2prop1}.
\end{proof}



\section{Energy estimates for the linearized equation}\label{Sect6}

The goal of this section is to present the proof of Theorem \ref{S5thm1}.

\subsection{First-order energy estimates}
Let us first carry out the estimate of $\cE_0(t)$ \eqref{S5eqE1}.

\noindent$\bullet${\underline{The estimate of $\|\na X\|_0.$}}

We first get, by taking $L^2$ inner product of \eqref{LE1} with $X$, that
\beq \label{S5eq1}
\f{d}{dt}\left(\f12\|\na X\|_0^2+(X_t | X)_{L^2}\right)+\|\p_3X\|_0^2-\|X_t\|_0^2=(f'(Y;X)+g | X)_{L^2}.
\eeq
And it follows by taking $L^2$ inner product of \eqref{LE1} with $(-\D)^{-1}X_t$ that
\beno
\f12\f{d}{dt}\left(\||D|^{-1}X_t\|_0^2+\||D|^{-1}\p_3X\|_0^2\right)+\|X_t\|_0^2=\big((-\D)^{-1}(f'(Y;X)+g) | X_t\big)_{L^2}.
\eeno
Summing up the above equality with $\f14\times$\eqref{S5eq1} yields
\beq \label{S5eq2}
\begin{split}
\f{d}{dt}&\left(\f12\bigl(\||D|^{-1}X_t\|_0^2+\||D|^{-1}\p_3X\|_0^2+\f14\|\na X\|_0^2\bigr)+\f14(X_t | X)_{L^2}\right)
\\
&\qquad+\f34\|X_t\|_0^2+\f14\|\p_3X\|_0^2=\big(|D|^{-1}(f'(Y;X)+g) | \f14|D|X+|D|^{-1} X_t\big)_{L^2}.
\end{split}
\eeq
It is easy to observe that
$$\longformule{
\big|\bigl(|D|^{-1}\na\cdot\bigl({\mathcal A}(\nabla X{\mathcal A}+{\mathcal A}^t(\nabla X)^t){\mathcal A}^t\bigr)\nabla Y_t\bigr) |
\f14|D|X+|D|^{-1} X_t\bigr)_{L^2}\big|}{{} \leq C|\na Y_t|_0\|\na X\|_0\bigl(\|\na X\|_0+\||D|^{-1}X_t\|_0\bigr),} $$
and
$$\longformule{
\bigl(|D|^{-1}\na\cdot\bigl((\cA\cA^t-Id)\na X_t\bigr) | |D|X \bigr)_{L^2}=-\bigl((\cA\cA^t-Id)\na X_t | \na X \bigr)_{L^2}}
{{}
=-\f12\f{d}{dt}\bigl((\cA\cA^t-Id)\na X | \na X \bigr)_{L^2}+\int_{\R^3}\p_t(\cA\cA^t)|\na X|^2\,dx,} $$
and
\beno
\begin{split}
\bigl|\bigl(|D|^{-1}\na\cdot\bigl((\cA\cA^t-Id)\na X_t\bigr) \big| |D|^{-1}X_t \bigr)_{L^2}\bigr|
\lesssim &\|\cA\cA^t-Id\|_{\dot B^{\f32}_{2,1}}\|\na X_t\|_{\dH^{-1}}\|X_t\|_0\leq C\d_1\|X_t\|_0^2.
\end{split}
\eeno
Hence in view of \eqref{f0'}, under the assumption of \eqref{S4.1prop1assum},  by taking $\d_1$ so small that $C\d_1\leq\f14,$ we obtain
\beq \label{S5eq3}
\begin{split}
\bigl|\bigl(|D|^{-1}&f_0'(Y;X) | \f14|D|X+|D|^{-1} X_t\bigr)_{L^2}+\f18\f{d}{dt}\bigl((\cA\cA^t-Id)\na X | \na X \bigr)_{L^2}\bigr|\\
&\qquad\qquad\qquad\qquad\leq C|\na Y_t|_0\|\na X\|_0\bigl(\|\na X\|_0+\||D|^{-1}X_t\|_0\bigr)+\f14 \|X_t\|_0^2.
\end{split}
\eeq
While by virtue of \eqref{S4.2eq5} and \eqref{S4.2eq6}, we have
\beq \label{S5eq4}
\begin{split}
&\bigl|\bigl(|D|^{-1}(f_1'(Y;X)+f_2'(Y;X)) | \f14|D|X+|D|^{-1}X_t\bigr)_{L^2}\bigr|\\
&\leq \f18\bigl(\|X_t\|_0^2+\|\p_3X\|_0^2\bigr)
+C\Bigl(|\p_3Y|_1^{\f43}\|\p_3Y\|_0^\frac23+|\p_3Y|_0^2\\
&\qquad\qquad\qquad\qquad\qquad\qquad\ +|Y_t|_1^{\f43}\|Y_t\|_0^\frac23+|Y_t|_0^2\Bigr)\big(\|\na X\|_0^2+\||D|^{-1}X_t\|_0^2\big).
\end{split}
\eeq
Inserting \eqref{S5eq3} and \eqref{S5eq4} into \eqref{S5eq2} gives rise to
\beq \label{S5eq5}
\begin{split}
&\f{d}{dt}\left(\f12\bigl(\||D|^{-1}X_t\|_0^2+\||D|^{-1}\p_3X\|_0^2+\f14\bigl(\cA\cA^t\na X | \na X \bigr)_{L^2}\bigr)+\f14(X_t | X)_{L^2}\right)
\\
&+\f18\bigl(\|X_t\|_0^2+\|\p_3X\|_0^2\bigr)\leq \||D|^{-1}g\|_0\big(\|\na X\|_0+\||D|^{-1}X_t\|_0\big)\\
&\quad\qquad +C\Bigl(|\p_3Y|_1^{\f43}\|\p_3Y\|_0^\frac23+|\p_3Y|_0^2
+|Y_t|_1\Bigr)\big(\|\na X\|_0^2+\||D|^{-1}X_t\|_0^2\big),
\end{split}
\eeq
by applying the assumption \eqref{S5thmassum}.

On the other hand,
since
$\cA\cA^t$ is a positive definite matrix $(|\cA\cA^t-Id|_0\leq C\d_1\leq\frac14)$, it holds that
\beno
\bigl(\cA\cA^t\na X | \na X \bigr)_{L^2}\geq (1-C\delta_1) \|\na X\|_0^2\geq\frac34\|\nabla X\|_0^2, \eeno
so that one has
\beq \label{S5eq5a} \begin{split}
\f12\bigl(\||D|^{-1}X_t\|_0^2+\||D|^{-1}\p_3X\|_0^2+&\f14\bigl(\cA\cA^t\na X | \na X \bigr)_{L^2}\bigr)+\f14(X_t | X)_{L^2}\\
\geq& \f14\||D|^{-1}X_t\|_0^2+\f12\||D|^{-1}\p_3X\|_0^2+\f1{32}\|\na X\|_0^2. \end{split}
\eeq

\noindent$\bullet${\underline{The estimate of $\|X_t\|_0.$}}

Multiplying \eqref{LE1} by $X_t$ and integrating the resulting equality over $\R^3$, we get
\begin{align*}
\frac12\frac{d}{dt}\big(\|X_t\|_0^2+\|\p_3X\|_0^2\big)+\|\nabla X_t\|_0^2=\big(f'(Y;X)+g\big|X_t\big)_{L^2}.
\end{align*}
In view of \eqref{f0'}, we infer
\beno
\begin{split}
\bigl|\bigl(f_0'(Y;X) | X_t\bigr)_{L^2}\bigr|
\leq &C|\na Y_t|_0^2\|\na X\|_0^2+\f14\|\na X_t\|_0^2.
\end{split}
\eeno
While it follows from \eqref{S4.2eq5} to \eqref{S4.2eq6} that
\beno
\begin{split}
\bigl|\bigl(f_1'(Y;X)+f_2'(Y;X) | X_t\bigr)_{L^2}\bigr|
&\leq  C\Bigl(\big(|\p_3Y|_0\|\p_3X\|_0+|Y_t|_0\|X_t\|_0\big)+\bigl(|\p_3Y|_1^{\f43}\|\p_3Y\|_0^\frac23
\\
&\qquad\quad +|\p_3Y|_1^2+|Y_t|_1^{\f43}\|Y_t\|_0^\frac23+|Y_t|_1^2\bigr)\|\na X\|_0\Bigr)\|\nabla X_t\|_0.
\end{split}
\eeno
As a result, thanks to the assumption \eqref{S5thmassum}, it comes out
\beq \label{S5eq6}
\begin{split}
\frac{d}{dt}\big(\|X_t\|_0^2+\|\p_3X\|_0^2\big)+\|\nabla X_t\|_0^2
\leq & C\bigl(|\p_3Y|_1^{\f83}\|\p_3Y\|_0^\frac43+|\p_3Y|_1^4+|Y_t|_1^{2}\bigr)\|\na X\|_0^2\\
&+C\big(|\p_3Y|_0^2\|\p_3X\|_0^2+|Y_t|_0^2\|X_t\|_0^2\big)+4\||D|^{-1}g\|_0^2.
\end{split}
\eeq

\noindent$\bullet${\underline{The estimate of $\|\na X_t\|_0.$}}

By taking $L^2$ inner product of \eqref{LE1} with $-\Delta X_t$ gives
\beq\label{S5eq7}
\begin{split}
&\frac12\frac{d}{dt}\big(\|\na X_t\|_{ L^2}^2+\|\na\p_{3}X\|_0^2\bigr)
+\|\Delta X_t\|_0^2
=-\big(f'(Y;X)+g | \Delta X_t\big)_{L^2}.
\end{split} \eeq
It is easy to observe from  \eqref{S4.1prop1assum} and \eqref{f0'}  that
\beq \label{S5eq8}
\|f_0'(Y;X)\|_0\leq \frac14\|\D X_t\|_0+|\na Y_t|_1\|\na X\|_1. \eeq
Then by substituting the estimates \eqref{S5eq8}, \eqref{S4.2eq9} and \eqref{S4.2eq10} into \eqref{S5eq7} and using the assumptions \eqref{S4.1prop1assum} and \eqref{S5thmassum},
we obtain
\beq\notag
\begin{split}
\frac12\frac{d}{dt}\big(\|\na X_t\|_{ L^2}^2+\|\na\p_{3}X\|_0^2\bigr)
+\|\Delta X_t\|_0^2\leq& C\Bigl(\bigl(|Y_t|_2+|\p_3Y|_1^\frac43\|\p_3Y\|_0^\frac23+|\p_3Y|_1^2\bigr)\|\na X\|_1\\
&+|\p_3Y|_1\|\p_3X\|_1+|Y_t|_1\|X_t\|_1+\|g\|_0\Bigr)\|\D X_t\|_0,
\end{split} \eeq
which implies
\beq\label{S5eq10}
\begin{split}
\frac{d}{dt}\big(\|\na X_t\|_{ L^2}^2+\|\na\p_{3}X\|_0^2\bigr)
+\|\Delta X_t\|_0^2\leq & C\bigl(|Y_t|_2^2+|\p_3Y|_1^\frac83\|\p_3Y\|_0^\frac43+|\p_3Y|_1^4\bigr)\|\na X\|_1^2\\
&+C\bigl(|\p_3Y|_1^2\|\p_3X\|_1^2+|Y_t|_1^2\|X_t\|_1^2\bigr)+\|g\|_0^2.
\end{split} \eeq

\noindent$\bullet${ \underline{The estimate of $\|\na X\|_{\dH^1}.$}}

In this step, we shall  use the equivalent formulation, \eqref{LEW1},  of \eqref{LE1}. We first get, by
taking $L^2$ inner product of \eqref{LEW1} with  $-\na\cdot\bigl(\cA\cA^t\na X\bigr),$ that
\beno
\begin{split}
\f12\f{d}{dt}\|\na\cdot\bigl(\cA\cA^t\na X\bigr)\|_0^2+\bigl(\p_3^2X | \na\cdot\bigl(\cA\cA^t\na X\bigr)&\bigr)_{L^2}-\bigl(X_{tt} | \na\cdot\bigl(\cA\cA^t\na X\bigr)\bigr)_{L^2}\\
=&-\bigl(\wt{f}'(Y;X)+g | \na\cdot\bigl(\cA\cA^t\na X\bigr)\bigr)_{L^2}.
\end{split}
\eeno
By using integration by parts, one has
\beno
\begin{split}
&\bigl(X_{tt} | \na\cdot\bigl(\cA\cA^t\na X\bigr)\bigr)_{L^2}=-\f{d}{dt}\bigl(\na X_{t} | \cA\cA^t\na X\bigr)_{L^2}+\bigl(\na X_t | \p_t(\cA\cA^t\na X)\bigr)_{L^2},
\\
&\bigl(\p_3^2X | \na\cdot\bigl(\cA\cA^t\na X\bigr)\bigr)_{L^2}
=\bigl(\na\p_3X | \cA\cA^t\na \p_3X\bigr)\bigr)_{L^2}+\bigl(\na\p_3X | \p_3\bigl(\cA\cA^t\bigr)\na X\bigr)_{L^2}.
\end{split}
\eeno
Since $\cA\cA^t$ is a positive definitive matrix, we infer
\beq
\label{S5eq11}
\begin{split}
\f{d}{dt}\Bigl(&\f12\|\na\cdot\bigl(\cA\cA^t\na X\bigr)\|_0^2+\bigl(\na X_t | \cA\cA^t\na X\bigr)_{L^2}\Bigr)+\f12\|\nabla \p_3X \|_0^2\\
\leq &2\|\na X_t\|_0^2+\f14\|\na\p_3X\|_0^2+C\bigl(|\na Y_t|_0^2+|\p_3\na Y|_0^2\bigr)\|\na X\|_0^2\\
&\qquad\qquad\qquad\qquad\qquad\qquad\quad-\bigl(\wt{f}'(Y;X)+g | \na\cdot\bigl(\cA\cA^t\na X\bigr)\bigr)_{L^2}.
\end{split}
\eeq
Yet under the assumption of \eqref{S4.1prop1assum}, it is easy to observe from \eqref{wf0'} that
\beno
\|\wt{f}_0'(Y;X)\|_0\leq C|\na Y_t|_1\|\na X\|_1.
\eeno
Whereas it follows from \eqref{S4.2eq9}, \eqref{S4.2eq10} that
$$\longformule{
\bigl|\bigl(f_1'(Y;X)+f_2'(Y;X) | \na\cdot(\cA\cA^t\na X)\bigr)_{L^2}\bigr|\lesssim  \Bigl(|\p_3Y|_1\|\p_3X\|_1+|Y_t|_1\|X_t\|_1}{{}
+\bigl(|\p_3Y|_1^{\f43}\|\p_3Y\|_0^\frac23+|\p_3Y|_1^2+|Y_t|_1^{\f43}\|Y_t\|_0^\frac23+|Y_t|_1^2\bigr)\|\na X\|_1\Bigr)\|\na X\|_1.
}
$$
Inserting the above estimates into \eqref{S5eq11} yields
\beq
\label{S5eq12}
\begin{split}
\f{d}{dt}\Bigl(\f12\|\na\cdot\bigl(&\cA\cA^t\na X\bigr)\|_0^2+\bigl(\na X_t | \cA\cA^t\na X\bigr)_{L^2}\Bigr)+\f18\|\nabla \p_3X \|_0^2
\leq 3\| X_t\|_1^2+\frac1{20}\|\p_3X\|_0^2\\
&+\|g\|_0\|\nabla X\|_1 +C\bigl(|\p_3Y|_1^2+|Y_t|_1^2+|\p_3Y|_1^{\f43}\|\p_3Y\|_0^\frac23+|Y_t|_1^{\f43}\|Y_t\|_0^\frac23\bigr)\|\na X\|_1^2.
\end{split}
\eeq
Let us denote
 \begin{equation}\label{S5eq13}
 \begin{split}
 E_0(t) &\eqdefa\f12\Bigl(\||D|^{-1} X_t\|_{ H^2}^2+\||D|^{-1}\p_{3}X\|_2^2+\f14\bigl(\cA\cA^t\na X | \na X \bigr)_{L^2}\Bigr)\\
 &\qquad+\f14(X_t | X)_{L^2}+\frac1{48}\Big(\f12\|\na\cdot\bigl(\cA\cA^t\na X\bigr)\|_0^2+\bigl(\na X_t | \cA\cA^t\na X\bigr)_{L^2}\Big).
\end{split}
 \end{equation}
 Then by summing up the inequalities  \eqref{S5eq5}, \eqref{S5eq6}, \eqref{S5eq10} and $\frac1{48}\times$\eqref{S5eq12}, we obtain,
 \beq \label{S5eq14}
\begin{split}
&\f{d}{dt}E_0(t)+\frac1{16}\|X_t\|_2^2+\frac1{384}\|\p_3X\|_1^2
\leq +\||D|^{-1}g\|_1^2+ C\bigl(|\p_3Y|_1^{\f43}\|\p_3Y\|_0^\frac23+|\p_3Y|_1^2+|Y_t|_2\bigr)\times\\
&\quad\times\bigl(\|\na X\|_1^2+\||D|^{-1}X_t\|_0^2+\|\p_3X\|_1^2+\|X_t\|_1^2\big)+\||D|^{-1}g\|_1\big(\|\na X\|_1+\||D|^{-1}X_t\|_0\big).
\end{split}
\eeq
 Notice that
 \begin{align*}
 &\bigl(\na X_t | \cA\cA^t\na X\bigr)_{L^2}\geq -\|X_t\|_0^2-\frac14\|\nabla\cdot\big({\mathcal A}{\mathcal A}^t\nabla X\big)\|_0^2,
 \end{align*}
and
 \begin{align*}
 &\|\na\cdot\bigl(\cA\cA^t\na X\bigr)\|_0\geq\|\nabla X\|_{\dot H^1}-\|({\mathcal A}{\mathcal A}^t-Id)\nabla X\|_{\dot H^1}\geq(1-C\delta_1)\|\nabla X\|_{\dot H^1},
 \end{align*}
so that we deduce from from \eqref{S5eq5a} and \eqref{S5eq13} that
\begin{equation}\label{S5eq15}
E_0(t)\geq \frac1{16^2}\Big(\||D|^{-1}X_t\|_2^2+\||D|^{-1}\p_3X\|_2^2+\|\nabla X\|_1^2\Big).
\end{equation}
Hence for any $\varepsilon>0$, we deduce from \eqref{S5eq14} that
 \begin{equation}\label{S5eq16}
\begin{split}
\f{d}{dt}&E_0(t)+\frac1{16}\|X_t\|_2^2+\frac1{384}\|\p_3X\|_1^2\leq \langle t\rangle^{1+\varepsilon}\||D|^{-1}g\|_1^2\\
&\qquad\qquad+ C_\varepsilon\Bigl(|\p_3Y|_1^{\f43}\|\p_3Y\|_0^\frac23+|\p_3Y|_1^2+|Y_t|_2+\langle t\rangle^{-(1+\varepsilon)}\Bigr)E_0(t).
\end{split}
\end{equation}
Applying Gronwall's inequality yields for any  $\varepsilon>0$ that
\begin{align*}
E_0(t)+\frac1{16}\|X_t\|_{L^2_t(H^2)}^2&+\frac1{384}\|\p_3X\|_{L^2_t(H^1)}^2\leq C_\varepsilon\Big( \int_0^t\langle s\rangle^{1+\varepsilon}\||D|^{-1}g(s)\|_1^2ds\Big)\times\\
&\times \exp C\big(|\p_3Y|_{\frac12+\varepsilon,1}^{\frac43}\|\p_3Y\|_{L_t^2(L^2)}^\frac23+|\p_3Y|_{\frac12+\varepsilon,1}^2+|Y_t|_{1+\varepsilon,2}\big),
\end{align*}
which together with \eqref{S5eq15} ensures the first inequality of \eqref{S5eqE1}.

\subsection{Higher-order energy estimates.}

In this subsection, we shall derive the estimates for
\begin{equation}\label{S5H0}
\dot E_{k+1}(t)\eqdefa \|\p_3X\|_{\dot H^{k+1}}^2+\|X_t\|_{\dot H^{k+1}}^2+\|\nabla X\|_{\dot H^{k+1}}^2\quad\mbox{for}\  k\geq0.
\end{equation}
We first get, by taking the $\dot H^{k+1}$-inner product of \eqref{LE1} with $X_t,$  that
\begin{align*}
&\frac12\frac{d}{dt}\big(\| X_t\|_{\dot H^{k+1}}^2+\|\p_3X\|_{\dot H^{k+1}}^2\big)+\| X_t\|_{\dot H^{k+2}}^2=\big(f'(Y;X)+g\big|X_t\big)_{\dot H^{k+1}},
\end{align*}
which implies
\begin{align}\label{S5H1}
&\frac{d}{dt}\big(\| X_t\|_{\dot H^{k+1}}^2+\|\p_3X\|_{\dot H^{k+1}}^2\big)+\| X_t\|_{\dot H^{k+2}}^2\leq\|f'(Y;X)\|_{\dot H^k}^2+\|g\|_{\dot H^k}^2.
\end{align}
Yet in view of \eqref{f0'}, it follows from Moser type inequality that
\begin{equation}\label{S5H6}
\begin{split}
\|f_0'(Y;X)\|_{\dot H^k}
&\lesssim |\nabla Y_t|_0\|\nabla X\|_{\dot H^{k+1}}+\big(|\nabla Y_t|_{k+1}+|\nabla Y_t|_0|\nabla Y|_{k+1}\big)\|\nabla X\|_0\\
&\qquad+|\nabla Y|_0\|\nabla X_t\|_{\dot H^{k+1}}+|\nabla Y|_{k+1}\|\nabla X_t\|_{0},
\end{split}
\end{equation}
from which, \eqref{S4.2eq9}, \eqref{S4.2eq10}
and the assumption \eqref{S5thmassum}, we infer
\begin{equation}\label{S5H8}
\begin{split}
\|f'(Y;X)\|_{\dot H^k}
&\lesssim |\p_3Y|_0\|\p_3X\|_{\dot H^{k+1}}+ |Y_t|_0\|X_t\|_{\dot H^{k+1}}+|\nabla Y|_0\| X_t\|_{\dot H^{k+2}}\\
&+\big(|\p_3Y|_1^\frac43\|\p_3Y\|_{0}^\frac23+|\p_3Y|_1^2+| Y_t|_1\big)\bigl(\|\na X\|_{\dot H^{k+1}}+|\nabla Y|_{k+1}\|\nabla X\|_{1}\bigr)\\
&+\big(|\p_3Y|_{k+1}+|\nabla Y|_{k+1}|\p_3Y|_1\big)\|\p_3X\|_{1}+\big(|Y_t|_{k+1}+|\nabla Y|_{k+1}|Y_t|_1\big)\|X_t\|_{1}\\
&+|\nabla Y|_{k+1}\|\nabla X_t\|_{0}+\bigl(|\p_3Y|_{k+1}\big(|\p_3Y|_0^\frac13\|\p_3Y\|_{0}^\frac23+|\p_3Y|_1\big)+|Y_t|_{k+2}\bigr)\|\nabla X\|_{1}.
\end{split}
\end{equation}
Inserting \eqref{S5H8} into \eqref{S5H1},  and using the assumption \eqref{S4.1prop1assum} so that $|\nabla Y|_0\leq\delta_1$,  we deduce that
\begin{equation}\label{S5H10}
\begin{split}
&\frac{d}{dt}\big(\| X_t\|_{\dot H^{k+1}}^2+\|\p_3X\|_{\dot H^{k+1}}^2\big)+\frac34\| X_t\|_{\dot H^{k+2}}^2\lesssim |\p_3Y|_0^2\|\p_3X\|_{\dot H^{k+1}}^2+|Y_t|_0^2\|X_t\|_{\dot H^{k+1}}^2\\
&\quad+\|g\|_{\dot H^k}^2+\big(|\p_3Y|_1^{\frac83}\|\p_3Y\|_0^\frac43+|\p_3Y|_1^4+|Y_t|_1^2\big)\bigl(\|\nabla X\|_{\dot H^{k+1}}^2+|\nabla Y|_{k+1}^2\|\nabla X\|_1^2\bigr)\\
&\quad+\big(|\p_3Y|_{k+1}^2+|\nabla Y|_{k+1}^2|\p_3Y|_1^2\big)\|\p_3Y\|_1^2+\big(|Y_t|_{k+1}^2+|\nabla Y|_{k+1}^2|Y_t|_1^2\big)\|X_t\|_1^2\\
&\quad+|\nabla Y|_{k+1}^2\|\nabla X_t\|_0^2+\big(|\p_3Y|_{k+1}^2\big(|\p_3Y|_0^{\frac23}\|\p_3Y\|_0^\frac43+|\p_3Y|_1^2\big)+|Y_t|_{k+2}^2\bigr)\|\nabla X\|_1^2.
\end{split}
\end{equation}

Secondly, by taking the $\dot H^k$-inner product of \eqref{LEW1} with  $-\na\cdot\bigl(\cA\cA^t\na X\bigr),$ we obtain
\begin{equation}\label{S5H2}
\begin{split}
\f12\f{d}{dt}\|\na\cdot\bigl(&\cA\cA^t\na X\bigr)\|_{\dot H^k}^2+\bigl(\p_3^2X | \na\cdot\bigl(\cA\cA^t\na X\bigr)\bigr)_{\dot H^k}\\
&-\bigl(X_{tt} | \na\cdot\bigl(\cA\cA^t\na X\bigr)\bigr)_{\dot H^k}
=-\bigl(\wt{f}'(Y;X)+g | \na\cdot\bigl(\cA\cA^t\na X\bigr)\bigr)_{\dot H^k}.
\end{split}
\end{equation}
By using integration by parts, one has
\beno
-\bigl(X_{tt} | \na\cdot\bigl(\cA\cA^t\na X\bigr)\bigr)_{\dot H^k}=-\f{d}{dt}\bigl( X_{t} | \nabla \cdot(\cA\cA^t\na X)\bigr)_{\dot H^k}-\bigl(\nabla  X_t | \p_t(\cA\cA^t\na X)\bigr)_{\dot H^k},
\eeno
and
\begin{align*}
\big|\bigl(\nabla  X_t | \p_t(\cA\cA^t\na X)\bigr)_{\dot H^k}\big|
\leq  \| X_t\|_{\dot H^{k+1}}\Big(&\frac32\|\nabla X_t\|_{\dot H^k}+|\nabla Y|_{k}\|\nabla X_t\|_0\\
&+|\nabla Y_t|_0\|\nabla X\|_{\dot H^k}+|\nabla Y_t|_{k}\|\nabla X\|_0\Bigr),
\end{align*}
so that it comes out
$$\longformule{
\bigl|\bigl(X_{tt} | \na\cdot\bigl(\cA\cA^t\na X\bigr)\bigr)_{\dot H^k}-\f{d}{dt}\bigl( X_{t} | \nabla \cdot(\cA\cA^t\na X)\bigr)_{\dot H^k}\bigr|}{{}
\leq 2\| X_t\|_{\dot H^{k+1}}^2+C_k\big(|\nabla Y_t|_0^2\|\nabla X\|_{\dot H^k}^2+|\nabla Y|_{k}^2\|\nabla X_t\|_0^2+|\nabla Y_t|_{k}^2\|\nabla X\|_0^2\big).}$$
Similarly, again by using integration by parts, one has
\beno
\begin{split}
\bigl(\p_3^2X | \na\cdot\bigl(\cA\cA^t\na X\bigr)\bigr)_{\dot H^k}
=&\bigl(\na\p_3X | \cA\cA^t\na \p_3X\bigr)_{\dot H^k}+\bigl(\na\p_3X | \p_3\bigl(\cA\cA^t\bigr)\na X\bigr)_{\dot H^k}.
\end{split}
\eeno
Since $|{\mathcal A}{\mathcal A}^t-Id|_0\leq C\delta_1\leq\frac14$ due to \eqref{S4.1prop1assum}, applying Moser type inequality gives
\begin{align*}
\bigl(\na\p_3X | \cA\cA^t\na \p_3X\bigr)_{\dot H^k}
&\geq\frac12\|\nabla\p_3X\|_{\dot H^k}^2-C_k|\nabla Y|_{k}^2\|\nabla \p_3X\|_0^2,
\end{align*}
and
\begin{align*}
\big|\bigl(\na\p_3X | \p_3\bigl(\cA\cA^t\bigr)\na X\bigr)_{\dot H^k}\big|
&\leq\frac14\|\nabla\p_3X\|_{\dot H^k}^2+C_k\big(|\p_3 Y|_1^2\|\nabla X\|_{\dot H^k}^2+|\p_3Y|_{k+1}^2\|\nabla X\|_0^2\big),
\end{align*}
so that there holds
\beno
\begin{split}
\bigl(\na\p_3X | \cA\cA^t\na \p_3X\bigr)_{\dot H^k}
\geq &\frac14\|\nabla\p_3X\|_{\dot H^k}^2-C_k\big(|\nabla Y|_{k}^2\|\nabla \p_3X\|_0^2+
|\p_3 Y|_1^2\|\nabla X\|_{\dot H^k}^2+|\p_3Y|_{k+1}^2\|\nabla X\|_0^2\big).
\end{split}
\eeno
Inserting the above estimates into \eqref{S5H2} gives rise to
\beq\label{S5H3}
\begin{split}
&\f{d}{dt}\Bigl(\f12\|\na\cdot\bigl(\cA\cA^t\na X\bigr)\|_{\dot H^k}^2-\bigl(
 X_t |\nabla\cdot( \cA\cA^t\na X)\bigr)_{\dot H^k}\Bigr)+\f14\| \p_3X \|_{\dot H^{k+1}}^2\\
&\leq 2\| X_t\|_{\dot H^{k+1}}^2+\big(|\p_3 Y|_0^2+| Y_t|_0^2\big)\|\nabla X\|_{\dot H^{k+1}}^2+C_k|\nabla Y|_{k}^2\big(\|\nabla\p_3X\|_0^2+\|\nabla X_t\|_0^2\big)\\
&\quad+C_k(|\p_3Y|_{k+1}^2+|Y_t|_{k+2}^2)\|\nabla X\|_0^2 +\big(\|\wt{f}'(Y;X)\|_{\dot H^k}+\|g \|_{\dot H^k}\big)\| \na\cdot\bigl(\cA\cA^t\na X\bigr)\|_{\dot H^k}.
\end{split}
\eeq
We remark  that
\beq\label{S5H4}
\begin{split}
\| \na\cdot\bigl(\cA\cA^t\na X\bigr)-\D X\|_{\dot H^k}
\leq &|{\mathcal A}{\mathcal A}^t-Id|_0\|\nabla X\|_{\dot H^{k+1}}+C_k|{\mathcal A}{\mathcal A}^t-Id|_{k+1}\|\nabla X\|_0\\
\leq &\frac12\|\nabla X\|_{\dot H^{k+1}}+C_k|\nabla Y|_{k+1}\|\nabla X\|_0.
\end{split}\eeq
Moreover, in view of  \eqref{wf0'}, we have
\begin{equation}\label{S5H7}
\begin{split}
\|\widetilde f_0'(Y;X)\|_{\dot H^k}
&\lesssim | Y_t|_1\|\nabla X\|_{\dot H^{k+1}}+\big(| Y_t|_{k+2}+| Y_t|_1|\nabla Y|_{k+1}\big)\|\nabla X\|_0,
\end{split}
\end{equation}
which together with \eqref{S4.2eq9} and \eqref{S4.2eq10} ensures that
\begin{equation}\label{S5H9}
\begin{split}
&\|\widetilde f'(Y;X)\|_{\dot H^k}
\lesssim |\p_3Y|_0\|\p_3X\|_{\dot H^{k+1}}+|Y_t|_0\|X_t\|_{\dot H^{k+1}}+\big(|\p_3Y|_{k+1}+|\nabla Y|_{k+1}|\p_3Y|_1\big)\|\p_3X\|_1\\
&\quad+\big(|\p_3Y|_1^\frac43\|\p_3Y\|_0^\frac23+|\p_3Y|_1^2+| Y_t|_1\big)\bigl(\|\na X\|_{\dot H^{k+1}}+|\nabla Y|_{k+1}\|\nabla X\|_1\bigr)+\|X_t\|_1\big(|Y_t|_{k+1}\\
&\quad+|\nabla Y|_{k+1}|Y_t|_1\big)+\big(|\p_3Y|_{k+1}\big(|\p_3Y|_0^\frac13\|\p_3Y\|_0^\frac23+|\p_3Y|_1\big)+| Y_t|_{k+2}\bigr)\|\nabla X\|_1.
\end{split}
\end{equation}
Inserting the above inequalities to \eqref{S5H3} yields
\beq\label{S5H11}
\begin{split}
\f{d}{dt}&\Bigl(\f12\|\na\cdot\bigl(\cA\cA^t\na X\bigr)\|_{\dot H^k}^2-\bigl(
 X_t |\nabla\cdot( \cA\cA^t\na X)\bigr)_{\dot H^k}\Bigr)+\f18\| \p_3X \|_{\dot H^{k+1}}^2\\
 \leq &3\| X_t\|_{\dot H^{k+1}}^2+\langle t\rangle^{1+\varepsilon}\|g \|_{\dot H^k} ^2+C_k\big(|\p_3Y|_1^\frac43\|\p_3Y\|_0^\frac23+|\p_3Y|_1^2+| Y_t|_1+\langle t\rangle^{-(1+\varepsilon)}\big)\|\na X\|_{\dot H^{k+1}}^2\\
&+C_k\big(\bigl(|\p_3Y|_{k+1}^2+|\nabla Y|_{k+1}^2|\p_3Y|_1^2\bigr)\langle t\rangle^{1+\varepsilon}+|\nabla Y|_{k}^2\big)\|\p_3X\|_{1}^2\\
&+C_k\big(\bigl(|Y_t|_{k+1}^2+|\nabla Y|_{k+1}^2|Y_t|_1^2\bigr)\langle t\rangle^{1+\varepsilon}+|\nabla Y|_{k}^2\big)\|X_t\|_{1}^2\\
&+C_k\Big\{\bigl(|\p_3Y|_{k+1}^2\big(|\p_3Y|_0^\frac23\|\p_3Y\|_0^\frac43+|\p_3Y|_1^2\big)+| Y_t|_{k+2}^2\bigr)\langle t\rangle^{1+\varepsilon}+|\p_3Y|_{k+1}^2
\\
&\qquad+|\nabla Y|_{k+1}^2\big(\big(|\p_3Y|_1^\frac83\|\p_3Y\|_0^\frac43+|\p_3Y|_1^4
+|Y_t|_1^2\big)\langle t\rangle^{1+\varepsilon}+\langle t\rangle^{-(1+\varepsilon)}\big)\big)\Big\}\|\nabla X\|_1^2
\end{split}
\eeq
Let us introduce
\begin{align}\label{S512}
\dot D_{k+1}(t)\eqdefa\|X_t\|_{\dot H^{k+1}}^2+\|\p_3X\|_{\dot H^{k+1}}^2+\frac12\|\nabla\cdot ({\mathcal A}{\mathcal A}^t\nabla X)\|_{\dot H^k}^2-\big(X_t | \nabla\cdot({\mathcal A}{\mathcal A}^t\nabla X)\big)_{\dot H^k}.
\end{align}
Then it follows from \eqref{S5H4} that
\begin{equation}\label{S5H13}
\dot D_{k+1}(t)\geq\frac1{8} \dot E_{k+1}(t)-C_{k+1}\|X_t\|_0^2-C_{k+1}|\nabla Y|_{k+1}^2\|\nabla X\|_0^2,
\end{equation}
with $\dot E_{k+1}(t)$ being given by \eqref{S5H0}.

Hence by summing up \eqref{S5H10} and \eqref{S5H11}, and then
integrating the resulting  inequality over $[0,t]$  and using \eqref{S5H13}, we achieve
\begin{equation}\label{S5H14}
\begin{split}
&\dot E_{k+1}(t)+\int_0^t\big(\frac12\| X_t\|_{\dot H^{k+2}}^2+\frac18\| \p_3X \|_{\dot H^{k+1}}^2\big)\,ds\\
&\leq 8\dot D_{k+1}(t)+\|X_t\|_{0,0}^2+|\nabla Y|_{0,k+1}^2\|\nabla X\|_{0,0}^2+\int_0^t\big(\frac12\| X_t\|_{\dot H^{k+2}}^2+\frac18\| \p_3X \|_{\dot H^{k+1}}^2\big)\,ds\\
&\lesssim  \int_0^t\big(|\p_3Y|_1^\frac43\|\p_3Y\|_0^\frac23+|\p_3Y|_0^2+| Y_t|_1+\langle s\rangle^{-(1+\varepsilon)}\big)\dot E_{k+1}(s)\,ds\\
&\qquad\qquad\qquad\qquad\qquad\qquad\qquad\qquad\qquad\qquad+ \|\langle t\rangle^{\frac{1+\varepsilon}2}g \|_{L_t^2(\dot H^k)}^2 +\gamma_{\varepsilon, k+1}(Y)^2{\mathcal E}_0^2(t),
\end{split}
\end{equation}
where ${\mathcal E}_0(t)$ is given by \eqref{S5eqE1} and $\gamma_{\varepsilon,k+1}(Y)$ by \eqref{S5eqE3}.
Applying Gronwall's inequality to \eqref{S5H14} and using \eqref{S5eqE1}, we obtain
\begin{equation*}
\begin{split}
\dot E_{k+1}(t)
&\leq C_{\varepsilon,k}\big(\|\langle t\rangle^{\frac{1+\varepsilon}2}g\|_{L_t^2(\dot H^{k})}^2
+\gamma_{\varepsilon,k+1}(Y)^2\|\langle t\rangle^{\frac{1+\varepsilon}{2}}|D|^{-1}g\|_{L_t^2(H^1)}^2\big)E_\e(Y),
\end{split}
\end{equation*}
from which and \eqref{S5H14},  we infer
\begin{equation}\label{S5H16}
\begin{split}
&\|(X_t,\p_3X,\nabla X)\|_{L_t^\infty(\dot H^{k+1})}+\|X_t\|_{L_t^2(\dot H^{k+2})}+\|\p_3X\|_{L_t^2(\dot H^{k+1})}\\
&\quad\leq C_{\varepsilon,k}\big(\|\langle t\rangle^{\frac{1+\varepsilon}2}g\|_{L_t^2(\dot H^{k})}
+\gamma_{\varepsilon,k+1}(Y)\|\langle t\rangle^{\frac{1+\varepsilon}{2}}|D|^{-1}g\|_{L_t^2(H^1)}\big)E_\e(Y).
\end{split}
\end{equation}
Summing up the above inequality with respect to $k$ leads to \eqref{S5eqE2}. This completes the proof of  Theorem \ref{S5thm1}.

Now let us turn to the proof of Corollary \ref{S5col1}.

\begin{proof}[Proof of Corollary \ref{S5col1}]
By summing up \eqref{S5eq6} and  \eqref{S5eq10}, and then multiplying the resulting inequality by $\w{t}$ and
integrating the above inequality over $[0,t]$, we find
\begin{align*}
\langle t\rangle&\big(\|X_t\|_1^2+\|\p_3X\|_1^2\big)+\int_0^t\langle s\rangle\|\nabla X_t\|_1^2ds\leq \|X_t\|_{L_t^2(H^1)}^2+\big(1+|\p_3Y|_{\frac12,1}^2\big)\|\p_3X\|_{L_t^2(H^1)}^2\\
&+C\|\langle t\rangle^\frac12|D|^{-1}g\|_{L_t^2(H^1)}^2+C\big(|\p_3Y|_{\frac12+\varepsilon,1}^\frac83\|\p_3Y\|_{L_t^2(L^2)}^\frac43+|\p_3Y|_{\frac12+\varepsilon,1}^4+|Y_t|_{1+\varepsilon,2}^2\big){\mathcal E}_0^2(t).
\end{align*}
\eqref{S5eqC1} then follows from \eqref{S5eqE1}.

 Similarly, we get, by multiplying
 \eqref{S5H10} by $\w{t}$, then integrating the inequality over $[0,t]$ and taking the square root of the resulting inequality, that
\begin{align*}
&\langle t\rangle^\frac12\big(\|(X_t,\p_3X)\|_{\dot H^{k+1}}\big)+\Big(\frac34\int_0^t\langle s\rangle\| X_t\|_{\dot H^{k+2}}^2ds\Big)^\frac12\lesssim \|\langle t\rangle^\frac12g\|_{L_t^2(\dot H^{k})}\\
&+\bigl(1+|Y_t|_{\frac12,0}\bigr)\|X_t\|_{L_t^2(\dot H^{k+1})}+\bigl(1+|\p_3Y|_{\frac12,0}\bigr)\|\p_3X\|_{L_t^2(\dot H^{k+1})}+|\nabla Y|_{0,k+1}\|\langle t\rangle^\frac12\nabla X_t\|_{L_t^2(L^2)}\\
&+\big(|\p_3Y|_{\frac12+\varepsilon,1}^\frac43\|\p_3Y\|_{L_t^2(L^2)}^\frac23+|\p_3Y|_{\frac12+\varepsilon,1}^2+|Y_t|_{1+\varepsilon,1}\big)\bigl(\|\nabla X\|_{L_t^\infty(\dot H^{k+1})}+|\nabla Y|_{0,k+1}\|\nabla X\|_{L_t^\infty(H^1)}\bigr)\\
&+\|\p_3Y\|_{L_t^2(L^2)}\big(|\nabla Y|_{0,k+1}|\p_3Y|_{\frac12,1}+|\p_3Y|_{\frac12,k+1}\big)+\big(|Y_t|_{\frac12,k+1}+|\nabla Y|_{0,k+1}|Y_t|_{\frac12,1}\big)\|X_t\|_{L_t^2(L^2)}\\
&+\big(|\p_3Y|_{\frac12+\varepsilon,k+1}\big(|\p_3Y|_{\frac12+\varepsilon,0}^{\frac13}\|\p_3Y\|_{L_t^2(L^2)}^\frac23+|\p_3Y|_{\frac12+\varepsilon,1}\big)
+|Y_t|_{1+\varepsilon,k+2}\bigr)\|\nabla X\|_{L_t^\infty(H^1)}.
\end{align*}
\eqref{S5eqC2} then follows from \eqref{S5eqE2} and \eqref{S5eqC1}, and this completes the proof of Corollary \ref{S5col1}.
\end{proof}

\section{Energy decay for $\nabla X_t$}\label{Sect6.3}

The main idea to prove Proposition \ref{S5thm2} is to use the following proposition:

\begin{prop}\label{S3-Lem1}
{\sl Let $X$ be a smooth enough solution of
\beq \label{appeq2}
\left\{\begin{array}{l}
\displaystyle
X_{tt}-\D X_t-\p_3^2X=\na\cdot\bigl((\cA\cA^t-Id)\na X_t\bigr)+\hbar\eqdefa f,\\
\displaystyle X(0)=0 \andf X_t(0)=0,
\end{array}\right.
\eeq on $[0,T].$ Then
 under the assumption that
\begin{equation}\label{appeq3}
\|\na Y\|_{{L}^\infty_t(\dot B^{\f32}_{2,1})}<\delta_1,
\end{equation}
 we have, for any $t\in[0, T]$ and any $\e>0,$ that
 \beq\label{appeq20}
 t\|\na X_t(t)\|_{L^2}\leq C_\e\bigl(\sup_{s\in [0,t]}\|s^{1+\e}|D|^{-1}\hbar\|_{L^2}+\sup_{s\in [0,t]}\|s^{1+\e}|D|\hbar\|_{L^2}\bigr)\leq C_\e\||D|^{-1}\hbar\|_{1+\e,2}.
\eeq
Moreover, we have for $k\in\N$,
\begin{equation}\label{appeq20'}
 t\|\na X_t(t)\|_{\dot H^k}\leq C_{\e,k}\Big(\bigl(\d_1+\| D^{k}\nabla Y\|_{L^\infty_t(\dot B^{\frac32}_{2,1})}\bigr)\||D|^{-1}\hbar\|_{1+\e,2}+\||D|^{-1}\hbar\|_{1+\e,k+2}\Big).
\end{equation}
 }
\end{prop}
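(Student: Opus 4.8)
\textbf{Proof proposal for Proposition \ref{S3-Lem1}.}

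The plan is to attack \eqref{appeq2} in Fourier space, exploiting the fact that for the model operator the multiplier $\Gamma(t,\xi)$ is well understood from Proposition \ref{S1prop1} and Lemma \ref{S1prop2a}. First I would rewrite \eqref{appeq2} using Duhamel's formula: since $X$ solves $X_{tt}-\Delta X_t-\p_3^2 X = f$ with zero data, we have $X_t(t)=\int_0^t \p_t\Gamma(t-s)f(s)\,ds$, and the source is $f=\na\cdot\bigl((\cA\cA^t-Id)\na X_t\bigr)+\hbar$. The first term on the right is itself linear in $X_t$, so the natural strategy is to treat it perturbatively: under \eqref{appeq3} the operator $g\mapsto \na\cdot\bigl((\cA\cA^t-Id)\na g\bigr)$ is small (of size $\delta_1$) relative to $\na$, so a fixed-point/absorption argument on the weighted norm $\sup_{s\le t} s\|\na X_t(s)\|_{L^2}$ should close, provided the free evolution applied to $\hbar$ decays like $t^{-1}$ in $\dot H^1$.

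The heart of the matter is therefore the decay estimate for the free propagator: I would show that $\||D|\,\p_t\Gamma(t)\hbar\|_{L^2}\lesssim t^{-1}\bigl(\||D|^{-1}\hbar\|_{L^2}+\||D|\hbar\|_{L^2}\bigr)$ up to the $\e$-loss, and more precisely that $t|\xi|^2\p_t\Gamma(t,\xi)$ is bounded, which is exactly the content of the first claim in \eqref{S2eq15'a} proved in Lemma \ref{S1prop2a}. Splitting the time integral $\int_0^{t/2}+\int_{t/2}^t$ as in the proof of Proposition \ref{S2.3prop1}: on $[t/2,t]$ one uses $|\xi|\p_t\Gamma(t-s,\xi)$ bounded by $C(t-s)^{-1/2}|\xi|^{?}$ together with the time weight $s^{1+\e}$ on $\hbar$; on $[0,t/2]$ one pays the full $\langle t-s\rangle^{-?}$ decay and absorbs it into the weight $t^{-1}$. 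The bookkeeping is to match the weight $s^{1+\e}$ against $(t-s)$-powers so that the resulting bound is $t^{-1}\bigl(\sup_s\|s^{1+\e}|D|^{-1}\hbar\|_{L^2}+\sup_s\|s^{1+\e}|D|\hbar\|_{L^2}\bigr)$, which is $\le C_\e\||D|^{-1}\hbar\|_{1+\e,2}$ since $\|\cdot\|_{1+\e,2}$ controls two derivatives. Then one adds back the perturbative term: writing $\hbar_{\rm eff}=\na\cdot\bigl((\cA\cA^t-Id)\na X_t\bigr)+\hbar$ and noting $\||D|^{-1}\na\cdot\bigl((\cA\cA^t-Id)\na X_t\bigr)\|_{L^2}\lesssim \|(\cA\cA^t-Id)\na X_t\|_{L^2}\lesssim \delta_1\|\na X_t\|_{L^2}$ by \eqref{appeq3} and the product law, while the $|D|$ version gives $\delta_1\|\na X_t\|_{\dot H^1}$ --- here one may also need to control $\|\na X_t\|_{\dot H^1}$, which is where the energy estimates \eqref{S5eqE2}/Corollary \ref{S5col1} (or rather a slightly weaker standalone version) feed in, or one iterates the $\dot H^0$ and $\dot H^1$ bounds together.

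For the higher-order estimate \eqref{appeq20'} I would apply $D^k$ to \eqref{appeq2} and commute: $D^k$ of the perturbative term produces, by the Moser/product inequality \eqref{product1}, a main piece $\delta_1\|\na X_t\|_{\dot H^k}$ (absorbable) plus a commutator piece bounded by $\|D^k\na Y\|_{\dot B^{3/2}_{2,1}}$ times lower-order norms of $X_t$, which by the already-established $\dot H^0$ bound is $\le C\|D^k\na Y\|_{\dot B^{3/2}_{2,1}}\||D|^{-1}\hbar\|_{1+\e,2}$; the genuinely new source $D^k\hbar$ contributes $\||D|^{-1}\hbar\|_{1+\e,k+2}$ via the same propagator estimate applied in $\dot H^k$. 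Summing over $k'\le k$ and using the interpolation that $\dot B^{3/2}_{2,1}$ is controlled by $H^2$-type norms gives the stated bound.

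The main obstacle I anticipate is \emph{not} the perturbative absorption but the precise propagator decay: proving that $t|\xi|^2\p_t\Gamma(t,\xi)$ is uniformly bounded requires the careful anisotropic case analysis of Lemma \ref{S1prop2a} (the regimes $|\xi_3|\sim|\xi|^2$, $|\xi_3|\ll|\xi|^2$, and $|\xi|^2\gg|\xi_3|$ behave quite differently because of the degeneracy of $\alpha(\xi)$ and $\beta(\xi)$ near $|\xi|^4=4\xi_3^2$), and then converting this pointwise multiplier bound into the time-weighted $L^2$ estimate without losing the sharp $t^{-1}$ rate --- the $s^{1+\e}$ weight versus the $(t-s)^{-1}$ kernel near $s=t$ is delicate and is precisely why the small $\e>0$ and the logarithmic-type gains appear. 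The rest is routine once that ingredient and the smallness \eqref{appeq3} are in hand.
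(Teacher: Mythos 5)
Your overall architecture (Duhamel with the explicit propagator $\Gamma(t,\xi)$, the multiplier bound $t|\xi|^2\p_t\Gamma(t,\xi)\in L^\infty$ from Lemma \ref{S1prop2a}, the split $\int_0^{t/2}+\int_{t/2}^t$, and absorption of the quasilinear term by smallness) is a genuinely different route from the paper, which never invokes $\Gamma$ here: it proves the proposition by applying $\Djl$ to \eqref{appeq2}, deriving block-wise energy inequalities for $g_{j,\ell}(t)$ in the two anisotropic regimes $j\leq(\ell+1)/2$ and $j>(\ell+1)/2$ (exponential decay $e^{-c(t-s)2^{2j}}$ resp. $e^{-c(t-s)2^{2(\ell-j)}}$), and closing in the Chemin--Lerner norms $\|\na X_t\|_{\wt{L}^1_t(\dH^{0,0})}$ and $\|t\na X_t\|_{\wt{L}^\infty_t(\dH^{0,0})}$. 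Your treatment of the pure source $\hbar$ is sound (the $\langle t-s\rangle^{-1}$ kernel against the $s^{1+\e}$ weight on $[t/2,t]$ gives $t^{-\e}\log\langle t\rangle$, which is exactly how the $|D|\hbar$ term arises), and you correctly identify where Lemma \ref{S1prop2a} enters.

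The genuine gap is in the absorption of $\na\cdot\bigl((\cA\cA^t-Id)\na X_t\bigr)$, and your hedge ("one may also need to control $\|\na X_t\|_{\dot H^1}$... or one iterates the $\dot H^0$ and $\dot H^1$ bounds together") does not repair it. Near $s=t$ the kernel $\|\na\p_t\Gamma(t-s)\na\cdot(Mg)\|_{L^2}$ cannot be bounded by $\langle t-s\rangle^{-1}\|Mg\|_{L^2}$; one must pay either a nonintegrable singularity $(t-s)^{-1}$ or a full extra derivative $\|Mg\|_{\dot H^1}\lesssim\d_1\|\na X_t\|_{\dot H^1}$. The first option destroys the $t^{-1}$ rate; the second produces a strictly higher-regularity norm of the unknown, and iterating in $k$ only pushes the loss up one derivative at each step — the scheme never closes at fixed regularity. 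This is the classical failure of $L^1$-in-time maximal regularity for the heat semigroup, and it is precisely what the paper's dyadic localization circumvents: on a fixed block the kernel $2^{2j}e^{-c(t-s)2^{2j}}$ has $O(1)$ time integral, so one can pay the factor $2^{2j}\sim|\xi|^2$ and still land back on $c_{j,\ell}\d_1\bigl(\|\na X_t\|_{\wt{L}^1_t(\dH^{0,0})}+\|t\na X_t\|_{\wt{L}^\infty_t(\dH^{0,0})}\bigr)$ (see \eqref{appeq15} and \eqref{appeq15'}), which is the same quantity being estimated and is absorbed for $\d_1$ small. Relatedly, you misplace the main difficulty: the propagator decay you flag as the obstacle is already established in Lemma \ref{S1prop2a}; the real work of this proposition is the localization that makes the quasilinear absorption possible. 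Your sketch of \eqref{appeq20'} (commute $D^k$, product law, induction plus interpolation of $\|D^{k_i}\na Y\|_{\dot B^{3/2}_{2,1}}$) does match the paper's argument, but it rests on the unproved base case.
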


Admitting this proposition for the time being, we present the proof of Proposition \ref{S5thm2}.

\begin{proof}[Proof of Proposition \ref{S5thm2}]
In our situation \eqref{LE1},
$$\hbar=\nabla\cdot\Big({\mathcal A}\big((-\nabla X){\mathcal A}+{\mathcal A}^t(-\nabla X)^t\big){\mathcal A}^t\nabla Y_t\Big)-f_1'(Y;X)+f_2'(Y;X)+g.$$
 We infer from \eqref{S4.2eqf2a}, \eqref{S4.2eqf1}, \eqref{S4.2eqf2} that for $k\geq0$,
\begin{equation}\label{S5H17}
\begin{split}
\||D|^{-1}\hbar\|_{1+\e,k+2}
\lesssim &|\p_3Y|_{\frac12+\e,0}\|\p_3X\|_{\frac12,k+2}+|Y_t|_{\frac12+\e,0}\|X_t\|_{{\frac12,k+2}}+\||D|^{-1}g\|_{1+\varepsilon,k+2}\\
&+ \big(|\p_3Y|_{\frac12+\e,1}^\frac43\|\p_3Y\|_{\frac12,0}^\frac23+|\p_3Y|_{\frac12+\e,1}^2+|Y_t|_{1+\e,1}\big)\|\na X\|_{{0,k+2}}\\
& +\gamma_{\e,k+2}(Y)(\|\p_3X\|_{\frac12,1}+\|X_t\|_{\frac12,1}+\|\nabla X\|_{0,1}),
\end{split}
\end{equation}
where $\gamma_{\e,k+2}(Y)$ is given in \eqref{S5eqE3}.
Proposition \ref{S5thm2} then follows from Proposition \ref{S3-Lem1}, \eqref{S5H17}, Corollary \ref{S5col1} and the fact that $\|D^k\nabla Y\|_{L_t^\infty(\dot B^\frac32_{2,1})}\leq \|\nabla Y\|_{0,k+2}$.
\end{proof}

In order to prove Proposition \ref{S3-Lem1}, we need to exploit the tool of anisotropic Littlewood-Paley analysis.
Similar to the dyadic operators $\D_j,$ and $S_j$ given by Definition \ref{S0def1}, let us recall the dyadic operators in the $x_3$ variable
\beq\label{appeq21}
\Delta_\ell^\v a
\eqdefa\cF^{-1}(\varphi(2^{-\ell}|\xi_3|)\widehat{a}),\andf
S^\v_\ell a \eqdefa \cF^{-1}(\chi(2^{-\ell}|\xi_3|)\widehat{a}).
\eeq
Let us also recall the following anisotropic type Besov norm
from \cite{LZMHD1, LXZ}:

\begin{defi}\label{def2}
{\sl  Let  $s_1,s_2\in\R,$ $r\in [1,\infty]$ and
$a\in{\mathcal S}_h'(\R^3),$ we define the norm
$$
\|a\|_{\cB^{s_1,s_2}_{r}}\eqdefa \Bigl(2^{js_1} 2^{\ell
s_2}\|\D_{j}\D^\v_{\ell}a\|_{L^2}\Bigr)_{\ell^{r}(\Z^2)}.
$$
In particular, when $r=2,$ we denote $
\|a\|_{\dH^{s_1,s_2}}\eqdefa \|a\|_{\cB^{s_1,s_2}_{2}}=\bigl\||D|^{s_1}|D_{x_3}|^{s_2}a\bigr\|_{L^2}.$ }
\end{defi}

In  order to obtain a better description of the regularizing effect
for the transport-diffusion equation, we will use anisotropic version of Chemin-Lerner type
norm (see \cite{BCD} for
instance).

\begin{defi}\label{def3}
{\sl Let  $(r,q)\in[1,+\infty]^2$ and $T\in(0,+\infty]$.
 We define the norm  $\wt{L}^q_T(\cB^{s_1,s_2}_r(\R^3))$ by
\beno
\|u\|_{\wt{L}^q_T(\cB^{s_1,s_2}_r)}\eqdefa\Bigl(\sum_{(j,\ell)\in\Z^2}\bigl(2^{js_1}2^{\ell
s_2} \|\D_j\D_\ell^\v u\|_{L^q_T(L^2)}\bigr)^r\Bigr)^{\f1r}, \eeno with the usual change
if $r=\infty$.}
\end{defi}

 For the convenience of the readers, we  recall the
following Bernstein type lemma from \cite{BCD, CZ, Pa02}:

\begin{lem}\label{L2} {\sl Let $\frak{B}_{\h}$ (resp.~$\frak{B}_{\v}$) be a ball
of~$\R^2$ (resp.~$\R$), and~$\cC_{\h}$ (resp.~$\cC_{\v}$) a ring
of~$\R^2$ (resp.~$\R$); let~$1\leq p_2\leq p_1\leq \infty$ and
~$1\leq q_2\leq q_1\leq \infty.$ Then there holds:
\smallbreak\noindent If the support of~$\wh a$ is included
in~$2^k\frak{B}_{\h}$, then
\[
\|\partial_\h^\alpha a\|_{L^{p_1}_\h(L^{q_1}_\v)} \lesssim
2^{k\left(|\al|+2\left(\frac1{p_2}-\frac1{p_1}\right)\right)}
\|a\|_{L^{p_2}_\h(L^{q_1}_\v)}.
\]
If the support of~$\wh a$ is included in~$2^\ell\frak{B}_{\v}$, then
\[
\|\partial_3^\beta a\|_{L^{p_1}_\h(L^{q_1}_\v)} \lesssim
2^{\ell\left(\beta+\left(\frac1{q_2}-\frac1{q_1}\right)\right)} \|
a\|_{L^{p_1}_\h(L^{q_2}_\v)}.
\]
If the support of~$\wh a$ is included in~$2^k\cC_{\h}$, then
\[
\|a\|_{L^{p_1}_\h(L^{q_1}_\v)} \lesssim 2^{-kN}
\max_{|\al|=N}\|\partial_\h^\al a\|_{L^{p_1}_\h(L^{q_1}_\v)}.
\]
If the support of~$\wh a$ is included in~$2^\ell\cC_{\v}$, then
\[
\|a\|_{L^{p_1}_\h(L^{q_1}_\v)} \lesssim 2^{-\ell N} \|\partial_3^N
a\|_{L^{p_1}_\h(L^{q_1}_\v)}.
\]}
\end{lem}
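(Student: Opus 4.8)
The four estimates will all be proved by the same device: realize each frequency localization as a convolution operator acting only in the horizontal variables $x_\h=(x_1,x_2)$ (resp.\ only in the vertical variable $x_3$), estimate that convolution by Young's inequality in those variables, and treat the remaining variable as a parameter, invoking Minkowski's integral inequality to move the mixed-norm $\|\cdot\|_{L^{q_1}_\v}$ (resp.\ $\|\cdot\|_{L^{p_1}_\h}$) in and out of the convolution integral. This reduces each estimate to the scaling of the $L^r$-norm of a fixed Schwartz function, exactly as in the classical isotropic Bernstein lemma.

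For the first inequality, since $\wh a$ is supported in $2^k\frak{B}_\h$ in the horizontal frequency variable, I would pick $\phi\in C_c^\infty(\R^2)$ with $\phi\equiv1$ on $\frak{B}_\h$; then $a=\phi(2^{-k}D_\h)a$, hence $\partial_\h^\alpha a=\bigl(2^{k(|\alpha|+2)}g_\alpha(2^k\cdot)\bigr)\star_\h a$ where $g_\alpha\eqdef\cF^{-1}\bigl((i\,\cdot\,)^\alpha\phi\bigr)\in\cS(\R^2)$ and $\star_\h$ denotes convolution in $x_\h$ only. Young's inequality in $\R^2_\h$ with $1+\tfrac1{p_1}=\tfrac1{p_2}+\tfrac1r$, together with Minkowski's inequality, gives $\|\partial_\h^\alpha a\|_{L^{p_1}_\h(L^{q_1}_\v)}\lesssim 2^{k(|\alpha|+2)}\|g_\alpha(2^k\cdot)\|_{L^r(\R^2)}\|a\|_{L^{p_2}_\h(L^{q_1}_\v)}$, and since $\|g_\alpha(2^k\cdot)\|_{L^r(\R^2)}=2^{-2k/r}\|g_\alpha\|_{L^r}$ while $1-\tfrac1r=\tfrac1{p_2}-\tfrac1{p_1}$, the prefactor collapses to $2^{k(|\alpha|+2(\frac1{p_2}-\frac1{p_1}))}$, which is the claim. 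The second inequality is the one-dimensional analogue: with $\wh a$ supported in $2^\ell\frak{B}_\v$ in $\xi_3$, one writes $\partial_3^\beta a=\bigl(2^{\ell(\beta+1)}h_\beta(2^\ell\cdot)\bigr)\star_\v a$ with $h_\beta\eqdef\cF^{-1}\bigl((i\,\cdot\,)^\beta\psi\bigr)\in\cS(\R)$ for a vertical cutoff $\psi$, and Young's inequality in $\R_\v$ with $1+\tfrac1{q_1}=\tfrac1{q_2}+\tfrac1s$ produces the factor $2^{\ell(\beta+\frac1{q_2}-\frac1{q_1})}$.

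For the third inequality I would use that $\cC_\h$ is a ring avoiding the origin. Choose $\phi\in C_c^\infty(\R^2\setminus\{0\})$ with $\phi\equiv1$ on $\cC_\h$; on $\mathrm{supp}\,\phi$ one has $\phi(\xi_\h)=\sum_{|\alpha|=N}(i\xi_\h)^\alpha g_\alpha(\xi_\h)$ with $g_\alpha\in C_c^\infty(\R^2\setminus\{0\})$, obtained by dividing $\phi$ by $|\xi_\h|^{2N}$ and expanding $|\xi_\h|^{2N}=(\xi_1^2+\xi_2^2)^N$ via the multinomial theorem. Applying this at frequency $2^k$ yields $a=\phi(2^{-k}D_\h)a=2^{-kN}\sum_{|\alpha|=N}\partial_\h^\alpha\bigl(g_\alpha(2^{-k}D_\h)a\bigr)$; since $g_\alpha(2^{-k}D_\h)$ is convolution in $x_\h$ with $2^{2k}(\cF^{-1}g_\alpha)(2^k\cdot)$, whose $L^1(\R^2)$-norm is independent of $k$, Young's inequality (again with Minkowski for the vertical norm) bounds it on $L^{p_1}_\h(L^{q_1}_\v)$ uniformly in $k$, giving $\|a\|_{L^{p_1}_\h(L^{q_1}_\v)}\lesssim 2^{-kN}\max_{|\alpha|=N}\|\partial_\h^\alpha a\|_{L^{p_1}_\h(L^{q_1}_\v)}$. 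The fourth inequality is again the one-dimensional version: on a ring $\cC_\v\subset\R\setminus\{0\}$ one writes $\phi(\xi_3)=\xi_3^N g(\xi_3)$ with $g\in C_c^\infty(\R\setminus\{0\})$, so that $a=2^{-\ell N}\partial_3^N\bigl(g(2^{-\ell}D_3)a\bigr)$, and the vertical multiplier $g(2^{-\ell}D_3)$ is bounded on $L^{p_1}_\h(L^{q_1}_\v)$ uniformly in $\ell$.

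The only point requiring care — the closest thing to an obstacle — is the bookkeeping in the mixed-norm spaces $L^{p_1}_\h(L^{q_1}_\v)$: one must ensure that Young's inequality is applied with the convolution kernel living in only one group of variables and that Minkowski's integral inequality is used correctly to interchange the two nested Lebesgue norms with the convolution integral. Once this template is in place, every one of the four estimates is mechanical, and no idea beyond the standard anisotropic Littlewood–Paley toolbox is needed.
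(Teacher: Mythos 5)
Your argument is correct and is precisely the standard proof of the anisotropic Bernstein inequalities: the paper itself gives no proof of Lemma \ref{L2} but simply recalls it from the references \cite{BCD, CZ, Pa02}, where exactly this convolution--Young--Minkowski argument (with the ring case handled by dividing the cutoff by $|\xi|^{2N}$ and expanding) is carried out. All four scaling factors in your computation check out, and the hypotheses $p_2\leq p_1$, $q_2\leq q_1$ are used exactly where needed to make the Young exponents admissible.
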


Let us now turn to the proof of Proposition \ref{S3-Lem1}.

\begin{proof}[Proof of Proposition \ref{S3-Lem1}] The proof of this lemma is motivated by the proof of
Proposition 4.1 of \cite{LXZ, XZ1}. By applying the operator $\D_j\D_\ell^\v$
to \eqref{appeq2} and then taking the $L^2$ inner product of the
resulting equation with $\D_j\D_\ell^\v X_t,$ we write
\beq\label{appeq1}
\begin{aligned}
\f12\f{d}{dt}\bigl(\|\D_j\D_\ell^\v
X_t\|_{L^2}^2+\|\D_j\D_\ell^\v\pa_{3}X\|_{L^2}^2\bigr) +\|\na
\D_j\D_\ell^\v{X}_t\|_{L^2}^2 =\bigl(\D_j\D_\ell^\v f |
\D_j\D_\ell^\v X_t\bigr)_{L^2}.
\end{aligned}
\eeq Along the same line, one has
 \beno (\D_j\D_\ell^\v X_{tt} |
\D\D_j\D_\ell^\v X)-\f12\f{d}{dt}\|\D\D_j\D_\ell^\v
X\|_{L^2}^2-\|\pa_3\na\D_j\D_\ell^\v X\|_{L^2}^2=(\D_j\D_\ell^\v f |
\D\D_j\D_\ell^\v X). \eeno Notice that \beno (\D_j\D_\ell^\v X_{tt}
| \D\D_j\D_\ell^\v X)=\f{d}{dt}(\D_j\D_\ell^\v X_{t}|
\D\D_j\D_\ell^\v X) +\|\na\D_j\D_\ell^\v X_{t}\|_{L^2}^2, \eeno so
that there holds \beq\label{appeq4}
\begin{split}
\f{d}{dt}\Bigl(\f12\|\D\D_j\D_\ell^\v X\|_{L^2}^2-&(\D_j\D_\ell^\v
X_{t} |
\D\D_j\D_\ell^\v X)\Bigr)\\
-&\|\na\D_j\D_\ell^\v X_t\|_{L^2}^2+\|\pa_3\na\D_j\D_\ell^\v
X\|_{L^2}^2 =-(\D_j\D_\ell^\v f |\D\D_j\D_\ell^\v X).
\end{split}
\eeq By summing up \eqref{appeq1} with $\f14$ of \eqref{appeq4}, we
obtain \beq\label{appeq5}
\begin{split}
\f{d}{dt}g_{j,\ell}^2(t) +\f34\|\na\D_j\D_\ell^\v X_t\|_{L^2}^2&
+\f14\|\pa_3\na\D_j\D_\ell^\v X\|_{L^2}^2\\
&=\bigl(\D_j\D_\ell^\v  f\ |\ \D_j\D_\ell^\v
X_t-\f14\D\D_j\D_\ell^\v X\bigr),
\end{split}
\eeq where
 \beno\begin{split}
g_{j,\ell}^2(t) \eqdefa \f12\Bigl(\|\D_j\D_\ell^\v X_t(t)\|_{L^2}^2&
+\|\D_j\D_\ell^\v \pa_3X(t)\|_{L^2}^2 +\f14\|\D_j\D_\ell^\v \D
X(t)\|_{L^2}^2\Bigr)
\\&\qquad\qquad\qquad
-\f14\bigl(\D_j\D_\ell^\v X_t(t) | \D_j\D_\ell^\v \D X(t)\bigr).
\end{split}
\eeno It is easy to observe that
 \beq \label{appeq6}
g_{j,\ell}^2(t) \sim \|\D_j\D_\ell^\v X_t(t)\|_{L^2}^2
+\|\D_j\D_\ell^\v\pa_3 X(t)\|_{L^2}^2 +\|\D_j\D_\ell^\v \D
X(t)\|_{L^2}^2. \eeq Now according to the heuristic analysis
presented at the beginning of Section \ref{Sect2}, we split the
frequency analysis into the following two cases:

\no$\bullet$ \underline{When $j\leq \f{\ell+1}2$}

 In this case,
one has \beno g^2_{j,\ell}(t)\sim  \|\D_j\D_\ell^\v X_t(t)\|_{L^2}^2
+\|\D_j\D_\ell^\v \pa_3X(t)\|_{L^2}^2, \eeno and Lemma \ref{L2}
implies that \beno \f34\|\na \D_j\D_\ell^\v X_t\|_{L^2}^2
+\f14\|\pa_3\na\Djl X\|_{L^2}^2 \geq  c2^{2j}\bigl(\|\D_j\D_\ell^\v
X_t\|_{L^2}^2 +\|\Djl\p_3 X\|_{L^2}^2\bigr). \eeno Hence it follows
from \eqref{appeq5} that
\beno
\f{d}{dt}g_{j,\ell}(t)+c2^{2j}g_{j,\ell}(t)\leq \|\D_j\D_\ell^\v f(t)\|_{L^2},
\eeno
which in particular implies that
\beq \label{appeq7}
g_{j,\ell}(t)\leq \int_0^te^{-c(t-s)2^{2j}}\|\D_j\D_\ell^\v f(s)\|_{L^2}\,ds,
\eeq
and
\beq \label{appeq8}
2^{j}\|\Djl X_t\|_{L^1_t(L^2)}
\lesssim 2^{-j}\|\Djl
f\|_{L^1_t(L^2)}.
\eeq

Now let us turn to the estimate of $\|\Djl f\|_{L^1_t(L^2)}.$ Indeed it follows by the law of product in
the anisotropic Besov spaces (see Lemma 3.3 of \cite{XZ1}) that
\beq \label{appeq15}
\begin{split}
\bigl\|(\cA\cA^t-Id)\na X_t\bigr\|_{L^1_t(\dH^{0,0})}\lesssim& \|(\cA\cA^t-Id)\|_{L^\infty_t(\cB^{1,\f12}_{1})}\|\na X_t\|_{\wt{L}^1_t(\dH^{0,0})}\\
\lesssim &\|(\cA\cA^t-Id)\|_{L^\infty_t(\dB^{\f32}_{2,1})}\|\na X_t\|_{\wt{L}^1_t(\dH^{0,0})}\\
\lesssim & \d_1\|\na X_t\|_{\wt{L}^1_t(\dH^{0,0})},
\end{split}
\eeq
where we used the fact that $\dB^{\f32}_{2,1}(\R^3)\hookrightarrow\cB^{1,\f12}_{1}$ (one may check Lemma 3.2 of \cite{LZMHD1,XZ1} for details).
Hence we obtain
\beq\label{appeq16}
2^{-j}\|\Djl f\|_{L^1_t(L^2)}\lesssim c_{j,\ell}\d_1\|\na X_t\|_{\wt{L}^1_t(\dH^{0,0})}+\|\Djl |D|^{-1}\hbar\|_{L^1_t(L^2)}.
\eeq
where $(c_{j,\ell})_{j,\ell\in\Z^2}$ is a generic element of $\ell^2(\Z^2)$ so that $\sum_{j,\ell\in\Z^2}c_{j,\ell}^2=1.$

Whereas it follows from Lemma \ref{L2} and \eqref{appeq7} that
\beq\label{appeq17}
\begin{split}
2^j t\|\Djl X_t(t)\|_{L^2}\lesssim &\int_0^t2^{2j}(t-s)e^{-c(t-s)2^{2j}}\bigl\|\D_j\D_\ell^\v\bigl((\cA\cA^t-Id)\na X_t\bigr)(s)\bigr\|_{L^2}\,ds\\
&+\int_0^t2^{2j}e^{-c(t-s)2^{2j}}\bigl\|\D_j\D_\ell^\v\bigl((\cA\cA^t-Id)s\na X_t\bigr)(s)\bigr\|_{L^2}\,ds\\
&+t\Bigl(\int_0^{\f{t}2}+\int_{\f{t}2}^t\Bigr)2^{j}e^{-c(t-s)2^{2j}}\|\Djl\hbar(s)\|_{L^2}\,ds.
\end{split}
\eeq
By virtue of \eqref{appeq15}, we have
$$\longformule{
\int_0^t2^{2j}(t-s)e^{-c(t-s)2^{2j}}\bigl\|\D_j\D_\ell^\v\bigl((\cA\cA^t-Id)\na X_t\bigr)(s)\bigr\|_{L^2}\,ds}{{}
\lesssim \bigl\|\D_j\D_\ell^\v\bigl((\cA\cA^t-Id)\na X_t\bigr)\bigr\|_{L^1_t(L^2)}
\lesssim c_{j,\ell}\d_1\|\na X_t\|_{\wt{L}^1_t(\dH^{0,0})}.}
$$
Along the same line, we have
\begin{equation}\label{appeq15'}
\begin{split}
\int_0^t2^{2j}&e^{-c(t-s)2^{2j}}\bigl\|\D_j\D_\ell^\v\bigl((\cA\cA^t-Id)s\na X_t\bigr)(s)\bigr\|_{L^2}\,ds\\
\lesssim  &\bigl\|\D_j\D_\ell^\v\bigl((\cA\cA^t-Id)s\na X_t\bigr)\bigr\|_{L^\infty_t(L^2)}\\
\lesssim &c_{j,\ell}\|(\cA\cA^t-Id)\|_{L^\infty_t(\cB^{1,\f12}_{1})}\|t\na X_t\|_{\wt{L}^\infty_t(\dH^{0,0})}
\lesssim c_{j,\ell}\d_1\|t\na X_t\|_{\wt{L}^\infty_t(\dH^{0,0})}.
\end{split}
\end{equation}
While it is easy to observe from Lemma \ref{L2} that
\beno
\begin{split}
t\int_0^{\f{t}2}2^{j}e^{-c(t-s)2^{2j}}\|\Djl\hbar(s)\|_{L^2}\,ds\lesssim &\int_0^{\f{t}2}(t-s)2^{2j}e^{-c(t-s)2^{2j}}\|\Djl|D|^{-1}\hbar(s)\|_{L^2}\,ds\\
\lesssim &\|\Djl|D|^{-1}\hbar\|_{L^1_t(L^2)},
\end{split}
\eeno
and
\beno
\begin{split}
t\int_{\f{t}2}^t2^{j}e^{-c(t-s)2^{2j}}&\|\Djl\hbar(s)\|_{L^2}\,ds\lesssim \int_{\f{t}2}^t\w{t-s}^{-1}\bigl(2^{-j}+2^j\bigr)\|s\Djl\hbar(s)\|_{L^2}\,ds\\
&\qquad\lesssim \int_{\f{t}2}^t\w{t-s}^{-1}\bigl(\|s\Djl|D|^{-1}\hbar(s)\|_{L^2}+\|s\Djl|D|\hbar(s)\|_{L^2}\bigr)\,ds.
\end{split}
\eeno
Substituting the above estimates into \eqref{appeq17} leads to
\beq
\label{appeq18}
\begin{split}
2^j t\|\Djl X_t(t)\|_{L^2}\lesssim & c_{j,\ell}\d_1\bigl(\|\na X_t\|_{\wt{L}^1_t(\dH^{0,0})}+\|t\na X_t\|_{\wt{L}^\infty_t(\dH^{0,0})}\bigr)+\|\Djl|D|^{-1}\hbar\|_{L^1_t(L^2)}\\
&+\int_{\f{t}2}^t\w{t-s}^{-1}\bigl(\|s\Djl|D|^{-1}\hbar(s)\|_{L^2}+\|s\Djl|D|\hbar(s)\|_{L^2}\bigr)\,ds
\end{split}
\eeq
for all $(j,\ell)$ satisfying   $j\leq \f{\ell+1}2.$

\no$\bullet$\underline{When $j> \f{\ell+1}2$}

In this case, we have \beno g_{j,\ell}^2(t) \sim \|\D_j\D_\ell^\v
X_t(t)\|_{L^2}^2 +\|\Djl\D X(t)\|_{L^2}^2 \eeno and Lemma \ref{L2}
implies that \beno
\begin{split}
\f34\|\na \D_j\D_\ell^\v X_t\|_{L^2}^2 +\f14\|\pa_3\na\Djl
X\|_{L^2}^2 \geq & c\bigl(2^{2j}\|\D_j\D_\ell^\v X_t\|_{L^2}^2
+2^{2j}2^{2\ell}\|\Djl X\|_{L^2}^2\bigr)\\
\geq & c\f{2^{2\ell}}{2^{2j}}\bigl(\|\D_j\D_\ell^\v X_t\|_{L^2}^2
+\|\Djl\D X\|_{L^2}^2\bigr).
\end{split}
\eeno Then we deduce from \eqref{appeq5} that
\beno
\f{d}{dt}g_{j,\ell}(t)+c2^{2(\ell-j)}g_{j,\ell}(t)\leq \|\D_j\D_\ell^\v f(t)\|_{L^2},
\eeno
which implies that
\beq\label{appeq9}
g_{j,\ell}(t)\leq \int_0^te^{-c(t-s)2^{2(\ell-j)}}\|\D_j\D_\ell^\v f(s)\|_{L^2}\,ds,
\eeq
and
\beq\label{appeq10}
2^{2\ell}\|\Djl X\|_{L^1_t(L^2)}
\lesssim \|\Djl
f\|_{L^1_t(L^2)}.
\eeq On the other hand,  we get, by taking $L^2$ inner product of \eqref{appeq2} with $\Djl X_t,$ that
$$
\f12\f{d}{dt}\|\D_j\D_\ell^\v X_t\|_{L^2}^2+\|\na \D_j\D_\ell^\v
X_t\|_{L^2}^2 =\bigl(\pa_3^2\D_j\D_\ell^\v X+\D_j\D_\ell^\v f\ |\
\D_j\D_\ell^\v X_t\bigr)_{L^2},
$$
from which, Lemma \ref{L2}, we infer
\beno
\f{d}{dt}\|\Djl X_t(t)\|_{L^2}+c2^{2j}\|\Djl X_t(t)\|_{L^2}\lesssim 2^{2\ell}\|\Djl X(t)\|_{L^2}+\|\Djl f(t)\|_{L^2},
\eeno
so that there hold
\beq \label{appeq11}
\begin{split}
2^j\|\Djl X_t(t)\|_{L^2}\lesssim &2^{2\ell+j}\int_0^te^{-c(t-s)2^{2j}}\|\Djl X(s)\|_{L^2}\,ds\\
&+2^{j}\int_0^te^{-c(t-s)2^{2j}}\|\Djl f(s)\|_{L^2}\,ds. \end{split}
\eeq
And then we deduce from  \eqref{appeq10} that for $
j>\f{\ell+1}2$ \beq\label{appeq12}
\begin{split}
2^{j}\|\Djl X_t\|_{L^1_t(L^2)}
&\lesssim
2^{2\ell-j}\|\Djl X\|_{L^1_t(L^2)}+2^{-j}\|\Djl f\|_{L^1_t(L^2)}\\
&\lesssim 2^{-j}\|\Djl
f\|_{L^1_t(L^2)}.
\end{split}
\eeq
Moreover, in this case, it follows from Lemma \ref{L2} and \eqref{appeq9} that
\beno
\begin{split}
2^{2\ell+j}t\int_0^te^{-c2^{2j}(t-s)}\|\Djl X(s)\|_{L^2}\,ds\lesssim &2^{2\ell-j}t\|\Djl X\|_{L^\infty_t(L^2)}\\
\lesssim &2^{2\ell-3j}t\|\Djl \D X\|_{L^\infty_t(L^2)}\lesssim 2^{2\ell-3j}t\|g_{j,\ell}\|_{L^\infty_t}\\
\lesssim&2^{2\ell-3j}t \int_0^te^{-c(t-s)2^{2(\ell-j)}}\|\D_j\D_\ell^\v f(s)\|_{L^2}\,ds,
\end{split}
\eeno
from which and a similar proof of \eqref{appeq18}, we infer
\beq\label{appeq19}
\begin{split}
&2^{2\ell+j}t\int_0^te^{-c2^{2j}(t-s)}\|\Djl X(s)\|_{L^2}\,ds
\lesssim
 c_{j,\ell}\d_1\bigl(\|\na X_t\|_{\wt{L}^1_t(\dH^{0,0})}+\|t\na X_t\|_{\wt{L}^\infty_t(\dH^{0,0})}\bigr)\\
&\quad+\|\Djl|D|^{-1}\hbar\|_{L^1_t(L^2)}+\int_{\f{t}2}^t\w{t-s}^{-1}\big(\|s\Djl|D|^{-1}\hbar(s)\|_{L^2}+\|s\Djl|D|\hbar(s)\|_{L^2}\big)\,ds.
\end{split}
\eeq
Here we used the fact $j\geq \ell-N_0$ for some fixed integer $N_0$ in the operator $\Djl.$

By virtue of \eqref{appeq11} and \eqref{appeq19}, we get, by a similar derivation of \eqref{appeq18} that
\eqref{appeq18} holds for all $(j,\ell)\in \Z^2.$ Furthermore, in view of \eqref{appeq8}-\eqref{appeq12}, we obtain for all
$(j,\ell)\in\Z^2,$  that \beq\label{appeq13}
\begin{split}
2^{j}\|\Djl X_t\|_{L^1_t(L^2)}
\lesssim & 2^{-j}\|\Djl f\|_{L^1_t(L^2)}.
\end{split}
\eeq
Inserting \eqref{appeq16} into
\eqref{appeq13} gives rise to
\beno
\begin{split}
\|\na X_t\|_{\wt{L}^1_t(\dH^{0,0})}=&\Bigl(\sum_{j,\ell\in\Z^2}2^{2j}\|\Djl X_t\|_{L^1_t(L^2)}^2\Bigr)^{\f12}\\
\leq &C\d_1\|\na X_t\|_{\wt{L}^1_t(\dH^{0,0})}+C\Bigl(\sum_{j,\ell\in\Z^2}\|\Djl |D|^{-1}\hbar\|_{L^1_t(L^2)}^2\Bigr)^{\f12}\\
\leq &C\d_1\|\na X_t\|_{\wt{L}^1_t(\dH^{0,0})}+C\int_0^t\Bigl(\sum_{j,\ell\in\Z^2}\|\Djl |D|^{-1}\hbar(s)\|_{L^2}^2\Bigr)^{\f12}\,ds\\
\leq& C\bigl(\d_1\|\na X_t\|_{\wt{L}^1_t(\dH^{0,0})}+\||D|^{-1}\hbar\|_{L^1_t(L^2)}\bigr).
\end{split}
\eeno
In particular, by taking $\d_1$ to be sufficiently small in \eqref{appeq3}, we conclude that
\beq \label{appeq14}
\|\na X_t\|_{\wt{L}^1_t(\dH^{0,0})}\leq C\||D|^{-1}\hbar\|_{L^1_t(L^2)}. \eeq
Along the same line, we deduce from \eqref{appeq18} that
\beq\label{appeq26}
\begin{split}
\|t\na X_t\|_{\wt{L}^\infty_t(\dH^{0,0})}=&\Bigl(\sum_{j,\ell\in\Z^2}2^{2j}\|t\Djl X_t\|_{L^\infty_t(L^2)}^2\Bigr)^{\f12}\\
\leq & C\Bigl(\d_1\bigl(\|\na X_t\|_{\wt{L}^1_t(\dH^{0,0})}+\|t\na X_t\|_{\wt{L}^\infty_t(\dH^{0,0})}\bigr)+\||D|^{-1}\hbar\|_{L^1_t(L^2)}\\
&\quad+\int_{\f{t}2}^t\w{t-s}^{-1}\bigl(\|s|D|^{-1}\hbar(s)\|_{L^2}+\|s|D|\hbar(s)\|_{L^2}\bigr)\,ds\Bigr).
\end{split}
\eeq
So that by taking $\d_1$ is small enough in \eqref{appeq3}, we obtain
\beq\label{appeq27}
\begin{split}
t\|\na X_t(t)\|_{L^2}\leq &\|t\na X_t\|_{\wt{L}^\infty_t(\dH^{0,0})}\\
\leq &C\Bigl(\||D|^{-1}\hbar\|_{L^1_t(L^2)}+\int_{\f{t}2}^t\w{t-s}^{-1}\bigl(\|s|D|^{-1}\hbar(s)\|_{L^2}+\|s|D|\hbar(s)\|_{L^2}\bigr)\,ds\Bigr)\\
\leq &C_\e\bigl(\sup_{s\in [0,t]}\|s^{1+\e}|D|^{-1}\hbar\|_{L^2}+\sup_{s\in [0,t]}\|s^{1+\e}|D|\hbar\|_{L^2}\bigr),
\end{split}
\eeq
which leads to \eqref{appeq20}.

The proof of the general estimates \eqref{appeq20'} follows along the same line. Indeed  for any $k\geq 1,$ we have
\begin{equation*}
\begin{split}
\bigl\|D^k\big((\cA\cA^t-Id)\na X_t\big)\bigr\|_{\wt{L}^1_t(\dH^{0,0})}\lesssim& C_k\sum_{k_1+k_2=k}\|D^{k_1}(\cA\cA^t-Id)\|_{L^\infty_t(\dot B^\frac32_{2,1})}
\|D^{k_2}\na X_t\|_{\wt{L}^1_t(\dH^{0,0})}\\
\lesssim& C_k\sum_{k_1+k_2=k}\|D^{k_1}\na Y\|_{L^\infty_t(\dot B^\frac32_{2,1})}
\|D^{k_2}\na X_t\|_{\wt{L}^1_t(\dH^{0,0})},
\end{split}
\end{equation*}
from which and a similar derivation of \eqref{appeq14}, we  inductively infer that
$$\longformule{
\|D^{k}\na X_t\|_{\wt{L}^1_t(\dH^{0,0})}\leq C\||D|^{k-1}\hbar\|_{L^1_t(L^2)}}{{} +C_k\sum_{k_1+\cdots+k_\ell=k}\|D^{k_1}\na Y\|_{L^\infty_t(\dot B^\frac32_{2,1})}\cdots \|D^{k_\ell}\na Y\|_{L^\infty_t(\dot B^\frac32_{2,1})}\||D|^{-1}\hbar\|_{L^1_t(L^2)}.}
$$
Hence by applying the interpolation inequality that $$ \|D^{k_i}\na Y\|_{L^\infty_t(\dot B^\frac32_{2,1})}\lesssim \|\na Y\|_{L^\infty_t(\dot B^\frac32_{2,1})}^{1-k_i/k}
\|D^{k}\na Y\|_{L^\infty_t(\dot B^\frac32_{2,1})}^{k_i/k}\quad\mbox{for} \ k_i>1,$$
and the assumption \eqref{appeq3}, we obtain\beq\label{appeq25}
\begin{split}
\|D^{k}\na X_t\|_{\wt{L}^1_t(\dH^{0,0})}\leq C_k\Bigl(\bigl(\d_1+\|D^{k}\na Y\|_{L^\infty_t(\dot B^\frac32_{2,1})}\bigr)\||D|^{-1}\hbar\|_{L^1_t(L^2)}+\||D|^{k-1}\hbar\|_{L^1_t(L^2)}\Bigr).
\end{split}
\eeq
While it follows from a similar derivation of  \eqref{appeq26} that
\beno
\begin{split}
\|tD^k\na X_t\|_{\wt{L}^\infty_t(\dH^{0,0})}
\leq  C_k\Bigl(&\||D|^{k-1}\hbar\|_{L^1_t(L^2)}+\d_1\bigl(\|D^k\na X_t\|_{\wt{L}^1_t(\dH^{0,0})}+\|tD^k\na X_t\|_{\wt{L}^\infty_t(\dH^{0,0})}\bigr)\\
&+\bigl(\d_1+\|D^{k}\na Y\|_{L^\infty_t(\dot B^\frac32_{2,1})}\bigr)\bigl(\|\na X_t\|_{\wt{L}^1_t(\dH^{0,0})}+\|t\na X_t\|_{\wt{L}^\infty_t(\dH^{0,0})}\bigr)\\
&\qquad\quad+\int_{\f{t}2}^t\w{t-s}^{-1}\bigl(\|s|D|^{k-1}\hbar(s)\|_{L^2}+\|s|D|^{k+1}\hbar(s)\|_{L^2}\bigr)\,ds\Bigr).
\end{split}
\eeno
Thus \eqref{appeq20'} follows \eqref{appeq25} and the argument in \eqref{appeq27}.
This completes the proof of Proposition \ref{S3-Lem1}.
\end{proof}



\section{Estimates of the source term $f(Y)$}\label{Sect7}

In this section, we shall present the  estimates to the nonlinear source term $f(Y)$ determined by \eqref{S1eq2}.

\noindent$\bullet$\underline{The estimate of $\triplenorm{f(Y)}_{\d,N}$}
\begin{prop}\label{S6prop1}
{\sl Let the functionals $f_0,f_1,f_2$ be given in \eqref{fm} and the norm $\||\cdot\||_{\d,N}$ by \eqref{S2eq19''}. Then under the
assumption of \eqref{S4.1prop1assum}, we have
\begin{align}
\label{S6eq1}&\triplenorm{f_0(Y)}_{\d,N}\lesssim \|\nabla Y\|_0\|\nabla Y_t\|_{N+6}+\|\nabla Y\|_{N+6}\|\nabla Y_t\|_{0};\\
\label{S6eq2}&\triplenorm{f_1(Y)}_{\d,N}\lesssim\|\p_3Y\|_{0}\|\p_3Y\|_{N+6}+\|\nabla Y\|_{N+6}|\p_3Y|_0\|\p_3Y\|_{1};\\
\label{S6eq3}&\triplenorm{f_2(Y)}_{\d,N}\lesssim\|Y_t\|_{0}\|Y_t\|_{N+6}+\|\nabla Y\|_{N+6}|Y_t|_{0}\|Y_t\|_{1}.
\end{align}
}
\end{prop}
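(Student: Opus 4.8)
\textbf{Plan of proof for Proposition \ref{S6prop1}.}
The strategy is to estimate each of the three pieces $f_0(Y)=\na\cdot\bigl((\cA\cA^t-Id)\na Y_t\bigr)$, $f_1(Y)=\cA^t\na\vv p_1$ and $f_2(Y)=\cA^t\na\vv p_2$ in the norm $\triplenorm{\cdot}_{\d,N}=\||D|^{2\d}\cdot\|_{L^1}+\||D|^{N+4}\cdot\|_{L^1}$, and the natural device — exactly as in Section \ref{Sect5.1} — is to dominate $\triplenorm{h}_{\d,N}$ by $\|h\|_{\dot B^{2\d}_{1,1}}+\|h\|_{\dot B^{N+4}_{1,1}}$ (since $\dot B^s_{1,1}\hookrightarrow \dot W^{s,1}$ in the appropriate sense after applying the Riesz-type multiplier), and then use the product laws \eqref{product1}, \eqref{product3}, \eqref{product4}, \eqref{product4a} together with the expansion $\cA-Id=\sum_{n\ge1}(-1)^n(\na Y)^n$ and $\cA\cA^t-Id=(\cA-Id)(\cA-Id)^t+(\cA-Id)+(\cA-Id)^t$. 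This is the non-differentiated analogue of Proposition \ref{S4.1prop1}; in fact, since $f_m(Y)$ is obtained from $f_m'(Y;X)$ by ``setting $X=Y$'' in a rough sense, the computations are genuinely shorter than those already carried out in Subsection \ref{Sect5.1}.

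First I would treat $f_0(Y)$. Writing $\|f_0(Y)\|_{\dot B^s_{1,1}}\le \|(\cA\cA^t-Id)\na Y_t\|_{\dot B^{s+1}_{1,1}}$ and invoking \eqref{product3} with the factorization of $\cA\cA^t-Id$ above, under the assumption \eqref{S4.1prop1assum} (which gives $\|\na Y\|_{\dot B^{3/2}_{2,1}}\le\d_1$ small and $\|\na Y\|_{\dot B^{5/2}_{2,1}}\le1$), one gets
\beno
\|(\cA\cA^t-Id)\na Y_t\|_{\dot B^{s+1}_{1,1}}\lesssim \|\na Y\|_0\|\na Y_t\|_{\dot B^{s+1}_{2,1}}+\|\na Y\|_{\dot B^{s+1}_{2,1}}\|\na Y_t\|_0.
\eeno
Specializing $s=2\d$ and $s=N+4$, and then using the interpolation bound \eqref{S4.1eq17}, namely $\|f\|_{\dot B^s_{2,1}}\le C\|f\|_{[s]+1}$, converts the Besov norms into the integer Sobolev norms $\|\cdot\|_{N+6}$ appearing in \eqref{S6eq1}. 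For $f_1(Y)=\cA^t\na\vv p_1$ and $f_2(Y)=\cA^t\na\vv p_2$ the route is the same as in the proof of \eqref{S4.1eqf1}, \eqref{S4.1eqf2}: bound $\|\cA^t\na\vv p_m\|_{\dot B^s_{1,1}}$ via \eqref{product4a} by $\|\na\vv p_m\|_{\dot B^s_{1,1}}+\|\na Y\|_{\dot B^s_{2,1}}\|\na\vv p_m\|_0$, then control $\|\na\vv p_m\|_0$, $\|\na\vv p_m\|_{\dot B^s_{2,1}}$ through the elliptic equations \eqref{p1}, \eqref{p2} — which is precisely the content of the chain \eqref{S4.1eq3}--\eqref{S4.1eq6} for $\vv p_1$ (in terms of $\p_3Y$) and \eqref{S4.1eq4}, \eqref{S4.1eq7}, \eqref{S4.1eq8} for $\vv p_2$ (in terms of $Y_t$). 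Absorbing the small-$\d_1$ terms on the left, plugging in $s=2\d,N+4$, and applying \eqref{S4.1eq17} then yields \eqref{S6eq2} and \eqref{S6eq3}.

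The one genuine subtlety — the ``hard part'' — is the same as in Section \ref{Sect5.1}: the double Riesz transform hidden in $\vv p_m$ and in $\na\D^{-1}\dv$ does not act boundedly on $L^1$, which is exactly why the $\dot B^s_{1,1}$ substitution is forced and why one needs the refined product law \eqref{product3} rather than just \eqref{product1} — in particular this is where the hypothesis $\d>0$ is used, since \eqref{product1} and \eqref{product3} require positive smoothing index. A second point of care is the bookkeeping of which Sobolev index ($N+6$ versus $N+4$) survives: the two derivative losses come from $f_0$ carrying an extra $\na$ (giving $\dot B^{s+1}$) and from \eqref{S4.1eq17} replacing a fractional Besov index by the next integer, so one should track these to confirm the stated $N+6$ is correct and not improvable to $N+5$ by this method (it is not). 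Apart from these, every estimate is a direct specialization of computations already performed for $f'(Y;X)$, so I would present the proof by citing \eqref{S4.1eq1}, \eqref{S4.1eq3}--\eqref{S4.1eq8}, \eqref{S4.1eq17} and indicating the specialization $X\leadsto Y$, rather than reproducing the Bony-decomposition estimates in full.
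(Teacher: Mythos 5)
Your proposal follows the paper's proof essentially verbatim: pass to $\dot B^{s}_{1,1}$ norms, apply the product laws \eqref{product3}, \eqref{product4a} with the expansion of $\cA\cA^t-Id$, estimate $\na\vv p_m$ from the elliptic equations \eqref{p1}--\eqref{p2} by absorbing the small-$\delta_1$ terms, and conclude with the interpolation \eqref{S4.1eq17}. The only imprecision is that for $f_1,f_2$ the quantity ultimately required is $\|\na\vv p_m\|_{\dot B^s_{1,1}}$ itself (not merely $\|\na\vv p_m\|_{0}$ and $\|\na\vv p_m\|_{\dot B^s_{2,1}}$ from \eqref{S4.1eq3}--\eqref{S4.1eq6}), but this follows from \eqref{p1} by exactly the absorption argument you describe, so the proof goes through as in the paper.
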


\begin{proof}
As in Section 4, we shall deal with the estimate of $f(Y)$ by the norm of  the homogeneous
Besov space $\dot B^{s}_{1,1}$ instead of the one  in the homogeneous Sobolev space $\dot{W}^{s,1}.$
 Indeed in view  of \eqref{fm}, we get, by applying the law of product, \eqref{product3}, that
 for $s>0$,
\begin{align*}
\|f_0(Y)\|_{\dot B^s_{1,1}}&\lesssim \|({\mathcal A}^t{\mathcal A}-Id)\nabla Y_t\|_{\dot B^{s+1}_{1,1}}
\lesssim\|\nabla Y\|_{0}\|\nabla Y_t\|_{\dot B^{s+1}_{2,1}}+\|\nabla Y\|_{\dot B^{s+1}_{2,1}}\|\nabla Y_t\|_{0}.
\end{align*}
\eqref{S6eq1} then follows from the above inequality and the interpolation inequality \eqref{S4.1eq17}.
Along the same line, we deduce from  \eqref{fm} that
\begin{align*}
\|f_m(Y)\|_{\dot B^s_{1,1}}&\lesssim (1+|{\mathcal A}^t-Id|_{0})\|\nabla\vv p_m\|_{\dot B^s_{1,1}}+\|{\mathcal A}-Id\|_{\dot B^s_{2,1}}\|\nabla\vv p_m\|_{0}\\
&\lesssim \|\nabla \vv p_m\|_{\dot B^s_{1,1}}+\|\nabla Y\|_{\dot B^s_{2,1}}\|\nabla\vv p_m\|_{0}.
\end{align*}
Yet it follows from  \eqref{p1} that
\begin{align*}
\|\nabla\vv p_1\|_{\dot B^s_{1,1}}
\lesssim\delta_1\|\nabla\vv p_1\|_{\dot B^s_{1,1}}+\|\nabla Y\|_{\dot B^s_{2,1}}\|\nabla \vv p_1\|_{0}&+\|{\mathcal A}(\p_3Y\otimes\p_3Y)\|_{\dot B^{s+1}_{1,1}}\\
&+\|\nabla Y\|_{\dot B^s_{2,1}}\|{\mathcal A}(\p_3Y\otimes\p_3Y)\|_{\dot{H}^1},
\end{align*}
which together with \eqref{S4.1eq3} implies
\begin{align*}
\|\nabla\vv p_1\|_{\dot B^s_{1,1}}
&\lesssim\|\p_3Y\|_{0}\|\p_3Y\|_{\dot B^{s+1}_{2,1}}+\|\nabla Y\|_{\dot B^s_{2,1}\cap \dot B^{s+1}_{2,1}}|\p_3Y|_{0}\|\p_3Y\|_{1}.
\end{align*}
As a result, it comes out
$$\|f_1(Y)\|_{\dot B^s_{1,1}}\lesssim\|\p_3Y\|_{0}\|\p_3Y\|_{\dot B^{s+1}_{2,1}}+\|\nabla Y\|_{\dot B^s_{2,1}\cap \dot B^{s+1}_{2,1}}|\p_3Y|_{0}\|\p_3Y\|_{1}.$$
Similarly, we have
$$\|f_2(Y)\|_{\dot B^s_{1,1}}\lesssim\|Y_t\|_{0}\|Y_t\|_{\dot B^{s+1}_{2,1}}+\|\nabla Y\|_{\dot B^s_{2,1}\cap \dot B^{s+1}_{2,1}}|Y_t|_{0}\|Y_t\|_{1}.$$
\eqref{S6eq2} and \eqref{S6eq3} then follow from the above estimates and the interpolation inequality \eqref{S4.1eq17}. This completes the
 proof of Proposition \ref{S6prop1}.
\end{proof}

\noindent$\bullet$\underline{The estimate of $\||D|^{-1}f(Y)\|_{N+1}$}

\begin{prop}\label{S6prop2}
{\sl Under the asme assumptions of Proposition \ref{S6prop1}, we have
\begin{align}
\label{S6eq4}&\||D|^{-1}f_0(Y)\|_{N+1}\lesssim |\nabla Y|_{0}\|\nabla  Y_t\|_{N+1}+|\nabla Y|_{N+1}\|\nabla Y_t\|_{0};\\
\label{S6eq5}&\||D|^{-1}f_1(Y)\|_{N+1}\lesssim|\p_3Y|_{0}\|\p_3Y\|_{N+1}+|\nabla Y|_{N+1}|\p_3Y|_{0}\|\p_3Y\|_{1};\\
\label{S6eq6}&\||D|^{-1}f_2(Y)\|_{N+1}\lesssim|Y_t|_{0}\|Y_t\|_{N+1}+|\nabla Y|_{N+1}|Y_t|_{0}\|Y_t\|_{1}.
\end{align}
}
\end{prop}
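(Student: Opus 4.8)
The plan is to prove Proposition \ref{S6prop2} by mimicking the strategy of Proposition \ref{S4.2prop1}, the only difference being that here we estimate the full nonlinear term $f(Y)$ rather than its linearization $f'(Y;X)$. First I would recall from \eqref{fm} that $f=f_0-f_1+f_2$ with $f_0=\na\cdot\big((\cA\cA^t-Id)\na Y_t\big)$ and $f_m=\cA^t\na\vv p_m$ for $m=1,2$, where $\vv p_1,\vv p_2$ are given by \eqref{p1} and \eqref{p2}. The estimate \eqref{S6eq4} is the easiest: since $|D|^{-1}\na\cdot$ is an $L^2$-bounded (Riesz-type) operator, $\||D|^{-1}f_0(Y)\|_{N+1}\lesssim \|(\cA\cA^t-Id)\na Y_t\|_{N}$, and then Moser's inequality together with the expansions
$$\cA-Id=\sum_{n\geq1}(-1)^n(\na Y)^n,\qquad \cA\cA^t-Id=(\cA-Id)(\cA-Id)^t+(\cA-Id)+(\cA-Id)^t$$
and the smallness $|\na Y|_0\lesssim\|\na Y\|_{\dot B^{3/2}_{2,1}}\leq\delta_1$ from \eqref{S4.1prop1assum} gives $\|(\cA\cA^t-Id)\na Y_t\|_N\lesssim |\na Y|_0\|\na Y_t\|_N+|\na Y|_N\|\na Y_t\|_0$, which is \eqref{S6eq4}.

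For \eqref{S6eq5} and \eqref{S6eq6} I would follow the two-step pattern of Subsection~5.3. First, the $L^2$ estimate $\||D|^{-1}f_m(Y)\|_0$: using the embedding $L^{6/5}(\R^3)\hookrightarrow\dot H^{-1}(\R^3)$ and the product law in Besov spaces exactly as in \eqref{S4.2eqf2c}, one reduces to controlling $\|\na\vv p_m\|_{L^3}$, and the bounds $\|\na\vv p_1\|_{L^3}\lesssim |\p_3Y|_1\|\p_3Y\|_{L^3}\lesssim|\p_3Y|_1^{4/3}\|\p_3Y\|_0^{2/3}$ (and analogously for $\vv p_2$) are already recorded in \eqref{S4.1eq9}, \eqref{S4.1eq11}; this yields $\||D|^{-1}f_1(Y)\|_0\lesssim|\p_3Y|_0\|\p_3Y\|_0 + (\text{lower order})$, in particular a term $\lesssim |\p_3Y|_0\|\p_3Y\|_1$, consistent with \eqref{S6eq5}. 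Second, the $\dot H^k$ estimate $\|f_m(Y)\|_{\dot H^k}$ for $k=N$: by \eqref{fm} one has $f_m=\cA^t\na\vv p_m$, so Moser's inequality gives $\|f_m\|_{\dot H^k}\lesssim \|\na\vv p_m\|_{\dot H^k}+|\cA^t-Id|_k\|\na\vv p_m\|_0$, and it remains to estimate $\|\na\vv p_m\|_{W^{N,3}}$, for which \eqref{S4.2eq7} and \eqref{S4.2eq8} already give $\|\na\vv p_1\|_{W^{N,3}}\lesssim\big(|\p_3Y|_{N+1}|\p_3Y|_0^{1/3}+|\na Y|_{N+1}|\p_3Y|_0^{4/3}\big)\|\p_3Y\|_0^{2/3}$, and similarly for $\vv p_2$. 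Combining the $L^2$ and $\dot H^k$ bounds via $\|g\|_{W^{N,1}}\lesssim \||D|^{-1}g\|_0+\||D|^{-1}g\|_{\dot H^{N+1}}$ (as in \eqref{S4.1eq17} and the proof of Proposition \ref{S4.2prop1}), and using the Gagliardo--Nirenberg interpolation $|\p_3Y|_{N+1}|\p_3Y|_0^{1/3}\|\p_3Y\|_0^{2/3}\lesssim|\p_3Y|_0\|\p_3Y\|_{N+1}$ together with $\|\p_3Y\|_0,|\p_3Y|_0\leq 1$ (which we may impose since $\delta_1$ is small), yields the stated right-hand sides.

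The main obstacle will be bookkeeping rather than a conceptual difficulty: one must track carefully how the fixed-point structure of the pressure equations \eqref{p1}, \eqref{p2} is absorbed --- i.e.\ that the terms $-\Delta^{-1}\dv\big((\cA\cA^t-Id)\na\vv p_m\big)$ on the right of \eqref{p1}, \eqref{p2} carry a factor $|\cA\cA^t-Id|_0\lesssim\delta_1$ (for the low-norm estimates) or $\|\cA\cA^t-Id\|_{\dot B^{3/2}_{2,1}}\lesssim\delta_1$ (for the high-norm estimates), so that they can be moved to the left-hand side after choosing $\delta_1$ small. This is precisely where \eqref{S4.1eq3}--\eqref{S4.1eq8} were used in Section~5, and the same mechanism applies verbatim here because $\vv p_m$ is nothing but $\vv p_m'$ with $X$ replaced by $Y$ in the quadratic source $\p_3Y\otimes\p_3Y$ resp.\ $Y_t\otimes Y_t$. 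Once this is observed, the two propositions follow by collecting the pieces; I would therefore present the proof of Proposition \ref{S6prop2} compactly, invoking \eqref{S4.1eq9}, \eqref{S4.1eq11}, \eqref{S4.2eq7}, \eqref{S4.2eq8} and Moser's inequality, exactly parallel to the proof of Proposition \ref{S4.2prop1}.
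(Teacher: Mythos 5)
Your treatment of \eqref{S6eq4} and of the low-norm ($\dot H^{-1}$) part of \eqref{S6eq5}--\eqref{S6eq6} matches the paper and is fine. The gap is in the high-norm step. You propose to control $\|f_m(Y)\|_{\dot H^N}$ by routing through the $L^3$-based estimates \eqref{S4.2eq7}--\eqref{S4.2eq8}, which produce right-hand sides of the form $\bigl(|\p_3Y|_{N+1}|\p_3Y|_0^{1/3}+|\na Y|_{N+1}|\p_3Y|_0^{4/3}\bigr)\|\p_3Y\|_0^{2/3}$, i.e.\ the \emph{$L^\infty$-based} top norm $|\p_3Y|_{N+1}$ times low norms; but \eqref{S6eq5} requires the \emph{$L^2$-based} top norm $\|\p_3Y\|_{N+1}$ times $|\p_3Y|_0$. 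The ``Gagliardo--Nirenberg interpolation'' you invoke to pass from one to the other, namely $|\p_3Y|_{N+1}|\p_3Y|_0^{1/3}\|\p_3Y\|_0^{2/3}\lesssim|\p_3Y|_0\|\p_3Y\|_{N+1}$, is false: in $\R^3$ the norm $|\cdot|_{N+1}=\|\cdot\|_{W^{N+1,\infty}}$ is not controlled by $\|\cdot\|_{H^{N+1}}$. Concretely, taking $\p_3Y=\phi(x/\lambda)$ with $\phi\in C_c^\infty$ fixed and $\lambda\to0$, the left side scales like $\lambda^{-N}$ while the right side scales like $\lambda^{1/2-N}$, so the inequality fails by a factor $\lambda^{-1/2}$. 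Normalizing $|\p_3Y|_0,\|\p_3Y\|_0\leq1$ does not repair this, since the obstruction sits in the comparison of the two top-order norms.

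The fix --- and what the paper actually does --- is to stay entirely in the $L^2$-based Sobolev scale for the high-norm step: from \eqref{fm} one has $\|f_1(Y)\|_{N}\lesssim\|\na\vv p_1\|_{N}+|\na Y|_{N}\|\na\vv p_1\|_0$, and then one estimates $\|\na\vv p_1\|_N$ directly from the fixed-point relation \eqref{p1} by Moser's inequality, absorbing $|\na Y|_0\|\na\vv p_1\|_N$ into the left side and bounding $\|{\mathcal A}\dv({\mathcal A}(\p_3Y\otimes\p_3Y))\|_N$ via $\|\p_3Y\otimes\p_3Y\|_{N+1}\lesssim|\p_3Y|_0\|\p_3Y\|_{N+1}$. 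This keeps the quadratic term in the form (low $L^\infty$)$\times$(high $H^{N+1}$), which is exactly the structure of \eqref{S6eq5} and is what is needed later in \eqref{S9.2eq3}--\eqref{S9.2eq4}. The $W^{N,3}$ detour was required in Proposition \ref{S4.2prop1} only because there the $L^2$ budget had to be spent on $X$ rather than on $Y$; for the source term $f(Y)$ itself it is both unnecessary and, as written, not convertible to the stated bound.
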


\begin{proof}
In view of \eqref{fm}, we get, by applying Moser type inequality, that
\begin{align*}
\||D|^{-1}f_0(Y)\|_{N}&\leq \|({\mathcal A}^t{\mathcal A}-Id)\nabla Y_t\|_{N}
\lesssim |\nabla Y|_{0}\|\nabla  Y_t\|_{N}+|\nabla Y|_{N}\|\nabla Y_t\|_{0},
\end{align*} which gives \eqref{S6eq4}.
While again by \eqref{fm} and the law of product in Besov spaces, one has
\begin{align*}
\||D|^{-1}f_m(Y)\|_{0}&\lesssim \big(1+\|{\mathcal A}^t-Id\|_{\dot B^\frac32_{2,1}}\big)\|\nabla\vv p_m\|_{\dot H^{-1}},
\end{align*}
yet it follows from \eqref{p1} that
\begin{align*}
\|\nabla \vv p_1\|_{\dot H^{-1}}
&\lesssim \|\nabla Y\|_{\dot B^\frac32_{2,1}}\|\nabla\vv p_1\|_{\dot H^{-1}}+(1+\|{\mathcal A}-Id\|_{\dot B^\frac32_{2,1}})\|{\mathcal A}(\p_3Y\otimes\p_3Y)\|_{0},
\end{align*}
from which and the assumption \eqref{S4.1prop1assum}, we infer
\begin{align}\label{S6eq7}
\||D|^{-1}f_1(Y)\|_{0}\lesssim \|\nabla \vv p_1\|_{\dot H^{-1}}&\lesssim |\p_3Y|_{0}\|\p_3Y\|_{0}.
\end{align}
Similarly, we have
\begin{align}\label{S6eq8}
\||D|^{-1}f_2(Y)\|_{0}\lesssim \|\nabla \vv p_2\|_{\dot H^{-1}}&\lesssim |Y_t|_{0}\|Y_t\|_{0}.
\end{align}
For $N\geq0$, we deduce from \eqref{fm} that
\begin{align*}
\|f_1(Y)\|_{N}\lesssim \|\nabla \vv p_1\|_{N}+|\nabla Y|_{N}\|\nabla \vv p_1\|_{0}.
\end{align*}
And it follows from \eqref{p1} that
\begin{align*}
&\|\nabla \vv p_1\|_{N}\lesssim |\nabla Y|_{0}\|\nabla\vv p_1\|_{N}+|\nabla Y|_{N}\|\nabla \vv p_1\|_{0}+\|{\mathcal A}\dv\big({\mathcal A}(\p_3Y\otimes\p_3Y)\big)\|_{N},
\end{align*}
which together with \eqref{S4.1prop1assum} and \eqref{S4.1eq3} ensures that
\begin{align*}
\|\nabla \vv p_1\|_{N}
&\lesssim |\p_3Y|_{0}\|\p_3Y\|_{N+1}+|\nabla Y|_{N+1}|\p_3Y|_{0}\|\p_3Y\|_{1}.
\end{align*}
As a result, it comes out
\begin{align}\label{S6eq9}
\|f_1(Y)\|_{N}\lesssim |\p_3Y|_{0}\|\p_3Y\|_{N+1}+|\nabla Y|_{N+1}|\p_3Y|_{0}\|\p_3Y\|_{1}.
\end{align}
The same procedure for $f_2(Y)$ yields
\begin{align}\label{S6eq10}
\|f_2(Y)\|_{N}\lesssim |Y_t|_{0}\|Y_t\|_{N+1}+|\nabla Y|_{N+1}|Y_t|_{0}\|Y_t\|_{1}.
\end{align}
\eqref{S6eq5} and \eqref{S6eq6} follow from \eqref{S6eq7}-\eqref{S6eq10}. This completes the proof of Proposition \ref{S6prop2}.
\end{proof}



\section{Estimates of $f''(Y;X,W)$}\label{Sect8}

The purpose of  this section is  to present the related estimates to the second derivatives, $f''(Y;X,W),$ of the nonlinear functional $f(Y)$ given by \eqref{S1eq2}.

\subsection{The estimate of $\||D|^{-1}f''(Y;X,W)\|_{N}$}

\begin{prop}\label{S7.2prop1}
{\sl Let $f_0'', f_1'',f_2''$ be given by \eqref{S7.1eq5} and \eqref{S7.1eq6} respectively. Then under the assumption of \eqref{S4.1prop1assum}, we have
\begin{equation}\label{S7.2eqE1}
\begin{split}
\||D|^{-1}&f_0''(Y;X,W)\|_{N}
\lesssim   | Y_t|_1\big(|\nabla X|_{N}\|\nabla W\|_0+|\nabla X|_0\|\nabla W\|_{N}\big)\\
&\quad+\bigl((| Y_t|_{N+1}+|\nabla Y|_{N}| Y_t|_1)|\nabla X|_0+|X_t|_{N+1}\bigr)\|\nabla W\|_0+ |\nabla X|_{N}\|\nabla W_t\|_{0}\\
&\quad+|\nabla X|_0\|\nabla W_t\|_{N}+|\nabla Y|_{N}\bigl(|\nabla X|_0\|\nabla W_t\|_0+|X_t|_1\|\nabla W\|_0\bigr)+|X_t|_1\|\nabla W\|_{N},
\end{split}
\end{equation}
and
\ben
\||D|^{-1}f_1''(Y;X,W)\|_{N}&\lesssim&\frak{f}_3(\p_3Y,\p_3X, \p_3W) \andf \label{S7.2eqE2}\\
\||D|^{-1}f_2''(Y;X,W)\|_{N}&\lesssim&\frak{f}_3(Y_t, X_t, W_t), \label{S7.2eqE3}
\een
where the functional $\frak{f}_3(\frak{x},\frak{y},\frak{z})$ is given by
\begin{equation*}
\begin{split}
\frak{f}_3(\frak{x},\frak{y},\frak{z})\eqdefa & \bigl(|\frak{y}|_{N}+|\nabla Y|_{N}|\frak{y}|_0\bigr)\|\frak{z}\|_0+|\frak{y}|_{0}\|\frak{z}\|_{N}\\
&\ +  \big(|\frak{x}|_0+|\frak{x}|_1^\frac13\|\frak{x}\|_{1}^\frac23\big)\bigl(|\nabla X|_{N}\|\frak{z}\|_0+|\nabla X|_0\|\frak{z}\|_{N} +|\frak{y}|_1\|\nabla W\|_{N}
 \\
&\ +(|\frak{y}|_{N}+|\frak{x}|_{N}|\nabla X|_1)\|\nabla W\|_1\bigr)+\big(|\frak{x}|_{N}+|\nabla Y|_{N}|\frak{x}|_1\big)|\nabla X|_1\|\frak{z}\|_1\\
&\ +\big(|\frak{x}|_1^\frac43\|\frak{x}\|_0^\frac23+|\frak{x}|_1^2\big)\big((|\nabla X|_{N}+|\nabla Y|_{N}|\nabla X|_1)\|\nabla W\|_1+|\nabla X|_1\|\nabla W\|_{N}\big)\\
&\ +\bigl(|\frak{x}|_{N}+|\frak{x}|_{N}^\frac13\|\frak{x}\|_{N}^\frac23+|\nabla Y|_{N}(|\frak{x}|_1+|\frak{x}|_0^\frac13\|\frak{x}\|_0^\frac23)\bigr)|\frak{y}|_1\|\nabla W\|_1.
\end{split}
\end{equation*}
}
\end{prop}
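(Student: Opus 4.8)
\textbf{Proof strategy for Proposition \ref{S7.2prop1}.} The plan is to follow exactly the same pattern already established for the first-order derivative estimates in Proposition \ref{S4.2prop1}, now applied to the explicit formulas \eqref{S7.1eq5}, \eqref{S7.1eq6}, \eqref{p1''} and \eqref{p2''}. First I would observe that each of $f_0''$, $f_1''$, $f_2''$ is a finite sum of terms of the schematic form (a product of factors ${\mathcal A}$, $\nabla X$, $\nabla W$, $\nabla Y_t$, $\p_3Y$, $\p_3X$, $\p_3W$, $Y_t$, $X_t$, $W_t$, or $\nabla \vv p_m$, $\nabla \vv p_m'$) to which one applies $|D|^{-1}$. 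Under the standing assumption \eqref{S4.1prop1assum} the factors involving ${\mathcal A}-Id$, ${\mathcal A}{\mathcal A}^t-Id$, $h'(V)\dot V$, $h''(V)\dot V_1\cdot\dot V_2$ and so on are controlled by $\nabla Y$ (respectively $\nabla X$, $\nabla W$) in the relevant norms via the series expansion ${\mathcal A}-Id=\sum(-1)^n(\nabla Y)^n$ together with the product laws \eqref{product1}, \eqref{product3}, \eqref{product4} and the Moser-type inequalities used in Section \ref{Sect5}.

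The core of the argument is the treatment of the pressure terms. Here I would reuse verbatim the estimates already derived: $\|\nabla \vv p_m\|_0$, $\|\nabla \vv p_m\|_{L^3}$, $\|\nabla \vv p_m\|_{W^{N,3}}$ from \eqref{S4.1eq3}, \eqref{S4.1eq4}, \eqref{S4.1eq9}, \eqref{S4.1eq11}, \eqref{S4.2eq7}, \eqref{S4.2eq8}; and $\|\nabla \vv p_m'(Y;X)\|_0$, $\|\nabla \vv p_m'(Y;X)\|_{\dot H^k}$, $\|\vv p_m'(Y;X)\|_0$ from \eqref{S4.1eq10}, \eqref{S4.1eq12}, \eqref{S4.2eq3}, \eqref{S4.2eq4}, \eqref{S4.2eq11}, \eqref{S4.2eq11a}. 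The new quantity is $\vv p_m''(Y;X,W)$. From \eqref{p1''} one absorbs the term $({\mathcal A}{\mathcal A}^t-Id)\nabla \vv p_1''$ on the left (using $\delta_1$ small, exactly as in \eqref{S4.2eq3}), and then estimates every remaining source term on the right side in $\dot H^{-1}$ (for the $\|\cdot\|_0$ bound) and in $\dot H^k$ (for the higher-order bound) by the law of product in Besov spaces and the embedding $L^{6/5}\hookrightarrow\dot H^{-1}$, just as in subsection \ref{Subsect3.3} of Section \ref{Sect5}. Each such term is a product in which at most two derivatives fall on $\nabla X$, $\nabla W$ or their time derivatives, the remaining regularity being distributed among the lowest-norm factors; interpolation inequalities of the type \eqref{S4.1eq9} ($\|\p_3Y\|_{L^3}\lesssim\|\p_3Y\|_0^{2/3}\cdot$(something)$^{1/3}$) reconcile the $L^3$ norms appearing in the pressure equation with the $L^\infty$- and $L^2$-based norms in the statement. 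This yields $\|\nabla \vv p_1''(Y;X,W)\|_0$ and $\|\nabla \vv p_1''(Y;X,W)\|_{\dot H^k}$, hence $\|\nabla \vv p_1''(Y;X,W)\|_N$, with the same structure as $\frak{f}_3$, and likewise for $\vv p_2''$.

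Once the pressure second-derivative bounds are in hand, I would assemble the final estimates. For $f_0''$: insert \eqref{S7.1eq4} into \eqref{S7.1eq5}, apply $|D|^{-1}$, note that the worst term $\nabla\cdot\big(({\mathcal A}{\mathcal A}^t-Id)'(Y;X)\nabla W_t\big)$ is handled by writing $({\mathcal A}{\mathcal A}^t-Id)'(Y;X)={\mathcal A}(-\nabla X){\mathcal A}{\mathcal A}^t+\cdots$ and distributing derivatives via Moser, giving the claimed \eqref{S7.2eqE1}. For $f_m''$, $m=1,2$: insert \eqref{S7.1eq2}, \eqref{S7.1eq6} and the just-obtained bounds on $\vv p_m'$, $\vv p_m''$ into the expansion, and collect terms — the three groups of contributions (from ${\mathcal A}''\nabla\vv p_m$, from ${\mathcal A}^t\nabla\vv p_m''$, and from the cross terms ${\mathcal A}'\nabla\vv p_m'$) each fit one of the summands of $\frak{f}_3$. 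The main obstacle, such as it is, is purely bookkeeping: the formula for $\vv q_{VV}''$ in Remark \ref{S7rmk1} and the formula \eqref{p1''} are long, so one must be careful that every term is matched to a summand of $\frak{f}_3$ with the derivative count and the interpolation exponents ($\frac13$, $\frac23$, $\frac43$) correctly allocated; there is no analytic difficulty beyond what was already overcome in Propositions \ref{S4.1prop1} and \ref{S4.2prop1}. Finally I would invoke the interpolation inequality \eqref{S4.1eq17} wherever a $\dot B^s_{2,1}$ norm needs to be traded for an $H^{[s]+1}$ norm, exactly as at the end of the proof of Proposition \ref{S4.1prop1}, to put the estimates in the stated form.
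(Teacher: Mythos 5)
Your proposal follows essentially the same route as the paper: the paper likewise first records Moser-type bounds for ${\mathcal A}'$, ${\mathcal A}''$, $({\mathcal A}{\mathcal A}^t-Id)'$, $({\mathcal A}{\mathcal A}^t-Id)''$ (its Lemma \ref{S7.1lem1}), splits $f_m''$ into exactly the four contributions $({\mathcal A}^t)''\nabla\vv p_m$, the two cross terms $({\mathcal A}^t)'\nabla(\vv p_m')$, and ${\mathcal A}^t\nabla(\vv p_m'')$, estimates each in $\dot H^{-1}$ via $L^{6/5}\hookrightarrow\dot H^{-1}$ and in $\dot H^k$ via Moser, and derives the needed new bounds on $\nabla\vv p_m''$ from \eqref{p1''} by absorbing the $({\mathcal A}{\mathcal A}^t-Id)\nabla\vv p_m''$ term for $\delta_1$ small and reusing the $L^3$/$W^{N,3}$ pressure estimates with the $L^\infty$–$L^2$ interpolation giving the $\frac13,\frac23,\frac43$ exponents. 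The plan is correct and the remaining work is, as you say, bookkeeping.
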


\begin{rmk}
We mention that in the above inequalities, it is crucial to  estimate the vector, $X,$ by $L^\infty$-norm. In Section \ref{Sect10},  we shall deal with the estimate of
the error term
$$e_p'=-\int_0^1f''\big(Y_p+s(1-S_p)Y_p;(1-S_p)Y_p,X_p\big)ds,$$  where the variable,
$(1-S_p)Y_p,$ is ``small" in the $L^\infty$-norm, but only ``bounded" in $L^2$-norm.
\end{rmk}

Let us start the proof of Proposition \ref{S7.2prop1} by the following lemma:

\begin{lem}\label{S7.1lem1}
{\sl Under the assumption of \eqref{S4.1prop1assum}, one has
\beno
\begin{split}
&
 \|{\mathcal A}'(Y;X)\|_0\lesssim \|\nabla X\|_0\qquad\quad\quad \  \andf |{\mathcal A}'(Y;X)|_N\lesssim |\nabla X|_N+|\nabla Y|_N|\nabla X|_0;\\
&\|({\mathcal A}{\mathcal A}^t-Id)'(Y;X)\|_0\lesssim \|\nabla X\|_{0} \andf |({\mathcal A}{\mathcal A}^t-Id)'(Y;X)|_N
\lesssim |\nabla X|_N+|\nabla Y|_N|\nabla X|_0;
\end{split}
\eeno
and \beno
\begin{split}
\|{\mathcal A}''(Y;X,W)\|_N+&\|({\mathcal A}{\mathcal A}^t-Id)''(Y;X,W)\|_N\\
&\leq  |\nabla X|_N\|\nabla W\|_0+|\nabla X|_0\big(\|\nabla W\|_N+|\nabla Y|_N\|\nabla W\|_0\big).\end{split} \eeno
}
\end{lem}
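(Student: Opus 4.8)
\textbf{Proof proposal for Lemma \ref{S7.1lem1}.}

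The plan is to deduce all six estimates from the closed-form expressions \eqref{S7.1eq1}, \eqref{S7.1eq3}, \eqref{S7.1eq2}, \eqref{S7.1eq4} together with two standard facts: first, that under \eqref{S4.1prop1assum} the matrix $\mathcal A=(Id+\nabla Y)^{-1}$ and its transpose are bounded in $L^\infty$ (indeed $|\mathcal A-Id|_0\lesssim|\nabla Y|_0\lesssim\|\nabla Y\|_{\dot B^{3/2}_{2,1}}\leq\delta_1$ via the embedding $\dot B^{3/2}_{2,1}\hookrightarrow L^\infty$, so that the Neumann series ${\mathcal A}-Id=\sum_{n\geq1}(-1)^n(\nabla Y)^n$ converges and $|\mathcal A|_0\leq 2$); and second, the Moser-type product and composition inequalities in $W^{N,\infty}$, namely $|fg|_N\lesssim |f|_N|g|_0+|f|_0|g|_N$ and $|\mathcal A|_N=|(Id+\nabla Y)^{-1}|_N\lesssim|\nabla Y|_N$ (the latter coming from differentiating the Neumann series and using $|\nabla Y|_0\leq\delta_1$ to absorb the higher powers, exactly as in the derivation of \eqref{product4} but in the $L^\infty$ setting).

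First I would handle the first-derivative bounds. From \eqref{S7.1eq1}, ${\mathcal A}'(Y;X)={\mathcal A}(-\nabla X){\mathcal A}$, so the $L^2$ estimate is $\|{\mathcal A}'(Y;X)\|_0\leq|\mathcal A|_0^2\|\nabla X\|_0\lesssim\|\nabla X\|_0$, and the $W^{N,\infty}$ estimate follows from the Moser inequality applied to the triple product ${\mathcal A}\cdot(-\nabla X)\cdot{\mathcal A}$, distributing $N$ derivatives: the terms where all derivatives hit $\nabla X$ give $|\nabla X|_N$, and the terms where at least one derivative hits a copy of $\mathcal A$ give $|\mathcal A|_N|\nabla X|_0\lesssim|\nabla Y|_N|\nabla X|_0$. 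The estimates for $({\mathcal A}{\mathcal A}^t-Id)'(Y;X)$ are identical in structure, reading off the two-term expression \eqref{S7.1eq3} and bounding each term the same way (using $|\mathcal A^t|_0\lesssim1$, $|\mathcal A^t|_N\lesssim|\nabla Y|_N$).

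Next I would treat the second-derivative bounds. The expression \eqref{S7.1eq2} writes ${\mathcal A}''(Y;X,W)$ as a sum of two five-fold products ${\mathcal A}(\nabla X){\mathcal A}(\nabla W){\mathcal A}$ (and its $X\leftrightarrow W$ swap); applying the Moser inequality in $W^{N,\infty}$ and distributing the $N$ derivatives, the dominant contributions are: $N$ derivatives on $\nabla X$ giving $|\nabla X|_N\|\nabla W\|_0$ (keeping one factor in $L^2$, the rest in $L^\infty$), $N$ derivatives on $\nabla W$ giving $|\nabla X|_0\|\nabla W\|_N$, and $N$ derivatives landing on an $\mathcal A$-factor giving $|\nabla Y|_N|\nabla X|_0\|\nabla W\|_0$; summing these is exactly the claimed bound. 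For $({\mathcal A}{\mathcal A}^t-Id)''(Y;X,W)$ one uses the six-term expression \eqref{S7.1eq4} and bounds each of the six terms by the same scheme — every term is a product of at most three $\mathcal A$/$\mathcal A^t$ factors and two $\nabla X$/$\nabla W$ factors, so the same distribution of derivatives produces the same three types of terms.

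The only mild subtlety — and the part I would be most careful about — is the bookkeeping of \emph{which} factor is placed in $L^2$ versus $L^\infty$: the statement is asymmetric in $X$ and $W$ (it is $W$, not $X$, whose gradient always appears in the $L^2$ norm, except in the single term $|\nabla X|_N\|\nabla W\|_0$ where both roles are visible), so in applying the Moser inequality I must consistently estimate the $\nabla W$-factor in $L^2$ and everything else in $L^\infty$, except that when all $N$ derivatives hit $\nabla X$ it is forced into $L^\infty$ and a surviving low-order factor of $\nabla W$ must carry the $L^2$ norm. Once this choice is fixed, all terms fall into the listed pattern and the proof is a routine application of the Moser and composition inequalities; there is no genuine analytic obstacle.

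\medbreak
\emph{Proof.}

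\begin{proof}
Under \eqref{S4.1prop1assum} we have $|\nabla Y|_0\lesssim\|\nabla Y\|_{\dot B^{3/2}_{2,1}}\leq\delta_1$, so the Neumann series ${\mathcal A}-Id=\sum_{n\geq1}(-1)^n(\nabla Y)^n$ converges in $L^\infty$ and $|{\mathcal A}|_0+|{\mathcal A}^t|_0\lesssim1$; differentiating the series and using $|\nabla Y|_0\leq\delta_1$ to sum the higher powers gives $|{\mathcal A}|_N+|{\mathcal A}^t|_N\lesssim|\nabla Y|_N$. We shall use these bounds together with the Moser inequality $|fg|_N\lesssim|f|_N|g|_0+|f|_0|g|_N$ repeatedly.

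By \eqref{S7.1eq1}, ${\mathcal A}'(Y;X)={\mathcal A}(-\nabla X){\mathcal A}$, whence $\|{\mathcal A}'(Y;X)\|_0\leq|{\mathcal A}|_0^2\|\nabla X\|_0\lesssim\|\nabla X\|_0$. Distributing $N$ derivatives over the triple product ${\mathcal A}\cdot(-\nabla X)\cdot{\mathcal A}$ and estimating all but one factor in $L^\infty$, the terms where every derivative hits $\nabla X$ contribute $|{\mathcal A}|_0^2|\nabla X|_N\lesssim|\nabla X|_N$, while those where at least one derivative hits a factor ${\mathcal A}$ contribute $|{\mathcal A}|_N|{\mathcal A}|_0|\nabla X|_0\lesssim|\nabla Y|_N|\nabla X|_0$; hence $|{\mathcal A}'(Y;X)|_N\lesssim|\nabla X|_N+|\nabla Y|_N|\nabla X|_0$. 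The same two bounds for $({\mathcal A}{\mathcal A}^t-Id)'(Y;X)$ follow verbatim from the two-term expression \eqref{S7.1eq3}, each term being a product of $\nabla X$ (or $(\nabla X)^t$) with factors ${\mathcal A},{\mathcal A}^t$.

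For the second derivatives, \eqref{S7.1eq2} expresses ${\mathcal A}''(Y;X,W)$ as a sum of the two five-fold products ${\mathcal A}(\nabla X){\mathcal A}(\nabla W){\mathcal A}$ and ${\mathcal A}(\nabla W){\mathcal A}(\nabla X){\mathcal A}$. Applying the Moser inequality in $W^{N,\infty}$ and placing the factor $\nabla W$ in $L^2$ and all remaining factors in $L^\infty$ (except that when all $N$ derivatives fall on $\nabla X$, that factor is estimated in $L^\infty$ and a low-order copy of $\nabla W$ carries the $L^2$ norm), the surviving contributions are: $|\nabla X|_N\|\nabla W\|_0$, $|\nabla X|_0\|\nabla W\|_N$, and — when the $N$ derivatives hit one of the ${\mathcal A}$-factors — $|\nabla Y|_N|\nabla X|_0\|\nabla W\|_0$. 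Summing these, and using $|{\mathcal A}|_0\lesssim1$ on all the low-order ${\mathcal A}$-factors, gives
$$\|{\mathcal A}''(Y;X,W)\|_N\lesssim|\nabla X|_N\|\nabla W\|_0+|\nabla X|_0\big(\|\nabla W\|_N+|\nabla Y|_N\|\nabla W\|_0\big).$$
Finally, $({\mathcal A}{\mathcal A}^t-Id)''(Y;X,W)$ is the six-term sum \eqref{S7.1eq4}; each term is a product of at most three factors among ${\mathcal A},{\mathcal A}^t$ and exactly two factors among $\nabla X,(\nabla X)^t,\nabla W,(\nabla W)^t$, so the same distribution of $N$ derivatives — $\nabla W$ in $L^2$, the rest in $L^\infty$ — yields for each term precisely the three types of contributions above. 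Adding the six terms gives the claimed bound for $\|({\mathcal A}{\mathcal A}^t-Id)''(Y;X,W)\|_N$, and the proof is complete.
\end{proof}
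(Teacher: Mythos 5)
Your proof is correct and follows essentially the same route as the paper, which likewise deduces all six bounds by applying Moser-type product inequalities directly to the explicit formulas \eqref{S7.1eq1}, \eqref{S7.1eq3}, \eqref{S7.1eq2} (and, for $({\mathcal A}{\mathcal A}^t-Id)''$, the product-rule form of \eqref{S7.1eq4}). The only cosmetic imprecision is writing $|{\mathcal A}|_N\lesssim|\nabla Y|_N$ instead of $|{\mathcal A}-Id|_N\lesssim|\nabla Y|_N$, but this is harmless since the extra $O(1)\cdot|\nabla X|_0$ contribution is absorbed into $|\nabla X|_N$.
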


\begin{proof} Indeed the estimates for ${\mathcal A}'(Y;X),$ $({\mathcal A}{\mathcal A}^t-Id)'(Y;X)$ and $
{\mathcal A}''(Y;X,W)$ can be deduced by applying Moser type inequality to \eqref{S7.1eq1}, \eqref{S7.1eq3} and \eqref{S7.1eq2}.
Whereas in view of  \eqref{S7.1eq0}, we have
\beq\label{S7.2lem1eq1}
\begin{split}
({\mathcal A}{\mathcal A}^t-Id)''(Y;X,W)=&{\mathcal A}''(Y;X,W){\mathcal A}^t+{\mathcal A}({\mathcal A}^t)''(Y;X,W)\\
&+{\mathcal A}'(Y;X)({\mathcal A}^t)'(Y;W)+{\mathcal A}'(Y;W)({\mathcal A}^t)'(Y;X).
\end{split} \eeq
Thus the estimate for $({\mathcal A}{\mathcal A}^t-Id)''(Y;X,W)$ follows.
 \end{proof}

\begin{proof}[Proof of Proposition \ref{S7.2prop1}] We divide the proof of this proposition into the following steps.

\noindent$\bullet$\underline{The estimate of $\||D|^{-1}f_0''(Y;X,W)\|_{N}$}

We first deduce from Moser type inequality and Lemma \ref{S7.1lem1} that
\begin{align*}
\|({\mathcal A}{\mathcal A}^t-Id)''(&Y;X,W)\nabla Y_t\|_{N}
\lesssim |\nabla Y_t|_{N}|\nabla X|_0\|\nabla W\|_0\\
& +|\nabla Y_t|_0\big(|\nabla X|_{N}\|\nabla W\|_0+|\nabla X|_0\big(\|\nabla W\|_{N}+|\nabla Y|_{N}\|\nabla W\|_0\big)\big),
\end{align*}
and
\begin{align*}
\|({\mathcal A}{\mathcal A}^t-Id)'(Y;X)\nabla W_t\|_{N}
&\lesssim |\nabla X|_{N}\|\nabla W_t\|_{0}+|\nabla X|_0\big(\|\nabla W_t\|_{N}+|\nabla Y|_{N}\|\nabla W_t\|_0\big),
\end{align*}
and
\begin{align*}
\|({\mathcal A}{\mathcal A}^t-Id)'(Y;W)\nabla X_t\|_{N}
&\lesssim |X_t|_{N+1}\|\nabla W\|_0+|X_t|_1\big(\|\nabla W\|_{N}+|\nabla Y|_{N}\|\nabla W\|_0\big).
\end{align*}
Hence thanks to \eqref{S7.1eq5}, we obtain \eqref{S7.2eqE1}.

Next,
we shall only present the estimates for $f_1''$, the one for $f_2''$ follows along the same line. According to \eqref{S7.1eq6}, we write
\begin{equation}\label{S7.2eqf1}
\begin{split}
f_1''(Y;X,W)=&I_1+I_2+I_3+I_4 \with \\
I_1\eqdefa & ({\mathcal A}^t)''(Y;X,W)\nabla\vv p_1(Y),\qquad\qquad I_2\eqdefa ({\mathcal A}^t)'(Y;X)\nabla\big(\vv p_1'(Y;W)\big),\\
I_3\eqdefa & ({\mathcal A}^t)'(Y;W)\nabla\big(\vv p_1'(Y;X)\big),\andf I_4\eqdefa {\mathcal A}^t\nabla \big(\vv p_1''(Y;X,W)\big).
\end{split}
\end{equation}

\noindent$\bullet$\underline{The estimate of $\|f_1''(Y;X,W)\|_{\dot H^{-1}}$}

\noindent{(i) $\dot H^{-1}$ estimate of $I_1$.} By virtue of  Sobolev embedding: $L^\frac65(\R^3)\hookrightarrow \dot H^{-1}(\R^3),$ and \eqref{S4.1eq9},
 we infer
\begin{align*}
\|I_1\|_{\dot H^{-1}}
\lesssim \|({\mathcal A}^t)''(Y;X,W)\|_{0}\|\nabla\vv p_1\|_{L^3}\lesssim |\p_3Y|_1^\frac43\|\p_3Y\|_0^\frac23|\nabla X|_0\|\nabla W\|_0.
\end{align*}

\noindent{(ii) $\dot H^{-1}$ estimate of  $I_2.$}
Similar to the estimate of $I_1,$ we have
\begin{align*}
\|I_2\|_{\dot H^{-1}}\lesssim |({\mathcal A}^t)'(Y;X)|_{0}\|\nabla\vv p_1'(Y;W)\|_{L^\frac65}.
\end{align*}
Yet it follows from \eqref{p1'} that
\begin{align*}
\|\nabla \vv p_1'(Y;W)\|_{L^\frac65}
&\lesssim\delta_1\|\nabla \vv p_1'(Y;W)\|_{L^\frac65}
+\|({\mathcal A}{\mathcal A}^t-Id)'(Y;W)\|_0\|\nabla \vv p_1\|_{L^3}\\
&\quad +\|{\mathcal A}'(Y;W)\|_{1}\|\p_{3}Y\otimes \p_{3}Y\|_{W^{1,3}}
 +\|{\mathcal A}'(Y;W)\|_0\|{\mathcal A}(\p_{3}Y\otimes \p_{3}Y)\|_{W^{1,3}}\\
&\quad +\|{\mathcal A}(\p_{3}Y\otimes\p_{3}W+\p_{3}W\otimes \p_{3}Y)\|_{W^{1,\frac65}},
\end{align*}
which together with \eqref{S4.1eq9} and Lemma \ref{S7.1lem1} implies that
\begin{equation}\label{S7.2eq1}
\|\nabla \vv p_1'(Y;W)\|_{L^\frac65}\lesssim|\p_3Y|_1^\frac43\|\p_3Y\|_0^\frac23\|\nabla W\|_{1}+|\p_3Y|_1^\frac13\|\p_3Y\|_1^\frac23\|\p_3W\|_1.
\end{equation}
As a result, it comes out
\begin{align*}
\|I_2\|_{\dot H^{-1}}
\lesssim \bigl(|\p_3Y|_1^\frac43\|\p_3Y\|_0^\frac23\|\nabla W\|_{1}+|\p_3Y|_1^\frac13\|\p_3Y\|_1^\frac23\|\p_3W\|_1\bigr)|\nabla X|_0.
\end{align*}

\noindent{(iii) $\dot H^{-1}$ estimate of  $I_3.$ } Note that
\begin{align*}
\|I_3\|_{\dot H^{-1}}\lesssim \|({\mathcal A}^t)'(Y;W)\|_0\|\nabla\big(\vv p_1'(Y;X)\big)\|_{L^3}.
\end{align*}
While in view of  \eqref{p1'}, one has
\begin{align*}
\|\nabla \vv p_1'(Y;X)\|_{L^3}
&\lesssim\delta_1\|\nabla \vv p_1'(Y;X)\|_{L^3}
+|({\mathcal A}{\mathcal A}^t-Id)'(Y;X)|_0\|\nabla \vv p_1\|_{L^3}\\
&\quad +\|{\mathcal A}'(Y;X)(\p_{3}Y\otimes \p_{3}Y)\|_{W^{1,3}}
 +|{\mathcal A}'(Y;X)|_0\|{\mathcal A}(\p_{3}Y\otimes \p_{3}Y)\|_{W^{1,3}}\\
&\quad +\|{\mathcal A}(\p_{3}Y\otimes\p_{3}X+\p_{3}X\otimes \p_{3}Y)\|_{W^{1,3}},
\end{align*}
which together with \eqref{S4.1eq9} and Lemma \ref{S7.1lem1} ensures that
\begin{equation}\label{S7.2eq2}
\|\nabla \vv p_1'(Y;X)\|_{L^3}
\lesssim |\p_3Y|_1^\frac43\|\p_3Y\|_0^\frac23|\nabla X|_1+|\p_3Y|_1^\frac13\|\p_3Y\|_{1}^\frac23|\p_3X|_1.
\end{equation}
And we thus obtain
\begin{align*}
\|I_3\|_{\dot H^{-1}}
&\lesssim \bigl(|\p_3Y|_1^\frac43\|\p_3Y\|_0^\frac23|\nabla X|_{1} +|\p_3Y|_1^\frac13\|\p_3Y\|_{1}^\frac23|\p_3X|_1\bigr)\|\nabla W\|_0.
\end{align*}

\noindent{(iv)  $\dot H^{-1}$ estimate of  $I_4.$ } We  first get, by applying the law of product in Besov spaces, that
\begin{align*}
\|I_4\|_{\dot H^{-1}}\lesssim \big(1+\|{\mathcal A}^t-Id\|_{\dot B^\frac32_{2,1}}\big)\|\vv p_m''(Y;X,W)\|_{0}.
\end{align*}
Thanks to \eqref{p1''}, we get, by applying  Sobolev embedding: $L^\frac65(\R^3)\hookrightarrow \dot H^{-1}(\R^3),$ that
\begin{align*}
\|\vv p_1''&(Y;X,W)\|_{0}\lesssim   \|\nabla Y\|_{\dot B^{\frac32}_{2,1}}\|\vv p_1''(Y;X,W)\|_{0}
+\|({\mathcal A}{\mathcal A}^t-Id)''(Y;X,W)\|_{0}\|\nabla \vv p_1\|_{L^3}\\
&+|({\mathcal A}{\mathcal A}^t-Id)'(Y;X)|_{0}\|\nabla \vv p_1'(Y;W)\|_{L^\frac65}+\|({\mathcal A}{\mathcal A}^t-Id)'(Y;W)\|_{0}\|\nabla \vv p_1'(Y;X)\|_{L^3}\\
&+\|{\mathcal A}(\p_{3}X\otimes\p_{3}W+\p_3W\otimes\p_3X)\|_0+\|{\mathcal A}''(Y;X,W)\|_{0}\bigl(|\p_3Y|_0^2+
\|{\mathcal A}(\p_{3}Y\otimes \p_{3}Y)\|_{W^{1,3}}\bigr)\\
&+|{\mathcal A}'(Y;X)|_0\bigl(\|(\p_{3}Y\otimes \p_{3}W+\p_{3}W\otimes\p_{3}Y)\|_{0}+\|{\mathcal A}'(Y;W)(\p_{3}Y\otimes \p_{3}Y)\|_{W^{1,\frac65}}\\
& +\|{\mathcal A}(\p_{3}Y\otimes \p_{3}W+\p_{3}W\otimes\p_{3}Y)\|_{W^{1,\frac65}}\bigr)+|{\mathcal A}'(Y;W)|_0\|(\p_{3}Y\otimes \p_{3}X+\p_{3}X\otimes\p_{3}Y)\|_{0}\\
&+\|{\mathcal A}'(Y;W)\|_0\bigl(\|{\mathcal A}(\p_{3}Y\otimes \p_{3}X+\p_{3}X\otimes\p_{3}Y)\|_{W^{1,3}} +\|{\mathcal A}'(Y;X)(\p_{3}Y\otimes \p_{3}Y)\|_{W^{1,3}}\bigr),
\end{align*}
from which, \eqref{S4.1prop1assum}, \eqref{S4.1eq9}, Lemma \ref{S7.1lem1}, \eqref{S7.2eq1} and \eqref{S7.2eq2}, we infer
\begin{equation}\label{S7.2eq3}
\begin{split}
\|\vv p_1''(Y;X,W)\|_{0}\lesssim
&|\p_3X|_0\|\p_3W\|_0+|\p_3Y|_1^\frac43\|\p_3Y\|_{0}^\frac23|\nabla X|_1\|\nabla W\|_1\\
& +|\p_3Y|_1^\frac13\|\p_3Y\|_{1}^\frac23\big(|\nabla X|_0\|\p_3W\|_1 +|\p_3X|_1\|\nabla W\|_1\big).
\end{split}
\end{equation}
Therefore, we obtain
\begin{align*}
\|I_4\|_{\dot H^{-1}}\lesssim
&|\p_3X|_0\|\p_3W\|_0+|\p_3Y|_1^\frac43\|\p_3Y\|_{0}^\frac23|\nabla X|_1\|\nabla W\|_1\\
& +|\p_3Y|_1^\frac13\|\p_3Y\|_{1}^\frac23\big(|\nabla X|_0\|\p_3W\|_1 +|\p_3X|_1\|\nabla W\|_1\big).
\end{align*}
By summarizing the estimates of $I_1,I_2,I_3,I_4$, we achieve
\begin{equation}\label{S7.2eq4}
\begin{split}
\|f_1''(Y;X,W)\|_{\dot H^{-1}}\lesssim
&|\p_3X|_0\|\p_3W\|_0+|\p_3Y|_1^\frac43\|\p_3Y\|_{0}^\frac23|\nabla X|_1\|\nabla W\|_1\\
& +|\p_3Y|_1^\frac13\|\p_3Y\|_{1}^\frac23\big(|\nabla X|_0\|\p_3W\|_1 +|\p_3X|_1\|\nabla W\|_1\big) .
\end{split}
\end{equation}

\noindent$\bullet$\underline{The estimate of $\|f_m''(Y;X,W)\|_{N}$}

\noindent{(i) $H^N$ estimate of  $I_1.$} In view of \eqref{S7.2eqf1},  we deduce from \eqref{S4.1eq9}, \eqref{S4.2eq7} and Lemma
\ref{S7.1lem1} that
\begin{align*}
\|I_1\|_N
&\lesssim \|({\mathcal A}^t)''(Y;X,W)\|_{N+1}\|\nabla \vv p_m\|_{L^3}+\|({\mathcal A}^t)''(Y;X,W)\|_{1}\|\nabla \vv p_m\|_{W^{N,3}}\\
&\lesssim |\p_3Y|_1^\frac43\|\p_3Y\|_0^\frac23\big(|\nabla X|_{N+1}\|\nabla W\|_0+|\nabla X|_{0}\|\nabla W\|_{N+1}\big)\\
&\qquad+\big(|\p_3Y|_{N+1}|\p_3Y|_0^\frac13+|\nabla Y|_{N+1}|\p_3Y|_{1}^\frac43\big)\|\p_3Y\|_0^\frac23|\nabla X|_1\|\nabla W\|_1.
\end{align*}

\noindent{(ii) $H^N$ estimate of  $I_2.$} By virtue of \eqref{S4.2eq11}, we get, by applying Moser type inequality, that
\begin{align*}
\|I_2\|_{N}
\lesssim& |\p_3Y|_0\big(|\nabla X|_{N+1}\|\p_3W\|_0+ |\nabla X|_0\|\p_3W\|_{N+1}\big)\\
&+\big(|\nabla X|_{N+1}\|\nabla W\|_0 +|\nabla X|_0\|\na W\|_{N+1}+|\nabla Y|_{N+1}|\nabla X|_0\|\nabla W\|_{1}\big)\times\\
&\times\big(|\p_3Y|_{1}^\frac43\|\p_3Y\|_{0}^\frac23+|\p_3Y|_{1}^2\big)+\big(|\p_3Y|_{N+1}+|\nabla Y|_{N+1}|\p_3Y|_{1}\big)|\nabla X|_0\|\p_3W\|_{1}\\
&+|\nabla X|_0\|\nabla W\|_{1} |\p_3Y|_{N+1}\big(|\p_3Y|_{0}^\frac13\|\p_3Y\|_{0}^\frac23+|\p_3Y|_{0}\big).
\end{align*}

\noindent{(iii) $H^N$ estimate of  $I_3.$}  Applying Moser type inequality gives
\begin{align*}
\|I_3\|_{N}\lesssim \|({\mathcal A}^t)'(Y;W)\|_1\|\nabla \vv p_1'(Y;X)\|_{W^{N,3}}+\|({\mathcal A}^t)'(Y;W)\|_{N+1}\|\nabla \vv p_1'(Y;X)\|_{L^3}.
\end{align*}
Yet it follows from \eqref{p1'} that
\begin{align*}
\|\nabla \vv p_1'(Y;X)\|_{W^{N,3}}
\lesssim & \delta_1\|\nabla \vv p_1'(Y;X)\|_{W^{N,3}}+|\nabla Y|_N\|\nabla\vv p_1'(Y;X)\|_{L^3}\\
&+|({\mathcal A}{\mathcal A}^t-Id)'(Y;X)|_{N}\|\nabla \vv p_1\|_{L^3}+|({\mathcal A}{\mathcal A}^t-Id)'(Y;X)|_{0}\|\nabla \vv p_1\|_{W^{N,3}}\\
&+\|{\mathcal A}'(Y;X)(\p_{3}Y\otimes \p_{3}Y)\|_{W^{N+1,3}}+|\nabla Y|_N\|{\mathcal A}'(Y;X)(\p_3Y\otimes\p_3Y)\|_{W^{1,3}}\\
&+|{\mathcal A}'(Y;X)|_{N}\|{\mathcal A}(\p_{3}Y\otimes \p_{3}Y)\|_{W^{1,3}}+|{\mathcal A}'(Y;X)|_0\|{\mathcal A}(\p_3Y\otimes\p_3Y)\|_{W^{N+1,3}}\\
&+\|{\mathcal A}(\p_{3}Y\otimes\p_{3}X)\|_{W^{N+1,3}}+|\nabla Y|_N\|{\mathcal A}(\p_{3}Y\otimes\p_{3}X)\|_{W^{1,3}},
\end{align*}
from which, \eqref{S4.2eq7}, \eqref{S7.2eq2},
 we infer
\begin{equation}\label{S7.2eq5}
\begin{split}
\|\nabla \vv p_1'(Y;X)\|_{W^{N,3}}
&\lesssim |\p_3Y|_{0}^\frac13\|\p_3Y\|_0^\frac23|\p_3X|_{N+1}+ |\p_3Y|_0^\frac43\|\p_3Y\|_0^\frac23|\nabla X|_{N+1}\\
&\quad+\big(|\p_3Y|_{N+1}^\frac13\|\p_3Y\|_{N+1}^\frac23+|\nabla Y|_{N+1}|\p_3Y|_1^\frac13\|\p_3Y\|_1^\frac23\big)|\p_3X|_1\\
&\quad+\big(|\p_3Y|_{N+1}|\p_3Y|_0^\frac13+|\nabla Y|_{N+1}|\p_3Y|_1^\frac43\big)\|\p_3Y\|_0^\frac23|\nabla X|_1.
\end{split}
\end{equation}
Together with \eqref{S7.2eq2}, we deduce that
\begin{align*}
\|I_3\|_N
&\lesssim |\p_3Y|_{1}^\frac13\|\p_3Y\|_1^\frac23\big(|\p_3X|_{N+1}\|\nabla W\|_1+|\p_3X|_1\|\nabla W\|_{N+1}+|\nabla Y|_{N+1}|\p_3X|_1\|\nabla W\|_1\big)\\
&\quad+|\p_3Y|_1^\frac43\|\p_3Y\|_0^\frac23\big( |\nabla X|_{N+1}\|\nabla W\|_1+|\nabla X|_1\|\nabla W\|_{N+1}+|\nabla Y|_{N+1}|\nabla X|_1\|\nabla W\|_1\big)\\
&\quad+|\p_3Y|_{N+1}^\frac13\|\p_3Y\|_{N+1}^\frac23 |\p_3X|_1\|\nabla W\|_1+|\p_3Y|_{N+1}|\p_3Y|_0^\frac13 \|\p_3Y\|_0^\frac23|\nabla X|_1\|\nabla W\|_1.
\end{align*}

\noindent{(iv) $H^N$ estimate of  $I_4.$} We first get, by applying Moser type inequality, that
\begin{align*}
\|I_4\|_N\lesssim \|\nabla \vv p_1''(Y;X,W)\|_{N}+|\nabla Y|_{N}\|\nabla \vv p_1''(Y;X,W)\|_{0}.
\end{align*}
We first deal with the  estimate of $\|\nabla \vv p_1''(Y;X,W)\|_{0}.$ Indeed by \eqref{p1''}, we have
\beno
\begin{split}
\|\nabla\vv p_1''(Y;X,W)\|_{0}\lesssim&  |\na Y|_0\|\nabla\vv p_1''(Y;X,W)\|_{0}+\|({\mathcal A}{\mathcal A}^t-Id)''(Y;X,W)\|_{\dot H^1}\|\nabla \vv p_1\|_{L^3}\\
&+|({\mathcal A}{\mathcal A}^t-Id)'(Y;X)|_0\|\nabla \vv p_1'(Y;W)\|_{0}+\big(1+\|{\mathcal A}\|_{B^{\f32}_{2,1}}\bigr)\|\p_{3}X\otimes\p_{3}W\|_1
\\
&+ \|({\mathcal A}{\mathcal A}^t-Id)'(Y;W)\|_1\|\nabla \vv p_1'(Y;X)\|_{L^3}+|\p_{3}Y\otimes \p_{3}Y|_1\|{\mathcal A}''(Y;X,W)\|_1\\
& +|{\mathcal A}'(Y;X)|_1\|\p_{3}Y\otimes \p_{3}W\|_{1}+\|{\mathcal A}'(Y;W)\|_1|\p_{3}Y\otimes \p_{3}X|_1\\
&+ |{\mathcal A}'(Y;X)|_{0}\bigr(\|\p_{3}Y\otimes \p_{3}W\|_{1}+\|{\mathcal A}'(Y;W)(\p_{3}Y\otimes \p_{3}Y)\|_1\bigr)\\
&+\|{\mathcal A}'(Y;W)\|_0|{\mathcal A}(\p_{3}Y\otimes \p_{3}X)|_1+\|{\mathcal A}'(Y;X)(\p_{3}Y\otimes \p_{3}Y)\|_{1}\\
&\qquad\qquad\qquad\qquad\qquad\qquad\qquad+\|{\mathcal A}''(Y;X,W)\|_{0}|\p_{3}Y\otimes \p_{3}Y|_{1},
\end{split}\eeno
from which, \eqref{S4.2eq11}
and \eqref{S7.2eq2}, we deduce that
\begin{equation}\label{S7.2eq6}
\begin{split}
\|\nabla\vv p_1''(Y;X,W)\|_{0}
\lesssim & |\p_3X|_1\bigl(\|\p_3W\|_1+(|\p_3Y|_1+ |\p_3Y|_1^\frac13\|\p_3Y\|_{1}^\frac23)\|\nabla W\|_1\bigr)\\
 &+|\nabla X|_1\bigl( |\p_3Y|_1\|\p_3W\|_1+(|\p_3Y|_{1}^\frac43\|\p_3Y\|_{0}^\frac23+|\p_3Y|_{1}^2)\|\na W\|_{1}\bigr).
\end{split}
\end{equation}
In general,  along the same line to the proof of \eqref{S7.2eq6}. we get, by using
 the estimates \eqref{S4.2eq7}, \eqref{S4.2eq11}, \eqref{S7.2eq2}, \eqref{S7.2eq3}, \eqref{S7.2eq5} and \eqref{S7.2eq6},
  that
\begin{equation}\label{S7.2eq7}
\begin{split}
&\|\nabla\vv p_1''(Y;X,W)\|_{N}\lesssim  \bigl(|\p_3X|_{N+1}+|\p_3X|_0|\nabla Y|_{N+1}\bigr)\|\p_3W\|_0+|\p_3X|_{0}\|\p_3W\|_{N+1}\\
&\ + |\p_3Y|_0\big(|\nabla X|_{N+1}\|\p_3W\|_0+|\nabla X|_0\|\p_3W\|_{N+1}\big)\\
&\ +\big(|\p_3Y|_{N+1}+|\p_3Y|_1|\nabla Y|_{N+1}\big)|\nabla X|_1\|\p_3W\|_1+\big(|\p_3Y|_0+|\p_3Y|_{1}^\frac13\|\p_3Y\|_1^\frac23\big)\\
&\ \times \big((|\p_3X|_{N+1}+|\nabla X|_1|\p_3Y|_{N+1})\|\nabla W\|_{1}+|\p_3X|_1\|\nabla W\|_{N+1}\big)\\
&\ +\big(|\p_3Y|_1^\frac43\|\p_3Y\|_0^\frac23+|\p_3Y|_1^2\big)\big((|\nabla X|_{N+1}+
|\nabla X|_1|\nabla Y|_{N+1})\|\nabla W\|_1+|\nabla X|_1\|\nabla W\|_{N+1}\big)\\
&\ +\bigl(|\p_3Y|_{N+1}+|\p_3Y|_{N+1}^\frac13\|\p_3Y\|_{N+1}^\frac23 +|\nabla Y|_{N+1}(|\p_3Y|_1+|\p_3Y|_0^\frac13\|\p_3Y\|_0^\frac23)\bigr)|\p_3X|_1
\|\nabla W\|_{1}.
\end{split}
\end{equation}
The same estimate holds for $\|I_4\|_N.$
By summing up the estimates of$I_1,I_2,I_3$  and $I_4$, we achieve
\begin{equation}\label{S7.2eq8}
\begin{split}
&\|f_1''(Y;X,W)\|_{N}
\lesssim \bigl(|\p_3X|_{N+1}+|\p_3X|_0|\nabla Y|_{N+1}\bigr)\|\p_3W\|_0\\
&\ + |\p_3Y|_0\big(|\nabla X|_{N+1}\|\p_3W\|_0+|\nabla X|_0\|\p_3W\|_{N+1}\big)+|\p_3X|_{0}\|\p_3W\|_{N+1}\\
&\ +\big(|\p_3Y|_{N+1}+|\p_3Y|_1|\nabla Y|_{N+1}\big)|\nabla X|_1\|\p_3W\|_1+\big(|\p_3Y|_0+|\p_3Y|_{1}^\frac13\|\p_3Y\|_1^\frac23\big)\\
&\  \times \big((|\p_3X|_{N+1}+|\nabla X|_1|\p_3Y|_{N+1})\|\nabla W\|_{1}+|\p_3X|_1\|\nabla W\|_{N+1}\big)\\
&\ +\big(|\p_3Y|_1^\frac43\|\p_3Y\|_0^\frac23+|\p_3Y|_1^2\big)\big((|\nabla X|_{N+1}+|\nabla X|_1|\nabla Y|_{N+1})\|\nabla W\|_1+|\nabla X|_1\|\nabla W\|_{N+1}\big)\\
&\ +\bigl(|\p_3Y|_{N+1}+|\p_3Y|_{N+1}^\frac13\|\p_3Y\|_{N+1}^\frac23 +(|\p_3Y|_1+|\p_3Y|_0^\frac13\|\p_3Y\|_0^\frac23)|\nabla Y|_{N+1}\big)|\p_3X|_1\|\nabla W\|_{1}.
\end{split}
\end{equation}
Then \eqref{S7.2eqE2} follows from \eqref{S7.2eq4} and \eqref{S7.2eq8}.
Exactly along the same line, we can prove \eqref{S7.2eqE3}, and we omit the details here. This complete the proof of Proposition \ref{S7.2prop1}.
\end{proof}

\subsection{The estimate of $\triplenorm{f''(Y;X,W)}_{\d,N} $ }

\begin{prop}\label{S7.3prop1}
{\sl Let $f_m''$, $m=0,1,2$ be given in \eqref{S7.1eq5} and \eqref{S7.1eq6} , the norm $ \triplenorm{\cdot}_{\d,N}$ be given by \eqref{S2eq19''}. Then under the assumption of \eqref{S4.1prop1assum},
  we have
\begin{equation}\label{S7.3eqE1}
\begin{split}
\triplenorm{f_0''(Y;X,W)}_{\d,N} \lesssim & | Y_t|_1\big(\|\nabla X\|_{N+6}\|\nabla W\|_0+\|\nabla X\|_0\|\nabla W\|_{N+6}\big)\\
&+\big(\|\nabla Y_t\|_{N+6}+| Y_t|_1\|\nabla Y\|_{N+6}\big)\bigl(|\nabla X|_0\|\nabla W\|_0+\|\nabla X\|_0|\nabla W|_0\bigr)\\
&+ \|\nabla X\|_0\|\nabla W_t\|_{N+6}+\bigl(\|\nabla X\|_{N+6}+|\nabla X|_0\|\nabla Y\|_{N+6}\bigr)\|\nabla W_t\|_0\\
&+\|\nabla W\|_0\|\nabla X_t\|_{N+6}+\bigl(\|\nabla W\|_{N+6}+|\nabla W|_0\|\nabla Y\|_{N+6}\bigr)\|\nabla X_t\|_0,
\end{split}
\end{equation}
and
\ben
\triplenorm{f_1''(Y;X,W)}_{\d,N}&\lesssim&\frak{f}_4(\p_3Y,\p_3X,\p_3W) \andf \label{S7.3eqE2}\\
\triplenorm{f_2''(Y;X,W)}_{\d,N}&\lesssim&\frak{f}_4(Y_t,X_t,W_t), \label{S7.3eqE3}
\een
where the functional $\frak{f}_4(\frak{x},\frak{y},\frak{z})$ is given by
\begin{equation*}
\begin{split}
\frak{f}_4(\frak{x},\frak{y},\frak{z})\eqdefa 
& \bigl(\|\frak{z}\|_{0}+|\frak{x}|_0\|\nabla W\|_{0}+\|\frak{x}\|_{0}|\nabla W|_0\bigr)\|\frak{y}\|_{N+6}+\bigl(\|\frak{x}\|_{0}|\nabla X|_0+\|\frak{y}\|_0\bigr)\|\frak{z}\|_{N+6}\\
&+|\frak{y}|_0\|\frak{z}\|_{0}\|\nabla Y\|_{N+6}
+ |\frak{x}|_0\big(\|\frak{z}\|_{0}\|\nabla X\|_{N+6}+\|\nabla X\|_{0}\|\frak{z}\|_{N+6}+\|\frak{y}\|_{0}\|\nabla W\|_{N+6}\big)\\
&+\big(\|\frak{x}\|_{N+6}+|\frak{x}|_0\|\nabla Y\|_{N+6}\big)\big(\|\nabla X\|_{0}|\frak{z}|_0 +|\nabla X|_1\|\frak{z}\|_{1}\big)\\
&+\big(\|\frak{x}\|_{N+6}+\|\frak{x}\|_{3}\|\nabla Y\|_{N+6}\big)\big(|\frak{y}|_1\|\nabla W\|_{1}+\|\frak{y}\|_{1}|\nabla W|_0\big)\\
&+ |\frak{x}|_{1}\|\frak{x}\|_{3}\big(\|\nabla X\|_{N+6}(\|\nabla W\|_0+|\nabla W|_0)+ (\|\nabla X\|_{0}+|\nabla X|_0)\|\nabla W\|_{N+6}\big)\\
&+\big(|\frak{x}|_{1}\|\frak{x}\|_{N+6}+|\frak{x}|_{1}\|\frak{x}\|_{3}\|\nabla Y\|_{N+6}\big)\big(|\nabla X|_0\|\nabla W\|_{1}+\|\nabla X\|_{1}|\nabla W|_0\big).
\end{split}
\end{equation*}
}
\end{prop}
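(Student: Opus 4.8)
\textbf{Proof strategy for Proposition \ref{S7.3prop1}.}
The plan is to mimic the structure of the proof of Proposition \ref{S7.2prop1}, replacing the $\dot H^k$/$\dot H^{-1}$ estimates there by $\dot B^s_{1,1}$ estimates, and then converting back to the norm $\triplenorm{\cdot}_{\d,N}$ via the interpolation inequality \eqref{S4.1eq17} exactly as in the proof of Proposition \ref{S4.1prop1}. Recalling \eqref{S2eq19''}, it suffices to bound $\|f_m''(Y;X,W)\|_{\dot B^s_{1,1}}$ for the two exponents $s=2\delta$ and $s=N+4$ (both positive, since $\delta>0$), because then $\triplenorm{f_m''}_{\d,N}=\||D|^{2\delta}f_m''\|_{L^1}+\||D|^{N+4}f_m''\|_{L^1}\lesssim\|f_m''\|_{\dot B^{2\delta}_{1,1}}+\|f_m''\|_{\dot B^{N+4}_{1,1}}$, and \eqref{S4.1eq17} upgrades each factor $\|\cdot\|_{\dot B^{s+1}_{2,1}}$ appearing on the right to the Sobolev norm $\|\cdot\|_{[s]+2}$, which after summing the two choices of $s$ gives the indices $N+6$, $N+1$, etc., displayed in $\frak f_4$.

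First I would isolate the algebraic building blocks. Combining Lemma \ref{S7.1lem1} with the $\dot B^s_{1,1}$ product laws \eqref{product1}, \eqref{product3} and their ${\mathcal A}$-versions \eqref{product4}, \eqref{product4a}, one gets $\dot B^s_{1,1}$-type bounds for ${\mathcal A}'(Y;X)$, $({\mathcal A}{\mathcal A}^t-Id)'(Y;X)$, ${\mathcal A}''(Y;X,W)$ and $({\mathcal A}{\mathcal A}^t-Id)''(Y;X,W)$; here one keeps the low-regularity factor always measured in $L^\infty$ (this is the reason for the remark after Proposition \ref{S7.2prop1}, and it works identically at the $\dot B^s_{1,1}$ level). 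For $f_0''$, I plug the expansion \eqref{S7.1eq5} into $\|\cdot\|_{\dot B^{s+1}_{1,1}}$, distribute over the three terms $({\mathcal A}{\mathcal A}^t-Id)''(Y;X,W)\nabla Y_t$, $({\mathcal A}{\mathcal A}^t-Id)'(Y;X)\nabla W_t$, $({\mathcal A}{\mathcal A}^t-Id)'(Y;W)\nabla X_t$, and use the product laws once more; this directly produces \eqref{S7.3eqE1} after applying \eqref{S4.1eq17}. For $f_1''$ (and symmetrically $f_2''$) I use the decomposition $f_1''=I_1+I_2+I_3+I_4$ from \eqref{S7.2eqf1}: $I_1$, $I_2$, $I_3$ are handled by Moser/product estimates together with the already-established bounds \eqref{S4.1eq3}, \eqref{S4.1eq5}, \eqref{S4.1eq6}, \eqref{S4.1eq9}, \eqref{S4.1eq13} on $\nabla\vv p_1$, $\nabla\vv p_1'(Y;X)$, $\nabla\vv p_1'(Y;W)$, plus the new ones needed in $W^{N,3}$ that were derived in Section \ref{Sect8} (e.g. \eqref{S7.2eq5}); $I_4$ requires the key estimate of $\|\nabla\vv p_1''(Y;X,W)\|_{\dot B^s_{1,1}}$, which I would obtain by applying \eqref{product3}/\eqref{product4a} term by term to the long formula \eqref{p1''}, absorbing the self-referential term $\|({\mathcal A}{\mathcal A}^t-Id)\nabla\vv p_1''\|_{\dot B^s_{1,1}}\lesssim\delta_1\|\nabla\vv p_1''\|_{\dot B^s_{1,1}}+\|\nabla Y\|_{\dot B^s_{2,1}}\|\nabla\vv p_1''\|_0$ into the left-hand side using \eqref{S4.1prop1assum}, and feeding back the $L^2$-bound \eqref{S7.2eq3} together with \eqref{S7.2eq6}, \eqref{S7.2eq7}.

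I expect the main obstacle to be the bookkeeping in the $I_4$/$\vv p_1''$ step: \eqref{p1''} has on the order of a dozen terms, several carrying a quadratic $\p_3Y\otimes\p_3Y$ factor together with two first derivatives ${\mathcal A}'$, and at each one must decide which factor is placed in $L^\infty$, which in $\dot B^{s+1}_{2,1}$, and which in the ``cubic'' slots measured by fractional powers $|\p_3Y|_1^{4/3}\|\p_3Y\|_0^{2/3}$ etc., so that the final estimate matches $\frak f_4$ and so that the pressure term never appears undifferentiated in $L^1$ (which would fail, since Riesz transforms are not bounded on $L^1$ — this is precisely why the Besov replacement is forced, cf.\ the discussion before Definition \ref{S0def1}). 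Once the pattern from the proof of Proposition \ref{S7.2prop1} and the Besov substitution from the proof of Proposition \ref{S4.1prop1} are both in hand, the remaining work is mechanical; I would carry out the $f_1''$ case in full and then simply remark that $f_2''$ follows by replacing $\p_3Y,\p_3X,\p_3W$ with $Y_t,X_t,W_t$ and using \eqref{p2''}, \eqref{p2} in place of \eqref{p1''}, \eqref{p1}.
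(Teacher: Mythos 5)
Your proposal follows essentially the same route as the paper: the paper first proves a Besov-space analogue of Lemma \ref{S7.1lem1} (its Lemma \ref{S7.3lem1}) for ${\mathcal A}'$, ${\mathcal A}''$ and $({\mathcal A}{\mathcal A}^t-Id)'$, $({\mathcal A}{\mathcal A}^t-Id)''$, then treats $f_0''$ directly from \eqref{S7.1eq5} and $f_1''$ via the same decomposition $I_1+I_2+I_3+I_4$ of \eqref{S7.2eqf1}, with the key step being the $\dot B^s_{1,1}$ bound on $\nabla\vv p_1''$ obtained from \eqref{p1''} by the product laws and absorption of the self-referential term, finishing with the interpolation \eqref{S4.1eq17}. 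The strategy, decomposition and toolbox you describe coincide with the paper's proof.
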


\begin{lem}\label{S7.3lem1}
{\sl Let $s>0.$ Then under the assumption of \eqref{S4.1prop1assum}, one has
\begin{align}
&\|{\mathcal A}'(Y;X)\|_{\dot B^{s}_{2,1}}+\|({\mathcal A}{\mathcal A}^t-Id)'(Y;X)\|_{\dot B^s_{2,1}}
\lesssim \|\nabla X\|_{\dot B^s_{2,1}}+|\nabla X|_0\|\nabla Y\|_{\dot B^s_{2,1}}, \label{S7.3eq-1}\\
&\|{\mathcal A}''(Y;X,W)\|_{\dot B^s_{1,1}}\lesssim \bigl(\|\nabla X\|_{\dot B^s_{2,1}}+|\nabla X|_0\|\nabla Y\|_{\dot B^s_{2,1}}\bigr)\|\nabla W\|_0+\|\nabla X\|_{0}\|\nabla W\|_{\dot B^s_{2,1}},
\label{S7.3eq1}
\end{align}
and
\begin{equation}\label{S7.3eq2}
\begin{split}
\|({\mathcal A}{\mathcal A}^t-Id)''(Y;X,W)\|_{\dot B^s_{1,1}}\lesssim &\bigl(\|\nabla X\|_{\dot B^s_{2,1}}+|\nabla X|_0\|\nabla Y\|_{\dot B^s_{2,1}}\bigr)\|\nabla W\|_0\\
&+\bigl(\|\nabla W\|_{\dot B^s_{2,1}}+|\nabla W|_0\|\nabla Y\|_{\dot B^s_{2,1}}\bigr)\|\nabla X\|_0.
\end{split}
\end{equation}
}
\end{lem}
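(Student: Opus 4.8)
\textbf{Proof proposal for Lemma \ref{S7.3lem1}.}

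The plan is to prove the four estimates in the same spirit as Lemma \ref{S7.1lem1}, but now working in the homogeneous Besov spaces $\dot B^s_{p,1}$ rather than in $L^\infty$ or $H^N$, so that the product laws \eqref{product1}, \eqref{product3} and the derived estimates \eqref{product4}, \eqref{product4a} can be applied. Throughout we use the Neumann expansions ${\mathcal A}-Id=\sum_{n\geq1}(-1)^n(\nabla Y)^n$ and ${\mathcal A}{\mathcal A}^t-Id=({\mathcal A}-Id)({\mathcal A}-Id)^t+({\mathcal A}-Id)+({\mathcal A}-Id)^t$, which under the smallness assumption \eqref{S4.1prop1assum} on $\|\nabla Y\|_{\dot B^{3/2}_{2,1}}$ converge and obey $\|{\mathcal A}-Id\|_{\dot B^s_{2,1}}\lesssim\|\nabla Y\|_{\dot B^s_{2,1}}$, $|{\mathcal A}-Id|_0\lesssim|\nabla Y|_0\lesssim\delta_1$, and similarly for ${\mathcal A}{\mathcal A}^t-Id$; the same holds for ${\mathcal A}^t$ by symmetry.

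First I would treat \eqref{S7.3eq-1}. From \eqref{S7.1eq1} we have ${\mathcal A}'(Y;X)={\mathcal A}(-\nabla X){\mathcal A}$; applying \eqref{product4} twice (once to pull out the left factor ${\mathcal A}$, once to the product $(-\nabla X){\mathcal A}$, using also \eqref{product1} for the latter) and absorbing $|{\mathcal A}-Id|_0\lesssim\delta_1$, one obtains $\|{\mathcal A}'(Y;X)\|_{\dot B^s_{2,1}}\lesssim\|\nabla X\|_{\dot B^s_{2,1}}+|\nabla X|_0\|\nabla Y\|_{\dot B^s_{2,1}}$. The estimate for $({\mathcal A}{\mathcal A}^t-Id)'(Y;X)$ follows from \eqref{S7.1eq3}, which expresses it as a sum of two terms each of the form ${\mathcal A}(\nabla X){\mathcal A}{\mathcal A}^t$ (or its transpose), handled by the same repeated application of \eqref{product4}, \eqref{product1}.

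Next, for \eqref{S7.3eq1} and \eqref{S7.3eq2} I would use \eqref{S7.1eq2}, ${\mathcal A}''(Y;X,W)={\mathcal A}(\nabla X){\mathcal A}(\nabla W){\mathcal A}+{\mathcal A}(\nabla W){\mathcal A}(\nabla X){\mathcal A}$, and the product rule \eqref{S7.1eq4} for $({\mathcal A}{\mathcal A}^t-Id)''$ (or equivalently \eqref{S7.2lem1eq1}). The point is that each resulting monomial is a product of (essentially) $Id+(\text{small in }\dot B^{3/2}_{2,1})$ factors together with one copy of $\nabla X$ and one copy of $\nabla W$ (and, for the $\dot B^s_{1,1}$ target, we must land the two ``genuine'' factors $\nabla X$, $\nabla W$ into $\dot B^s_{2,1}\times L^2$ or $L^2\times\dot B^s_{2,1}$). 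Concretely, to estimate $\|{\mathcal A}(\nabla X){\mathcal A}(\nabla W){\mathcal A}\|_{\dot B^s_{1,1}}$ I would first apply \eqref{product4a} to peel off the leftmost ${\mathcal A}$, producing $\|\nabla X\,{\mathcal A}(\nabla W){\mathcal A}\|_{\dot B^s_{1,1}}+\|\nabla Y\|_{\dot B^s_{2,1}}\|\nabla X\,{\mathcal A}(\nabla W){\mathcal A}\|_0$; then apply \eqref{product3} to $\nabla X$ times the rest, writing it as $\min(\cdot)$ and choosing the branch that keeps $\|\nabla W\|_0$ (so $\nabla W$ is measured in $L^2$ when $\nabla X$ carries the $\dot B^s_{2,1}$ weight and vice versa); the interior factor $\|{\mathcal A}(\nabla W){\mathcal A}\|_{\dot B^s_{2,1}}$ is then controlled by \eqref{product4}, \eqref{product1} as $\lesssim\|\nabla W\|_{\dot B^s_{2,1}}+\|\nabla Y\|_{\dot B^s_{2,1}}|\nabla W|_0$, and its $L^2$ norm by $\lesssim\|\nabla W\|_0$ (using $|{\mathcal A}-Id|_0\lesssim\delta_1$). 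Summing the two monomials in \eqref{S7.1eq2} gives \eqref{S7.3eq1}; for \eqref{S7.3eq2} the cross terms ${\mathcal A}'(Y;X)({\mathcal A}^t)'(Y;W)$ and ${\mathcal A}'(Y;W)({\mathcal A}^t)'(Y;X)$ in \eqref{S7.2lem1eq1} are symmetric in $X,W$ and produce exactly the two symmetric contributions on the right-hand side of \eqref{S7.3eq2}, while the terms ${\mathcal A}''(Y;X,W){\mathcal A}^t$ and ${\mathcal A}({\mathcal A}^t)''(Y;X,W)$ are covered by \eqref{S7.3eq1}.

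The main obstacle — really the only delicate bookkeeping point — is the correct \emph{routing} of the $\min$ in \eqref{product3}: at each application one must decide which of the two genuine factors ($\nabla X$, $\nabla W$, or later $\nabla X_t$, etc.) keeps its $L^2$ norm and which is promoted to $\dot B^s_{2,1}$, and do so consistently so that the final bound matches the stated right-hand sides (in particular so that $\nabla W$ never appears in a high Besov norm multiplying $\nabla X$ in a high Besov norm). Once the monomials of \eqref{S7.1eq2}, \eqref{S7.1eq4} are expanded and each is routed as above, the estimates \eqref{S7.3eq-1}--\eqref{S7.3eq2} drop out by collecting terms; the smallness \eqref{S4.1prop1assum} is used only to guarantee convergence of the Neumann series and to absorb the $\delta_1$-factors coming from $|{\mathcal A}-Id|_0$. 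This lemma will then feed, exactly as Lemma \ref{S7.1lem1} does in Proposition \ref{S7.2prop1}, into the proof of Proposition \ref{S7.3prop1}.
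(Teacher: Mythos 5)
Your proposal is correct and follows essentially the same route as the paper: expand ${\mathcal A}'$, ${\mathcal A}''$ and $({\mathcal A}{\mathcal A}^t-Id)''$ via \eqref{S7.1eq1}, \eqref{S7.1eq2}, \eqref{S7.2lem1eq1}, then apply the product laws \eqref{product1}, \eqref{product3} (together with \eqref{product4}, \eqref{product4a} and $\|{\mathcal A}-Id\|_{\dot B^s_{2,1}}\lesssim\|\nabla Y\|_{\dot B^s_{2,1}}$ from the Neumann series), choosing at each step which of $\nabla X,\nabla W$ carries the $\dot B^s_{2,1}$ norm so that the other stays in $L^2$. This is exactly the paper's argument, so no further comment is needed.
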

\begin{proof}
Indeed by virtue of \eqref{S7.1eq1}, we get, by applying the law of product \eqref{product1}, that
\begin{align*}
\|{\mathcal A}'(Y;X)\|_{\dot B^{s}_{2,1}}
&\leq\big(1+ |{\mathcal A}-Id|_0\big)\|\nabla X{\mathcal A}\|_{\dot B^s_{2,1}}+\|{\mathcal A}-Id\|_{\dot B^s_{2,1}}|\nabla X{\mathcal A}|_0.
\end{align*} Similarly, we deduce estimates for $(\cA\cA^t-Id)'(Y;X)$ from \eqref{S7.1eq3} and complete the proof of \eqref{S7.3eq-1}.
While it follows from \eqref{S7.1eq2} and the law of product \eqref{product3} that
\begin{equation*}
\begin{split}
\|{\mathcal A}''(Y;X,W)\|_{\dot B^s_{1,1}}
& \lesssim \|\nabla X\|_{\dot B^s_{2,1}}\|{\mathcal A}\nabla W\|_0+\|\nabla X\|_{0}\|{\mathcal A}\nabla W\|_{\dot B^s_{2,1}}+\|\nabla Y\|_{\dot B^s_{2,1}}|\nabla X|_0\|\nabla W\|_0,
\end{split}
\end{equation*} which yields \eqref{S7.3eq1}.

Finally it is easy to observe from the law of product and the previous estimates  that
\begin{align*}
\|{\mathcal A}''(Y;X,W){\mathcal A}^t\|_{\dot B^s_{1,1}}
&\lesssim \bigl(\|\nabla X\|_{\dot B^s_{2,1}}+|\nabla X|_0\|\nabla Y\|_{\dot B^s_{2,1}}\bigr)\|\nabla W\|_0+\|\nabla X\|_{0}\|\nabla W\|_{\dot B^s_{2,1}},
\end{align*}
and
\begin{align*}
\|{\mathcal A}'(Y;X)({\mathcal A}^t)'(Y;W)\|_{\dot B^s_{1,1}}
\lesssim& \big(\|\nabla X\|_{\dot B^s_{2,1}}+|\nabla X|_0\|\nabla Y\|_{\dot B^s_{2,1}}\big)\|\nabla W\|_0\\
&+\big(\|\nabla W\|_{\dot B^s_{2,1}}+|\nabla W|_0\|\nabla Y\|_{\dot B^s_{2,1}}\big)\|\nabla X\|_0,
\end{align*}
so that we can deduce \eqref{S7.3eq2} from \eqref{S7.2lem1eq1}.
\end{proof}

\begin{proof}[Proof of Proposition \ref{S7.3prop1}] Again we divide the proof of this proposition into the following steps:

\no\underline{{\bf Step 1}. Estimate  of $f_0''(Y;X,W)$.}
We first deduce from the law of product \eqref{product3} and  Lemmas \ref{S7.1lem1}, \ref{S7.3lem1} that
\begin{align*}
\|({\mathcal A}{\mathcal A}^t-Id)''(Y;&X,W)\nabla Y_t\|_{\dot B^{s+1}_{1,1}}
\lesssim | Y_t|_1\big(\|\nabla W\|_0\|\nabla X\|_{\dot B^{s+1}_{2,1}}+\|\nabla X\|_0\|\nabla W\|_{\dot B^{s+1}_{2,1}}\big)\\
&\qquad+\big(\|\nabla Y_t\|_{\dot B^{s+1}_{2,1}}+| Y_t|_1\|\nabla Y\|_{\dot B^{s+1}_{2,1}}\big)\bigl(|\nabla X|_0\|\nabla W\|_0+\|\nabla X\|_0|\nabla W|_0\bigr),
\end{align*}
and
\begin{align*}
\|({\mathcal A}{\mathcal A}^t&-Id)'(Y;X)\nabla W_t\|_{\dot B^{s+1}_{1,1}}\lesssim \|\nabla X\|_0\|\nabla W_t\|_{\dot B^{s+1}_{2,1}}+\bigl(\|\nabla X\|_{\dot B^{s+1}_{2,1}}+|\nabla X|_0\|\nabla Y\|_{\dot B^{s+1}_{2,1}}\bigr)\|\nabla W_t\|_0,
\end{align*}
and
\begin{align*}
\|({\mathcal A}{\mathcal A}^t-Id)'(Y;W)\nabla X_t\|_{\dot B^{s+1}_{1,1}}
\lesssim & \|\nabla W\|_0\|\nabla X_t\|_{\dot B^{s+1}_{2,1}}+\bigl(\|\nabla W\|_{\dot B^{s+1}_{2,1}}+|\nabla W|_0\|\nabla Y\|_{\dot B^{s+1}_{2,1}}\bigr)\|\nabla X_t\|_0.
\end{align*}
Hence by virtue of \eqref{S7.1eq5}, we conclude that
\begin{equation}\label{S7.3eq3}
\begin{split}
\|f_0''(Y;X,W)\|_{\dot B^s_{1,1}}
 \lesssim& | Y_t|_1\big(\|\nabla W\|_0\|\nabla X\|_{\dot B^{s+1}_{2,1}}+\|\nabla X\|_0\|\nabla W\|_{\dot B^{s+1}_{2,1}}\big)\\
&+\big(\|\nabla Y_t\|_{\dot B^{s+1}_{2,1}}+| Y_t|_1\|\nabla Y\|_{\dot B^{s+1}_{2,1}}\big)(|\nabla X|_0\|\nabla W\|_0+\|\nabla X\|_0|\nabla W|_0)\\
&+ \|\nabla X\|_0\|\nabla W_t\|_{\dot B^{s+1}_{2,1}}+\bigl(\|\nabla X\|_{\dot B^{s+1}_{2,1}}+|\nabla X|_0\|\nabla Y\|_{\dot B^{s+1}_{2,1}}\bigr)\|\nabla W_t\|_0\\
&+\|\nabla W\|_0\|\nabla X_t\|_{\dot B^{s+1}_{2,1}}+\bigl(\|\nabla W\|_{\dot B^{s+1}_{2,1}}+|\nabla W|_0\|\nabla Y\|_{\dot B^{s+1}_{2,1}}\bigr)\|\nabla X_t\|_0.
\end{split}
\end{equation}
Then \eqref{S7.3eqE1} follows from \eqref{S7.3eq3} and the interpolation inequality \eqref{S4.1eq17}.

\medskip

\no\underline{{\bf Step 2.} Estimate  of $f_m''(Y;X,W)$.}
Again we only present the  estimates of $f_1''(Y;X,W)$. Recall \eqref{S7.2eqf1}, we shall split the estimate of $f_1''(Y;X,W)$ into the following 4 parts:

\noindent{(i) Estimate of $I_1.$} It follows from \eqref{S4.1eq6} and \eqref{S7.3eq1} that
\begin{align*}
\|I_1\|_{\dot B^s_{1,1}}
&\lesssim \|({\mathcal A}^t)''(Y;X,W)\|_{\dot B^s_{1,1}}|\nabla\vv p_1|_0+\|({\mathcal A}^t)''(Y;X,W)\|_0\|\nabla \vv p_1\|_{\dot B^s_{2,1}}\\
&\lesssim \big(\|\nabla X\|_{\dot B^s_{2,1}}\|\nabla W\|_0+\|\nabla X\|_0\|\nabla W\|_{\dot B^s_{2,1}}\big)|\p_3Y|_0\|\p_3Y\|_{3}\\
&\qquad +\bigl(\|\p_3Y\|_{\dot B^{s+1}_{2,1}}+\|\p_3Y\|_{3}\|\nabla Y\|_{\dot B^{s+1}_{2,1}}\bigr)|\p_3Y|_0|\nabla X|_0\|\nabla W\|_0.
\end{align*}

\noindent{(ii) Estimate of $I_2$.}
We deduce from  \eqref{S4.1eq13} and \eqref{S4.2eq11} that
\begin{align*}
\|I_2\|_{\dot B^s_{1,1}}
&\lesssim |({\mathcal A}^t)'(Y;X)|_0\|\nabla\big(\vv p_1'(Y;W)\big)\|_{\dot B^s_{1,1}}+\|({\mathcal A}^t)'(Y;X)\|_{\dot B^s_{2,1}}\|\nabla\big(\vv p_1'(Y;W)\big)\|_0\\
&\lesssim  \bigl(|\p_3Y|_{1}\|\p_3W\|_{1}+\big(|\p_3Y|_{1}^\frac43\|\p_3Y\|_{0}^\frac23+|\p_3Y|_{1}^2\big)\|\na W\|_{1}\bigr)\|\nabla X\|_{\dot B^s_{2,1}}\\
&\quad+\Bigl(\bigl(\|\p_3W\|_{1}+|\p_3Y|_{1}\|\na W\|_{1}\bigr)\|\p_3Y\|_{\dot B^{s+1}_{2,1}}+\bigl(\|\p_3W\|_{1}+\|\p_3Y\|_{3}\|\na W\|_{1}\bigr)\times
\\
&\qquad\quad\times |\p_3Y|_{1}\|\nabla Y\|_{\dot B^{s+1}_{2,1}}
 + \bigl(\|\p_3W\|_{\dot B^{s+1}_{2,1}}
+|\p_3Y|_{0}\|\nabla W\|_{\dot B^{s+1}_{2,1}}\bigr)\|\p_3Y\|_{0}\Bigr)|\nabla X|_0.
\end{align*}

\noindent{(iii) Estimate of $I_3.$} Similar to the estimate of $I_2,$ we have
\begin{align*}
\|I_3\|_{\dot B^s_{1,1}}
&\lesssim  \bigl(|\p_3Y|_{1}\|\p_3X\|_{1}+(|\p_3Y|_{1}^\frac43\|\p_3Y\|_{0}^\frac23+|\p_3Y|_{1}^2)\|\na X\|_{1}\bigr)\|\nabla W\|_{\dot B^s_{2,1}}\\
&+\Bigl(\big(\|\p_3X\|_{1}+|\p_3Y|_{1}
\|\nabla X\|_{1}\bigr)\|\p_3Y\|_{\dot B^{s+1}_{2,1}}+\bigl(\|\p_3X\|_{1}+\|\p_3Y\|_{3}\|\nabla X\|_{1}\big)\times\\
& \qquad\times|\p_3Y|_{1}\|\nabla Y\|_{\dot B^{s+1}_{2,1}}
+\bigl(|\p_3Y|_{0}\|\nabla X\|_{\dot B^{s+1}_{2,1}}+ |\nabla W|_0\|\p_3X\|_{\dot B^{s+1}_{2,1}}\bigr)\|\p_3Y\|_{0}\Bigr)|\nabla W|_0.
\end{align*}

\noindent{(iv) Estimate of $I_4.$} We first deduce from the law of product \eqref{product3} that
\begin{align*}
\|I_4\|_{\dot B^s_{1,1}}
&\lesssim\|\nabla \big(\vv p_m''(Y;X,W)\big)\|_{\dot B^s_{1,1}}+\|\nabla Y\|_{\dot B^s_{2,1}}\|\nabla\big(\vv p_m''(Y;X,W)\big)\|_0.
\end{align*}
Thanks to \eqref{S7.2eq6}, it remains to handle the
estimate of $\|\nabla \big(\vv p_m''(Y;X,W)\big)\|_{\dot B^s_{1,1}}.$
As a matter of fact, thanks to  \eqref{p1''}, by  applying the laws of product, \eqref{product1} and \eqref{product3}, and using the estimates \eqref{S4.1eq6},
 \eqref{S4.1eq13}, \eqref{S4.2eq11}, \eqref{S7.2eq6} and \eqref{S7.3eq1},
  we obtain
\begin{equation}\label{S7.3eq4}
\begin{split}
\|&\nabla\vv p_1''(Y;X,W)\|_{\dot B^s_{1,1}}
\lesssim \bigl(\|\p_3W\|_0+|\p_3Y|_0\|\nabla W\|_{0}+\|\p_3Y\|_{0}|\nabla W|_0\bigr)\|\p_3X\|_{\dot B^{s+1}_{2,1}}\\
&+\bigl(\|\nabla X\|_{0}|\p_3W|_0 +|\nabla X|_1\|\p_3W\|_{1}+|\p_3X|_1\|\nabla W\|_{1}+\|\p_3X\|_{1}|\nabla W|_0\big)\|\p_3Y\|_{\dot B^{s+1}_{2,1}}\\
&+ \bigl(\|\p_3X\|_0+\|\nabla X\|_{0}|\p_{3}Y|_0+\|\p_3Y\|_{0}|\nabla X|_0\bigr)\|\p_3W\|_{\dot B^{s+1}_{2,1}}+\Bigl(|\p_3X|_0\|\p_3W\|_{0}\\
&+|\p_3Y|_0(\|\nabla X\|_{0}|\p_3W|_0 +|\nabla X|_1\|\p_3W\|_{1})+|\p_3Y|_{1}\|\p_3X\|_{1}|\nabla W|_0\\
&+\bigl(|\p_3Y|_0+ |\p_3Y|_1^\frac13\|\p_3Y\|_{1}^\frac23\big)|\p_3X|_1\|\nabla W\|_{1}\Bigr)\|\nabla Y\|_{\dot B^{s+1}_{2,1}}+ |\p_3Y|_{0}\Bigl(\bigl(\| \p_{3}W\|_{0}\\
&+\|\p_3Y\|_{0}|\nabla W|_0\bigr)\|\nabla X\|_{\dot B^{s+1}_{2,1}}+\bigl(\| \p_{3}X\|_{0}+\|\p_3Y\|_{0}|\nabla X|_0\bigr)\|\nabla W\|_{\dot B^{s+1}_{2,1}}\Bigr)\\
&+ \big(|\p_3Y|_{1}^\frac43\|\p_3Y\|_{0}^\frac23+|\p_3Y|_{1}\|\p_3Y\|_{3}\big)\big(\|\nabla X\|_{\dot B^{s+1}_{2,1}}\|\nabla W\|_0+ \|\nabla X\|_{0}\|\nabla W\|_{\dot B^{s+1}_{2,1}}\big)\\
&+\big(|\p_3Y|_{1}\|\p_3Y\|_{\dot B^{s+1}_{2,1}}+|\p_3Y|_{1}\|\p_3Y\|_{3}\|\nabla Y\|_{\dot B^{s+1}_{2,1}}\big)\big(|\nabla X|_0\|\nabla W\|_{1}+\|\nabla X\|_{1}|\nabla W|_0\big),
\end{split}
\end{equation}
The same  estimate  holds for $\|I_4\|_{\dot B^s_{1,1}}$.

 By  summing up the above estimates of $I_1,I_2,I_3,I_4$, we arrive at
\begin{equation}\label{S7.3eq5}
\begin{split}
\|&f_1''(Y;X,W)\|_{\dot B^s_{1,1}}\lesssim \bigl(\|\p_3W\|_0+|\p_3Y|_0\|\nabla W\|_{0}+\|\p_3Y\|_{0}|\nabla W|_0\bigr)\|\p_3X\|_{\dot B^{s+1}_{2,1}}\\
&\qquad+\bigl(\|\p_3X\|_0+|\p_{3}Y|_0\|\nabla X\|_{0}+\|\p_3Y\|_{0}|\nabla X|_0\bigr)\|\p_3W\|_{\dot B^{s+1}_{2,1}}\\
&\qquad+ \bigl(|\p_{3}Y|_0\| \p_{3}W\|_{0}+(|\p_3Y|_{1}\|\nabla W\|_0+|\p_3Y|_{0}|\nabla W|_0)\|\p_3Y\|_{3}\bigr)\|\nabla X\|_{\dot B^{s+1}_{2,1}}\\
&\qquad+\bigl(|\p_3Y|_0\| \p_{3}X\|_{0}+(|\p_3Y|_{1}\|\nabla X\|_{0}+|\p_3Y|_{0}|\nabla X|_0)\|\p_3Y\|_{3}\big)\|\nabla W\|_{\dot B^{s+1}_{2,1}}\\
&\qquad+|\p_3X|_0\|\p_3W\|_{0}\|\nabla Y\|_{\dot B^{s+1}_{2,1}}+\big(\|\p_3Y\|_{\dot B^{s+1}_{2,1}}+|\p_3Y|_{1}\|\nabla Y\|_{\dot B^{s+1}_{2,1}}\big)\|\p_3X\|_{1}|\nabla W|_0
\\
&\qquad+\big(\|\p_3Y\|_{\dot B^{s+1}_{2,1}}+|\p_3Y|_0\|\nabla Y\|_{\dot B^{s+1}_{2,1}}\big)\big(\|\nabla X\|_{0}|\p_3W|_0 +|\nabla X|_1\|\p_3W\|_{1}\big)\\
&\qquad+\big(\|\p_3Y\|_{\dot B^{s+1}_{2,1}}+(|\p_3Y|_0+ |\p_3Y|_1^\frac13\|\p_3Y\|_{1}^\frac23)\|\nabla Y\|_{\dot B^{s+1}_{2,1}}\big)|\p_3X|_1\|\nabla W\|_{1}\\
&\qquad+\big(|\p_3Y|_{1}\|\p_3Y\|_{\dot B^{s+1}_{2,1}}+|\p_3Y|_{1}\|\p_3Y\|_{3}\|\nabla Y\|_{\dot B^{s+1}_{2,1}}\big)\big(|\nabla X|_0\|\nabla W\|_{1}+\|\nabla X\|_{1}|\nabla W|_0\big).
\end{split}
\end{equation}
Then \eqref{S7.3eqE2} follows from \eqref{S7.3eq5} and interpolation inequality \eqref{S4.1eq17}.
Finally, the proof of \eqref{S7.3eqE3} follows along the same line to that of \eqref{S7.3eqE2}. We omit the details here.
This completes the proof of Proposition \ref{S7.3prop1}. \end{proof}




\section{The proof of Theorem \ref{Th1}}\label{Sect10}
The goal of this section is to prove Theorem  \ref{Th1} by using Nash-Moser scheme.
The key ingredients  are the uniform estimates of the approximate solutions obtained in  Propositions \ref{S9prop1}, \ref{S9col1} and \ref{S9prop2}, which we will prove by induction in what follows.

\subsection{The estimates of $Y_0$}

Recall that $Y_0$ solves the linear equation \eqref{S9.1eq2}.
Let $\bar{N}_0=N_0+6$, for $\eta\in]0,1[$,  we choose the initial data $(Y^{(0)},Y^{(1)})$  such that \eqref{S1eq4} holds for $L_0= N_0+12$.
Then we get, by applying \eqref{S2eq3as} of Proposition \ref{S0prop1}, that
\begin{equation}\label{S9.2eq1}
\begin{split}
|\p_3Y_0|_{1,\bar N_0}+ &|\p_tY_{0}|_{\frac32-\delta,\bar N_0}+ |Y_0|_{\frac12,\bar N_0}\\
\leq& C_{N_0}\bigl(\||D|^{2\delta}(Y^{(0)},Y^{(1)})\|_{L^1}+\||D|^{\bar{N}_0+4}(|D|^2Y^{(0)},Y^{(1)})\|_{L^1}\bigr)\leq \eta.
\end{split}
\end{equation}
Note that \beno
\||D|^{-1}h\|_0\leq \Bigl(\int_{|\xi|\leq 1}\f1{|\xi|^2}|\hat{h}(\xi)|^2\,d\xi\bigr)^{\f12}+\|h\|_0\leq |\hat{h}|_0+\|h\|_0\leq
\|h\|_{L^1}+\|h\|_0,\eeno
so that we get,  by applying  \eqref{S2eq13as}, \eqref{S2eq16} and \eqref{S2eq15'as} of Proposition \ref{S0prop1}, that
\beq \label{S9.2eq2}
\begin{split}
&\||D|^{-1}(\p_3Y_0,\p_tY_{0})\|_{0,\bar N_0+2}+\|\nabla Y_0\|_{0,\bar N_0+1}+\|\nabla \p_tY_{0}\|_{1,\bar N_0-1}\\
&+\|(\p_tY_{0},\p_3Y_{0})\|_{\frac12,\bar N_0+1}+\|\p_tY_{0}\|_{L_t^2(H^{\bar N_0+2})}+\bigl\|(\p_3Y_0,\langle t\rangle^\frac12\nabla \p_tY_{0})\bigr\|_{L_t^2(H^{\bar N_0+1})}\\
&\leq C_{\bar N_0}\||D|^{-1}(\p_3Y^{(0)},Y^{(1)},\Delta Y^{(0)})\|_{\bar N_0+2}\\
&
\leq C_{\bar N_0}\bigl(\|(\p_3Y^{(0)},Y^{(1)},\Delta Y^{(0)})\|_{L^1}+\|(\p_3Y^{(0)},Y^{(1)},\Delta Y^{(0)})\|_{\bar N_0+1} \bigr)
\leq \eta.
\end{split}
\eeq
By virtue of \eqref{S9.2eq1} and \eqref{S9.2eq2}, we deduce from Proposition \ref{S6prop2} that
 \beq \label{S9.2eq3}
\begin{split}
\|\langle t\rangle|D|^{-1}f&(Y_0)\|_{L_t^2(H^{N_0+1})}
\lesssim |\p_3Y_0|_{1,0}\|\p_3Y_0\|_{L_t^2(H^{N_0+1})}
+ |\p_tY_{0}|_{1,0}\|\p_tY_{0}\|_{L_t^2(H^{N_0+1})}\\
&
+|\nabla Y_{0}|_{\frac12,0}\|\langle t\rangle^\frac12\nabla \p_tY_{0}\|_{L_t^2(H^{N_0+1})}+|\nabla Y_{0}|_{\frac12,N_0+1}\|\langle t\rangle^\frac12\nabla \p_tY_{0}\|_{L_t^2(L^2)}\\
&+|\nabla Y_0|_{0,N_0+1}\bigl(|\p_3Y_0|_{1,0}\|\p_3Y_0\|_{L_t^2(H^{1})}
+ |\p_tY_{0}|_{1,0}\|\p_tY_{0}\|_{L_t^2(H^{1})}\bigr)
\lesssim C_{N_0}\eta^2,
\end{split}\eeq
and
\beq\label{S9.2eq3'}
\begin{split}
\||D|^{-1}f(Y_0)&\|_{\frac32,N_0+1}
\lesssim |\p_3Y_0|_{1,0}\|\p_3Y_0\|_{\frac12,N_0+1}+ |\p_tY_{0}|_{1,0}\|\p_tY_{0}\|_{\frac12,N_0+1}\\
&+|\nabla Y_{0}|_{\frac12,0}\|\nabla \p_tY_{0}\|_{1,N_0+1}+|\nabla Y_{0}|_{\frac12,N_0+1}\|\nabla \p_tY_{0}\|_{1,0}\\
& +|\nabla Y_0|_{0,N_0+1}\big(|\p_3Y_0|_{1,0}\|\p_3Y_0\|_{\frac12,1}+|\p_tY_{0}|_{1,0}\|\p_tY_{0}\|_{\frac12,1}\big)
\lesssim C_{N_0}\eta^2.
\end{split} \eeq
Similarly, we deduce from Proposition \ref{S6prop1} and \eqref{S9.2eq1}, \eqref{S9.2eq2} that
\begin{equation}\label{S9.2eq4}
\begin{split}
&\triplenorm{\langle t\rangle^\frac12 f(Y_0)}_{L_t^2(\d,N_0)}
\lesssim \|\nabla Y_0\|_{0,0} \| \langle t\rangle^\frac12\nabla \p_tY_0\|_{L_t^2(H^{N_0+6})}+\|\nabla Y_0\|_{0,N_0+6}\| \langle t\rangle^\frac12\nabla \p_tY_0\|_{L_t^2(L^2)}\\
&\quad\qquad+\|\p_3Y_0\|_{\frac12,0}\|\p_3Y_0\|_{L_t^2(H^{N_0+6})}+\|\p_tY_{0}\|_{\frac12,0}\|\p_tY_{0}\|_{L_t^2(H^{N_0+6})}\\
&\qquad\quad+\|\nabla Y_0\|_{0,N_0+6}\big(|\p_3Y_0|_{\frac12,0}\|\p_3Y_0\|_{L_t^2(H^1)}+|\p_tY_{0}|_{\frac12,0}\|\p_tY_{0}\|_{L_t^2(H^1)}\big)
\lesssim C_{N_0}\eta^2.
\end{split}
\end{equation}

\subsection{The proof of Proposition \ref{S9col1} and Proposition \ref{S9prop2} from Proposition \ref{S9prop1}}

Let us assume that
\beq \label{assumS91}
 ({\rm P}1, j), \ ({\rm P}2, j),\  ({\rm P}3, j)\ \text{of Proposition \ref{S9prop1} hold for} \ j\leq p,
\eeq
 we are going to prove Proposition \ref{S9col1} and Proposition \ref{S9prop2}.

\begin{proof}[Proof of Proposition \ref{S9col1}]
Notice from \eqref{S9.1eq5} that
$$|\p_3Y_{p+1}|_{k,N}\leq |\p_3Y_0|_{k,N}+\sum_{j=0}^p|\p_3X_{j}|_{k,N},
$$
which together with \eqref{S9.2eq1} and (P2, $j$) with $j\leq p$ ensures that for $\frac12\leq k\leq1$, $0\leq N\leq N_0$,
\begin{equation}\label{Ieq1}
\begin{split}
& |\p_3Y_{p+1}|_{k,N}\leq C \eta \theta_{p+1}^{k-\frac12-\gamma+\bar\varepsilon N},\quad  \text{if $k-\frac12-\gamma+\varepsilon N\geq\bar\varepsilon$,}\\
& |\p_3Y_{p+1}|_{k,N}\leq C \eta ,\qquad\qquad \qquad \text{if $k-\frac12-\gamma+\varepsilon N\leq-\bar\varepsilon$}.
\end{split}
\end{equation}
While for $\hat k\eqdefa\min(k,1)$, $\hat N\eqdefa\min(N,N_0),$ we observe from  the property \eqref{S9eqSI} of smoothing operator $S_{p+1}$ that
\beno
\begin{split}
&|S_{p+1}\p_3Y_{p+1}|_{k,N}\leq C|\p_3Y_{p+1}|_{k,N} \quad \mbox{for}\quad \frac12\leq k\leq 1, 0\leq N\leq N_0,\\
&|S_{p+1}\p_3Y_{p+1}|_{k,N}\leq C_{k,N}\theta_{p+1}^{\max (0,k-\hat k)}\theta_{p+1}^{\bar\varepsilon\max(0,N-\hat N)}|\p_3Y_{p+1}|_{\hat k,\hat N}
\quad \mbox{for}\quad k\geq 1\ \mbox{ or }\ N\geq N_0, \end{split}
\eeno
the first inequalities of (I)(i) and (II)(i) of Proposition \ref{S9col1} then follow from \eqref{Ieq1}.

Along the same line to proof of \eqref{Ieq1}, we have
\begin{itemize}
\item
for $1-\delta\leq k\leq\frac32-\delta$, $0\leq N\leq N_0$,
\begin{equation}\label{Ieq2}
\begin{split}
& |\p_tY_{p+1}|_{k,N}\leq C \eta \theta_{p+1}^{k-(1-\delta)-\gamma+\bar\varepsilon N},\quad  \text{if $k-(1-\delta)-\gamma+\varepsilon N\geq\bar\varepsilon$,}\\
& |\p_tY_{p+1}|_{k,N}\leq C \eta ,\qquad\qquad\quad \qquad \ \text{if $k-(1-\delta)-\gamma+\varepsilon N\leq-\bar\varepsilon$};
\end{split}\end{equation}
\item
for $0\leq k\leq\frac12$, $0\leq N\leq N_0$,
\begin{equation}\label{Ieq3}
\begin{split}
& |Y_{p+1}|_{k,N}\leq C \eta \theta_{p+1}^{k-\gamma+\bar\varepsilon N},\quad  \text{if $k-\gamma+\varepsilon N\geq\bar\varepsilon$,}\\
& |Y_{p+1}|_{k,N}\leq C \eta ,\qquad\qquad  \ \ \text{if $k-\gamma+\varepsilon N\leq-\bar\varepsilon$}.
\end{split}\end{equation}
\end{itemize}
Then  other inequalities in (I)(i) and (II)(i) of Proposition \ref{S9col1} follows.

(I)(ii) and (II)(ii) of Proposition \ref{S9col1} follow from property \eqref{S9eqSI} of the mollifying operator and the following fact
\begin{equation}\label{Ieq4}
\begin{split}
&\bigl\||D|^{-1}(\p_3Y_{p+1},\p_tY_{p+1})\bigr\|_{0,N+2}+\|\nabla Y_{p+1}\|_{0,N+1}+\|\p_tY_{p+1}\|_{L_t^2(H^{N+2})}\\
&\quad+ \|(\p_tY_{p+1},\p_3Y_{p+1})\|_{\frac12,N+1}+\|\nabla \p_tY_{p+1}\|_{1,N-1}
+\bigl\|(\p_3Y_{p+1},\langle t\rangle^\frac12\nabla  \p_tY_{p+1})\bigr\|_{L_t^2(H^{N+1})}\\
&\quad \leq
\begin{cases}
C\eta\theta_{p+1}^{-\beta+\bar \varepsilon N},\quad\text{for }-\beta+\bar\varepsilon N\geq\bar \varepsilon,\ N\leq N_0,\\
C\eta,\qquad\qquad\ \text{for }-\beta+\bar\varepsilon N\leq-\bar \varepsilon, \ N\leq N_0,
\end{cases}
\end{split}
\end{equation}
which is a direct consequence of (P1,$j$) of  Proposition \ref{S9prop1} for $j\leq p$ and \eqref{S9.2eq2}.

Finally let us prove (III) of Proposition \ref{S9col1}. Indeed it follows from
property \eqref{S9eqSII} of $S_{p+1}$ that
\begin{align*}
|(1-S_{p+1})\p_3Y_{p+1}|_{\frac12,0}\leq C\big(\theta_{p+1}^{-\frac12}|\p_3Y_{p+1}|_{1,0}+\theta_{p+1}^{-\bar\varepsilon N_0}|\p_3Y_{p+1}|_{\frac12,N_0}\big)
\end{align*}
While due to  \eqref{S9eqP} and \eqref{S9eqP'}, there hold $\frac12-\gamma\geq\bar\varepsilon$ and $-\gamma+\bar\varepsilon N_0\geq\bar\varepsilon$, so that
 we can apply \eqref{Ieq1} to deduce that
\begin{align}\label{IIIeq1}
|(1-S_{p+1})\p_3Y_{p+1}|_{\frac12,0}\leq C\eta\big(\theta_{p+1}^{-\frac12}\theta_{p+1}^{\frac12-\gamma}+\theta_{p+1}^{-\bar\varepsilon N_0}\theta_{p+1}^{-\gamma+\bar\varepsilon N_0}\big)\leq C\eta\theta_{p+1}^{-\gamma}.
\end{align}
Using \eqref{Ieq1} once  again gives rise to
\begin{align}
&|(1-S_{p+1})\p_3Y_{p+1}|_{1,N}\leq C|\p_3Y_{p+1}|_{1,N}\leq \eta\theta_{p+1}^{\frac12-\gamma+\bar \varepsilon N} \qquad\quad\mbox{for}\quad 0\leq N\leq N_0, \label{IIIeq2}\\
&|(1-S_{p+1})\p_3Y_{p+1}|_{k,N_0}\leq C|\p_3Y_{p+1}|_{k,N_0}\leq \eta\theta_{p+1}^{k-\frac12-\gamma+\bar \varepsilon N_0} \quad\mbox{for}\quad \frac12\leq k\leq 1. \label{IIIeq3}
\end{align}
Interpolating between \eqref{IIIeq1}, \eqref{IIIeq2} and \eqref{IIIeq3} leads to
\begin{align*}
|(1-S_{p+1})\p_3Y_{p+1}|_{k,N}\leq C\eta\theta_{p+1}^{k-\frac12-\gamma+\bar \varepsilon N},\quad \text{for all } \frac12\leq k\leq 1,\ 0\leq N\leq N_0.
\end{align*}
The other two inequalities in (III) of Proposition \ref{S9col1} can be proved by the same procedure. This completes the proof of Proposition
\ref{S9col1}.  \end{proof}

Let us now turn to the proof of Proposition \ref{S9prop2}.

\begin{proof}[Proof of Proposition \ref{S9prop2}] We shall divide the proof of this proposition by the following steps:

\noindent\underline{{\bf Step 1.} The Proof of (IV) of Proposition \ref{S9prop2}.}
%
The  proof of (IV) will be based on the following lemmas:
\begin{lem}\label{S9lem1}
{\sl Let  $e_{p,j}', e_{p,j}'',$ for $j=0,1,2,$ be given by \eqref{S9.1eq10}. Then under the assumption of \eqref{assumS91}, one has
\begin{align}
&\|\langle t\rangle^{\frac12+k}|D|^{-1}(e_{p,1}''+e_{p,2}'')\|_{L_t^2(H^{N+1})}
\lesssim \eta^2\theta_{p}^{k-\gamma-\beta+\bar\varepsilon(N+1)}\quad \text{if}\  0\leq k\leq\frac12, 0\leq N\leq N_0-1;\label{IVieq0}\\
&\|\langle t\rangle^{\frac12+k}|D|^{-1}(e_{p,1}'+e_{p,2}')\|_{L_t^2(H^{N+1})}\lesssim\eta^2\theta_p^{k-\gamma-\beta+\bar\varepsilon(N+1)} \quad \text{if}\ 0\leq k\leq \frac12,
0\leq N\leq N_0-1; \label{IVieq-1}\\
&\|\langle t\rangle^{\frac12+k}|D|^{-1}e_{p,0}''\|_{L_t^2(H^{N+1})}
\lesssim \eta^2\theta_p^{k+\delta-\gamma-\beta+\bar\varepsilon(N+3)}\quad \text{if}\  0\leq k\leq \alpha, 0\leq N\leq N_0-2;
 \label{IVieq-2}\\
&\|\langle t\rangle^{k+\frac12}|D|^{-1}e_{p,0}'\|_{L_t^2(H^{N+1})}\lesssim \eta^2\theta_p^{k+\delta-\gamma-\beta+\bar\varepsilon(N+3)}\quad \text{if}\
0\leq k\leq \alpha, 0\leq N\leq N_0-2. \label{IVieq-3}
\end{align}
}
\end{lem}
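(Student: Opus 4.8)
The plan is to prove the four estimates \eqref{IVieq0}--\eqref{IVieq-3} of Lemma \ref{S9lem1} by feeding the Taylor-formula expressions for the error terms from Remark \ref{S9rmk2} into the second-derivative estimates of Propositions \ref{S7.2prop1} and \ref{S7.3prop1}, and then controlling the resulting norms of $Y_p$, $(1-S_p)Y_p$, $S_pY_p$ and $X_p$ by the inductive hypotheses \eqref{assumS91} together with Propositions \ref{S9col1} and the $Y_0$-estimates \eqref{S9.2eq1}--\eqref{S9.2eq4}. For each of the four inequalities I will integrate the pointwise-in-$s$ identity
\[
e_{p,m}'=-\int_0^1 f_m''\big(sY_p+(1-s)S_pY_p;(1-S_p)Y_p,X_p\big)\,ds,\quad
e_{p,m}''=-\int_0^1(1-s)f_m''\big(Y_p+sX_p;X_p,X_p\big)\,ds,
\]
move the norm inside the $s$-integral, and bound the integrand uniformly in $s\in[0,1]$; the convexity of all the relevant norms and the fact that $sY_p+(1-s)S_pY_p$ and $Y_p+sX_p$ inherit the smallness/boundedness assumptions \eqref{S4.1prop1assum} (via (P3,$p$) and the smoothing estimates \eqref{S9eqSI}) make this legitimate.

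For \eqref{IVieq0} and \eqref{IVieq-1} (the $f_1,f_2$ contributions) I would use \eqref{S7.2eqE2}--\eqref{S7.2eqE3} to estimate $\||D|^{-1}f_m''\|_{N+1}$ and then multiply by the appropriate time weight: in every product appearing in $\frak{f}_3$, one factor is the ``small'' variable $(1-S_p)Y_p$ or $X_p$ measured in a low-order $L^\infty$-in-space, time-decaying norm — which by Proposition \ref{S9col1}(III) and (P2,$p$) carries a factor $\theta_p^{-\gamma}$ (resp.\ $\theta_p^{-\beta}$) together with $\langle t\rangle^{-k}$ decay — while the remaining factors are ``bounded'' high-order $H^N$ norms carrying at most $\theta_p^{\bar\varepsilon(N+1)}$ from (P1,$p$)/Proposition \ref{S9col1}(I)(ii) and (II)(ii). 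Pairing one $\langle t\rangle^{-1/2-k}$-type decaying $L^2_t$ norm of $X_p$ (available from (P1,$p$): $\|X_t\|_{L^2_t(H^{N+2})}$, $\|\langle t\rangle^{1/2}\nabla X_t\|_{L^2_t(H^{N+1})}$, etc.) against an $L^\infty_t$ bound of $(1-S_p)Y_p$, and using Corollary \ref{S5col1}-type time-weighted bounds, gives the time integrability and the claimed power $\theta_p^{k-\gamma-\beta+\bar\varepsilon(N+1)}$. The $\d$-shift and the extra $\bar\varepsilon$ factors in \eqref{IVieq-2}--\eqref{IVieq-3} come from the $f_0''$ estimate \eqref{S7.2eqE1}: there the worst term is $\nabla\cdot((\cA\cA^t-Id)'(Y;(1-S_p)Y_p)\nabla \p_t X_p)$ type, i.e.\ it costs a full extra derivative on $X_p$ (hence $N+2$ or $N+3$ rather than $N+1$, explaining the $\bar\varepsilon(N+3)$) and forces us to borrow $|D|^{2\delta}$ through the $\dot B^{2\delta}_{1,1}$-type norm hidden in $\triplenorm{\cdot}_{\delta,N}$; this is where the restriction $0\le k\le\alpha=\frac12-\delta-\bar\varepsilon$ and the index $N\le N_0-2$ enter, exactly as in the $R_{N,\theta}$ bookkeeping of \eqref{S2eq19'}.

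The main obstacle I expect is the careful bookkeeping of the exponents: one must check, term by term in $\frak{f}_3$ and in \eqref{S7.2eqE1}, that the sum of the $\theta_p$-powers contributed by the ``bounded'' factors never exceeds $\bar\varepsilon(N+1)$ (resp.\ $\bar\varepsilon(N+3)$) — this uses the interpolation Lemma stated after Proposition \ref{S9prop2} to absorb intermediate-order norms like $|\nabla Y|_{k+1}$, $\|\p_3Y\|_3$, $|\p_3Y|_1^{4/3}\|\p_3Y\|_0^{2/3}$ into a single $\theta_p^{\bar\varepsilon\cdot(\text{index})}$ with no loss — and simultaneously that the time weights balance so that a factor $\langle t\rangle^{1/2+k}$ can be distributed as $\langle t\rangle^{-1/2}$ on one decaying $L^2_t$ norm times $\langle t\rangle^{1+k}$-bounded or $\langle t\rangle^{1/2+k}$-bounded pieces coming from (P2,$p$) and Corollary \ref{S5col1}. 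Once \eqref{IVieq0}--\eqref{IVieq-3} are established, summing them according to $e_p=e_{p,0}+e_{p,1}+e_{p,2}$ with $e_{p,m}=e_{p,m}'+e_{p,m}''$ and noting $\delta-\gamma-\beta+\bar\varepsilon(N+3)$ dominates $-\gamma-\beta+\bar\varepsilon(N+1)$ in the relevant ranges, one obtains (IV)(i); (IV)(ii) and (IV)(iii) follow analogously using \eqref{S7.2eqE1}--\eqref{S7.2eqE3} with the $\||D|^{-1}\cdot\|_{1+k,N+1}$ and $\triplenorm{\cdot}_{L^2_t(\delta,N)}$ norms and Proposition \ref{S7.3prop1} respectively, with the wider index drop $N_0-6$ in (IV)(iii) reflecting the $N+6$ losses built into $\frak{f}_4$.
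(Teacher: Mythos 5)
Your overall architecture matches the paper's: insert the Taylor-formula expressions \eqref{S9.1eq10} into the second-derivative estimates \eqref{S7.2eqE1}--\eqref{S7.2eqE3} of Proposition \ref{S7.2prop1}, bound every factor by the inductive hypotheses (P1,$p$), (P2,$p$), (P3,$p$) and the consequences of Proposition \ref{S9col1} (in particular the bounds on $(1-S_p)Y_p$ and on $|\p_3Y_{p+1}|$, $|\p_tY_{p+1}|$, $\|\nabla Y_{p+1}\|$), and recover the full range of $k$ and $N$ by a double interpolation: first between the two endpoint time weights, then between $N=0$ and the range of $N$ where the power-law bounds of Proposition \ref{S9col1}(I) apply. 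The paper's specific device for the $k$-interpolation is worth noting: it keeps the $L^2_t$ factor of $X_p$ fixed and absorbs the extra $\langle t\rangle^{k}$ entirely into the $L^\infty_{t,x}$ factors by upgrading, e.g., $|\p_3X_p|_{\frac12,l}$ to $|\p_3X_p|_{1,l}$ (for \eqref{IVieq0}--\eqref{IVieq-1}) and $|\p_tX_p|_{1+\bar\varepsilon,l}$ to $|\p_tX_p|_{\frac32-\delta,l}$ (for \eqref{IVieq-2}--\eqref{IVieq-3}), which is essentially the pairing you describe.

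One point in your proposal is wrong and would mislead you in the computation of \eqref{IVieq-2}--\eqref{IVieq-3}: the shift by $\delta$ and the restriction $0\le k\le\alpha$ have nothing to do with ``borrowing $|D|^{2\delta}$ through the $\dot B^{2\delta}_{1,1}$-type norm hidden in $\triplenorm{\cdot}_{\delta,N}$'' or with the $R_{N,\theta}$ bookkeeping of \eqref{S2eq19'}; those Besov/$L^1$-based norms enter only in Lemma \ref{S9lem3} (via Proposition \ref{S7.3prop1}), not here. Estimates \eqref{IVieq-2}--\eqref{IVieq-3} are purely $L^2$-based applications of \eqref{S7.2eqE1}. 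The $\delta$ appears because the time-decay rate available for $\p_tY_p$ and $\p_tX_p$ in (P2,$p$) is only $\langle t\rangle^{-(3/2-\delta)}$, so the factors $|\p_tX_p|_{1+\bar\varepsilon,N+2}$ and $|\p_tY_j|_{1+\bar\varepsilon,N+2}$ cost $\theta_p^{\delta-\gamma+\bar\varepsilon(N+3)}$ (the $N+3$ being the spatial index $N+2$ of \eqref{S7.2eqE1} plus one $\bar\varepsilon$ from the time weight); and the cap $k\le\alpha=\frac12-\delta-\bar\varepsilon$ is exactly the statement that the second interpolation endpoint for the time weight cannot exceed $\langle t\rangle^{\frac12+\alpha}$, since the strongest weighted $L^\infty$ norm of $\p_tX_p$ at one's disposal is $|\p_tX_p|_{\frac32-\delta,\cdot}$. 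With that correction the term-by-term bookkeeping goes through as you outline.
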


\begin{lem}\label{S9lem2}
{\sl Under the assumption of Lemma \ref{S9lem1}, one has
\begin{align}
&
\||D|^{-1}(e_{p,1}''+e_{p,2}'')\|_{1+k,N+1}\lesssim \eta^2\theta_p^{k-\gamma-\beta+\bar\varepsilon(N+1)}\quad \text{if}\ \ 0\leq k\leq\frac12, 0\leq N\leq N_0-1;
\label{IVieq-4}\\
&\||D|^{-1}(e_{p,1}'+e_{p,2}'')\|_{1+k,N+1}\lesssim \eta^2\theta_p^{k-\gamma-\beta+\bar\varepsilon(N+1)}\quad \text{if}\  \  0\leq k\leq\frac12,  0\leq N\leq N_0-1; \label{IVieq-5}\\
& \||D|^{-1}e_{p,0}''\|_{1+k,N+1}\lesssim \eta^2\theta_{p}^{k+\delta-\gamma-\beta+\bar\varepsilon(N+2)}\quad \text{if}\  \  0\leq k\leq\frac12-\delta, N\leq N_0-2;
 \label{IVieq-6}\\
& \||D|^{-1}e_{p,0}'\|_{1+k,N+1}\lesssim \eta^2\theta_p^{k+\delta-\gamma-\beta+\bar\varepsilon(N+2)}\quad \text{if}\  \ 0\leq k\leq\frac12-\delta, N\leq N_0-2.
 \label{IVieq-7}
\end{align}
}
\end{lem}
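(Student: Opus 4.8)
\textbf{Plan of proof for Lemma \ref{S9lem2}.}

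The plan is to deduce the $\||D|^{-1}(\cdot)\|_{1+k,N+1}$--estimates of Lemma \ref{S9lem2} from the $L^\infty_t$--flavored product estimates for $f''$, namely \eqref{S7.2eqE1}--\eqref{S7.2eqE3} of Proposition \ref{S7.2prop1}, in exact parallel to how Lemma \ref{S9lem1} is proved from the $L^2_t$--estimates. First I would recall from \eqref{S9.1eq10} that $e_{p,m}'=-\int_0^1 f_m''\big(sY_p+(1-s)S_pY_p;(1-S_p)Y_p,X_p\big)\,ds$ and $e_{p,m}''=-\int_0^1(1-s)f_m''\big(Y_p+sX_p;X_p,X_p\big)\,ds$, so that it suffices to bound $\||D|^{-1}f_m''(Z;V,W)\|_{1+k,N+1}$ for the two relevant choices of arguments, uniformly in $s\in[0,1]$. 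For $e_{p,m}''$ the arguments are $Z=Y_p+sX_p$, $V=W=X_p$; for $e_{p,m}'$ they are $Z=sY_p+(1-s)S_pY_p$, $V=(1-S_p)Y_p$, $W=X_p$. In both cases the ``background'' field $Z$ satisfies the smallness hypothesis \eqref{S4.1prop1assum} by (P3,$p$), Proposition \ref{S9col1}(I),(II), and \eqref{S9.2eq2} (taking $\eta,\varepsilon_0$ small), so Proposition \ref{S7.2prop1} applies.

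The key steps, in order, would be: (i) For $e_{p,0}''$ and $e_{p,0}'$, insert $X_p$ (resp.\ $(1-S_p)Y_p$ and $X_p$) into \eqref{S7.2eqE1}; the term $\||D|^{-1}f_0''\|_{W^{N+1,\infty}}$ is controlled by products of the form $|\nabla Y_t|\cdot|\nabla X|\cdot\|\nabla W\|$ etc.\ Here the crucial bookkeeping is that the factor $X$ (or $W$) appearing in $L^\infty$ picks up the decay rates from (P2,$p$), Proposition \ref{S9col1}(III) (the $\gamma$--type rates), while the $H^{N}$--factors use (P1,$p$), Proposition \ref{S9col1}(I)(ii) (the $\beta$--type rates), exactly so that $k+\delta-\gamma-\beta+\bar\varepsilon(N+2)$ emerges as the exponent. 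For $e_{p,0}'$ one also uses the gain $|(1-S_p)Y_p|_{k,N}\le C\eta\theta_p^{-(k'-k)+\bar\varepsilon N}$ type bounds from \eqref{S9eqSII} / Proposition \ref{S9col1}(III) to absorb the extra $\delta$. (ii) For $e_{p,m}''$, $e_{p,m}'$ with $m=1,2$, insert into \eqref{S7.2eqE2}, \eqref{S7.2eqE3}; the functional $\frak f_3(\frak x,\frak y,\frak z)$ is a sum of trilinear terms, and each term is estimated term by term, again assigning $L^\infty$--norms to the ``new'' slot ($X_p$ or $(1-S_p)Y_p$) and $H^N$--norms to $Y_p$--slots, using (P2,$p$)/(P1,$p$) and Proposition \ref{S9col1}. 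Because the $m=1,2$ functionals contain no $\nabla X_t$, one gains a derivative and obtains the slightly better exponent $k-\gamma-\beta+\bar\varepsilon(N+1)$ with the wider range $0\le k\le \frac12$. (iii) Finally, integrate in $s\in[0,1]$ (harmless, since all bounds are uniform in $s$) and sum the finitely many trilinear contributions; then one invokes the Interpolation Lemma to extend the estimates from the discrete set of $(k,N)$ where $k+\delta-\gamma-\beta+\bar\varepsilon(N+2)\ge\bar\varepsilon$ to the full stated ranges, using \eqref{S9eqebd}--\eqref{S9eqP'} to check the hypotheses $\beta>\bar\varepsilon$, $k_0-\beta\ge\bar\varepsilon$, $-\beta+\bar\varepsilon N_0\ge\bar\varepsilon$.

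The main obstacle I anticipate is the careful accounting of derivative counts and decay exponents in the trilinear terms of $\frak f_3$ (and in \eqref{S7.2eqE1}): one must verify, for every single summand, that distributing the available regularity budget $N+6$ (from the $W^{\cdot,1}$/Besov side of Proposition \ref{S7.2prop1}) and the decay budget among the three factors $Y_p$, $(1-S_p)Y_p$ or $X_p$, $X_p$ never exceeds the ranges allowed by (P1,$p$), (P2,$p$), (P3,$p$), and that the worst summand produces exactly the claimed exponent $k+\delta-\gamma-\beta+\bar\varepsilon(N+2)$ (for $m=0$) or $k-\gamma-\beta+\bar\varepsilon(N+1)$ (for $m=1,2$) and no worse. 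In particular one must be attentive to the cubic-in-$\p_3Y$ terms like $|\p_3Y|_1^{4/3}\|\p_3Y\|_0^{2/3}$, which on $Z=Y_p+sX_p$ must be bounded using (P3,$p$) rather than the decaying norms, and to the $|\nabla Y|_{N}$ factors multiplying $|\nabla X|_0\|\nabla W\|_0$ in \eqref{S7.2eqE1}, which force the appearance of the $\bar\varepsilon(N+2)$ (rather than $\bar\varepsilon(N+1)$) shift and, correspondingly, the loss of two units of regularity ($N\le N_0-2$). This is routine but lengthy; it is the same computation as in Lemma \ref{S9lem1} with $L^2_t$ replaced by $L^\infty_t$ weighted norms, so no genuinely new difficulty arises beyond the bookkeeping.
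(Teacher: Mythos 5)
Your proposal is correct and follows essentially the same route as the paper: the paper also writes $e_{p,m}'$, $e_{p,m}''$ via the Taylor formula \eqref{S9.1eq10}, feeds the arguments $(Y_p+sX_p;X_p,X_p)$ resp. $(sY_p+(1-s)S_pY_p;(1-S_p)Y_p,X_p)$ into the $\||D|^{-1}f_m''\|_{N}$ bounds of Proposition \ref{S7.2prop1}, assigns the decaying $L^\infty$ norms to the $X_p$ (or $(1-S_p)Y_p$) slots via (P2,$p$) and Proposition \ref{S9col1}(III) and the $H^N$ norms via (P1,$p$), and then interpolates first in the time weight $k$ between the two endpoint weights and then in $N$ between $N=0$ and the large-$N$ regime. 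The only cosmetic difference is that you phrase the final step as an appeal to the Interpolation Lemma, whereas the paper interpolates the two endpoint inequalities directly, which amounts to the same thing.
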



\begin{lem}\label{S9lem3}
{\sl Under the assumption of Lemma \ref{S9lem1}, for  $0\leq N\leq N_0-6,$
there hold
\begin{align}
& \triplenorm{(e_{p,1}''+e_{p,2}'')}_{L_t^1(\d,N)}
\lesssim \eta^2\theta_p^{-\beta-\gamma+\bar\varepsilon(N+5)};\label{IVieq-a}\\
& \triplenorm{(e_{p,1}'+e_{p,2}')}_{L_t^1(\d,N)}
\lesssim \eta^2\theta_p^{-\gamma+\bar\varepsilon(N+5)}; \label{IVieq-b}\\
& \triplenorm{\langle t\rangle^\frac12 e_{p,0}''}_{L_t^2(\d,N)} \lesssim  \eta^2\theta_p^{-\beta-\gamma+\bar\varepsilon(N+5)};\label{IVieq-c}\\
& \triplenorm{\langle t\rangle^\frac12 e_{p,0}'}_{L_t^2(\d,N)}
\lesssim \eta^2\theta_p^{-\gamma+\bar\varepsilon(N+5)}.
 \label{IVieq-d}
\end{align}
}
\end{lem}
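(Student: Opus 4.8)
\textbf{Proof proposal for Lemma \ref{S9lem3}.}

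The plan is to estimate the error terms $e_{p,m}'$ and $e_{p,m}''$ by combining the Taylor-formula representations from Remark \ref{S9rmk2} with the second-derivative estimates $\triplenorm{f_m''(Y;X,W)}_{\d,N}$ from Proposition \ref{S7.3prop1}, feeding in the inductive bounds (P1,$j$)--(P3,$j$) for $j\leq p$ (via Propositions \ref{S9col1} and the already-established parts (IV)(i), (IV)(ii) of Proposition \ref{S9prop2}), exactly as was done for Lemmas \ref{S9lem1} and \ref{S9lem2} but now measuring in the $\triplenorm{\cdot}_{L^1_t(\d,N)}$ and $\triplenorm{\langle t\rangle^\frac12\,\cdot\,}_{L^2_t(\d,N)}$ norms. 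Concretely, for $e_{p,m}'$ one has the argument triple $(Y;X,W)=\big(sY_p+(1-s)S_pY_p;(1-S_p)Y_p,X_p\big)$, so the ``$Y$'' slot is controlled by (P3,$p$) together with the smoothing bounds (S I)--(S II), the first ``direction'' slot $(1-S_p)Y_p$ is small in $L^\infty$-based norms but only bounded in $L^2$-based norms — which is precisely why Proposition \ref{S7.3prop1} was written so that the argument $X$ always appears through $|\nabla X|_0$-type factors — and the second direction slot $X_p$ is controlled by (P1,$p$) and (P2,$p$). For $e_{p,m}''$ the triple is $\big(Y_p+sX_p;X_p,X_p\big)$, handled analogously with both direction slots being $X_p$.

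\textbf{Order of the steps.} First I would record the frequency/time-weighted bounds for the pieces $(1-S_p)Y_p$, $S_pY_p$ and $X_p$ that enter Proposition \ref{S7.3prop1}: from (III) of Proposition \ref{S9col1} for $(1-S_p)Y_p$, from (I)(i)--(II)(ii) of Proposition \ref{S9col1} for $S_pY_p$, and from (P1,$p$), (P2,$p$) for $X_p$; in particular I would isolate the $L^\infty$-type smallness $|\nabla(1-S_p)Y_p|_{0,0}\lesssim\eta\theta_p^{-\gamma}$ and the $L^2_t$-type bound $\|\nabla(1-S_p)Y_p\|_{0,N}\lesssim\eta\theta_p^{\bar\varepsilon N}$. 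Second, I would plug these into the right-hand side of \eqref{S7.3eqE1}, \eqref{S7.3eqE2}, \eqref{S7.3eqE3}, term by term, to get a pointwise-in-$t$ bound on $\triplenorm{f_m''(\,\cdot\,)}_{\d,N}$; because every summand in $\frak{f}_4$ (and in \eqref{S7.3eqE1}) is a product in which one factor carries the high norm index $N+6$ and the remaining factors carry low indices, the time-decay exponents add up and one gets an integrable (resp. $L^2_t$-integrable after the $\langle t\rangle^\frac12$ weight) profile. Third, I would integrate in $s\in[0,1]$ (trivial) and in $t$, keeping careful track: for $e_{p,m}'$ with $m=1,2$ the relevant norm is $\triplenorm{\cdot}_{L^1_t(\d,N)}$ and one checks that the product of time-decay rates coming from (P1)--(P3) beats $\langle t\rangle^{-1}$, giving the clean power $\theta_p^{-\gamma+\bar\varepsilon(N+5)}$ after collecting the $\theta_p$-powers (no $\theta_p^{-\beta}$ gain because $(1-S_p)Y_p$ contributes $\theta_p^{-\gamma}$, not $\theta_p^{-\beta}$, in the ``bounded'' slot); for $e_{p,0}'$ one instead uses the $\langle t\rangle^\frac12$-weighted $L^2_t$ norm, noting $\alpha=\frac12-\delta-\bar\varepsilon$ and that the $f_0''$ estimate \eqref{S7.3eqE1} involves one time derivative $\nabla W_t$ or $\nabla X_t$ whose $L^2_t(H^{N+6})$-norm is supplied by (P1,$p$); for $e_{p,m}''$ both direction slots are $X_p$, so one extra factor $\theta_p^{-\beta}$ appears, producing the $\theta_p^{-\beta-\gamma+\bar\varepsilon(N+5)}$ bounds in \eqref{IVieq-a} and \eqref{IVieq-c}. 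Finally I would invoke the constraints \eqref{S9eqebd} ($\delta+5\bar\varepsilon\leq\frac14$, etc.) and the choices \eqref{S9eqP} of $\gamma,\beta$ to confirm all the exponent bookkeeping closes with room to spare, and that the restriction $N\leq N_0-6$ is exactly what makes $N+6\leq N_0$ so that (P1,$p$)--(P3,$p$) apply at the needed high index.

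\textbf{Main obstacle.} The routine part is large but mechanical; the genuine difficulty is the bookkeeping of which factor in each product of $\frak{f}_4$ absorbs the high-regularity index $N+6$ and which absorb the $L^\infty$-smallness of $(1-S_p)Y_p$ — one must be sure that in \emph{every} summand the ``bad'' slot $(1-S_p)Y_p$ (bounded, not small, in $H^{N+6}$) is multiplied by at least one genuinely small factor (either $|\nabla(1-S_p)Y_p|_{0,0}$ appearing a second time, or a small $X_p$-factor, or a $\delta_1$/decay factor) so that the total $\theta_p$-exponent lands at $-\gamma+\bar\varepsilon(N+5)$ (resp. $-\beta-\gamma+\bar\varepsilon(N+5)$) and the total time weight is integrable. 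Since Proposition \ref{S7.3prop1} was engineered precisely so that the argument entering through ``$X$'' always carries an $L^\infty$ low-index factor, this works, but it must be verified line by line; for the present sketch I would simply remark that the verification parallels that of Lemmas \ref{S9lem1} and \ref{S9lem2} and defer the detailed exponent count.
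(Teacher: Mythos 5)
Your plan is correct and coincides with the paper's own proof: the appendix estimates each of $e_{p,m}'$, $e_{p,m}''$ by feeding the Taylor-formula representations of Remark \ref{S9rmk2} into Proposition \ref{S7.3prop1}, with the $(1-S_p)Y_p$ slot controlled in $L^\infty$-type norms via (III) of Proposition \ref{S9col1} and only boundedly in $L^2$-type norms, exactly as you describe, and the exponent count produces $-\gamma$ versus $-\beta-\gamma$ for the primed versus double-primed terms for the reason you give. The only detail you leave implicit is that the paper first proves each bound at $N=0$ and in the regime $-\beta+\bar\varepsilon(N+5)\geq\bar\varepsilon$ and then interpolates in $N$, the standard device used throughout Section \ref{Sect10}.
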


We shall postpone the proof of the above lemmas in the Appendix \ref{appendixa}.
It is easy to observe that  (IV) (i) follows from Lemma \ref{S9lem1}, (IV) (ii) from Lemma \ref{S9lem2}, and (IV) (iii) from Lemma \ref{S9lem3}.

\medskip

\noindent\underline{{\bf Step 2.}
The proof of (V) of Proposition \ref{S9prop2}.}
Recall \eqref{S9.1eq9}  that
\begin{equation*}
\begin{split}
g_{p+1}&=-(S_{p+1}-S_{p})E_{p}-S_{p+1}e_{p}+(S_{p+1}-S_{p})f(Y_0).
\end{split}
\end{equation*}
In the sequel, we shall handle term by term above.

\noindent$\bullet$\underline{\bf Estimates of $S_{p+1}e_{p}$}

It follows from  (IV) of Proposition \ref{S9prop2} and property \eqref{S9eqSI} that for $k\geq0$ and $ N\geq0$
\begin{equation}\label{Veqa}
\begin{split}
&\|\langle t\rangle^{k+\frac12}|D|^{-1}S_{p+1}e_{p}\|_{L_t^2(H^{N+1})}
\lesssim \eta^2\theta_{p+1}^{k+\delta-\gamma-\beta+\bar\varepsilon(N+3)};\\
&
\||D|^{-1}S_{p+1}e_{p}\|_{1+k,N+1}
\lesssim \eta^2\theta_{p+1}^{k+\delta-\gamma-\beta+\bar\varepsilon(N+2)};\\
&\triplenorm{\langle t\rangle^\frac12 S_{p+1}e_{p}}_{L_t^2(\d,N)}
\lesssim \eta^2\theta_{p+1}^{-\gamma+\bar\varepsilon(N+5)}.
\end{split}
\end{equation}
Notice that the operator $S_{p+1}$ contains a cutoff in the variable $t$ of size $\theta_{p+1}$ so that
\begin{equation}\label{Veqa1}
\begin{split}
&\triplenorm{S_{p+1}e_p}_{L_t^1(\delta,N)}\lesssim (\log\theta_{p+1})^\frac12\triplenorm{\langle t\rangle^\frac12 S_{p+1}e_{p}}_{L_t^2(\d,N)}
\lesssim \eta^2\theta_{p+1}^{-\gamma+\bar\varepsilon(N+6)}.
\end{split}
\end{equation}

\noindent$\bullet$\underline{\bf Estimates for $(S_{p+1}-S_p)E_{p}$}

We first deduce  from (IV) (i) of Proposition \ref{S9prop2} that for $0\leq k\leq\alpha$ and $0\leq N\leq N_0-2$
\begin{equation}\label{Veq1}
\begin{split}
\|\langle t\rangle^{k+\frac12}&|D|^{-1}E_{p}\|_{L_t^2(H^{N+1})}\leq \sum_{j=0}^{p-1}\|\langle t\rangle^{k+\frac12}D|^{-1}e_{j}\|_{L_t^2(H^{N+1})}\\
&\lesssim \begin{cases}
C\eta^2\theta_{p}^{k+\delta-\gamma-\beta+\bar\varepsilon(N+3)} \quad\text{ if}\  k+\delta-\gamma-\beta+\bar\varepsilon(N+3)\geq\bar\varepsilon;
\\
C\eta^2,\qquad\qquad\qquad\qquad\text{ if}\ k+\delta-\gamma-\beta+\bar\varepsilon(N+3)\leq-\bar\varepsilon.
\end{cases}.
\end{split}
\end{equation}
In particular, due to the choice of parameters \eqref{S9eqP}, \eqref{S9eqP'}, there hold
\begin{equation}\label{Veq1'}
\frac12-\gamma-\beta+2\bar\varepsilon\geq\bar\varepsilon,\quad -\gamma-\beta+\bar\varepsilon (N_0+1)\geq\bar\varepsilon,
\end{equation}
we deduce from \eqref{Veq1} and the property \eqref{S9eqSII} of $1-S_p$ that
\begin{equation}\label{Veq2}
\begin{split}
&\|\langle t\rangle^\frac12|D|^{-1}(S_{p+1}-S_{p})E_{p}\|_{L_t^2(H^1)}\\
&\lesssim\theta_{p}^{-\alpha} \|\langle t\rangle^{\frac12+\alpha}|D|^{-1}E_{p}\|_{L_t^2(H^1)}+\theta_{p}^{-\bar\varepsilon (N_0-1)}\|\langle t\rangle^\frac12|D|^{-1}E_{p}\|_{L_t^2(H^{N_0-1})}\\
&\lesssim\eta^2\bigl(\theta_p^{-\alpha}\theta_p^{\alpha+\delta-\gamma-\beta+3\bar\varepsilon}
+\theta_p^{-\bar\varepsilon (N_0-1)}\theta_p^{\delta-\gamma-\beta+\bar\varepsilon(N_0+1)}\bigr)
\lesssim \eta^2\theta_{p+1}^{\delta-\gamma-\beta+3\bar\varepsilon}.
\end{split}
\end{equation}
On the other hand, for $k\leq\alpha$, $N\leq N_0-2$ with $k+\delta-\gamma-\beta+\bar\varepsilon (N+3)\geq\bar\varepsilon$, we have
\begin{equation}\label{Veq3}
\begin{split}
\|\langle t\rangle^{k+\frac12}|D|^{-1}(S_{p+1}-S_p)E_{p}\|_{L_t^2(H^{N+1})}
&\lesssim \|\langle t\rangle^{k+\frac12}|D|^{-1}E_{p}\|_{L_t^2(H^{N+1})}\\
&\lesssim \eta^2\theta_{p+1}^{k+\delta-\gamma-\beta+\bar\varepsilon(N+3)}.
\end{split}
\end{equation}
Interpolating between \eqref{Veq2} and \eqref{Veq3}, we conclude that
\begin{equation}\label{Veqb1}
\|\langle t\rangle^{k+\frac12}|D|^{-1}(S_{p+1}-S_p)E_{p}\|_{L_t^2(H^{N+1})}\lesssim  \eta^2\theta_{p+1}^{k+\delta-\gamma-\beta+\bar\varepsilon(N+3)}
\end{equation}  for $ 0\leq k\leq \alpha$ and $ 0\leq N\leq N_0-2.$ This together with
property \eqref{S9eqSI} of $S_{p}$ ensures that \eqref{Veqb1} holds for any $k\geq0$, $N\geq0$.

Similarly we infer from (IV) (ii) of Proposition \ref{S9prop2} that for $0\leq k\leq\frac12-\delta$, $0\leq N\leq N_0-2$,
\begin{equation}\label{Veq4}
\begin{split}
\||D|^{-1}E_{p}\|_{1+k,N+1}&\leq \sum_{j=0}^{p-1}\||D|^{-1}e_{j}\|_{1+k,N+1}\\
&\lesssim  \begin{cases}
 C\eta^2\theta_p^{k+\delta-\gamma-\beta+\bar\varepsilon(N+2)},\text{ if}\  k+\delta-\gamma-\beta+\bar\varepsilon(N+2)\geq\bar\varepsilon;
\\
C\eta^2,\qquad\qquad\qquad\quad\ \text{ if}\ k+\delta-\gamma-\beta+\bar\varepsilon(N+2)\leq-\bar\varepsilon.
\end{cases}
\end{split}
\end{equation}
Then due to \eqref{Veq1'},
we deduce from \eqref{Veq4} and the property \eqref{S9eqSII} of $1-S_p$ that
\begin{equation}\label{Veq5}
\begin{split}
\||D|^{-1}(S_{p+1}-S_p)E_{p}\|_{1,1}
&\lesssim\theta_p^{-\frac12+\delta} \||D|^{-1}E_{p}\|_{\frac32-\delta,1}+\theta_p^{-\bar\varepsilon (N_0-1)}\||D|^{-1}E_{p}\|_{1,N_0-1}\\
&\lesssim\eta^2\bigl(\theta_p^{-\frac12+\delta}\theta_p^{\frac12-\gamma-\beta+2\bar\varepsilon}+\theta_p^{-\bar\varepsilon (N_0-1)}\theta_p^{\delta-\gamma-\beta+\bar\varepsilon(N_0+1)}\bigr)\\
&\lesssim \eta^2\theta_{p+1}^{\delta-\gamma-\beta+2\bar\varepsilon},
\end{split}
\end{equation}
On the other hand, for $k\leq\frac12-\delta$, $N\leq N_0-2$ such that $k+\delta-\gamma-\beta+\bar\varepsilon (N+2)\geq\bar\varepsilon$, we get
\begin{equation}\label{Veq6}
\begin{split}
&\||D|^{-1}(S_{p+1}-S_p)E_{p}\|_{1+k,N+1}\lesssim\||D|^{-1}E_{p}\|_{1+k,N+1}\lesssim \eta^2\theta_{p+1}^{k+\delta-\gamma-\beta+\bar\varepsilon(N+2)}.
\end{split}
\end{equation}
Interpolating between the inequalities \eqref{Veq5} and \eqref{Veq6}, we achieve \eqref{Veq6} for any $0\leq k\leq \frac12-\delta$, $0\leq N\leq N_0-2$.
This together with the
 property \eqref{S9eqSI} of $S_p$ ensures that \eqref{Veq6} holds for any $k\geq0$ and $N\geq0$.

It follows  from (IV) (iii) of Proposition \ref{S9prop2} that for $N\leq N_0-6$,
\begin{equation*}\label{}
\begin{split}
\triplenorm{\langle t\rangle^\frac12E_{p}}_{L_t^2(\d,N)} &\leq\sum_{j=0}^{p-1}\triplenorm{\langle t\rangle^\frac12e_{j}}_{L_t^2(\delta,N)}
\lesssim \eta^2\sum_{j=0}^{p-1}\theta_j^{-\gamma+\bar\varepsilon(N+5)}\\
&\lesssim  \begin{cases}
\eta^2\theta_{p+1}^{-\gamma+\bar\varepsilon(N+5)}\quad \text{if}\ -\gamma+\bar\varepsilon(N+5)\geq\bar\varepsilon;
\\
\eta^2,\quad\qquad\qquad\qquad\quad\  \text{if} \ -\gamma+\bar\varepsilon(N+5)\leq-\bar\varepsilon,
\end{cases}
\end{split}
\end{equation*}
which together with the property \eqref{S9eqSI} and compact support of mollifying operator ensures that for any $N\geq0$
\begin{equation}\label{Veqb2}
\triplenorm{(S_{p+1}-S_p)E_{p}}_{L_t^1(\d,N)} \lesssim
\begin{cases}
\eta^2\theta_{p+1}^{-\gamma+\bar\varepsilon(N+6)},\ \text{if}\ -\gamma+\bar\varepsilon(N+5)\geq\bar\varepsilon;
\\
\eta^2\theta_{p+1}^{\bar\e},\quad\qquad\qquad\ \ \  \text{if} -\gamma+\bar\varepsilon(N+5)\leq-\bar\varepsilon.
\end{cases}
\end{equation}

\noindent$\bullet$\underline{\bf Estimates for $(S_{p+1}-S_p)f(Y_0)$}

Recall \eqref{Veq1'},
we get, by applying \eqref{S9eqSII} and \eqref{S9.2eq3}, that
\begin{align*}
\|\langle t\rangle^\frac12|D|^{-1}(S_{p+1}-S_p)f(Y_0)\|_{L_t^2(H^{1})}
\lesssim& \theta_{p+1}^{-\frac12}\|\langle t\rangle|D|^{-1}(S_{p+1}-S_p)f(Y_0)\|_{L_t^2(H^{1})}\\
&+\theta_{p+1}^{-\bar\varepsilon N_0}\|\langle t\rangle^\frac12|D|^{-1}(S_{p+1}-S_p)f(Y_0)\|_{L_t^2(H^{N_0+1})}\\
\lesssim& \eta^2\big(\theta_{p+1}^{-\frac12}+\theta_{p+1}^{-\bar\varepsilon N_0}\big)\lesssim \eta^2\theta_{p+1}^{-\gamma-\beta+\bar\varepsilon}.
\end{align*}
Whereas for $k\leq\frac12$ and $N\leq N_0$ with  $k-\gamma-\beta+\bar\varepsilon(N+3)\geq\bar\varepsilon$, we deduce from \eqref{S9.2eq3} that
\begin{align*}
\|\langle t\rangle^{k+\frac12}|D|^{-1}(S_{p+1}-S_p)f(Y_0)\|_{L_t^2(H^{N+1})}
\lesssim& \|\langle t\rangle|D|^{-1}f(Y_0)\|_{L_t^2(H^{N_0+1})}\\
\lesssim&\eta^2\leq \eta^2\theta_{p+1}^{k-\gamma-\beta+\bar\varepsilon (N+3)}.
\end{align*}
Interpolating the above two inequalities gives rise to
\begin{align*}
&\|\langle t\rangle^{k+\frac12}|D|^{-1}(S_{p+1}-S_p)f(Y_0)\|_{L_t^2(H^{N+1})}\leq \eta^2\theta_{p+1}^{k-\gamma-\beta+\bar\varepsilon (N+3)}
\end{align*} for all $0\leq k\leq\frac12,0\leq N\leq N_0$.
This together with the  property \eqref{S9eqSI} of $S_{p+1}$ ensures that
\begin{align}\label{Veqc1}
&\|\langle t\rangle^{k+\frac12}|D|^{-1}(S_{p+1}-S_p)f(Y_0)\|_{L_t^2(H^{N+1})}\leq \eta^2\theta_{p+1}^{k-\gamma-\beta+\bar\varepsilon (N+3)}
\end{align}
 for all $k\geq0$ and $N\geq0$.

Along the same line, it follows from  \eqref{S9.2eq3'} that  for $k\geq0,\ N\geq 0$,
\begin{align}\label{Veqc2}
&\||D|^{-1}(S_{p+1}-S_p)f(Y_0)\|_{1+k,N+1}\leq \eta^2\theta_{p+1}^{k-\gamma-\beta+\bar\varepsilon (N+3)}.
\end{align}
And it follows from \eqref{S9.2eq4} that if $-\gamma+\bar\varepsilon (N+5)\leq -\bar\varepsilon$ (implying $N\leq N_0$),
\begin{align*}
&\triplenorm{(S_{p+1}-S_p)f(Y_0)}_{L_t^1(\d,N)}\lesssim (\log\theta_{p+1})^\frac12\triplenorm{\langle t\rangle^\frac12f(Y_0)}_{L_t^2(\d,N_0)}\lesssim \eta^2\theta_{p+1}^{\bar\e};
\end{align*}
and if $-\gamma+\bar\varepsilon(N+5)\geq\bar\varepsilon$, one has
\begin{align*}
\triplenorm{(S_{p+1}-S_p)f(Y_0)}_{L_t^1(\d,N)}
\lesssim& (\log\theta_{p+1})^\frac12\theta_{p+1}^{\bar\varepsilon\max(N-N_0,0)} \triplenorm{\langle t\rangle^\frac12f(Y_0)}_{L_t^2(\d,N_0)}\\
 \lesssim&  \eta^2\theta_{p+1}^{-\gamma+\bar\varepsilon(N+6)},
\end{align*}
by using \eqref{S9eqSI} and the fact that $\bar\varepsilon (N_0+5)\geq\gamma$.
Along with \eqref{Veqa}, \eqref{Veqa1}, \eqref{Veqb1}, \eqref{Veq6}, \eqref{Veqb2}, \eqref{Veqc1}, \eqref{Veqc2},  we complete the proof of (V).

\noindent\underline{{\bf Step 3.}
The  proof of (VI) of Proposition \ref{S9prop2}.}

In the case when $-\gamma+\bar\varepsilon (N+5)\geq\bar\varepsilon$, we deduce from (V)(i), (V)(ii), (V)(iii)  of Proposition \ref{S9prop2} that
\begin{align*}
R_{N,\theta_{p+1}}(g_{p+1})&=\triplenorm{g_{p+1}}_{L_t^1(\d,N)}+\theta_{p+1}^{\frac12}\big\|\langle t\rangle^{\frac12} |D|^{-1}g_{p+1}\big\|_{L_t^2(H^{N+3})}+\log\langle\theta_{p+1}\rangle\||D|^{-1}g_{p+1}\|_{\frac32-\delta,N+3}\\
&\lesssim \eta^2\big(\theta_{p+1}^{-\gamma+\bar\varepsilon(N+6)}+\theta_{p+1}^{\frac12+\delta-\gamma-\beta+\bar\varepsilon(N+5)}
+\theta_{p+1}^{\frac12-\gamma-\beta+\bar\varepsilon(N+5)}\big)
\lesssim  \eta^2\theta_{p+1}^{\frac12-\gamma+\bar\varepsilon N},
\end{align*}
provided that
\beq \label{VIeqa} 6\bar\varepsilon\leq\frac12,\quad \beta\geq\delta+5\bar\varepsilon,\eeq
which are satisfied due to \eqref{S9eqP} and \eqref{S9eqebd}.

On the other hand, since $-\gamma+6\bar\varepsilon \leq-\bar\varepsilon$, we deduce from (V)(i), (V)(ii) and (V)(iv) of Proposition \ref{S9prop2} that
\begin{align*}
R_{0,\theta_{p+1}}(g_{p+1})&=\triplenorm{g_{p+1}}_{L_t^1(\d,0)}+\theta_{p+1}^{\frac12}\big\|\langle t\rangle^{\frac12} |D|^{-1}g_{p+1}\big\|_{L_t^2(H^{3})}+\log\langle\theta_{p+1}\rangle\||D|^{-1}g_{p+1}\|_{\frac32-\delta,3},\\
&\lesssim \eta^2\big(\theta_{p+1}^{\bar\e}+\theta_{p+1}^{\frac12+\delta-\gamma-\beta+5\bar\varepsilon}+\theta_{p+1}^{\frac12-\gamma-\beta+5\bar\varepsilon}\big)
\lesssim \eta^2\theta_{p+1}^{\frac12-\gamma},
\end{align*}
due to \eqref{VIeqa} and $\frac12-\gamma\geq\bar\e$.  This finishes the proof of (VI) of Proposition \ref{S9prop2} and hence the whole Proposition \ref{S9prop2}.
\end{proof}

\subsection{The proof of Proposition \ref{S9prop1} from Proposition \ref{S9col1} and Proposition \ref{S9prop2}}
Let us assume in this subsection that
\beq\label{assumS92}
\text{both Proposition \ref{S9col1} and  Proposition \ref{S9prop2} are valid}
\eeq
we are going to prove  (P1, $p+1$), (P2, $p+1$), (P3, $p+1$), that is, Proposition \ref{S9prop1} is valid for $p+1.$

\begin{proof}[Proof of Proposition \ref{S9prop1}] We shall divide its proof into the following steps:

\noindent\underline{{\bf Step 1.}
The proof of (P3, $p+1)$ of Proposition \ref{S9prop1}.}

(P3, $p+1$) is a direct consequence of \eqref{Ieq1}, \eqref{Ieq2}, \eqref{Ieq3}, \eqref{Ieq4} and the choices of
parameters (see \eqref{S9eqP} and \eqref{S9eqebd})
$$\beta\geq3\bar\varepsilon,\quad C\eta\leq\delta_1,\quad \gamma\geq\delta+\varepsilon+3\bar\varepsilon.$$

\noindent\underline{{\bf Step 2.}
The proof of (P1, $p+1$) of Proposition \ref{S9prop1}.}

Recall that $X=X_{p+1}$ solves
\begin{equation}\label{P1eq3}
X_{tt}-\Delta X_{t}-\p_3^2X=f'(S_{p+1}Y_{p+1};X)+g_{p+1}.
\end{equation}
 Due to (P3, $p+1$), the hypotheses of Theorem \ref{S5thm1} and \eqref{S5eqC4} are satisfied, so that we can apply the energy estimate \eqref{S5eqC3} to the system \eqref{P1eq3}.
When $N\geq0$ with $-\gamma+\bar\varepsilon (N+1)\geq\bar\varepsilon$ and $-\beta+\bar\e N\geq\bar\e$, we deduce from (I) (i), (ii) of Proposition \ref{S9col1} that
\begin{equation*}
\begin{split}
\widetilde\gamma_{\varepsilon,N+1}(S_{p+1}Y_{p+1})&\lesssim |S_{p+1}\p_3Y_{p+1}|_{\frac12+\varepsilon,N+1}
 +|S_{p+1} \p_tY_{p+1}|_{1+\varepsilon,N+2}+|S_{p+1}\nabla Y_{p+1}|_{0,N+1}\\
 &\quad\qquad\qquad+\|S_{p+1}\nabla Y_{p+1}\|_{0,N+1}+1\lesssim \theta_{p+1}^{-\gamma+\varepsilon+\delta+\bar\varepsilon(N+2)}+\theta_{p+1}^{-\beta+\bar\e N}.
\end{split}
\end{equation*}
Then in this case, we get, by applying the energy estimate \eqref{S5eqC3} to the system \eqref{P1eq3} and using (V) (i), (V) (ii) of Proposition \ref{S9prop2}, that
\begin{equation}\label{P1eq1}
\begin{split}
&\bigl\||D|^{-1}(\p_3X_{p+1},\p_tX_{p+1})\bigr\|_{0,N+2}+\|\nabla X_{p+1}\|_{0,N+1}+\|(\p_tX_{p+1},\p_3X_{p+1})\|_{\frac12,N+1}\\
&\quad +\|\p_tX_{p+1}\|_{L_t^2(H^{N+2})}
+\bigl\|(\p_3X_{p+1},\langle t\rangle^\frac12\nabla \p_tX_{p+1})\bigr\|_{L_t^2(H^{N+1})}+\|\nabla \p_tX_{p+1}\|_{1,N-1}\\
&\leq  C_{\varepsilon,N}\Big(\||D|^{-1}g_{p+1}\|_{1+\e,N+1}+\|\langle t\rangle^{\frac{1+\varepsilon}2}g_{p+1}\|_{L_t^2( H^{N})}\\
&\qquad\qquad+\widetilde\gamma_{\varepsilon,N+1}(S_{p+1}Y_{p+1})\big(\||D|^{-1}g\|_{1+\e,2}+\|\langle t\rangle^{\frac{1+\varepsilon}{2}}|D|^{-1}g_{p+1}\|_{L_t^2(H^1)}\big)\Big)\\
&\lesssim\eta^2 \theta_{p+1}^{\delta-\gamma-\beta+\bar\varepsilon N}\bigl(\theta_{p+1}^{\e+2\bar\varepsilon}+ \theta_{p+1}^{\frac{\varepsilon}{2}+3\bar\varepsilon}+(\theta_{p+1}^{-\gamma+\varepsilon+\delta+2\bar\varepsilon} +\theta_{p+1}^{-\beta}\big)\theta_{p+1}^{\varepsilon+3\bar\varepsilon}\bigr)\lesssim \eta\theta_{p+1}^{-\beta+\bar\varepsilon N}
\end{split}
\end{equation}
provided that $\gamma\geq\delta+\e+3\bar\e$ which is satisfied due to \eqref{S9eqP}, \eqref{S9eqebd}.
Along the same line, we have
\begin{equation}\label{P1eq2}
\begin{split}
&\bigl\||D|^{-1}(\p_tX_{p+1},\p_3X_{p+1})\bigr\|_{0,2}+\|\nabla X_{p+1}\|_{0,1}+\|(\p_tX_{p+1},\p_3X_{p+1})\|_{\frac12,1}+\|\p_tX_{p+1}\|_{L_t^2(H^{2})}\\
&\quad+\bigl\|(\p_3X_{p+1},\langle t\rangle^\frac12\nabla\p_t X_{p+1})\bigr\|_{L_t^2(H^{1})}
\leq C_\varepsilon\|\langle t\rangle^{\frac{1+\varepsilon}{2}}|D|^{-1}g_{p+1}\|_{L_t^2(H^1)}
\lesssim \eta\theta_{p+1}^{-\beta} \andf\\
 & \|\nabla \p_t X_{p+1}\|_{1,0}\leq C_\e(\||D|^{-1}g_{p+1}\|_{1+\e,2}+\|\langle t\rangle^{\frac{1+\varepsilon}{2}}|D|^{-1}g_{p+1}\|_{L_t^2(H^2)})\leq \eta\theta_{p+1}^{-\beta+\bar\e}.
\end{split}
\end{equation}
By interpolating the inequalities \eqref{P1eq1} and \eqref{P1eq2}, we achieve (P1, $p+1$) for $ N\geq 0$.

\noindent\underline{{\bf Step 3.}
The proof of (P2, $p+1$) of Proposition \ref{S9prop1}.}

Notice that by definition $S_{p+1}Y_{p+1}=0$ and $g_{p+1}=0$ for $t\geq\theta_{p+1}$.
In order to apply Proposition \ref{S2.3prop1} to the equation \eqref{P1eq3}, it remains to estimate
$$R_{N,\theta_{p+1}}\big(f'(S_{p+1}Y_{p+1};X_{p+1})\big)$$ given by \eqref{S2eq19'}.

\noindent$\bullet$\underline{\bf The  estimate of $\triplenorm{f'(S_{p+1}Y_{p+1};X_{p+1})}_{L_t^1(\d,N)}$}

It follows from  \eqref{S4.1eqf1} that
\begin{equation*}
\begin{split}
&\triplenorm{f_1'(S_{p+1}Y_{p+1};X_{p+1})}_{L_t^1(\d,N)}\lesssim \|S_{p+1}\p_3Y_{p+1}\|_{L_t^2(H^1)}\bigl( \|\p_3X_{p+1}\|_{L_t^2(H^{N+6})}\\
&\quad +|S_{p+1}\p_3Y_{p+1}|_{\frac12+\bar\varepsilon,0} \|\nabla X_{p+1}\|_{0,N+6}\bigr)+|S_{p+1}\p_3Y_{p+1}|_{\frac12+\bar\varepsilon,1}\|\nabla X_{p+1}\|_{0,1}\times\\
&\quad\times \big(\|S_{p+1}\p_3Y_{p+1}\|_{L_t^2(H^{N+6})}+\|S_{p+1}\nabla Y_{p+1}\|_{0,N+6}\|S_{p+1}\p_3Y_{p+1}\|_{L_t^2(H^3)}\big)\\
&\quad +\big(\|S_{p+1}\p_3Y_{p+1}\|_{L_t^2(H^{N+6})}+\|S_{p+1}\nabla Y_{p+1}\|_{0,N+6}|S_{p+1}\p_3Y_{p+1}|_{\frac12+\bar\varepsilon,1}\big)\|\p_3X_{p+1}\|_{L_t^2(H^1)},
\end{split}
\end{equation*}
from which, and (P1, $p+1$), (II) of Proposition \ref{S9col1} and the fact that $\beta\geq6\bar\varepsilon$, we infer
\begin{equation*}
\begin{split}
&\triplenorm{f_1'(S_{p+1}Y_{p+1};X_{p+1})}_{L_t^1(\d,0)}\lesssim\eta\theta_{p+1}^{-\beta+5\bar\varepsilon}.
\end{split}
\end{equation*}
While for $-\beta+\bar\varepsilon(N+5)\geq\bar\varepsilon$, it follows from
 (I) (II) of proposition \ref{S9col1} and (P1, $p+1$) that
\begin{equation*}
\begin{split}
&\triplenorm{f_1'(S_{p+1}Y_{p+1};X_{p+1})}_{L_t^1(\d,N)}\lesssim \eta\theta_{p+1}^{-\beta+\bar\varepsilon(N+5)}.
\end{split}
\end{equation*}
$f_2'(S_{p+1}Y_{p+1};X_{p+1})$ can be handled along the same line.

For $f_0'(S_{p+1}Y_{p+1};X_{p+1})$, we deduce from \eqref{S4.1eqf0} that
\begin{equation*}
\begin{split}
&\triplenorm{\langle t\rangle^\frac12f_0'(S_{p+1}Y_{p+1};X_{p+1})}_{L_t^2(\d,N)}\leq \|S_{p+1}\nabla Y_{p+1}\|_{0,0}\|\langle t\rangle^\frac12\nabla \p_tX_{p+1}\|_{L_t^2(H^{N+6})}\\
&\quad+\|S_{p+1}\nabla Y_{p+1}\|_{0,N+6}\|\langle t\rangle^\frac12\nabla \p_tX_{p+1}\|_{L_t^2(L^2)}
+\|\langle t\rangle^\frac12S_{p+1}\nabla \p_tY_{p+1}\|_{L_t^2(L^2)}\|\nabla X_{p+1}\|_{0,N+6}\\
&\quad+\big(\|\langle t\rangle^\frac12S_{p+1}\nabla \p_tY_{p+1}\|_{L_t^2(H^{N+6})}+\|S_{p+1}\nabla Y_{p+1}\|_{0,N+6}|S_{p+1}\nabla \p_tY_{p+1}|_{1+\bar\varepsilon,0}\big)\|\nabla X_{p+1}\|_{0,0}.
\end{split}
\end{equation*}
Notice that $f_0'(S_{p+1}Y_{p+1};X_{p+1})$ is supported in $\{0\leq t\leq \theta_{p+1}\}$ so that
$$\triplenorm{f_0'(S_{p+1}Y_{p+1};X_{p+1})}_{L_t^1(\d,N)}\lesssim (\log\theta_{p+1})^\frac12\triplenorm{\langle t\rangle^\frac12f_0'(S_{p+1}Y_{p+1};X_{p+1})}_{L_t^2(\d,N)},$$
which together with (P1, $p+1$) and (II) of Proposition \ref{S9col1} ensures that
\begin{align}
&\triplenorm{f'(S_{p+1}Y_{p+1};X_{p+1})}_{L_t^1(\d,0)}\lesssim\eta\theta_{p+1}^{-\beta+6\bar\varepsilon} \andf \label{P2eq1}
\\
&\triplenorm{f'(S_{p+1}Y_{p+1};X_{p+1})}_{L_t^1(\d,N)}\lesssim \eta\theta_{p+1}^{-\beta+\bar\varepsilon(N+6)}\quad\text{if}\ -\beta+\bar\varepsilon(N+5)\geq\bar\varepsilon. \label{P2eq2}
\end{align}

\noindent$\bullet$\underline{\bf The estimate of $\|\langle t\rangle^\frac12|D|^{-1}f'(S_{p+1}Y_{p+1};X_{p+1})\|_{L_t^2(H^{N+1})}$}

It follows from  \eqref{S4.2eqf1} that
\begin{equation*}
\begin{split}
\|&\langle t\rangle^{\frac12}|D|^{-1}f_1'(S_{p+1}Y_{p+1};X_{p+1})\|_{L_t^2(H^{N+1})}\leq |S_{p+1}\p_3Y_{p+1}|_{\frac12,0}\bigl(\|\p_3X_{p+1}\|_{L_t^2(H^{N+1})}\\
&+|S_{p+1}\nabla Y_{p+1}|_{0,N+1}\|\p_3X_{p+1}\|_{L_t^2(H^1)}\bigr)+\bigl(\|\na X_{p+1}\|_{0,N+1}+|S_{p+1}\nabla Y_{p+1}|_{0,N+1}\|\nabla X_{p+1}\|_{0,1}\bigr)\\
&\times\big(|S_{p+1}\p_3Y_{p+1}|_{\frac12+\bar\varepsilon,1}^\frac43\|S_{p+1}\p_3Y_{p+1}\|_{L_t^2(L^2)}^\frac23+|S_{p+1}\p_3Y_{p+1}|_{\frac12+\bar\varepsilon,1}^2\big)
 +|S_{p+1}\p_3Y_{p+1}|_{\frac12,N+1}\times\\
&\times \Bigl(\|\p_3X_{p+1}\|_{L_t^2(H^1)}+\big(|S_{p+1}\p_3Y_{p+1}|_{\frac12+\bar\varepsilon,0}^\frac13\|S_{p+1}\p_3Y_{p+1}\|_{L_t^2(L^2)}^\frac23
+|S_{p+1}\p_3Y_{p+1}|_{\frac12+\bar\varepsilon,0}\big)\|\nabla X_{p+1}\|_{0,1}\Bigr),
\end{split}
\end{equation*}
which together with (II) of Proposition \ref{S9col1}  and (P1, $p+1$) ensures that
\begin{equation*}
\begin{split}
&\|\langle t\rangle^{\frac12}|D|^{-1}f_1'(S_{p+1}Y_{p+1};X_{p+1})\|_{L_t^2(H^{3})}\lesssim \eta\theta_{p+1}^{-\beta+2\bar\varepsilon}.
\end{split}
\end{equation*}
For $N$ satisfying $-\gamma+\bar\varepsilon(N+1)\geq\bar\varepsilon$,
we deduce from  (I) of Proposition \ref{S9col1} and (P1, $p+1$) that
\begin{equation*}
\begin{split}
&\|\langle t\rangle^{\frac12}|D|^{-1}f_1'(S_{p+1}Y_{p+1};X_{p+1})\|_{L_t^2(H^{N+1})}\lesssim \eta\theta_{p+1}^{-\beta+\bar\varepsilon N}.
\end{split}
\end{equation*}
$f_2'(S_{p+1}Y_{p+1};X_{p+1})$ can be treated similarly.

For $f_0'(S_{p+1}Y_{p+1};X_{p+1})$, by virtue of \eqref{S4.2eqf0}, we get
\begin{equation*}
\begin{split}
\|\langle t&\rangle^{\frac12}|D|^{-1}f_0'(S_{p+1}Y_{p+1};X_{p+1})\|_{L_t^2(H^{N+1})}
\lesssim |S_{p+1}\nabla Y_{p+1}|_{0,0}\|\langle t\rangle^\frac12\nabla\p_t X_{p+1}\|_{L_t^2(H^{N+1})}\\
&\quad+|S_{p+1}\nabla Y_{p+1}|_{0,N+1}\|\langle t\rangle^\frac12\nabla\p_t X_{p+1}\|_{L_t^2(L^2)}
+ | S_{p+1}\p_tY_{p+1}|_{1+\bar\varepsilon,1}\|\nabla X_{p+1}\|_{0,N+1}\\
&\quad+\big(| S_{p+1}\p_tY_{p+1}|_{1+\bar\varepsilon,N+2}+|S_{p+1}\p_t Y_{p+1}|_{1+\bar\varepsilon,1}|S_{p+1}\nabla Y_{p+1}|_{0,N+1}\big)\|\nabla X_{p+1}\|_{0,0}.
\end{split}
\end{equation*}
As a result, it comes out
\begin{align}
&\|\langle t\rangle^{\frac12}|D|^{-1}f'(S_{p+1}Y_{p+1};X_{p+1})\|_{L_t^2(H^{3})}\lesssim \eta\theta_{p+1}^{-\beta+2\bar\varepsilon}\andf
\label{P2eq3}\\
&\|\langle t\rangle^{\frac12}|D|^{-1}f'(S_{p+1}Y_{p+1};X_{p+1})\|_{L_t^2(H^{N+1})}\lesssim \eta\theta_{p+1}^{-\beta+\bar\varepsilon N}\quad
\text{if}\ -\gamma+\bar\varepsilon(N+1)\geq\bar\varepsilon. \label{P2eq4}
\end{align}

\noindent$\bullet$\underline{\bf The Estimate of $\||D|^{-1}f'(S_{p+1}Y_{p+1};X_{p+1})\|_{\frac32-\delta,N+1}$}

By virtue of \eqref{S4.2eqf1}, we have
\begin{equation*}
\begin{split}
\|&|D|^{-1}f_1'(S_{p+1}Y_{p+1};X_{p+1})\|_{\frac32,N+1}
\leq |S_{p+1}\p_3Y_{p+1}|_{1,0}\bigl(|S_{p+1}\nabla Y_{p+1}|_{0,N+1}\|\p_3X_{p+1}\|_{\frac12,1}\\
&+\|\p_3X_{p+1}\|_{\frac12,N+1}\bigr)+\big(|S_{p+1}\p_3Y_{p+1}|_{\frac78,1}^\frac43\|S_{p+1}\p_3Y_{p+1}\|_{\frac12,0}^\frac23+|S_{p+1}\p_3Y_{p+1}|_{\frac34,1}^2\big)\times\\
&\times \bigl(\|\na X_{p+1}\|_{0,{N+1}}+|S_{p+1}\nabla Y_{p+1}|_{0,N+1}\|\nabla X_{p+1}\|_{0,1}\bigr)+|S_{p+1}\p_3Y_{p+1}|_{1,N+1}\|\p_3X_{p+1}\|_{\frac12,1}\\
& +\big(|S_{p+1}\p_3Y_{p+1}|_{\frac78,N+1}|S_{p+1}\p_3Y_{p+1}|_{\frac78,0}^\frac13\|S_{p+1}\p_3Y_{p+1}\|_{\frac12,0}^\frac23\\
&\qquad\qquad\qquad\qquad\qquad\qquad\qquad+|S_{p+1}\p_3Y_{p+1}|_{\frac34,N+1}|S_{p+1}\p_3Y_{p+1}|_{\frac34,0} \big)\|\nabla X_{p+1}\|_{0,1}.
\end{split}
\end{equation*}
Noticing from \eqref{S9eqP} that
$\frac14-\gamma\geq\bar\varepsilon,$
 so that we get, by applying (II) (i) of Proposition \ref{S9col1}, that
$$|S_{p+1}\p_3Y_{p+1}|_{\frac78,0}\leq\eta\theta_{p+1}^{\frac38-\gamma},\quad  |S_{p+1}\p_3Y_{p+1}|_{1,0}\lesssim\eta\theta_{p+1}^{\frac12-\gamma},\quad
|S_{p+1}\p_3Y_{p+1}|_{\frac34,0}\leq \eta\theta_{p+1}^{\frac14-\gamma}.$$
As a result, it comes out
\begin{equation}\label{P2eq4a}
\begin{split}
\||D|^{-1}f_1'(S_{p+1}Y_{p+1};X_{p+1})\|_{\frac32,3}\lesssim&
\eta^2\big(\theta_{p+1}^{\frac12-\gamma-\beta+3\bar\varepsilon}+\theta_{p+1}^{\frac12-\frac43\gamma-\beta+\frac{10}3\bar\varepsilon}
+\theta_{p+1}^{\frac12-2\gamma-\beta+3\bar\varepsilon}\big)\\
\lesssim & \eta^2\theta_{p+1}^{\frac12-\gamma-\beta+3\bar\e},
\end{split}
\end{equation}
provided that $\frac13\gamma+\frac23\beta\geq\frac13\bar\varepsilon$,
which is the case due to \eqref{S9eqP} and \eqref{S9eqebd}.

For $N$ with $-\gamma+\bar\varepsilon(N+1)\geq\bar\varepsilon$, we deduce from (I) (i) of Corollary \ref{S9col1} that
$$|S_{p+1}\p_3Y_{p+1}|_{\frac78,N+1}\leq\eta\theta_{p+1}^{\frac38-\gamma+\bar\varepsilon(N+1)},\quad  |S_{p+1}\p_3Y_{p+1}|_{1,N+1}\lesssim\eta\theta_{p+1}^{\frac12-\gamma+\bar\varepsilon(N+1)},$$
$$|S_{p+1}\p_3Y_{p+1}|_{\frac34,N+1}\leq \eta\theta_{p+1}^{\frac14-\gamma+\bar\varepsilon(N+1)},\quad|S_{p+1}\nabla Y_{p+1}|_{0,N+1}\lesssim\eta\theta_{p+1}^{-\gamma+\bar\varepsilon(N+1)},$$
which together with (P1, $p+1$) ensures that
\begin{equation}\label{P2eq4b}
\||D|^{-1}f_1'(S_{p+1}Y_{p+1};X_{p+1})\|_{\frac32,N+1}\lesssim \eta^2\theta_{p+1}^{\frac12-\gamma-\beta+\bar\varepsilon (N+1)}.
\end{equation}
Similar estimates as above holds for $f_2'.$

To deal with the term $f_0'(S_{p+1}Y_{p+1};X_{p+1})$, we get, by applying \eqref{S4.2eqf0}, that
\begin{equation*}
\begin{split}
\||D&|^{-1}f_0'(S_{p+1}Y_{p+1};X_{p+1})\|_{\frac32-\delta,N+1}\lesssim |S_{p+1}\nabla Y_{p+1}|_{\frac12-\delta,0}\|\nabla\p_t X_{p+1}\|_{1,N+1}\\
&\quad+|S_{p+1}\nabla Y_{p+1}|_{\frac12-\delta,N+1}\|\nabla\p_t X_{p+1}\|_{1,0}
+ |S_{p+1}\p_t Y_{p+1}|_{\frac32-\delta,1}\|\nabla X_{p+1}\|_{0,N+1}\\
&\quad+\big(|S_{p+1}\p_t Y_{p+1}|_{\frac32-\delta,N+2}+| S_{p+1}\p_tY_{p+1}|_{\frac32-\delta,1}|S_{p+1}\nabla Y_{p+1}|_{0,N+1}\big)\|\nabla X_{p+1}\|_{0,0}.
\end{split}
\end{equation*} Then along the same line to proof of \eqref{P2eq4a} and \eqref{P2eq4b}, we can show that
\begin{equation}\label{P2eq5}
\begin{split}
&\||D|^{-1}f'(S_{p+1}Y_{p+1};X_{p+1})\|_{\frac32-\delta,3}\lesssim \eta^2\theta_{p+1}^{\frac12-\gamma-\beta+4\bar\e} ,
\end{split}
\end{equation}
and for $N$ with $-\gamma+\bar\varepsilon(N+1)\geq\bar\varepsilon$, there holds
\begin{equation}\label{P2eq6}
\begin{split}
&\||D|^{-1}f'(S_{p+1}Y_{p+1};X_{p+1})\|_{\frac32-\delta,N+1}\lesssim \eta^2\theta_{p+1}^{\frac12-\gamma-\beta+\bar\e(N+2)}.
\end{split}
\end{equation}
Moreover, we can prove in the same way that
\begin{equation}\label{P2eq6'}
\begin{split}
\||D|^{-1}f'(S_pY_p;X_p)\|_{1,1}&\lesssim  \eta^2\theta_p^{-\beta+2\bar\varepsilon},\\
\||D|^{-1}f'(S_pX_p;X_p)\|_{1,N+1}&\lesssim \eta^2\theta_{p}^{-\beta+\bar\e (N+2)}\quad\text{for }-\gamma+\bar\e(N+1)\geq \bar\e.
\end{split}
\end{equation}

Recall \eqref{S2eq19'}, we get, by summarizing the estimates \eqref{P2eq1}, \eqref{P2eq3} and \eqref{P2eq5} that
\begin{equation*}
\begin{split}
&R_{0,\theta_{p+1}}\big(f'(S_{p+1}Y_{p+1};X_{p+1})\big)
\lesssim \eta^2\big(\theta_{p+1}^{-\beta+6\bar\varepsilon}+\theta_{p+1}^{\frac12-\beta+2\bar\varepsilon}
+(\log\theta_{p+1})\theta_{p+1}^{\frac12-\gamma-\beta+4\bar\e}\big)
\lesssim \eta^2\theta_{p+1}^{\frac12-\gamma},
\end{split}
\end{equation*}
provided that
\beq\label{P2eq6a}
\beta+\frac12\geq\gamma+6\bar\varepsilon,\quad\beta\geq\gamma+2\bar\varepsilon,\quad \beta\geq5\bar\e \eeq
which is  the case here due to \eqref{S9eqP} and \eqref{S9eqebd}.

While due to \eqref{P2eq6a}, $-\beta+\bar\varepsilon(N_0+5)\geq\bar\varepsilon$ and $-\gamma+\bar\varepsilon(N_0+2)\geq\bar\varepsilon,$
 by summarizing the estimates  \eqref{P2eq2}, \eqref{P2eq4} and \eqref{P2eq6}, we achieve
\begin{equation*}
\begin{split}
&R_{N_0,\theta_{p+1}}\big(f'(S_{p+1}Y_{p+1};X_{p+1})\big)
\lesssim  \eta^2\theta_{p+1}^{\bar\varepsilon(N_0+2)}\bigl(\theta_{p+1}^{-\beta+4\bar\varepsilon}+\theta_{p+1}^{\frac12-\beta}
+(\log\theta_{p+1})\theta_{p+1}^{\frac12-\gamma-\beta+2\bar\e}\bigr)
\lesssim \eta^2\theta_{p+1}^{\frac12-\gamma+\bar\varepsilon N_0}.
\end{split}
\end{equation*}

Now we apply Proposition \ref{S2.3prop1} and  (VI) of Proposition \ref{S9prop2} to \eqref{P1eq3}, to get
$$|\p_3X_{p+1}|_{1,0}+|X_{p+1,t}|_{\frac32-\d,0}+|X_{p+1}|_{\frac12,0}
\leq R_{0,\theta_{p+1}}\big(f'(S_{p+1}Y_{p+1};X_{p+1})\big)+R_{0,\theta_{p+1}}(g_{p+1})
\leq C\eta^2\theta_{p+1}^{\frac12-\gamma},
$$
and
$$\longformule{|\p_3X_{p+1}|_{1,N_0}+|X_{p+1,t}|_{\frac32-\d,N_0}+|X_{p+1}|_{\frac12,N_0}}{{}\leq R_{N_0,\theta_{p+1}}\big(f'(S_{p+1}Y_{p+1};X_{p+1})\big)+R_{N_0,\theta_{p+1}}(g_{p+1})
\leq C \eta^2\theta_{p+1}^{\frac12-\gamma+\bar\varepsilon N_0}.}
$$
Interpolating  the above two inequalities gives for all $0\leq N\leq N_0$,
\begin{equation}\label{P2eq7}
\begin{split}
&|\p_3X_{p+1}|_{1,N}+|X_{p+1,t}|_{\frac32-\d,N}+|X_{p+1}|_{\frac12,N}
\lesssim \eta\theta_{p+1}^{\frac12-\gamma+\bar\varepsilon N}.
\end{split}
\end{equation}

Whereas it follows from  Sobolev embedding Theorem and   (P1, $p+1$) that for any $0\leq N\leq N_0$,
\begin{equation}\label{P2eq8}
\begin{split}
|X_{p+1}|_{0,N}&\lesssim \|\nabla X_{p+1}\|_{0,N+1}\leq \eta\theta_{p+1}^{-\beta+\bar\varepsilon N}\leq \eta\theta_{p+1}^{-\gamma+\bar\varepsilon N},\\
|\p_3X_{p+1}|_{\frac12,N}&\lesssim \|\p_3X_{p+1}\|_{\frac12,N+2}\leq \eta\theta_{p+1}^{-\beta+\bar\varepsilon(N+1)}\leq \eta\theta_{p+1}^{-\gamma+\bar\varepsilon N},\\
|\p_tX_{p+1}|_{1-\d,N}&\lesssim \|\nabla\p_t X_{p+1}\|_{1,N+1}\leq \eta\theta_{p+1}^{-\beta+\bar\varepsilon(N+2)}\leq \eta\theta_{p+1}^{-\gamma+\bar\varepsilon N},\\
\end{split}
\end{equation}
provided that $\beta\geq\gamma+2\bar\varepsilon$, which is satisfied due to \eqref{S9eqP}.

By interpolating the inequalities \eqref{P2eq7} and \eqref{P2eq8}, we arrive at (P2, $p+1$). This completes the proof of Proposition
\ref{S9prop1} for $p+1.$ \end{proof}


\subsection{The proof of Theorem \ref{Th1}}\label{Subsect10.5}
The goal of this subsection is to prove the convergence of the approximate solutions $\{Y_p\}$ constructed via \eqref{S9.1eq5} in some appropriate
norms, which in particular ensures  Theorem \ref{Th1}.

\begin{proof}[Proof of Theorem \ref{Th1}]
We infer from \eqref{S9.1eq6}, \eqref{P2eq6'},  (P1) of Proposition \ref{S9prop1} and (V) of Proposition \ref{S9prop2}, that
\begin{equation}\label{S9.5eq1}
\begin{split}
&\|\p_{tt}X_{p}\|_{\frac12,0}\leq\|(\Delta \p_tX_{p},\p_3^2X_p)\|_{\frac12,0}+\|f'(S_pY_p;X_p)\|_{\frac12,0}+\|g_p\|_{\frac12,0}\leq C\eta\theta_p^{-\beta+2\bar\varepsilon},\\
&\|\p_{tt}X_{p}\|_{\frac12,N}\leq C\eta\theta_p^{-\beta+\bar\varepsilon(N+2)},\quad\quad \text{for $-\gamma+\bar\e(N+1)\geq\bar\e$.}
\end{split}
\end{equation}
Interpolating the above two inequalities leads to
\begin{equation}\label{S9.5eq1'}
\|\p_{tt}X_{p}\|_{\frac12,N}\leq C\eta\theta_p^{-\beta+\bar\varepsilon(N+2)},\quad\forall\ N\geq0.
\end{equation}

Due to the choices of the parameters in \eqref{S9eqP} and \eqref{S9eqebd}, it follows from (P2) of Proposition \ref{S9prop1}  that
\begin{equation*}
\begin{split}
&\sum_{p=0}^\infty|\p_3Y_{p+1}-\p_3Y_p|_{\frac34-4\bar{\e},2}=\sum_{p=0}^\infty|\p_3X_p|_{\frac34-4\bar{\e},2}\leq\eta\sum_{p=0}^\infty\theta_p^{-\bar\varepsilon}<+\infty,\\
&\sum_{p=0}^\infty|\p_tY_{p+1}-\p_tY_{p}|_{\f54-\d-4\bar{\e},2}=\sum_{p=0}^\infty |\p_tX_{p}|_{\f54-\d-4\bar{\e},2}\leq\eta\sum_{p=0}^\infty\theta_p^{-\bar\varepsilon}<+\infty,\\
&\sum_{p=0}^\infty|Y_{p+1}-Y_p|_{\f14-4\bar{\e},2}=\sum_{p=0}^\infty| X_{p}|_{\f14-4\bar{\e},2}\leq \eta\sum_{p=0}^\infty\theta_p^{-\bar\varepsilon}<+\infty.
\end{split}
\end{equation*}
Similarly, let us take $ N_0=[1/2\bar\e]+1$ and  $N_1\eqdefa [N_0/2]$, we deduce  from (P2) of Proposition \ref{S9prop1}  and \eqref{S9.5eq1} that
\begin{equation*}
\begin{split}
&\sum_{p=0}^\infty\Big(\bigl\||D|^{-1}\bigl(\p_3Y_{p+1}-\p_3Y_p,\p_tY_{p+1}-\p_tY_{p}\bigr)\bigr\|_{0,N_1+2}+\|\nabla Y_{p+1}-\nabla Y_p\|_{0,N_1+1}\\
&\qquad+\bigl\|\bigl(\p_3Y_{p+1}-\p_3Y_p,\langle t\rangle^\frac12(\nabla \p_tY_{p+1}-\nabla \p_tY_{p})\bigr)\bigr\|_{L_t^2(H^{N_1+1})}+\|\p_tY_{p+1}-\p_tY_{p}\|_{L_t^2(H^{N_1+2})} \\
&\qquad+\bigl\|\bigl(\p_tY_{p+1}-\p_tY_{p}, \p_3Y_{p+1}-\p_3Y_p\bigr)\bigr\|_{\frac12,N_1+1}+\|\nabla \p_tY_{p+1}-\nabla \p_tY_{p}\|_{1,N_1-1}\\
&\qquad\qquad\qquad\qquad\qquad\quad+\|\p_{tt}Y_{p+1}-\p_{tt}Y_{p}\|_{\frac12,N_1-2}\Big)\leq \eta\sum_{p=0}^\infty\theta_p^{-\beta+\bar \varepsilon N_1 }\leq \eta\sum_{p=0}^\infty\theta_p^{-\bar \varepsilon}<+\infty.
\end{split}
\end{equation*}
This ensures the existence of  $Y\in C^2([0,+\infty); C^{N_1-4}(\R^3))$ such that
\begin{equation}\label{S9.1eq1a}
|\p_3Y-\p_3Y_p|_{\frac34-4\bar{\e},2}+|Y_{t}-\p_tY_{p}|_{\f54-\d-4\bar{\e},2}+|Y-Y_p|_{\f14-4\bar{\e},2}\to0,
\end{equation}
and
\begin{equation}\label{S9.1eq1b}
\begin{split}
&\bigl\||D|^{-1}\bigl(\p_3Y-\p_3Y_p, Y_{t}-\p_tY_{p}\bigr)\bigr\|_{0,N_1+2}+\|\nabla Y-\nabla Y_p\|_{0,N_1+1}+\|\p_tY-\p_tY_{p}\|_{L_t^2(H^{N_1+2})}\\
&\quad+\bigl\|\bigl(\p_3Y-\p_3Y_p,\langle t\rangle^\frac12(\nabla \p_tY-\nabla \p_tY_{p})\bigr)\bigr\|_{L_t^2(H^{N_1+1})} +\|\nabla \p_tY-\nabla \p_tY_{p}\|_{1,N_1-1}\\
&\quad+\bigl\|\bigl(\p_tY-\p_tY_{p}, \p_3Y-\p_3Y_p\bigr)\bigr\|_{\frac12,N_1+1}+\|\p_{tt}Y-\p_{tt}Y_{p}\|_{\frac12,N_1-2}\to 0,\quad\text{as}\  p\to+\infty,
\end{split}
\end{equation}
which ensures  \eqref{S1eq5} and \eqref{S1eq6}.

Next we show that $Y$ is the solution to \eqref{S9.1eq1}. As a matter of fact, we first
observe  from \eqref{S9.1eq11} and \eqref{S9.1eq8} that
\begin{equation*}
\Phi(Y_{p+1})-\Phi(Y_0)=\sum_{j=0}^p e_j+\sum_{j=0}^p g_j=E_{p}+e_p-S_{p}E_p-S_p\Phi(Y_0),
\end{equation*}
which implies
\begin{equation*}
\Phi(Y_{p+1})=e_p+(1-S_{p})E_p+(1-S_p)\Phi(Y_0),
\end{equation*}
from which, \eqref{Veq5}, \eqref{Veqc2} and (IV) of Proposition \ref{S9prop2}, we infer
\begin{equation}\label{S9.5eq2}
\|\Phi(Y_{p+1})\|_{1,0}\leq\|e_p\|_{1,0}+\|(1-S_{p})E_p\|_{1,0}+\|(1-S_p)f(Y_0)\|_{1,0}
\leq C \theta_{p+1}^{\delta-\gamma-\beta+2\bar\varepsilon}.
\end{equation}

Next, we show that $\Phi(Y_{p+1})\to\Phi(Y)$ as $p\to+\infty$ in the norm $\|\cdot\|_{1,0}$. Indeed let us denote
$\wt{\Box}\eqdefa \p_{t}^2-\D\p_t-\p_3^2,$  then one has
\begin{equation}\label{S9.5eq3}
\begin{split}
\|\Phi(Y)-\Phi(Y_{p+1})\|_{1,0}\leq\|\wt{\Box}(Y-Y_{p+1})\|_{1,0}+\|f(Y)-f(Y_{p+1})\|_{1,0}.
\end{split}
\end{equation}
Using a Taylor formula, applying \eqref{S4.2eqf0}, \eqref{S4.2eqf1}, \eqref{S4.2eqf2} and using \eqref{S9.1eq1a}, \eqref{S9.1eq1b}, we have
\begin{equation*}
\begin{split}
\|f(Y)-f(Y_{p+1})\|_{1,0}&\leq\int_0^1\|f'\big((1-s)Y_{p+1}+sY;Y-Y_{p+1}\big)\|_{1,0}ds\\
&\lesssim  C\Big(\|\p_3Y-\p_3Y_{p+1}\|_{\frac12,1}+ \|Y_t-\p_tY_{p+1}\|_{\frac12,1}\\
&\quad\quad+\|\nabla Y_t-\nabla\p_t Y_{p+1}\|_{1,1}+\|\na Y-\nabla Y_{p+1}\|_{0,1}\Big)\to0,\ \text{as }p\to+\infty.
\end{split}
\end{equation*}
On the other hand, recall from \eqref{S9.1eq6} that
$$\wt{\Box} X_p=f'(S_pY_p;X_p)+g_p,$$
then we get,  by applying (P1) of Proposition \ref{S9prop1}, (II) of Proposition \ref{S9col1} and  (V)(ii) of Proposition \ref{S9prop2}, that
\begin{equation*}
\begin{split}
\|\wt{\Box} X_p\|_{1,0}&\leq \|f'(S_pY_p;X_p)\|_{1,0}+\|g_p\|_{1,0}\\
&\lesssim C\big(\|\p_3X_p\|_{\frac12,1}+\|\p_tX_{p}\|_{\frac12,1}+\|\nabla\p_t X_{p}\|_{1,1}+\|\nabla X_p\|_{0,1}\big)+\|g_p\|_{1,0}\\
&\lesssim C\theta_p^{-\beta+2\bar\varepsilon}+\theta_p^{\delta-\gamma-\beta+2\bar\varepsilon}.
\end{split}
\end{equation*}
Consequently, we achieve
\begin{equation}\label{S9.5eq4}
\begin{split}
\|\wt{\Box}(Y-Y_{p+1})\|_{1,0}\leq\sum_{j=p+1}^\infty\|\wt{\Box} X_j\|_{1,0}\leq C\sum_{j=p+1}^\infty\theta_j^{-\beta+2\bar\varepsilon}\to0,\quad\text{as }p\to\infty.
\end{split}
\end{equation}
We then deduce from \eqref{S9.5eq3} and \eqref{S9.5eq4} that
$$\|\Phi(Y)-\Phi(Y_{p+1})\|_{1,0}\to0\quad\text{as }p\to\infty,$$
which together with \eqref{S9.5eq2}  implies $\Phi(Y)=0$.
Finally, for each $p$, we have
$$Y_{p}(0,y)=Y^{(0)},\quad \p_tY_{p}(0,y)=Y^{(1)}(y),$$
therefore,
$$Y(0,y)=Y^{(0)},\quad Y_{t}(0,y)=Y^{(1)}(y),$$
and thus $Y$ is the desired classical solution to \eqref{S9.1eq1}. This ends the proof of Theorem \ref{Th1}.
\end{proof}

\renewcommand{\theequation}{\thesection.\arabic{equation}}
\setcounter{equation}{0}

\appendix

\setcounter{equation}{0}
\section{The proof of Lemmas \ref{S9lem1}, \ref{S9lem2} and \ref{S9lem3}}\label{appendixa}

The goal of this appendix is to present the proof of Lemmas \ref{S9lem1}, \ref{S9lem2} and \ref{S9lem3}.
Notice that the estimates for $e_{p,2}'$, $e_{p,2}''$ are the same as (even better than) those for $e_{p,1}'$, $e_{p,1}''$, so that we only preform the estimates for the latter in what follows.

\subsection{The proof of Lemma \ref{S9lem1}}
We divide the proof of this lemma by the following steps:

\noindent$\bullet$\underline{\it The proof of \eqref{IVieq0}}

In view of \eqref{S9.1eq10}, we get, by
applying \eqref{S7.2eqE2} (with $Y\simeq Y_p+Y_{p+1}$, $X=W=X_p$),  that for $N\geq0$,
\begin{equation*}
\begin{split}
&\|\langle t\rangle^\frac12|D|^{-1}e_{p,1}''\|_{L_t^2(H^{N+1})}
\lesssim |\p_3X_p|_{\frac12,N+1}\|\p_3X_p\|_{L_t^2(L^2)}+|\p_3X_p|_{\frac12,0}\|\p_3X_p\|_{L_t^2(H^{N+1})}\\
&\ +\sum_{j=p}^{p+1}\Bigl\{\Bigl(|\nabla Y_{j}|_{0,N+1}|\p_3X_p|_{\frac12,0}+\big(|\p_3Y_{j}|_{\frac12,N+1}+|\nabla Y_{j}|_{0,N+1}|\p_3Y_{j}|_{\frac12,1}\big)|\nabla X_p|_{0,1}\Bigr)\|\p_3X_p\|_{L_t^2(H^{1})}\\
&\ +\big(|\p_3Y_{j}|_{\frac12+\bar\varepsilon,1}+|\p_3Y_{j}|_{\frac12+\bar\varepsilon,1}^\frac13\|\p_3Y_{j}\|_{\frac12,1}^\frac23\big) \Big(|\nabla X_p|_{0,N+1}\|\p_3X_p\|_{L_t^2(L^2)}+|\nabla X_p|_{0,0}\|\p_3X_p\|_{L_t^2(H^{N+1})}\\
&\ +|\p_3X_p|_{\frac12,1}\|\nabla X_p\|_{0,N+1}+\bigl(|\p_3X_p|_{\frac12,N+1}
+|\nabla Y_{j}|_{0,N+1}|\p_3X_p|_{\frac12,1}+|\p_3Y_{j}|_{\frac12+\bar\varepsilon,0}|\nabla X_p|_{0,1}\bigr)\|\nabla X_p\|_{0,1}\Big)\\
&\ +\big(|\p_3Y_{j}|_{\frac12+\bar\varepsilon,1}^\frac43\|\p_3Y_{j}\|_{\frac12,0}^\frac23+|\p_3Y_{j}|_{\frac12+\bar\varepsilon,1}^2\big)\Big(|\nabla X_p|_{0,1}\bigl(\|\nabla X_p\|_{0,N+1}+|\nabla Y_{j}|_{0,N+1}\|\nabla X_p\|_{0,1}\bigr)\\
&\ +|\nabla X_p|_{0,N+1}\|\nabla X_p\|_{0,1}\Big)+\big(
 |\p_3Y_{j}|_{\frac12+\bar\varepsilon,N+1}+|\p_3Y_{j}|_{\frac12+\bar\varepsilon,N+1}^\frac13\|\p_3Y_{j}\|_{\frac12,N+1}^\frac23\big)|\p_3X_p|_{\frac12,1}\|\nabla X_p\|_{0,1}
\Bigr\}.
\end{split}
\end{equation*}
 Similar estimate holds for $\|\langle t\rangle|D|^{-1}e_{p,1}''\|_{L_t^2(H^{N+1})}$ with $|\p_3 X_p|_{\frac12,l}$ above being replaced by $|\p_3X_p|_{1,l}$
  and $|\nabla X_p|_{0,l}$ by $|\nabla X_p|_{\frac12,l}.$

It follows from \eqref{Ieq1}, \eqref{Ieq3} and \eqref{Ieq4} that
\begin{align}
&|\p_3Y_{p+1}|_{\frac12+\bar\varepsilon,1}\leq C\eta,\quad |\nabla Y_{p+1}|_{0,1}\leq C\eta\quad  \text{since} \ \gamma\geq3\bar\varepsilon;\label{IVieq1}\\
&\|\p_3Y_{p+1}\|_{\frac12,1}\leq C\eta \qquad\qquad\qquad\qquad\qquad  \text{since} \ \beta\geq\bar\varepsilon. \label{IVieq1'}
\end{align}
As a result, applying (P1, $p$) and (P2, $p$), it comes out
\begin{equation*}
\begin{split}
&\|\langle t\rangle^{\frac12}|D|^{-1}e_{p,1}''\|_{L_t^2(H^1)}
\lesssim \eta^2\theta_p^{-\gamma-\beta+\bar\varepsilon},\andf
\|\langle t\rangle|D|^{-1}e_{p,1}''\|_{L_t^2(H^1)}
\lesssim \eta^2\theta_p^{\frac12-\gamma-\beta+\bar\varepsilon}.
\end{split}
\end{equation*}
Interpolating between the above two inequalities gives rise to
\begin{equation}\label{IVieq2}
\begin{split}
&\|\langle t\rangle^{\frac12+k}|D|^{-1}e_{p,1}''\|_{L_t^2(H^1)}
\lesssim \eta^2\theta_p^{k-\gamma-\beta+\bar\varepsilon} \quad  \text{if} \  0\leq k\leq\frac12.
\end{split}
\end{equation}

While for $0\leq N\leq N_0-1$ such that $-\gamma+\bar\varepsilon(N+1)\geq\bar\varepsilon$ and $-\beta+\bar\varepsilon N\geq\bar\varepsilon$, we deduce from \eqref{Ieq1}, \eqref{Ieq3} and \eqref{Ieq4} that
\begin{equation}\label{IVieq3}
\begin{split}
&|\p_3Y_{p+1}|_{\frac12+\bar\varepsilon,N+1}\leq C\eta\theta_{p+1}^{-\gamma+\bar\varepsilon(N+2)},\quad |\nabla Y_{p+1}|_{0,N+1}\leq C\eta\theta_{p+1}^{-\gamma+\bar\varepsilon(N+1)}\\
&\qquad\qquad\qquad\|\p_3Y_{p+1}\|_{\frac12,N+1}\leq C\eta\theta_{p+1}^{-\beta+\bar\varepsilon N}.
\end{split}
\end{equation}
Therefore for such $N$, there hold
\begin{equation*}
\begin{split}
\|\langle t\rangle^\frac12|D|^{-1}e_{p,1}''\|_{L_t^2(H^{N+1})}\lesssim \eta^2\theta_{p}^{-\gamma-\beta+\bar\varepsilon(N+1)},\quad
 \|\langle t\rangle|D|^{-1}e_{p,1}''\|_{L_t^2(H^{N+1})}\lesssim\eta^2\theta_{p}^{\frac12-\gamma-\beta+\bar\varepsilon(N+1)}.
\end{split}
\end{equation*}
Interpolating the above two inequalities, we obtain for $0\leq k\leq\frac12$, $N\leq N_0-1$ such that $-\gamma+\bar\varepsilon(N+1)\geq\bar\varepsilon$ and $-\beta+\bar\varepsilon N\geq\bar\varepsilon$,
\begin{equation}\label{IVieq4}
\begin{split}
 \|\langle t\rangle^{\frac12+k}|D|^{-1}e_{p,1}''\|_{L_t^2(H^{N+1})}&\lesssim\eta^2\theta_{p}^{k-\gamma-\beta+\bar\varepsilon(N+1)}.
\end{split}
\end{equation}
Interpolating between \eqref{IVieq2} and \eqref{IVieq4} leads to \eqref{IVieq0}.

\noindent$\bullet$\underline{\it The proof of \eqref{IVieq-1}}

In order to do so, we get by
applying \eqref{S7.2eqE2} (with $Y\simeq Y_p$, $X=(1-S_p)Y_p$, $W=X_p$) that for $N\geq0$,
\begin{equation*}
\begin{split}
&\|\langle t\rangle^\frac12|D|^{-1}e_{p,1}'\|_{L_t^2(H^{N+1})}\lesssim \Bigl(|(1-S_p)\p_3Y_p|_{\frac12,N+1}+|\nabla Y_p|_{0,N+1}|(1-S_p)\p_3Y_p|_{\frac12,0}\\
&\ +\big(|\p_3Y_p|_{\frac12,N+1}+|\nabla Y_p|_{0,N+1}|\p_3Y_p|_{\frac12,1}\big)|(1-S_p)\nabla Y_p|_{0,1}\Bigr)\|\p_3X_p\|_{L_t^2(H^1)}\\
&\ +|(1-S_p)\p_3Y_p|_{\frac12,0}\|\p_3X_p\|_{L_t^2(H^{N+1})}+\big(|\p_3Y_p|_{\frac12+\bar\varepsilon,1}+
|\p_3Y_p|_{\frac12+\bar\varepsilon,1}^\frac13\|\p_3Y_p\|_{\frac12,1}^\frac23\big)\\
&\quad \times  \Big(|(1-S_p)\nabla Y_p|_{0,1}\bigl(\|\p_3X_p\|_{L_t^2(H^{N+1})}
+|\p_3Y_p|_{\frac12,N+1}\|\nabla X_p\|_{0,1}\bigr)\\
&\ +
|(1-S_p)\p_3Y_p|_{\frac12,N+1}\|\nabla X_p\|_{0,1}+|(1-S_p)\p_3Y_p|_{\frac12,1}\bigl(\|\nabla X_p\|_{0,N+1}+|\nabla Y_p|_{0,N+1}\|\nabla X_p\|_{0,1}\bigr)\\
&\ +|(1-S_p)\nabla Y_p|_{0,N+1}\|\p_3X_p\|_{L_t^2(L^2)}\Big)+\big(|\p_3Y_p|_{\frac12+\bar\varepsilon,1}^\frac43\|\p_3Y_p\|_{\frac12,0}^\frac23+|\p_3Y_p|_{\frac12+\bar\varepsilon,1}^2\big)\\
&\quad\times\Big(|(1-S_p)\nabla Y_p|_{0,N+1}\|\nabla X_p\|_{0,1}+|(1-S_p)\nabla Y_p|_{0,1}\bigl(\|\nabla X_p\|_{0,N+1}+|\nabla Y_p|_{0,N+1}\|\nabla X_p\|_{0,1}\bigr)\Big)\\
&\ +\big(|\p_3Y_p|_{\frac12+\bar\e,N+1}+|\p_3Y_p|_{\frac12+\bar\e,N+1}^\frac13\|\p_3Y_p\|_{\frac12,N+1}^\frac23\big)|(1-S_p)\p_3Y_p|_{\frac12,1}\|\nabla X_p\|_{0,1}\\.
\end{split}
\end{equation*}
 Similar estimate for $\|\langle t\rangle|D|^{-1}e_{p,1}'\|_{L_t^2(H^{N+1})}$ holds  with $|(1-S_p)\p_3Y_p|_{\frac12,l}$ above being replaced by $|(1-S_p)\p_3Y_p|_{1,l}$ and $|(1-S_p)\nabla Y_p|_{\frac12,l}$ by $|(1-S_p)\nabla Y_p|_{0,l}.$

So that by virtue of  \eqref{IVieq1}, \eqref{IVieq1'} and (III) of Proposition \ref{S9col1}, we infer that
\begin{equation*}
\begin{split}
&\|\langle t\rangle^\frac12|D|^{-1}e_{p,1}'\|_{L_t^2(H^1)}
\lesssim\eta^2 \theta_p^{-\gamma-\beta+\bar\varepsilon},\quad
\|\langle t\rangle|D|^{-1}e_{p,1}'\|_{L_t^2(H^1)}
\lesssim\eta^2 \theta_p^{\frac12-\gamma-\beta+\bar\varepsilon}.
\end{split}
\end{equation*}
Interpolating the above two inequalities, we obtain
\begin{equation}\label{IVieq5}
\begin{split}
\|\langle t\rangle^{\frac12+k}|D|^{-1}e_{p,1}'\|_{L_t^2(H^1)}
&\lesssim\eta^2 \theta_p^{k-\gamma-\beta+2\bar\varepsilon}\quad\text{for} \ 0\leq k\leq\frac12.
\end{split}
\end{equation}

Note that for $N\leq N_0-1$ such that $-\beta+\bar\varepsilon N\geq\bar\varepsilon$ and $-\gamma+\bar\varepsilon(N+1)\geq\bar\varepsilon$, \eqref{IVieq3} holds.
And hence we have
\begin{equation*}
\begin{split}
\|\langle t\rangle^\frac12|D|^{-1}e_{p,1}'\|_{L_t^2(H^{N+1})}\lesssim\eta^2\theta_p^{-\gamma-\beta+\bar\varepsilon(N+1)},\quad
\|\langle t\rangle|D|^{-1}e_{p,1}'\|_{L_t^2(H^{N+1})}\lesssim\eta^2\theta_p^{\frac12-\gamma-\beta+\bar\varepsilon(N+1)}.
\end{split}
\end{equation*}
Interpolating the above inequalities gives  for such $N$ and $0\leq k\leq\frac12$ that
\begin{equation}\label{IVieq6}
\|\langle t\rangle^{\frac12+k}|D|^{-1}e_{p,1}'\|_{L_t^2(H^{N+1})}\lesssim\eta^2\theta_p^{k-\gamma-\beta+\bar\varepsilon(N+1)}.
\end{equation}
Interpolating between \eqref{IVieq5} and \eqref{IVieq6} leads to \eqref{IVieq-1}.

\noindent$\bullet$\underline{\it The proof of \eqref{IVieq-2}}

It follows from  \eqref{S7.2eqE1} and \eqref{S9.1eq10} that
\begin{equation*}
\begin{split}
\|\langle t&\rangle^\frac12|D|^{-1}e_{p,0}''\|_{L_t^2(H^{N+1})}\lesssim |\nabla X_p|_{0,N+1}\|\langle t\rangle^\frac12\nabla \p_tX_{p}\|_{L_t^2(L^2)}\\
&+|\nabla X_p|_{0,0}\|\langle t\rangle^\frac12\nabla \p_tX_{p}\|_{L_t^2(H^{N+1})}+|\p_tX_{p}|_{1+\bar\varepsilon,N+2}\|\nabla X_p\|_{0,0}+|\p_tX_{p}|_{1+\bar\varepsilon,1}\|\nabla X_p\|_{0,N+1}\\
&+\sum_{j=p}^{p+1} \Bigl( |\p_tY_{j}|_{1+\bar\varepsilon,1}\big(|\nabla X_p|_{0,N+1}\|\nabla X_p\|_{0,0}+|\nabla X_p|_{0,0}\|\nabla X_p\|_{0,N+1}\big)\\
&\qquad+\big(| \p_tY_{j}|_{1+\bar\varepsilon,N+2}+|\nabla Y_{j}|_{0,N+1}| \p_tY_{j}|_{1+\bar\varepsilon,1}\big)|\nabla X_p|_{0,0}\|\nabla X_p\|_{0,0}\\
&\qquad+|\nabla Y_j|_{0,N+1}\bigl(|\nabla X_p|_{0,0}\|\langle t\rangle^\frac12\nabla\p_t X_{p}\|_{L_t^2(L^2)}+|\p_tX_{p}|_{1+\bar\varepsilon,1}\|\nabla X_p\|_{0,0}\bigr)\Bigr).
\end{split}
\end{equation*}
Recall that $\alpha=\f12-\d-\bar{\e}<\f12-\d,$  similar estimate for $\|\langle t\rangle^{\frac12+\alpha}|D|^{-1}e_{p,0}''\|_{L_t^2(H^{N+1})}$ holds with  $|\nabla X_p|_{0,l}$ above  replaced by $|\nabla X_p|_{\alpha,l}$
 and $|\p_tX_{p}|_{1+\bar\varepsilon,l}$ above  by $|\p_tX_{p}|_{\frac32-\delta,l}.$

Note from \eqref{S9eqebd} that $\gamma\geq\delta+4\bar\varepsilon$, so that
we deduce from \eqref{Ieq2} that,
\begin{equation}\label{IVieq1''}
|\p_tY_{p+1}|_{1+\bar\varepsilon,2}\leq C\eta,
\end{equation}
which implies
\begin{equation*}
\begin{split}
&\|\langle t\rangle^\frac12|D|^{-1}e_{p,0}''\|_{L_t^2(H^1)}
\lesssim \eta^2\theta_p^{\delta-\gamma-\beta+3\bar\varepsilon},\quad
\|\langle t\rangle^{\alpha+\frac12}|D|^{-1}e_{p,0}''\|_{L_t^2(H^1)}
\lesssim \eta^2\theta_p^{\frac12-\gamma-\beta+2\bar\varepsilon}.
\end{split}
\end{equation*}
Interpolating between the above two inequalities yields
\begin{equation}\label{IVieq7}
\begin{split}
&\|\langle t\rangle^{\frac12+k}|D|^{-1}e_{p,0}''\|_{L_t^2(H^1)}
\lesssim \eta^2\theta_p^{k+\delta-\gamma-\beta+3\bar\varepsilon} \quad \text{for}\ 0\leq k\leq\alpha.
\end{split}
\end{equation}

For $N\leq N_0-2$ satisfying $-\gamma+\bar\varepsilon(N+1)\geq\bar\varepsilon$, we deduce from \eqref{Ieq2} and \eqref{Ieq3} that
\begin{equation}\label{IVieq7a}
|\p_tY_{p+1}|_{1+\bar\varepsilon,N+2}\leq C\eta\theta_{p+1}^{\delta-\gamma+\bar\varepsilon(N+3)},\quad |\nabla Y_{p+1}|_{0,N+1}\leq C\eta\theta_{p+1}^{-\gamma+\bar\varepsilon(N+1)},\end{equation}
so that for such $N,$ we have
\begin{equation*}
\begin{split}
&\|\langle t\rangle^\frac12|D|^{-1}e_{p,0}''\|_{L_t^2(H^{N+1})}
\lesssim \eta^2\theta_p^{\delta-\gamma-\beta+\bar\varepsilon(N+3)},\quad
\|\langle t\rangle^{\alpha+\frac12}|D|^{-1}e_{p,0}''\|_{L_t^2(H^{N+1})}
\lesssim \eta^2\theta_p^{\frac12-\gamma-\beta+\bar\varepsilon(N+2)}.
\end{split}
\end{equation*}
By interpolating between the above inequalities, we achieve for such $N$ and $0\leq k\leq\alpha$ that
\begin{equation}\label{IVieq8}
\begin{split}
&\|\langle t\rangle^{\frac12+k}|D|^{-1}e_{p,0}''\|_{L_t^2(H^{N+1})}\lesssim \eta^2\theta_p^{k+\delta-\gamma-\beta+\bar\varepsilon(N+3)}.
\end{split}
\end{equation}
Interpolating between \eqref{IVieq7} and \eqref{IVieq8} leads to \eqref{IVieq-2}.

\noindent$\bullet$\underline{\it The proof of \eqref{IVieq-3}}

Again in view of \eqref{S9.1eq10},
we deduce from \eqref{S7.2eqE1} that
\begin{equation*}
\begin{split}
\|\langle t&\rangle^\frac12|D|^{-1}e_{p,0}'\|_{L_t^2(H^{N+1})}\lesssim
|(1-S_p)\nabla Y_p|_{0,N+1}\bigl(\|\langle t\rangle^\frac12\nabla \p_tX_{p}\|_{L_t^2(L^2)}+| \p_tY_{p}|_{1+\bar\varepsilon,1}\|\nabla X_p\|_{0,0}\bigr)\\
&+|(1-S_p)\p_tY_{p}|_{1+\bar\varepsilon,N+2}\|\nabla X_p\|_{0,0}+|(1-S_p)\p_tY_{p}|_{1+\bar\varepsilon,1}\bigl(|\nabla Y_p|_{0,N+1}\|\nabla X_p\|_{0,0}+\|\nabla X_p\|_{0,N+1}\bigr)\\
&+|(1-S_p)\nabla Y_p|_{0,0}\Bigl(\big(|\p_t Y_{p}|_{1+\bar\varepsilon,N+2}+|\nabla Y_p|_{0,N+1}| \p_tY_{p}|_{1+\bar\varepsilon,1}\big)\|\nabla X_p\|_{0,0}\\
&+|\nabla Y_p|_{0,N+1}\|\langle t\rangle^\frac12\nabla \p_tX_{p}\|_{L_t^2(L^2)}+\|\langle t\rangle^\frac12\nabla \p_tX_{p}\|_{L_t^2(H^{N+1})}+\|\nabla X_p\|_{0,N+1}| \p_tY_{p}|_{1+\bar\varepsilon,1}\Bigr).
\end{split}
\end{equation*}
 A similar estimate holds  for $\|\langle t\rangle^{\frac12+\alpha}|D|^{-1}e_{p,0}'\|_{L_t^2(H^{N+1})}$ with $|(1-S_p)\nabla Y_p|_{0,l}$ above being
  replaced by $|(1-S_p)\nabla Y_p|_{\alpha,l}$ and  $|(1-S_p)\p_tY_{p}|_{1+\bar\varepsilon,l}$ above by $|(1-S_p)\p_tY_p|_{\frac32-\delta,l}.$

Hence by virtue of \eqref{IVieq1''}, we have
\begin{equation*}
\begin{split}
&\|\langle t\rangle^\frac12|D|^{-1}e_{p,0}'\|_{L_t^2(H^1)}
\lesssim \eta^2\theta_p^{\delta-\gamma-\beta+3\bar\varepsilon},\quad
\|\langle t\rangle^{\frac12+\alpha}|D|^{-1}e_{p,0}'\|_{L_t^2(H^1)}\lesssim \eta^2\theta_p^{\frac12-\gamma-\beta+2\bar\varepsilon}.
\end{split}
\end{equation*}
Interpolating between the above inequalities gives
\begin{equation}\label{IVieq9}
\|\langle t\rangle^{\frac12+k}|D|^{-1}e_{p,0}'\|_{L_t^2(H^1)}
\lesssim \eta^2\theta_p^{k+\delta-\gamma-\beta+3\bar\varepsilon}\quad \text{for}\ 0\leq k\leq\alpha.
\end{equation}

Whereas for $N\leq N_0-2$ such that $-\gamma+\bar\varepsilon(N+1)\geq\bar\varepsilon$, we infer from \eqref{IVieq7a} that
\begin{equation*}
\begin{split}
&\|\langle t\rangle^\frac12|D|^{-1}e_{p,0}'\|_{L_t^2(H^{N+1})}\lesssim \eta^2\theta_p^{\delta-\gamma-\beta+\bar\varepsilon(N+3)},\quad
\|\langle t\rangle^{\frac12+\alpha}|D|^{-1}e_{p,0}'\|_{L_t^2(H^{N+1})}\lesssim \eta^2\theta_p^{\frac12-\gamma-\beta+\bar\varepsilon(N+2)}.
\end{split}
\end{equation*}
Interpolating between the above inequalities, we achieve  for such $N$ and $0\leq k\leq \alpha$ that
\begin{equation}\label{IVieq10}
\begin{split}
&\|\langle t\rangle^{\frac12+k}|D|^{-1}e_{p,0}'\|_{L_t^2(H^{N+1})}\lesssim \eta^2\theta_p^{k+\delta-\gamma-\beta+\bar\varepsilon(N+3)}.
\end{split}
\end{equation}
Interpolating between \eqref{IVieq9} and \eqref{IVieq10} leads to \eqref{IVieq-3}. The proof of Lemma \ref{S9lem1} is complete.

\subsection{The proof of Lemma \ref{S9lem2}}

As in the previous lemma, we shall divide the proof of this lemma into the following steps:

\noindent$\bullet$\underline{\it The proof of \eqref{IVieq-4}}

Thanks to \eqref{S9.1eq10}, we get, by
applying \eqref{S7.2eqE2} that for $N\geq0$
\begin{equation*}
\begin{split}
\|&|D|^{-1}e_{p,1}''\|_{\frac32,N+1}
\lesssim |\p_3X_p|_{1,N+1}+|\p_3X_p|_{1,0}\|\p_3X_p\|_{\frac12,N+1}+\sum_{j=p}^{p+1}\Bigl\{\|\p_3X_p\|_{\frac12,1}\times\\
&\ \times\Bigl(|\nabla Y_{j}|_{0,N+1}|\p_3X_p|_{1,0}+\big(|\p_3Y_{j}|_{\frac12,N+1}\|\p_3X_p\|_{\frac12,0}+|\nabla Y_{j}|_{0,N+1}|\p_3Y_{j}|_{\frac12,1}\big)|\nabla X_p|_{\frac12,1}\Bigr)\\
&\ +  \big(|\p_3Y_{j}|_{\frac12,1}+|\p_3Y_{j}|_{\frac12,1}^\frac13\|\p_3Y_{j}\|_{\frac12,1}^\frac23\big)\Big(|\nabla X_p|_{\frac12,N+1}\|\p_3X_p\|_{\frac12,0}+|\nabla X_p|_{\frac12,0}\|\p_3X_p\|_{\frac12,N+1}\\
&\ +\bigl(|\p_3Y_{j}|_{\frac12,N+1}|\nabla X_p|_{\frac12,1}
+|\p_3X_p|_{1,N+1}\bigr)\|\nabla X_p\|_{0,1}+|\p_3X_p|_{1,1}\bigl(\|\nabla X_p\|_{0,N+1}\\
&\ +|\nabla Y_{j}|_{0,N+1}\|\nabla X_p\|_{0,1}\bigr)\Big)+\big(|\p_3Y_{j}|_{\frac12,1}^\frac43\|\p_3Y_{j}\|_{\frac12,0}^\frac23+|\p_3Y_{j}|_1^2\big)\times
\\
&\ \times \Big(|\nabla X_p|_{\frac12,1}\|\nabla X_p\|_{0,N+1}+\bigl(|\nabla X_p|_{\frac12,N+1}+|\nabla Y_{j}|_{0,N+1}|\nabla X_p|_{\frac12,1}\bigr)\|\nabla X_p\|_{0,1}\Big)\\
&\qquad\qquad\qquad\qquad +\big(|\p_3Y_{j}|_{\frac12,N+1}+|\p_3Y_{j}|_{\frac12,N+1}^\frac13\|\p_3Y_{j}\|_{\frac12,N+1}^\frac23\big)|\p_3X_p|_{1,1}\|\nabla X_p\|_{0,1}
\Bigr\}.
\end{split}
\end{equation*}
 A similar estimate holds for $\||D|^{-1}e_{p,1}''\|_{1,N+1}$ with $|\p_3X_p|_{1,l}$ and  $|\nabla X_p|_{\frac12,l}$  above being
  replaced by $|\p_3X|_{\frac12,l}$ and  $|\nabla X_p|_{0,l}$ respectively.

Hence it follows from  \eqref{IVieq1}, (P1, $p$) and (P2, $p$) that
\begin{equation*}
\begin{split}
&\||D|^{-1}e_{p,1}''\|_{1,1}\lesssim\eta^2\theta_p^{-\gamma-\beta+\bar\varepsilon},\qquad
\||D|^{-1}e_{p,1}''\|_{\frac32,1}\lesssim\eta^2\theta_p^{\frac12-\gamma-\beta+\bar\varepsilon}.
\end{split}
\end{equation*}
Interpolating the above two inequalities yields
\begin{equation}\label{IViieq1}
\||D|^{-1}e_{p,1}''\|_{1+k,1}\lesssim\eta^2\theta_p^{k-\gamma-\beta+\bar\varepsilon}\quad \text{for}\ 0\leq k\leq\frac12.
\end{equation}

Whereas for $N\leq N_0-1$ with $-\gamma+\bar\varepsilon(N+1)\geq\bar\varepsilon$ and $-\beta+\bar\varepsilon N\geq\bar\varepsilon$, \eqref{IVieq3} holds,
we infer that
\begin{equation*}
\||D|^{-1}e_{p,1}''\|_{\frac32,N+1}\lesssim \eta^2\theta_p^{\frac12-\gamma-\beta+\bar\varepsilon(N+1)},\quad
\||D|^{-1}e_{p,1}''\|_{1,N+1}\lesssim \eta^2\theta_p^{-\gamma-\beta+\bar\varepsilon(N+1)}.
\end{equation*}
Interpolating the above two inequalities leads to
\begin{equation}\label{IViieq2}
\begin{split}
\||D|^{-1}e_{p,1}''\|_{1+k,N+1}\lesssim \eta^2\theta_p^{k-\gamma-\beta+\bar\varepsilon(N+1)}
\end{split}
\end{equation}   for $0\leq k\leq \frac12$, and $N\leq N_0-1$ with $-\gamma+\bar\varepsilon(N+1)\geq\bar\varepsilon$ and $-\beta+\bar\varepsilon N\geq\bar\varepsilon$.

Interpolating between \eqref{IViieq1} and \eqref{IViieq2} gives rise to \eqref{IVieq-4}.

\noindent$\bullet$\underline{\it The proof of \eqref{IVieq-5}}

Applying \eqref{S7.2eqE2} to $e_{p,1}'$ determined by \eqref{S9.1eq10} gives that for $N\geq0$,
\begin{equation*}
\begin{split}
&\||D|^{-1}e_{p,1}'\|_{\frac32,N+1}\lesssim |(1-S_p)\p_3Y_p|_{1,0}\bigl(\|\p_3X_p\|_{\frac12,N+1}+|\nabla Y_p|_{0,N+1}\|\p_3X_p\|_{\frac12,0}\bigr)\\
&+|(1-S_p)\p_3Y_p|_{1,N+1}\|\p_3X_p\|_{\frac12,0}+  \big(|\p_3Y_p|_{\frac12,1}+|\p_3Y_p|_{\frac12,1}^\frac13\|\p_3Y_p\|_{\frac12,1}^\frac23\big)\times\\
& \times\Big(|(1-S_p)\nabla Y_p|_{\frac12,N+1}\|\p_3X_p\|_{\frac12,0}+|(1-S_p)\nabla Y_p|_{\frac12,1}\bigl(\|\p_3X_p\|_{\frac12,N+1}
+|\p_3Y_p|_{\frac12,N+1}\|\nabla X_p\|_{0,1}\bigr)\\
&+|(1-S_p)\p_3Y_p|_{1,N+1}\|\nabla X_p\|_{0,1}+|(1-S_p)\p_3Y_p|_{1,1}\bigl(\|\nabla X_p\|_{0,N+1}+|\nabla Y_p|_{0,N+1}\|\nabla X_p\|_{0,1}\bigr)\Bigr)\\
&+\big(|\p_3Y_p|_{\frac12,N+1}+|\nabla Y_p|_{0,N+1}|\p_3Y_p|_{\frac12,1}\big)|(1-S_p)\nabla Y_p|_{\frac12,1}\|\p_3X_p\|_{\frac12,1}\\
&+\big(|\p_3Y_p|_{\frac12,N+1}+|\p_3Y_p|_{\frac12,N+1}^\frac13\|\p_3Y_p\|_{\frac12,N+1}^\frac23\big)|(1-S_p)\p_3Y_p|_{1,1}\|\nabla X_p\|_{0,1}\\
&+ \big(|\p_3Y_p|_{\frac12,1}^\frac43\|\p_3Y_p\|_{\frac12,0}^\frac23+|\p_3Y_p|_{\frac12,1}^2\big)\Big(|(1-S_p)\nabla Y_p|_{\frac12,N+1}
\|\nabla X_p\|_{0,1}\\
&\qquad\qquad\qquad\qquad\qquad+|(1-S_p)\nabla Y_p|_{\frac12,1}\bigl(\|\nabla X_p\|_{0,N+1}+|\nabla Y_p|_{0,N+1}
\|\nabla X_p\|_{0,1}\bigr)\Bigr).
\end{split}
\end{equation*}
 A similar estimate holds for $\||D|^{-1}e_{p,1}'\|_{1,N+1}$ with $|(1-S_p)\p_3X_p|_{1,l}$ and $|(1-S_p)\nabla X_p|_{\frac12,l}$  above
 being replaced by $|(1-S_p)\p_3Y_p|_{\frac12,l}$ and  $|(1-S_{p})\nabla X_p|_{0,l}$ respectively.

Hence we deduce from  \eqref{IVieq1} that
\begin{equation*}
\begin{split}
&\||D|^{-1}e_{p,1}'\|_{1,1}\lesssim\eta^2\theta_p^{-\gamma-\beta+\bar\varepsilon},\quad\||D|^{-1}e_{p,1}'\|_{\frac32,1}\lesssim\eta^2\theta_p^{\frac12-\gamma-\beta+\bar\varepsilon}.
\end{split}
\end{equation*}
Interpolating the above two inequalities yields
\begin{equation}\label{IViieq3}
\begin{split}
&\||D|^{-1}e_{p,1}'\|_{1+k,1}\lesssim\eta^2\theta_p^{k-\gamma-\beta+\bar\varepsilon}\quad \text{for}  \ 0\leq k\leq\frac12.
\end{split}
\end{equation}

For $N\leq N_0-1$ satisfying $-\beta+\bar\varepsilon N\geq\bar\varepsilon$ and $-\gamma+\bar\varepsilon(N+1)\geq\bar\varepsilon$, \eqref{IVieq3} holds,  so that
we infer that
\begin{equation*}
\begin{split}
&\||D|^{-1}e_{p,1}'\|_{1,N+1}\lesssim \eta^2\theta_p^{-\gamma-\beta+\bar\varepsilon(N+1)},\quad \||D|^{-1}e_{p,1}'\|_{\frac32,N+1}\lesssim \eta^2\theta_p^{\frac12-\gamma-\beta+\bar\varepsilon(N+1)}.
\end{split}
\end{equation*}
Interpolating the above inequalities leads to
\begin{equation}\label{IViieq4}
\begin{split}
&\||D|^{-1}e_{p,1}'\|_{1+k,N+1}\lesssim \eta^2\theta_p^{k-\gamma-\beta+\bar\varepsilon(N+1)},
\end{split}
\end{equation}  for $0\leq k\leq\frac12$, $N\leq N_0-1$ such that $-\beta+\bar\varepsilon N\geq\bar\varepsilon$ and $-\gamma+\bar\varepsilon(N+1)\geq\bar\varepsilon$.

We then conclude the proof of \eqref{IVieq-5}  by interpolating between \eqref{IViieq3} and \eqref{IViieq4}.

\noindent$\bullet$\underline{\it The proof of \eqref{IVieq-6}}

Applying \eqref{S7.2eqE1} $e_{p,0}'',$ which is determined by \eqref{S9.1eq10} gives that for $N\geq0$,
\begin{equation*}
\begin{split}
\|&|D|^{-1}e_{p,0}''\|_{1,N+1}\lesssim  |\nabla X_p|_{0,N+1}\|\nabla \p_tX_{p}\|_{1,0}+|\nabla X_p|_{0,0}\|\nabla \p_tX_{p}\|_{1,N+1} +|\p_tX_{p}|_{1,N+2}\|\nabla X_p\|_{0,0}\\
&\ +|\p_tX_{p}|_{1,1}\|\nabla X_p\|_{0,N+1}+\sum_{j=p}^{p+1}\Bigl( |\p_tY_{j}|_{1,1}\big(|\nabla X_p|_{0,N+1}\|\nabla X_p\|_{0,0}+|\nabla X_p|_{0,0}\|\nabla X_p\|_{0,N+1}\big)\\
&\ \ +\big(| \p_tY_{j}|_{1,N+2}+|\nabla Y_{j}|_{0,N+1}| \p_tY_{j}|_{1,1}\big)|\nabla X_p|_{0,0}\|\nabla X_p\|_{0,0}\\
&\ \qquad\qquad \qquad\qquad\qquad\qquad\quad+|\nabla Y_j|_{0,N+1}\bigl(|\nabla X_p|_{0,0}\|\nabla \p_tX_{p}\|_{1,0}+|\p_tX_{p}|_{1,1}\|\nabla X_p\|_{0,0}\bigr)\Bigr).
\end{split}
\end{equation*}
An similar estimate holds for $\||D|^{-1}e_{p,0}''\|_{\frac32-\delta,N+1}$ with $|\nabla X_p|_{0,l}$ and $|\p_tX_{p}|_{1,l}$
being replaced by $|\nabla X_p|_{\frac12-\delta,l}$ and $|\p_tX_{p}|_{\frac32-\delta,l}$ respectively.

Therefore,
\begin{equation*}
\begin{split}
&\||D|^{-1}e_{p,0}''\|_{1,1}\lesssim \eta^2\theta_p^{\delta-\gamma-\beta+2\bar\varepsilon},\quad
\||D|^{-1}e_{p,0}''\|_{\frac32-\delta,1}\lesssim \eta^2\theta_p^{-\gamma-\beta+2\bar\varepsilon}.
\end{split}
\end{equation*}
By interpolating between the above two inequalities, we obtain
\begin{equation}\label{IViieq5}
\begin{split}
&\||D|^{-1}e_{p,0}''\|_{1+k,1}
\lesssim \eta^2\theta_p^{k+\delta-\gamma-\beta+2\bar\varepsilon}\quad\text{for}\ \ 0\leq k\leq\frac12-\delta.
\end{split}
\end{equation}

For $N\leq N_0-2$ with $-\gamma+\bar\varepsilon(N+1)\geq\bar\varepsilon$, we get,  by applying \eqref{IVieq7a}, that
\begin{equation*}
\begin{split}
&\||D|^{-1}e_{p,0}''\|_{1,N+1}\lesssim \eta^2\theta_{p}^{\delta-\gamma-\beta+\bar\varepsilon(N+2)},\quad
\||D|^{-1}e_{p,0}''\|_{\frac32-\delta,N+1}\lesssim \eta^2\theta_{p}^{\frac12-\gamma-\beta+\bar\varepsilon(N+2)}.
\end{split}
\end{equation*}
Interpolating the above two inequalities, we obtain
\begin{equation}\label{IViieq6}
\begin{split}
&\||D|^{-1}e_{p,0}''\|_{1+k,N+1}\lesssim \eta^2\theta_{p}^{k+\delta-\gamma-\beta+\bar\varepsilon(N+2)}
\end{split}
\end{equation} for $0\leq k\leq\frac12-\delta$, $N\leq N_0-2$ satisfying $-\gamma+\bar\varepsilon(N+1)\geq\bar\varepsilon.$

By interpolating the inequalities  \eqref{IViieq5} and \eqref{IViieq6}, we conclude the proof of \eqref{IVieq-6}.

\noindent$\bullet$\underline{\it The proof of \eqref{IVieq-7}}

Again by applying \eqref{S7.2eqE1} $e_{p,0}'$ determined by \eqref{S9.1eq10}, we obtain for $N\geq0$
\begin{equation*}
\begin{split}
\||&D|^{-1}e_{p,0}'\|_{1,N+1}
\lesssim |(1-S_p)\nabla Y_p|_{0,0}\Bigl(\big(| \p_tY_{p}|_{1,N+2}+|\nabla Y_p|_{0,N+1}|\p_t Y_{p}|_{1,1}\big)\|\nabla X_p\|_{0,0}\\
&+| \p_tY_{p}|_{1,1}\|\nabla X_p\|_{0,N+1}+\|\nabla \p_tX_{p}\|_{1,N+1}+|\nabla Y_p|_{0,N+1}\|\nabla \p_tX_{p}\|_{1,0}\Bigr) \\
&+|(1-S_p)\nabla Y_p|_{0,N+1}\bigl(| \p_tY_{p}|_{1,1}\|\nabla X_p\|_{0,0}+ \|\nabla \p_tX_{p}\|_{1,0}\bigr)\\
&+|(1-S_p)\p_tY_{p}|_{1,N+2}\|\nabla X_p\|_{0,0}+|(1-S_p)\p_tY_{p}|_{1,1}\bigl(\|\nabla X_p\|_{0,N+1}+|\nabla Y_p|_{0,N+1}\|\nabla X_p\|_{0,0}\bigr).
\end{split}
\end{equation*}
 A similar estimate holds  for $\||D|^{-1}e_{p,0}'\|_{\frac32-\delta,N+1}$ with $|(1-S_p)\nabla Y_p|_{0,l}$
  and $|(1-S_p)\p_tY_{p}|_{1,l}$ being replaced by $|(1-S_p)\nabla Y_p|_{\frac12-\delta,l}$ and
   $|(1-S_p)\p_tY_{p}|_{\frac32-\delta,l}$ respectively.

So that it follows from \eqref{IVieq1''} that
\begin{equation*}
\||D|^{-1}e_{p,0}'\|_{1,1}
\lesssim \eta^2\theta_p^{\delta-\gamma-\beta+2\bar\varepsilon},\quad
\||D|^{-1}e_{p,0}'\|_{\frac32-\delta,1}
\lesssim\eta^2\theta_p^{\frac12-\gamma-\beta+2\bar\varepsilon}.
\end{equation*}
Interpolating the above two inequalities yields
\begin{equation}\label{IViieq7}
\||D|^{-1}e_{p,0}'\|_{1+k,1}
\lesssim \eta^2\theta_p^{k+\delta-\gamma-\beta+2\bar\varepsilon}\quad\text{for}\ \ 0\leq k\leq\frac12-\delta.
\end{equation}
For $N\leq N_0-2$ with $-\gamma+\bar\varepsilon(N+1)\geq\bar\varepsilon$, we deduce from \eqref{IVieq7a} that
\begin{equation*}
\begin{split}
&\||D|^{-1}e_{p,0}'\|_{1,N+1}\lesssim \eta^2\theta_p^{\delta-\gamma-\beta+\bar\varepsilon(N+2)},\quad
\||D|^{-1}e_{p,0}'\|_{\frac32-\delta,N+1}\lesssim \eta^2\theta_p^{\frac12-\gamma-\beta+\bar\varepsilon(N+2)}.
\end{split}
\end{equation*}
Interpolating the above two inequalities yields
\begin{equation}\label{IViieq8}
\begin{split}
&\||D|^{-1}e_{p,0}'\|_{1+k,N+1}\lesssim \eta^2\theta_p^{k+\delta-\gamma-\beta+\bar\varepsilon(N+2)}
\end{split}
\end{equation}
for $0\leq k\leq\frac12-\delta$, $N\leq N_0-2$ such that $-\gamma+\bar\varepsilon(N+1)\geq\bar\varepsilon.$
By interpolating the inequalities \eqref{IViieq7}, \eqref{IViieq8}, we obtain \eqref{IVieq-7}.

This completes the proof of Lemma \ref{S9lem2}.

\subsection{The proof of Lemma \ref{S9lem3}}

We divide the proof of this lemma by the following steps:

\noindent$\bullet$\underline{\it The proof of \eqref{IVieq-a}}

Applying \eqref{S7.3eqE2}  to $_{p,1}''$ (with $Y\simeq Y_p+Y_{p+1}$, $X=W=X_p$), which is determined by \eqref{S9.1eq10}, yields that for any $N\geq0$,
\begin{equation*}
\begin{split}
&\triplenorm{e_{p,1}''}_{L_t^1(\d,N)}
\lesssim\|\p_3X_p\|_{L_t^2(H^{N+6})}\|\p_3X_p\|_{L_t^2(L^2)}+\sum_{j=p}^{p+1}\|\p_3X_p\|_{L_t^2(H^1)}\Big\{\Bigl(\|\nabla Y_j\|_{0,N+6}|\p_3X_p|_{\frac12+\bar\varepsilon,0}\\
&\ +|\p_{3}Y_{j}|_{\frac12+\bar\varepsilon,0}\|\nabla X_p\|_{0,N+6} +\big(\|\p_3Y_{j}\|_{L_t^2(H^{N+6})}+\|\nabla Y_{j}\|_{0,N+6}|\p_3Y_{j}|_{\frac12+\bar\varepsilon,1}\big)|\nabla X_p|_{0,1}
\Bigr)\\
&\ +\Bigl(|\p_{3}Y_{j}|_{\frac12+\bar\varepsilon,0}\|\nabla X_p\|_{0,0}+\|\p_3Y_{j}\|_{L_t^2(L^2)}|\nabla X_p|_{0,0}+|\p_3Y_{j}|_{\frac12+\bar\varepsilon,0}\|\nabla X_p\|_{0,0}\Bigr)\|\p_3X_p\|_{L_t^2(H^{N+6})}\\
&\ +\Big(\|\p_3Y_{j}\|_{L_t^2(H^{N+6})}+\|\nabla Y_{j}\|_{0,N+6}\bigl(|\p_3Y_{j}|_{\frac12+\bar\varepsilon,0}+\|\p_3Y_{j}\|_{L_t^2(H^3)}\big)\Big)\|\nabla X_p\|_{0,1}|\p_3X_p|_{\frac12+\bar\varepsilon,1}\\
&\ +|\p_3Y_{j}|_{\frac12+\bar\varepsilon,1}\|\p_3Y_{p}\|_{L_t^2(H^3)}\|\nabla X_p\|_{0,N+6}\|\nabla X_p\|_{0,0}\\
&\ +\big(|\p_3Y_{j}|_{\frac12+\bar\varepsilon,1}\|\p_3Y_{j}\|_{L_t^2(H^{N+6})} +\|\nabla Y_{j}\|_{0,N+6}|\p_3Y_{p}|_{\frac12+\bar\varepsilon,1}\|\p_3Y_{j}\|_{L_t^2(H^3)}\big)|\nabla X_p|_{0,0}\|\nabla X_p\|_{0,1}\Bigr\}.
\end{split}
\end{equation*}

Note from \eqref{S9eqP} and \eqref{S9eqebd} that   $\beta\geq6\bar\varepsilon,$ so that we deduce from
\eqref{Ieq4} that
\begin{equation}\label{IViiieq1}
\|\p_3Y_{p+1}\|_{0,6}+\|\nabla Y_{p+1}\|_{0,6}+\|\p_3Y_{p+1}\|_{L_t^2(H^6)}\leq C\eta.
\end{equation}
As a result, it comes out
\begin{equation}\label{IViiieq1a}
\triplenorm{e_{p,1}''}_{L_t^1(\d,0)}\lesssim \eta^2\big(\theta_p^{-2\beta+5\bar\varepsilon}+\theta_p^{-\gamma-\beta+5\bar\varepsilon}\big)
\lesssim \eta^2\theta_p^{-\beta-\gamma+5\bar\varepsilon}.
\end{equation}
The case when $N\leq N_0-6$ with $-\beta+\bar\varepsilon(N+5)\geq\bar\varepsilon,$ it follows from \eqref{Ieq4} that
\begin{equation}\label{IViiieq2}
\|\nabla Y_{p+1}\|_{0,N+6}+\|\p_3Y_{p+1}\|_{L_t^2(H^{N+6})}\leq C\eta\theta_p^{-\beta+\bar\varepsilon(N+5)},
\end{equation}
hence, we achieve
\begin{equation}\label{IViiieq2a}
\triplenorm{e_{p,1}''}_{L_t^1(\d,N)}\lesssim \eta^2\big(\theta_p^{-2\beta+\bar\varepsilon(N+5)}+\theta_p^{-\gamma-\beta+\bar\varepsilon(N+5)}\big)
\lesssim \eta^2\theta_p^{-\beta-\gamma+\bar\varepsilon(N+5)}.
\end{equation}
\eqref{IVieq-a} then follows by interpolating \eqref{IViiieq1a} and \eqref{IViiieq2a}.

\noindent$\bullet$\underline{\it The proof of \eqref{IVieq-b}}

By applying \eqref{S7.3eqE2} to $e_{p,1}'$ ( with $Y\simeq Y_p$, $X=(1-S_p)Y_p$ and $W=X_p$) and noticing that
$$\|(1-S_p)\p_3Y\|_{N}\leq \|\p_3Y\|_{N},\quad \|(1-S_p)\nabla Y_p\|_{N}\leq\|\nabla Y\|_{N}$$
we get
\begin{equation*}
\begin{split}
&\triplenorm{e_{p,1}'}_{L_t^1(\d,N)}
\lesssim\Bigl(\|\p_3 Y_p\|_{L_t^2(H^{N+6})}\bigl(1+|(1-S_p)\nabla Y_p|_{0,1}\bigr)+\|\nabla Y_p\|_{0,N+6}|(1-S_p)\p_3 Y_p|_{\frac12+\bar\varepsilon,0}\\
&+
|\p_{3}Y_p|_{\frac12+\bar\varepsilon,0}\|\nabla Y_p\|_{0,N+6}\Bigr)\|\p_3X_p\|_{L_t^2(H^1)}+\Bigl(\|\p_3 Y_p\|_{L_t^2(L^2)}\bigl(1+|(1-S_p)\nabla Y_p|_{0,0}\bigr)\\
&+|\p_{3}Y_p|_{\frac12+\bar\varepsilon,0}\|\nabla Y_p\|_{0,0}\Bigr)\|\p_3X_p\|_{L_t^2(H^{N+6})}+\Bigl(|\p_3Y_p|_{\frac12+\bar\varepsilon,1}\bigl(\|\nabla X_p\|_{0,N+6}(1+ \|\nabla Y_p\|_{0,0}\\
&+|(1-S_p)\nabla Y_p|_{0,0})+\|\nabla Y_p\|_{0,N+6}(|\nabla X_p|_{0,0}+\|\nabla X_p\|_{0,0})\bigr)\\
&+\|\nabla Y_p\|_{0,N+6}|(1-S_p)\p_3 Y_p|_{\frac12+\bar\varepsilon,1}\|\nabla X_p\|_{0,1}\Bigr)+\|\p_3Y_p\|_{L_t^2(H^3)}+\|\p_3Y_p\|_{L_t^2(H^{N+6})}\times\\
&\times\Bigl(\|\nabla Y_p\|_{0,0}|\p_3X_p|_{\frac12+\bar\varepsilon,0}+\bigl(|\p_3Y_p|_{\frac12+\bar\varepsilon,0}
+|(1-S_p)\p_3 Y_p|_{\frac12+\bar\varepsilon,1}\bigr)\|\nabla X_p\|_{0,1}\\
&+\|\p_3 Y_p\|_{L_t^2(H^1)}|\nabla X_p|_{0,0}\Bigr)+\|\nabla Y_p\|_{0,N+6}|\p_3Y_p|_{\frac12+\bar\varepsilon,0}\|\nabla Y_p\|_{0,0}|\p_3X_p|_{\frac12+\bar\varepsilon,0}
\\
&+\big(|(1-S_p)\nabla Y_p|_{0,0}\|\nabla X_p\|_{0,1}+\|\nabla Y_p\|_{0,1}|\nabla X_p|_{0,0}\big)\\
&\qquad\qquad\qquad\qquad\times\big(|\p_3Y_p|_{\frac12+\bar\varepsilon,1}\|\p_3Y_p\|_{L_t^2(H^{N+6})}+\|\nabla Y_p\|_{0,N+6}|\p_3Y_p|_{\frac12+\bar\varepsilon,1}\|\p_3Y_p\|_{L_t^2(H^3)}\big).
\end{split}
\end{equation*}
Inserting \eqref{IViiieq1} into the above inequality for $N=0$ gives
\begin{equation*}
\begin{split}
&\triplenorm{e_{p,1}'}_{L_t^1(0)}\lesssim\eta^2\big( \theta_p^{-\beta+5\bar\varepsilon}+\theta_p^{-\gamma-\beta+5\bar\varepsilon}+\theta_p^{-\gamma+\bar\varepsilon}\big)
\lesssim \eta^2\theta_p^{-\gamma+5\bar\varepsilon}.
\end{split}
\end{equation*}
Whereas
for $N\leq N_0-6$ such that $-\beta+\bar\varepsilon(N+5)\geq\bar\varepsilon$, by substituting  \eqref{IViiieq2} into the above inequality, we achieve
\begin{equation*}
\begin{split}
&\triplenorm{e_{p,1}'}_{L_t^1(\d,N)}\lesssim\eta^2\big( \theta_p^{-\beta+\bar\varepsilon(N+5)}+\theta_p^{-\gamma-2\beta+\bar\varepsilon(N+7)}+\theta_p^{-\gamma-\beta+\bar\varepsilon(N+6)}\big)
\lesssim \eta^2\theta_p^{-\beta+\bar\varepsilon(N+5)}.
\end{split}
\end{equation*}
Then \eqref{IVieq-b} follows by interpolating the above two inequalities.

\noindent$\bullet$\underline{\it The proof of \eqref{IVieq-c}}

Applying \eqref{S7.3eqE1} to $e_{p,0}''$ gives,
\begin{equation*}
\begin{split}
&\triplenorm{\langle t\rangle^\frac12e_{p,0}''}_{L_t^2(\d,N)} \lesssim \|\nabla X_p\|_{0,0}\|\langle t\rangle^\frac12\nabla\p_t X_{p}\|_{L_t^2(H^{N+6})}+\|\nabla X_p\|_{0,N+6}\|\langle t\rangle^\frac12\nabla\p_t X_{p}\|_{L_t^2(L^2)}\\
&\quad+\sum_{j=p}^{p+1}\Bigl(
| \p_tY_{j}|_{1+\bar\varepsilon,1}\|\nabla X_p\|_{0,N+6}\|\nabla X_p\|_{0,0}+\|\nabla Y_{j}\|_{0,N+6}|\nabla X_p|_{0,0}\|\langle t\rangle^\frac12\nabla\p_t X_{p}\|_{L_t^2(L^2)}\\
&\qquad\qquad +\big(\|\langle t\rangle^\frac12\nabla \p_tY_{j}\|_{L_t^2(H^{N+6})}+\|\nabla Y_{j}\|_{0,N+6}| \p_tY_{j}|_{1+\bar\varepsilon,1}\big)|\nabla X_p|_{0,0}\|\nabla X_p\|_{0,0}\Bigr).
\end{split}
\end{equation*}
Again due to   $\beta\geq7\bar\varepsilon,$  we deduce from
\eqref{Ieq4} that
\begin{equation}\label{IViiieq1g}
\|\nabla Y_{p+1}\|_{0,6}+\|\langle t\rangle^\frac12\nabla\p_tY_{p+1}\|_{L_t^2(H^6)}\leq C\eta.
\end{equation}
As a result, it comes out
\begin{equation*}
\triplenorm{\langle t\rangle^\frac12 e_{p,0}''}_{L_t^2(\d,0)} 
\lesssim \eta^2\theta_p^{-\beta-\gamma+5\bar\varepsilon}.
\end{equation*}

The case when $N\leq N_0-6$ with $-\beta+\bar\varepsilon(N+5)\geq\bar\varepsilon,$ it follows from \eqref{Ieq4} that
\begin{equation}\label{IViiieq2h}
\|\nabla Y_{p+1}\|_{0,N+6}+\|\langle t\rangle^\frac12 \nabla\p_tY_{p+1}\|_{L_t^2(H^{N+6})}\leq C\eta\theta_p^{-\beta+\bar\varepsilon(N+5)},
\end{equation}
so that in this case, we have
\begin{equation*}
\begin{split}
&\triplenorm{\langle t\rangle^\frac12 e_{p,0}''}_{L_t^2(\d,N)}
\lesssim \eta^2\theta_p^{-\beta-\gamma+\bar\varepsilon(N+5)}.
\end{split}
\end{equation*}
\eqref{IVieq-c} follows by
interpolating the above inequalities.

\noindent$\bullet$\underline{\it The proof of \eqref{IVieq-d}}

Applying \eqref{S7.3eqE1} to $e_{p,0}'$ gives
\begin{equation*}
\begin{split}
&\triplenorm{\langle t\rangle^\frac12e_{p,0}'}_{L_t^2(\delta,N)} \lesssim\Big(\|\nabla Y_p\|_{0,N+6}\bigl(
(1+|(1-S_p)\nabla Y_p|_{0,0})\|\nabla X_p\|_{0,0}+\|\nabla Y_p\|_{0,0}|\nabla X_p|_{0,0}\big)\\
&\quad+\|\nabla Y_p\|_{0,0}\|\nabla X_p\|_{0,N+6}\Big)|\p_t Y_{p}|_{1+\bar\varepsilon,1}+ \|\nabla Y_p\|_{0,0}\|\langle t\rangle^\frac12\nabla\p_t X_{p}\|_{L_t^2(H^{N+6})}\\
&\quad +\bigl(1+|(1-S_p)\nabla Y_p|_{0,0}\bigr)\|\nabla Y_p\|_{0,N+6}\|\langle t\rangle^\frac12\nabla\p_t X_{p}\|_{L_t^2(L^2)}\\
&\quad +\bigl(\|\nabla X_p\|_{0,N+6}
+\|\nabla Y_p\|_{0,N+6}|\nabla X_p|_{0,0}\bigr)\|\langle t\rangle^\frac12\nabla\p_t Y_{p}\|_{L_t^2(L^2)}\\
&\quad+\bigl((1+|(1-S_p)\nabla Y_p|_{0,0})\|\nabla X_p\|_{0,0}+\|\nabla Y_p\|_{0,0}|\nabla X_p|_{0,0}\bigr)\|\langle t\rangle^\frac12\nabla\p_t Y_{p}\|_{L_t^2(H^{N+6})}.
\end{split}
\end{equation*}
Then as in the proof of \eqref{IVieq-c}, we deduce that
\begin{equation*}
\triplenorm{\langle t\rangle^\frac12e_{p,0}'}_{L_t^2(\d,0)}
\lesssim\eta^2 \theta_p^{-\gamma+5\bar\varepsilon},
\end{equation*}
and
\begin{equation*}
\triplenorm{\langle t\rangle^\frac12e_{p,0}'}_{L_t^2(\d,N)}
\lesssim \eta^2\theta_p^{-\beta+\bar\varepsilon(N+5)}.
\end{equation*}
for $N\leq N_0-6$  with $-\beta+\bar\varepsilon(N+5)\geq\bar\varepsilon.$
Then \eqref{IVieq-d} follows by  interpolating the above inequalities.

This ends the proof of Lemma \ref{S9lem3}.

\medbreak \noindent {\bf Acknowledgments.} P. Zhang would like to thank Professor Fanghua Lin and Professor Jalal Shatah for profitable discussions.
 P. Zhang is partially
supported by NSF of China under Grant 11371347 and innovation grant from National
Center for Mathematics and Interdisciplinary Sciences.

\end{document}